\documentclass[12pt]{article}
\usepackage{amsmath, amsfonts, amssymb,amsthm, amscd}

\usepackage{pictexwd,dcpic}
\usepackage{graphicx}
\usepackage{epsf}
\usepackage{epsfig}
\usepackage{rlepsf}
\usepackage{psfrag}
\usepackage{color}
\usepackage{hyperref}


\newtheorem{thm}{Theorem}[section]
\newtheorem{lem}[thm]{Lemma}

\newtheorem{prop}[thm]{Proposition}

\theoremstyle{definition}
\newtheorem{defn}[thm]{Definition}

\theoremstyle{definition}
\newtheorem{remark}[thm]{Remark}


 \newcommand{\el}{\text{L}}

\newcommand{\abs}[1]{\left\vert#1\right\vert}

\newcommand{\ssc}{\text{sc}}

\renewcommand{\epsilon}{\varepsilon}

\newcommand{\what}{\widehat}
\newcommand{\wh}{\widehat}

\newcommand{\ov}{\overline}

\newcommand{\Z}{{\mathbb Z}}

\newcommand{\de}{\text{deg}}

\newcommand{\supp}{\operatorname{supp}}

\newcommand{\R}{{\mathbb R}}

\providecommand{\ker}[1]{$\text{ker}\ {#1}$}

\newcommand{\N}{{\mathbb N}}
\newcommand{\Q}{{\mathbb Q}}


\def\abs#1{\mathopen|#1\mathclose|}


{\catcode`"=12 \gdef\hex{"}}

\mathchardef\laplace=\hex0001

\mathchardef\nabla=\hex0272

\makeatletter

\def\@@dalembert#1#2{\setbox0\hbox{$#1\mathrm I$}

  \vrule height\ht0 depth\z@ width.04\ht0

  \rlap{\vrule height\ht0 depth-.96\ht0 width.8\ht0}

  \vrule height.1\ht0 depth\z@ width.8\ht0

  \vrule height\ht0 depth\z@ width.1\ht0 }

\def\dalembert{\mathbin{\mathpalette\@@dalembert{}}\,}

\makeatother



\begin{document}

\title{{ A General Fredholm Theory {III}:\\
Fredholm Functors and Polyfolds}}

\author{ H. Hofer\footnote{Research partially supported
by NSF grant  DMS-0603957.}\\
New York University\\ USA\and K. Wysocki \footnote{Research
partially supported by  NSF grant DMS-0606588. }\\Penn State University\\
USA\and E. Zehnder
 \footnote{Research partially supported by  TH-project.}\\ETH-Zurich\\Switzerland}

\maketitle \tableofcontents
\section{Introduction and Survey of Results}
This paper is the third in a series of papers devoted to  a generalized
Fredholm theory. In Part I, \cite{HWZ2}, the
``splicing-based differential geometry'' is developed. In this theory  the familiar  local models for
spaces (open subsets of finite-dimensional spaces or
Banach spaces)  are replaced by  more general local models, namely by
open subsets of  so-called splicing cores. Further, the notion of
smoothness, as well as the notion of a smooth map,  is generalized
from the standard notion in finite dimensions to infinite dimensions
in a new way. The generalization  allows to extend the category of manifolds to a category of new
smooth objects which open up the possibility to identify new structures in
situations which could not be handled before. For example,  the moduli space in Symplectic Field Theory (SFT) can be
viewed as the zero-set of a Fredholm section (in a generalized
sense) defined on bundles on spaces belonging to our new category. Under the appropriate
transversality assumptions
the solution sets are  still
`old-fashioned' manifolds or  orbifolds and  in bad cases branched
manifolds or branched orbifolds. It still makes sense to talk about
orientations in all cases and the structures suffice to  establish  a
theory of integration in which Stokes' theorem holds true. This is all
one needs in order to define invariants.

{\bf Acknowledgement:} We  would like to thank Peter Albers, Yakov Eliashberg, Eleny Ionel,  Dusa McDuff and Tom Weinmann for very  useful  discussions.

\subsection{Background}

If one uses  the new local models, the recipe for defining
manifolds produces  so-called M-polyfolds which are the
starting point for further developments.  In the following we assume the reader to be familiar with the concepts and results  in \cite{HWZ2}. More
specifically,  we assume familiarity with  the notion of an
M-polyfold, its degeneration index, the notion of a strong bundle
splicing and that of a filled section. Moreover,  with the notion of an $\ssc^+$-section of a strong M-polyfold bundle, the notion of linearization, the notion of being
linearized Fredholm,  and the definition of the Fredholm index for
an sc-Fredholm operator.

In the first part of the present paper we shall
develop the theory of ep-groupoids and polyfolds, We shall modify the approach to
orbifolds based on  \'etale proper Lie groupoids, as
presented  in \cite{Mj,MM}  and  replace \'etale
proper Lie groupoids by ep-groupoids which are based on M-polyfolds.
In the second part of the paper we generalize  the Fredholm theory in
M-polyfolds  from \cite{HWZ3}  to a theory of Fredholm functors and polyfold Fredholm sections. Fredholm
functors can be viewed as Fredholm sections  compatible with local
symmetries  represented by the morphisms. In general,  it is not possible
to bring a Fredholm functor into a general position by a functorial
perturbation. However, if  multi-valued functors are admitted such
perturbations become possible. (Here we extend  ideas from
\cite{CRS} to the  functorial setting).  The price to pay is that the solution sets in case of
transversality are neither manifolds nor orbifolds, but merely
weighted branched submanifolds or suborbifolds. Neverthless these
objects have enough structure in order to establish a  well-defined integration
theory for so-called sc-differential forms as demonstrated in  \cite{HWZ7}. The integration on a branched  ep-subgroupoid in \cite{HWZ7} is  used to construct invariants. It is  related to the  recent work by McDuff in  \cite{Mc}. The results of the present paper form the basis for the
application to SFT given in \cite{HWZ4} and \cite{HWZ5}.

\subsection{Survey of the Main Results}
We describe now some of the main results postponing  the
precise definitions to  the later sections. A manifold is a second
countable paracompact space with an additional structure (the
overhead) of equivalence classes of atlases of smoothly compatible
charts.  One can formalize the equivalence of atlases in a
category-theoretic way as some kind of Morita-equivalence. From this
point of view the step to an orbifold structure on a topological space is
small.  Here  again we have an underlying topological space equipped with an the overhead of a Morita-equivalence class
of \'etale proper Lie groupoids as described in  \cite{Mj}. Polyfolds are second
countable paracompact spaces with an overhead of Morita-equivalence
classes of \'etale proper M-polyfold groupoids. In the polyfold theory the role of an atlas
for a manifold is played  by a pair $(Q,\beta)$ in which
$Q$ is an ep-groupoid (which generalizes the notion of an
\'etale proper Lie groupoid) and a homeomorphism
$\beta:\abs{Q}\to  Z$. An ep-groupoid is,  in particular,  a category
and $\abs{Q}$ denotes the orbit space in which two objects are identified
if there is a morphism between them. However,  one needs the whole
ep-groupoid to encode the smooth structure on the polyfold.

In  the following description of the main results we ignore the all
important overhead and just note that it defines a smooth structure
(in some new sense) on an otherwise second countable paracompact
space $Z$. The topological space $Z$ equipped with this for the moment
suppressed additional structure is called a polyfold. If one accepts
a polyfold as a generalization of a (possibly infinite-dimensional)
orbifold the results surveyed in the following appear familiar. In
fact our results can be viewed as  generalizations of known
results in Banach manifolds to much more general spaces.

It is a part of its structure that a polyfold $Z$ is equipped with a filtration
$$Z=Z_0\supset Z_1\supset Z_2\supset  \cdots  \supset Z_i\supset Z_{i+1}\supset \cdots  \supset  Z_{\infty}:=\bigcap_{i\geq 0}Z_i$$
into subsets $Z_i$ of $Z$ which have topologies such that the inclusions  $Z_{i+1}\to Z_i$ are continuous and enjoy some compactness properties. Moreover,  $Z_\infty$ is dense in every space $Z_i$.  In fact every $Z_m$ carries some smooth structure (again in the new sense) as well and smooth maps between polyfolds have to preserve
these levels of smoothness. We can define strong polyfold bundles
$p:W\rightarrow Z$. The space $W$ carries a double
integer-filtration $W_{n,k}$ for  $0\leq k\leq n+1$, where we may
view $k$ as the fiber regularity. A smooth section $f$ of the bundle
$p:W\rightarrow Z$ maps $Z_m$ to $W_{m,m}$. The collection of smooth
sections is denoted  by $\Gamma(p)$. There is an additional class of
so-called $\ssc^+$-sections. They  are smooth sections mapping $Z_m$
to $W_{m,m+1}$. Due to a compactness property of the fiber-wise
embeddings $W_{m,m+1}\rightarrow W_{m,m}$, the space of
$\ssc^+$-sections can be viewed as a well-defined universe of
compact perturbations. Of particular importance will be the set of
$\ssc^+$-multisections.
An  $\ssc^+$-multisection can be identified with a functor
$\lambda:W\rightarrow {\mathbb Q}^+$ such that near every point $z\in Z$
there exist a finite number of $\ssc^+$-sections $s_i$ for $i\in I$,
and associated  positive rational weights $\sigma_i$ satisfying for $w\in W$,
$$
\lambda(w)=\sum_{\{i\in I \vert \,  s_i(p(w))=w\}} \sigma_i.
$$
The sum over the empty set is defined to be zero. As it turns out two
such multisections $\lambda$ and $\tau$ can be added resulting in
$\lambda\oplus\tau$. Our notion of multisections  generalizes  ideas in \cite{CRS}
(where group actions were studied) to a functorial context. In
\cite{HWZ3} we have introduced  Fredholm sections which now will be generalized
to the polyfold context.   For a pair $(f,\lambda)$ in which
$f$ is a Fredholm section and $\lambda$ an
$\ssc^+$-multisection, we define the solution set $S(f,\lambda)$ to be the set
$$
S(f,\lambda)=\{z\in Z\ |\ \lambda(f(z))>0\}.
$$
By
$$S(f):=\{z\in Z\vert \, f(z)=0\}$$
 we shall denote the solution set of the Fredholm section $f:Z\to W$,  where $0$ is the zero section of the bundle $p$.  We should point out
 that the fiber of a strong polyfold bundle does not have a
linear structure but it has a preferred section $0$.
 We also define the
notion of an auxiliary norm $N:W_{0,1}\rightarrow {\mathbb R}^+$
in order  to measure the size of a $\ssc^+$-section or
$\ssc^+$-multisection. (It is not a real norm but related to a norm
in the overhead).
\begin{thm}[Compactness]
Let $p:W\rightarrow Z$ be a strong polyfold  bundle possibly with  boundary with
corners and  let $f$ be  a proper Fredholm section. Then there exists for a
given auxiliary norm  $N$ on the bundle $p$ an open neighborhood $U$ of the solution set  $S(f)$,  so that
for every $\ssc^+$-multisection $\lambda$ having its  support in $U$ and
satisfying  $N(\lambda)\leq 1$,  the solution set $ S(f,\lambda)$ is compact.
\end{thm}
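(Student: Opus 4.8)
The plan is to reduce the statement to the M-polyfold compactness theorem of \cite{HWZ3} by passing to an ep-groupoid presentation of $Z$, and to use the properness of the ep-groupoid near the compact zero set in order to push the resulting neighborhood back down to $Z$.

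First I would fix a presentation $(Q,\beta)$ of the polyfold $Z$ by an ep-groupoid, with object M-polyfold $X$ and quotient map $\pi\colon X\to Z$ (the composite of $X\to\abs{Q}$ with $\beta$). On $Q$, the Fredholm section $f$ is represented by an $\ssc$-Fredholm section $\bar f$ of a strong M-polyfold bundle over $X$, with $S(\bar f)=\pi^{-1}(S(f))$; the multisection $\lambda$ is represented by an $\ssc^{+}$-multisection $\bar\lambda$ of the object bundle, invariant under the morphisms of $Q$; and the auxiliary norm $N$ by an auxiliary norm $\bar N$, with $\bar N(\bar\lambda)\le1$. By definition, properness of $f$ means that $S(\bar f)$ is compact; in particular $S(f)=\pi(S(\bar f))$ is compact.

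Next I would invoke the M-polyfold compactness theorem from \cite{HWZ3} for $\bar f$ and $\bar N$: there is an open neighborhood $\bar U$ of $S(\bar f)$ in $X$ such that every $\ssc^{+}$-multisection of the object bundle supported in $\bar U$ and of $\bar N$-size at most $1$ has compact solution set. The substance of that result is the a priori estimate furnished by the $\ssc$-Fredholm property: a perturbed solution satisfies $\bar f(x)=s_{i}(x)$ with $\bar N(s_{i}(x))\le1$, this confines $x$ to a region controlled by $S(\bar f)$ and the properness of $\bar f$, and the compact fiberwise embeddings $W_{k,k+1}\hookrightarrow W_{k,k}$ upgrade weak convergence there to convergence on every level (only finitely many local branches $s_{i}$ occur near a point, so finite unions suffice); in the boundary-with-corners case one additionally uses the controlled behaviour of proper Fredholm sections near the corner strata, as organized by the degeneration index. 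Because $Q$ is \'etale and proper and $S(\bar f)$ is compact, only finitely many morphisms are relevant near $S(\bar f)$, so $\bar U$ may be shrunk to a saturated (morphism-invariant) open neighborhood of $S(\bar f)$; then $\bar U=\pi^{-1}(U)$ for an open neighborhood $U$ of $S(f)$ in $Z$, which is the neighborhood we seek. Given such a $U$ and an $\ssc^{+}$-multisection $\lambda$ with $\supp(\lambda)\subset U$ and $N(\lambda)\le1$, its representative $\bar\lambda$ is supported in $\pi^{-1}(\supp(\lambda))\subset\bar U$ with $\bar N(\bar\lambda)\le1$, hence $S(\bar f,\bar\lambda)\subset X$ is compact; since $\bar\lambda(\bar f(x))=\lambda(f(\pi(x)))$, we get $S(f,\lambda)=\pi(S(\bar f,\bar\lambda))$, the continuous image of a compact set, so $S(f,\lambda)$ is compact. (One may equally verify directly that $S(f,\lambda)$ is closed in the metrizable space $Z$ and argue by sequences, the escape of perturbed solutions being ruled out in the same way.)

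The step I expect to be the main obstacle is the M-polyfold a priori estimate keeping the perturbed solutions in a precompact set: this is the genuine analytic content, inherited from \cite{HWZ3}, and it rests on the interplay of the $\ssc$-Fredholm estimates with the compactness of the embeddings $W_{m,m+1}\hookrightarrow W_{m,m}$ (and, with boundary and corners, on the corner behaviour encoded by the degeneration index). The secondary, polyfold-specific difficulty is that the auxiliary norm and the multisection live in the overhead and are defined only invariantly under the morphisms, so one must make sure the M-polyfold neighborhood $\bar U$ can be chosen morphism-invariant and hence descends to a genuine neighborhood $U$ in $Z$; this is exactly where the properness of $Q$ near the compact set $S(\bar f)$ enters.
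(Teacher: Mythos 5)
Your overall route is the one the paper takes: lift everything to the ep-groupoid, invoke the M-polyfold compactness theorem of \cite{HWZ3} (Theorem 5.9 there) near the compact zero set, arrange a morphism-invariant neighborhood, and push the conclusion back down to $Z$. The paper organizes this around the auxiliary notion of a pair $(U,N)$ that ``controls compactness'' (Definition \ref{control}), proving first that such a $U$ exists (Theorem \ref{thm11.12}) and then that controlling compactness implies compactness of $S(f,\lambda)$ for $N(\lambda)\le 1$ (Lemma \ref{lemcomactA}); your write-up merges these two steps, which is only a difference of organization, not of substance.

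There is, however, a genuine imprecision in your argument that should be flagged. You assert that properness of $f$ means $S(\bar f)=\pi^{-1}(S(f))$ is compact in the object M-polyfold $X$, and later that $S(\bar f,\bar\lambda)\subset X$ is compact and $S(f,\lambda)=\pi(S(\bar f,\bar\lambda))$ is its continuous image. Neither compactness claim in $X$ holds in general: the quotient map $\pi:X\to|X|$ of an ep-groupoid need not be proper, so a saturated preimage $\pi^{-1}(\text{compact set})$ can fail to be compact, and a sequence in $\pi^{-1}(U)$ can escape within $X$ (by wandering through ever farther representatives of the same orbits) even though its image in $Z$ stays in a compact set. For the same reason ``shrinking $\bar U$ to a saturated open set'' does not preserve the local M-polyfold compactness estimate, since that estimate is proved only on the small nonsaturated $\overline{U(x_j)}$'s, not on $\overline{\pi^{-1}(\pi(U(x_j)))}$, which can be a much bigger set. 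The correct argument, which is exactly what Lemma \ref{lemcontrol} in the paper isolates, is to take a sequence $(z_k)$ in $S(f,\lambda)\subset Z$, choose lifts $x_k$, and then use morphisms $\varphi_k:x_k\to y_k$ to move each lift into the finite union $\overline{U(x_0)}\cup\cdots\cup\overline{U(x_K)}$ where the local M-polyfold compactness applies; one gets a convergent subsequence there and projects to $Z$. Your parenthetical remark about arguing by sequences directly in $Z$ is in fact the salvageable version of your argument, and it is what the paper actually does; the claim of compactness of the solution set in $X$ itself should be removed.
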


We should point out that the boundary of
a polyfold has very little to do with the set theoretic boundary of
the underlying topological space (whatever it means in any given
context). For example,  the subspace $Z$ of ${\mathbb R}^2$ given by
$Z=((-\infty,0)\times {\mathbb R})\cup ([0,1]\times \{0\})$ with the
induced topology admits a smooth polyfold structure without boundary
 (One might think that $(1,0)$ is a boundary point!).

The next result shows that we can perturb the multisections  to obtain as a solution set a
branched suborbifold of $Z$. The advantage is
that we can integrate over these, once they are equipped with an orientation, as
 is demonstrated in \cite{HWZ7}.
\begin{thm}[Perturbation]
Let $p:W\rightarrow Z$ be a strong polyfold bundle possibly with
boundary with corners and  let $f$ be a proper Fredholm section of $p$. We assume that the sc-smoothness structure of the polyfold $Z$ is based on separable Hilbert spaces.  Let
 $N$ be  an auxiliary norm and $U$ an open neighborhood of  the solution set $S(f):=\{z\in Z\vert \,  f(z)=0\}$
so that the pair $(N,U)$ controls compactness. Then for every  $\ssc^+$-
multisection $\lambda$ having  support in $U$ and satisfying
$N(\lambda)<\frac{1}{2}$,  and  for every $\varepsilon\in (0,\frac{1}{2})$,  there
exists an $\ssc^+$-multisection $\tau$ having support in $U$ and satisfying
$N(\tau)<\varepsilon$,  so that  the solution set $S(f,\lambda\oplus\tau)$ is a
compact branched suborbifold with boundary with corners  equipped  with the natural weight function $w:S(f,\lambda\oplus\tau)\rightarrow (0,\infty)\cap {\mathbb Q}$ defined
by
$$
w(z)=(\lambda\oplus\tau)(f(z)).
$$
If  the Fredholm section $f$ is oriented, the  solution set $S(f,\lambda\oplus\tau)$ has a natural orientation.
\end{thm}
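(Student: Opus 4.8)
The plan is to lift everything to an ep-groupoid chart and reduce the statement to the already-available transversality results for Fredholm sections in M-polyfolds (the polyfold version of the results of \cite{HWZ3}), together with the structure theory of branched ep-subgroupoids developed earlier in the paper. First I would fix an ep-groupoid representative $(X,\mathbf{X})$ for $Z$, a strong bundle $P:\mathbf{W}\to\mathbf{X}$ representing $p$, and a Fredholm functor $\mathbf{f}$ representing $f$, together with the $\ssc^+$-multifunctor $\Lambda$ representing $\lambda$. Since $(N,U)$ controls compactness, the set $S(f)$ is compact, hence $S(f,\lambda)$ lies in a compact subset of $U$; I would cover a neighborhood of this compact set by finitely many ep-groupoid charts and pass to a common invariant open neighborhood on which one may work equivariantly. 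The Hilbert-space hypothesis enters here: it guarantees existence of smooth cutoff functions and smooth partitions of unity in the sc-sense, so that local perturbations can be built and then made functorial (invariant) by averaging over the finite isotropy groups, exactly as in the passage from \cite{CRS} to the functorial setting already sketched in the introduction.

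Next I would carry out the local perturbation. Near a point $z\in S(f,\lambda)$, after choosing a local branching structure $s_1,\dots,s_k$ with weights $\sigma_1,\dots,\sigma_k$ for $\lambda$, each local section $f_j:=f-s_j$ (or rather the filled version) is an sc-Fredholm section of the kind handled in \cite{HWZ3}, and by the local transversality theorem there one can find an $\ssc^+$-section $\tau_j$, arbitrarily small in the auxiliary norm, making $f_j+\tau_j$ transverse to $0$; moreover the local solution set is then a finite-dimensional sc-smooth submanifold with boundary with corners, the corner structure coming from the degeneration index. One then patches these local $\tau_j$ together using an invariant sc-smooth partition of unity to produce a single $\ssc^+$-multisection $\tau$ with $\supp\tau\subset U$ and $N(\tau)<\varepsilon$; by construction $\tau$ is obtained from finitely many $\ssc^+$-sections with rational weights, so $\lambda\oplus\tau$ is again an $\ssc^+$-multisection. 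The Compactness Theorem applied to the pair $(N,U)$ and the multisection $\lambda\oplus\tau$ (whose auxiliary norm is still $<1$) gives compactness of $S(f,\lambda\oplus\tau)$.

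It then remains to identify $S(f,\lambda\oplus\tau)$ with a branched suborbifold carrying the stated weight function. Here I would invoke the local normal form: near each solution point, in a suitable chart, the solution set is a finite union of the zero sets of the transverse sections $f+s_i+\tau$, each a submanifold of the same dimension (the Fredholm index, corrected at the boundary), and the weight of a point is $\sum_{\{i\,:\,(f+s_i+\tau)(z)=0\}}(\sigma_i+\text{weight of }\tau_i)$, which is precisely $(\lambda\oplus\tau)(f(z))$; checking that this local picture satisfies the axioms of a branched ep-subgroupoid (local branches are submanifolds in general position with respect to the corner strata, the weight is locally constant on each branch, invariance under the morphisms, and that the nearby branches are compatible under the changes of charts) is the bulk of the work. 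Finally, orientations: if $f$ is oriented, each determinant line $\det(f+s_i+\tau)$ inherits an orientation from that of $\det(f)$ via the sc$^+$-perturbation (which does not change the determinant line), and these fit together to orient each branch coherently, giving the asserted natural orientation on $S(f,\lambda\oplus\tau)$.

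The main obstacle, I expect, is not the existence of the perturbation — that is essentially the M-polyfold transversality theorem plus averaging — but the \emph{verification that the perturbed solution set is genuinely a branched suborbifold with boundary with corners in the precise technical sense}, i.e.\ checking compatibility of the local branching structures under the étale morphisms and under changes of M-polyfold charts, controlling the interaction with the corner stratification via the degeneration index, and showing the weight function descends correctly to the orbit space. This is where the finiteness of isotropy and the properness built into the ep-groupoid axioms must be used carefully, and where the separable Hilbert space assumption is genuinely needed to have enough sc-smooth bump functions to make all the local choices simultaneously functorial.
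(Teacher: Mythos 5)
Your overall framing---pass to an ep-groupoid model, reduce to the M-polyfold transversality machinery of \cite{HWZ3}, average over the finite isotropy to enforce functoriality, and use Hilbert-space bump functions to make local constructions global---is the right skeleton, and you correctly locate where the compactness control, the branched-suborbifold structure, and the orientation argument enter. But the central mechanism you propose for producing the transversal perturbation $\tau$ has a gap that the paper's argument is specifically designed to avoid.

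You propose to find, near each solution, local $\ssc^+$-sections $\tau_j$ making each $f-s_j+\tau_j$ transverse, and then to ``patch these local $\tau_j$ together using an invariant sc-smooth partition of unity.'' This does not obviously produce a transversal pair. Partition-of-unity patching of transversal perturbations is unreliable: on the overlaps, the interpolated perturbation is a convex combination of perturbations that are individually transverse, and there is no reason for the interpolant to remain transverse, nor for the resulting family of branches to satisfy the compatibility conditions of a branched ep-subgroupoid. Moreover, in a multisection you are not perturbing each branch independently---the branches of $\lambda\oplus\tau$ are all the pairwise sums $s_i + \tau_j$, so the perturbation must be \emph{coordinated} across the local branches: the paper handles this by choosing finitely many smooth vectors $e^1,\ldots,e^m$ at each center $x$ whose $G_x$-translates $\mu(g,e^j)$ span the cokernels of \emph{all} the linearizations $(F-a_i)'(x)$ simultaneously for all indices $i$ with $F(x)=a_i(x)$, and then adding further vectors to ensure the kernels are in general position to the boundary strata.

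The correct mechanism, which replaces your partition-of-unity step, is a \emph{parametrized} transversality argument combined with Sard's theorem. After choosing cokernel-spanning $\ssc^+$-sections $s^1,\ldots,s^N$ with prescribed values and supports near each of a finite cover $V_{x_1},\ldots,V_{x_m}$ of $S(F,\Lambda)$, the paper forms the $t$-parametrized multisections
$$\Lambda_t=\Lambda^{t_1}_{x_1}\oplus\cdots\oplus\Lambda^{t_m}_{x_m}, \qquad t\in\R^N, \ |t| \text{ small},$$
where each $\Lambda^{t_k}_{x_k}$ is constructed from the $\sharp G_{x_k}$-many translated sections $s^j_g(\varphi_g(y))=\mu(\Gamma(g,y),s^j(y))$ to ensure functoriality. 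The parametrized solution set $\{(t,y): (\Lambda\oplus\Lambda_t)(F(y))>0,\ |t|<\varepsilon\}$ is, by surjectivity of the parametrized linearizations (Lemma \ref{lempert00}, Lemma \ref{lemmontreal1}), a finite collection of finite-dimensional submanifolds with boundary and corners, and one then applies Sard's theorem to the projection $(t,y)\mapsto t$, restricted to each face-stratum, to find a single small regular value $t^*$. Setting $\tau=\Lambda_{t^*}$ gives a global $\ssc^+$-multisection for which $(f,\lambda\oplus\tau)$ is in general position. This parametric/Sard argument is not an optional refinement but the actual engine of the proof; your proposal has no substitute for it, and I would expect a partition-of-unity patching to fail exactly in the overlap regions. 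Once transversality is secured this way, the identification of the solution set as a compact oriented branched suborbifold, which you describe as ``the bulk of the work,'' is handled fairly directly by Theorem \ref{trans2} and the orientation theory of \cite{HWZ8}.
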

A branched suborbifold is essentially a pair $(S,w)$ in which  $S$ is  a subset of  $Z$ as the
above  $S(f,\lambda\oplus\tau)$ and $w$ is a map associating with a point in $S$ a positive
rational weight. If $f$ is a  proper Fredholm section and $\lambda_i$, $i=0,1$,
are generic $\ssc^+$-multisections,  then there exists a generic
family $\lambda_t$ interpolating between them so that  the solution set $\{(t,z)\in
[0,1]\times Z\ | \lambda_t(f(z))>0\}$ is a smooth branched
suborbifold with boundary with corners interpolating between the solution sets $S_0$
and $S_1$. Next assume
$\partial Z=\emptyset$.  Using  determinant bundles one can  introduce the notion of an orientation
$\mathfrak{o}$ for a Fredholm section $f$ leading to the notion of
an oriented Fredholm section $(f,\mathfrak{o})$. We summarize the
results needed   in the  appendix.  The construction of
determinant bundles are in principle well-known, see for example \cite{DK,FH},
however, in the polyfold context they are not entirely standard
since the linearized Fredholm sections do not depend continuously as
operators on the points at which  they are linearized. This requires
some extra work carried out in \cite{HWZ8}.  Transversal pairs $(f,\lambda)$,  pairs in good position and pairs in general position occuring in the following result are defined in Definition \ref{DER1} below.

\begin{thm}[Invariants]\label{invar}
Let $p:W\rightarrow Z$ be a strong polyfold bundle without boundary
and $f$ a proper oriented Fredholm section, where $Z$ is build on separable Hilbert spaces. Assume that $N$ is an
auxiliary norm and $U$ an open neighborhood of  the solution set $S(f)$ so that
the pair $(N,U)$ controls compactness. Then there is a well-defined map
$$
\Phi_f:H_{dR}^{\ast}(Z,{\mathbb R})\rightarrow {\mathbb R}
$$
defined on the deRham cohomology group $H_{dR}^{\ast}(Z,{\mathbb R})$ and having the following property. For every generic solution set $S=S(f,\lambda)$
(i.e. $(f,\lambda)$ is a transversal pair),  where $S$ is equipped
with the weight function $w(z)=\lambda(f(z))$ and  where  $\lambda$ has support in
$U$ and satisfies  $N(\lambda)<1$, the map $\Phi_f$ is represented by the formula
$$
\Phi_f([\omega]) = \int_{(S,w)}\omega = \mu_{\omega}^{(S,w)}(S).
$$
Moreover, if $t\rightarrow f_t$ is a proper homotopy of oriented
Fredholm sections then $\Phi_{f_0}=\Phi_{f_1}$.
\end{thm}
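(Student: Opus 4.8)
The plan is to \emph{define} $\Phi_f$ by integrating a closed sc-differential form over a generic perturbed solution set, and then to show the result is independent of every choice by a cobordism argument resting on Stokes' theorem for branched ep-subgroupoids established in \cite{HWZ7}. Fix the auxiliary norm $N$ and the neighborhood $U$ from the hypothesis. Starting from the zero multisection (or any $\ssc^+$-multisection $\lambda$ with support in $U$ and $N(\lambda)<\tfrac12$), apply the Perturbation theorem to obtain an $\ssc^+$-multisection $\tau$ with support in $U$, $N(\tau)<\varepsilon$, and $(f,\lambda\oplus\tau)$ transversal. Since $\partial Z=\emptyset$, the solution set $S:=S(f,\lambda\oplus\tau)$ is then a compact branched suborbifold \emph{without} boundary, carrying the weight $w(z)=(\lambda\oplus\tau)(f(z))$ and, because $f$ is oriented, a natural orientation; hence by \cite{HWZ7} the pair $(S,w)$ admits a well-defined integration of sc-differential forms. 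Set $\Phi_f([\omega]):=\int_{(S,w)}\omega$. As $(S,w)$ has dimension $\Ind(f)$, only the degree-$\Ind(f)$ component of $\omega$ contributes, so $\Phi_f$ is automatically graded.

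Next I would prove well-definedness in three steps. First, independence of the closed representative: if $\omega'=\omega+d\eta$ then $\int_{(S,w)}\omega'-\int_{(S,w)}\omega=\int_{(S,w)}d\eta=\int_{(\partial S,w)}\eta=0$ by Stokes, since $\partial S=\emptyset$. Second, independence of the generic multisection: given two transversal pairs $(f,\lambda^0)$ and $(f,\lambda^1)$ with supports in $U$ and $N(\lambda^j)<1$, choose a generic path $\lambda_t$ interpolating between them, as in the discussion following the Perturbation theorem; then $\mathcal S:=\{(t,z)\in[0,1]\times Z\mid \lambda_t(f(z))>0\}$ is a compact branched suborbifold with boundary with corners, oriented, whose oriented boundary is $(S_1,w_1)\sqcup\bigl(-(S_0,w_0)\bigr)$. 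Writing $\pi:[0,1]\times Z\to Z$ for the projection and applying Stokes to the closed form $\pi^\ast\omega$ yields $0=\int_{\mathcal S}d\pi^\ast\omega=\int_{\partial\mathcal S}\pi^\ast\omega=\int_{(S_1,w_1)}\omega-\int_{(S_0,w_0)}\omega$. Third, independence of the pair $(N,U)$ controlling compactness follows by comparing any two such pairs through a common refinement and invoking the previous step. Together these show $\Phi_f$ descends to a well-defined map on $H^\ast_{dR}(Z,\R)$, and by the first construction it is represented by $\int_{(S,w)}\omega$ for \emph{any} transversal $(f,\lambda)$ with support in $U$ and $N(\lambda)<1$.

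For homotopy invariance, given a proper homotopy $t\mapsto f_t$ of oriented Fredholm sections, form the pullback strong polyfold bundle over $[0,1]\times Z$ together with the section $F(t,z)=(t,f_t(z))$; this is a proper oriented Fredholm section with $\Ind(F)=\Ind(f_0)+1$ and compact solution set $S(F)=\{(t,z)\mid f_t(z)=0\}$. Choose an auxiliary norm and neighborhood controlling compactness for $F$ that restrict at $t=0,1$ to data realizing $\Phi_{f_0}$ and $\Phi_{f_1}$, and apply the Perturbation theorem relative to the boundary to obtain a generic $\ssc^+$-multisection whose solution set is a compact oriented branched suborbifold $\mathcal S\subset[0,1]\times Z$ with boundary $(S_1,w_1)\sqcup\bigl(-(S_0,w_0)\bigr)$ lying over $\{0,1\}\times Z$. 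Stokes applied to $\pi^\ast\omega$ then gives $\int_{(S_1,w_1)}\omega-\int_{(S_0,w_0)}\omega=\int_{\mathcal S}d\pi^\ast\omega=0$, i.e. $\Phi_{f_0}=\Phi_{f_1}$.

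The main obstacle is not the formal Stokes bookkeeping but two points requiring the heavy machinery already developed: (i) guaranteeing, in a \emph{relative} form, that generic homotopies between two already-generic perturbations exist and that the interpolating solution set is genuinely a compact branched suborbifold with corners whose boundary is exactly the oriented difference of its two ends — this is precisely where the hypothesis that $Z$ is built on separable Hilbert spaces and the full strength of the Perturbation theorem enter; and (ii) verifying that the orientation induced on the cobordism $\mathcal S$ from the orientation of $f$ (resp. $F$) restricts at the two ends to the orientations used to define $\Phi_f$, with the correct signs, so that the boundary term in Stokes really is $\int_{(S_1,w_1)}\omega-\int_{(S_0,w_0)}\omega$. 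Comparing different controlling pairs $(N,U)$ and different polyfold atlases is a further technical point, but reduces, via common refinements, to the same cobordism argument together with the invariance properties of the integration theory of \cite{HWZ7}.
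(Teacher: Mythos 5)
Your proposal is correct and takes essentially the same approach as the paper: define $\Phi_f$ by branched integration over a generic transversal solution set, establish well-definedness via a cobordism over $[0,1]\times Z$ together with the Stokes theorem from \cite{HWZ7}, and prove homotopy invariance by pulling back the bundle over $[0,1]\times Z$ and perturbing the resulting Fredholm section relative to the two boundary faces. The only differences are cosmetic — you spell out the step that $\int_{(S,w)}d\eta=0$ since $\partial S=\emptyset$ (which the paper's Theorem 4.15 proof leaves implicit) and you flag the comparison of different controlling pairs $(N,U)$ explicitly — and a small notational slip in writing the pullback section as $F(t,z)=(t,f_t(z))$ rather than $F(t,z)=f_t(z)$.
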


Here $\mu_\omega^{(S,w)}$ is a natural signed measure associated with
an sc-differential form $\omega$ on the polyfold $Z$ and the
weighted branched suborbifold $(S,w)$ (of course the measure can
only be nonzero provided the degree of the form and the dimension of
$S$ match). The underlying measure space is $(S,{\mathcal L}(S))$. The $\sigma$-algebra
 ${\mathcal L}(S)$ of subsets of $S$  is a natural generalisation of the Lebesgue $\sigma$-algebra on a smooth
manifold. We refer to  \cite{HWZ7} for more details.

For example,   as sketched in section \ref{gromovwitten}, the disjoint
union of the Gromov-compactified moduli spaces of pseudoholomorphic
curves with varying arithmetic genus and representing the various
second homology classes  in a compact symplectic manifold
$(M,\omega)$ can be viewed as the zero set of a Fredholm section $f$
of some strong polyfold bundle $p:W\rightarrow Z$, which in  every
connected component is proper. The evaluation map $ev_l:Z\rightarrow M$ at the
$l$-marked point  is smooth (in the new
sense) and pulls back every  differential form on $M$ to an sc-differential   form $\omega$
on $Z$. Also the map which associates with  a point in $Z$ the
underlying stable part of the domain defines a smooth map into the
Deligne-Mumford stack and pulls back differential forms on the stack to  sc-differential forms
$\omega$ on $Z$. Wedges of suitable forms can be integrated over a
(generic) branched suborbifold which is the solution set for a
suitable $\ssc^+$-multisection perturbation, and organizing the
data in the usual way we obtain the GW-potential as defined in  \cite{MS2}.
That the GW-theory fits into our framework will be shown in detail
\cite{HWZ4}, though the main point of \cite{HWZ4} is the
construction of the polyfold structures  in the presence of
nodes. The understanding of the nodes  presents already all the analytical difficulties
related to  the phenomenon  called breaking of trajectories  occurring,   for example,  in Floer Theory and in  SFT.

 Using the previous theorem  one  can associate with  a proper Fredholm section a
 ${\mathbb Q}$-valued degree as follows. If $f$ is an oriented proper Fredholm section of Fredholm index
$0$ we can integrate $[1]\in H^0_{dR}(Z,{\mathbb R})$, i.e. the
cohomology class of the constant $1$-function. Then we define
$$\de (f)=\Phi_f([1]).$$ This is a degree and takes, as one can
show, values in ${\mathbb Q}$ and has the usual properties of a degree.

There is also a version for the boundary case. Consider the
inclusion $j:\partial Z\rightarrow Z$ which  restricted to the local faces, is sc-smooth.   We can introduce the notion
of a $\ssc$-differential form on $\partial Z$ by taking
$\ssc$-differential forms on the local faces which are compatible at
their intersections. Let us denote by $\Omega^k(j)$ the collection
of pairs $(\omega,\tau)\in \Omega^k(Z)\oplus\Omega^{k-1}(\partial
Z)$.  The Cartan derivative $d$ is defined as $d(\omega,\tau)=(d\omega,j^\ast\omega-d\tau)$ and the  associated deRham cohomology group $H_{dR}^\ast(j)$ is denoted
 by $H^\ast_{dR}(Z,\partial Z)$.

\begin{thm}[Invariants in case of boundary] \label{poil}
Assume that $f$ is a proper, oriented Fredholm section of the strong
polyfold bundle $p:W\rightarrow Z$, where $Z$ is built on separable Hilbert spaces. Then there exists a well-defined
map $\Psi_f:H^\ast_{dR}(Z,\partial Z)\rightarrow {\mathbb R}$ so
that the following holds. If $(N,U)$ is a pair controlling
compactness, where $N$ is an auxiliary norm and $U$ a corresponding
open neighborhood of the solution set $S(f)$,  then for any $\ssc^+$-multisection
$\lambda$ having  support in $U$ and satisfying $N(\lambda(z))<1$ for
all $z$ so that $(f,\lambda)$ is in general position (i.e. in
particular,  the associated solution set is a compact, weighted smooth
branched suborbifold with boundary with corners),  the map
$\Psi_f$ has the representation

$$
\Psi_f([\omega,\tau]) := \int_{(S(f,\lambda),\lambda_f)}\omega-
\int_{(\partial S(f,\lambda),\lambda_f)}\tau.
$$
Moreover, if $t\mapsto f_t$ is an sc-smooth, oriented, proper
homotopy of sc-Fredholm sections, then
$$
\Psi_{f_0}=\Psi_{f_1}.
$$
\end{thm}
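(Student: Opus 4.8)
The plan is to mimic the proof of Theorem~\ref{invar}, replacing ordinary Stokes' theorem for branched suborbifolds by its version for branched suborbifolds with boundary and corners from \cite{HWZ7}, and applying it twice: once on the solution set and once on its boundary.

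First I would fix a pair $(N,U)$ controlling compactness and check that admissible perturbations exist: combining the Compactness Theorem with the boundary version of the Perturbation Theorem produces $\ssc^+$-multisections $\lambda$ with support in $U$ and $N(\lambda(z))<1$ for all $z$ such that $(f,\lambda)$ is in general position (Definition~\ref{DER1}). For such $\lambda$ the solution set $S:=S(f,\lambda)$ is a compact, oriented, weighted branched suborbifold with boundary with corners, with weight $\lambda_f(z)=\lambda(f(z))$, and --- this is a structural fact I would isolate at the outset --- its boundary $\partial S$ is contained in $\partial Z$, inherits the corner structure through $j:\partial Z\to Z$, and is itself a compact oriented weighted branched suborbifold with corners, one dimension lower. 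Hence for every closed pair $(\omega,\tau)\in\Omega^k(j)$ the right hand side of the asserted formula is a well-defined real number; call it $\Psi_f([\omega,\tau],\lambda)$.

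Next I would show that $\Psi_f([\omega,\tau],\lambda)$ depends only on the class $[\omega,\tau]\in H^\ast_{dR}(Z,\partial Z)$ and not on $\lambda$. For the first point, take an exact pair $(\omega,\tau)=d(\alpha,\beta)=(d\alpha,\,j^\ast\alpha-d\beta)$: Stokes on $(S,\lambda_f)$ gives $\int_{(S,\lambda_f)}d\alpha=\int_{(\partial S,\lambda_f)}j^\ast\alpha$ because $\partial S\subset\partial Z$, and Stokes on $(\partial S,\lambda_f)$ gives $\int_{(\partial S,\lambda_f)}d\beta=0$ since $\partial(\partial S)=\emptyset$ in the boundary case, resp. the codimension-two faces of $S$ cancel in pairs with opposite induced orientation in the presence of deeper corners; subtracting, the value is $0$. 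For independence of $\lambda$, given two admissible $\lambda_0,\lambda_1$ I would build, from the Compactness and Perturbation Theorems applied over $[0,1]\times Z$, a generic homotopy $\lambda_t$ with support in $U$ and $N(\lambda_t(z))<1$, whose total solution set $\widehat S=\{(t,z)\mid\lambda_t(f(z))>0\}$ is a compact oriented weighted branched suborbifold with boundary with corners, with boundary $\{1\}\times S_1$, $\{0\}\times S_0$ and $\widehat T:=\widehat S\cap([0,1]\times\partial Z)$. Writing $\pi:[0,1]\times Z\to Z$ and $\pi_\partial:[0,1]\times\partial Z\to\partial Z$ for the projections, I would apply Stokes on $\widehat S$ to the closed form $\pi^\ast\omega$ and Stokes on $\widehat T$ to $\pi_\partial^\ast\tau$, using $d\pi_\partial^\ast\tau=\pi_\partial^\ast j^\ast\omega=\pi^\ast\omega|_{[0,1]\times\partial Z}$, then combine the two identities; tracking the induced orientations should give
\[
\int_{(S_1,\lambda_f)}\omega-\int_{(\partial S_1,\lambda_f)}\tau=\int_{(S_0,\lambda_f)}\omega-\int_{(\partial S_0,\lambda_f)}\tau,
\]
so that $\Psi_f([\omega,\tau]):=\Psi_f([\omega,\tau],\lambda)$ is unambiguous and defines $\Psi_f:H^\ast_{dR}(Z,\partial Z)\to\R$ with the stated representation. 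For the final claim I would run the identical two-step Stokes argument with $F(t,z)=f_t(z)$ regarded as a proper oriented Fredholm section of the pulled back bundle over $[0,1]\times Z$ (still admitting a pair controlling compactness), a generic admissible $\lambda$ for $F$, and $S_i$ the slice of $S(F,\lambda)$ over $t=i$, obtaining $\Psi_{f_0}=\Psi_{f_1}$.

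The hard part will be the boundary combinatorics in the last two steps: one must arrange the homotopy $\lambda_t$ (resp. $\lambda$ over $[0,1]\times Z$) to be generic not only in the interior but along \emph{every} boundary face, so that $\widehat S$ and $\widehat T$ are simultaneously branched suborbifolds with corners and the iterated Stokes' theorem of \cite{HWZ7} applies with all corner contributions cancelling, while keeping the support inside $U$ and the auxiliary norm strictly below $1$ throughout so that the Compactness Theorem keeps every solution set along the homotopy compact. What remains --- verifying that the orientations induced on $\{0\}\times S_0$, $\{1\}\times S_1$, $\widehat T$, and on $\partial S_i\subset\partial Z$ are precisely the ones making the two applications of Stokes combine with the signs in the displayed formula --- is routine but must be carried out carefully, using the orientation conventions on determinant bundles summarized in the appendix.
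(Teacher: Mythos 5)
Your proposal follows essentially the same route as the paper's proof: use Theorem~\ref{pertmontreal1} to produce admissible perturbations, pass to a Fredholm problem over $[0,1]\times Z$, decompose the boundary of the total solution set into the two slices $\{0\}\times S_0$, $\{1\}\times S_1$ and the piece lying over $\partial Z$, and apply Stokes' theorem for branched suborbifolds repeatedly. The only organizational difference is that the paper establishes both cohomological well-definedness and homotopy/$\lambda$-independence in a single chain computation (the Lemma in Section~4.3, which adds and subtracts $\int_{(\widehat\partial S,w)}\omega$ and then uses $d\omega=0$, $j^\ast\omega=d\vartheta$, and Stokes in sequence), whereas you separate the two points: first showing exact pairs $d(\alpha,\beta)$ integrate to zero by one application of Stokes on $S$ and one on $\partial S$, then showing $\lambda$-independence by applying Stokes on $\widehat S$ to $\pi^\ast\omega$ and on $\widehat T$ to $\pi_\partial^\ast\tau$ and combining. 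The two calculations are algebraically equivalent, and you correctly flag the same residual issues the paper glosses over — genericity of the homotopy along every boundary stratum, the measure-zero nature of the corner locus, and the orientation conventions on the boundary pieces — which the paper also treats lightly, deferring to \cite{HWZ7} and \cite{HWZ8}.
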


As already mentioned there are approaches to the structure of moduli spaces of stable maps which are different from ours. In some approaches so called Kuranishi structures are used, see \cite{FO} , \cite{FOOO} and \cite{LuT}.
We would like to point out that it is quite straight forward to construct a `forgetful' functor  from the polyfold Fredholm theory to a class of Kuranishi structures, see \cite{H2}.

\subsection{Sketch of the Application to Gromov-Witten}\label{gromovwitten}

As an illustration of the new concepts in this paper we sketch an application to the Gromow-Witten (GW)-invariants
referring to  \cite{HWZ4} for the details and the proofs. The GW-invariants are invariants of symplectic manifolds deduced from the structure of stable pseudoholomorphic maps from Riemann surfaces to the symplectic manifold.  The  construction of GW-invariants for general symplectic manifolds goes back to
Fukaya-Ono in \cite{FO} and  Li-Tian in  \cite{LiT}. Earlier work for special symplectic manifolds are due to Ruan in  \cite{R1} and \cite{R2}.
Recently,  Cieliebak and Mohnke proved the genus zero case  in \cite{CM}
using Donaldson's theory  \cite{Do} of codimension two symplectic hypersurfaces
in order to establish the crucial  transversality property. Our approach is  quite
different.  As a consequence of general principles, compact smooth moduli spaces are produced over which one can integrate.
In  the GW-case, the theory applies for arbitrary genus and since the theory also solves all the smoothnes problems automatically, many results, as for example  the composition formula for the GW-invariants, do not require extra work.

We start our sketch recalling some concepts from the theory of Riemann surfaces. Nodal Riemann surfaces show up in the compactification of the moduli space of compact Riemann surfaces $S$.  A nodal Riemann  surface is a multiplet
$$(S, j, M, D)$$
in which the pair $(S, j)$ is a closed, not necessarily connected Riemann surface $S$ equipped with the complex structure $j$. So, $S$ is the disjoint union of finitely many connected compact Riemann surfaces.  The set $M\subset S$ is an ordered finite subset of points, called  {\bf marked points}.  The unordered set $D$ consists of finitely many unordered pairs  $\{x, y\}$ of points in $S$ satisfying $x\neq y$ and called {\bf nodal pairs}. Moreover, $\{x, y\}\cap \{x',y'\}\neq \emptyset$ implies that the two sets are equal. The points of a nodal pair may belong to the same component or to different components of $S$. Denoting by $\abs{D}$ the collection of all the points of $S$ contained in the nodal pairs $D$, we assume, in addition, that $\abs{D}\cap M=\emptyset$. Points in $\abs{D}$ are called {\bf nodal points}. A {\bf special point} of the nodal Riemann surface is a point in $S$ which is either a nodal point or a marked point. The nodal Riemann surface $(S, j, M, D)$ is called {\bf connected} if the topological spaced obtained by identifying $x\equiv y$ for all nodal pairs $\{x, y\}\in D$ is a connected space. In this terminology it is possible that the nodal surface $(S, j, M, D)$ is connected but the Riemann surface $S$ has several connected components $C$, namely its domain components. The arithmetic genus $g_{\text{a}}$ agrees with the genus of the connected compact Riemann surface obtained by properly gluing the components $C$ of $S$ at all the nodes.
The arithmetic genus $g_{\text{a}}$ of the connected nodal Riemann  surface $(S, j, M, D)$ is the nonnegative integer $g_{\text{a}}$ defined  by
$$g_{\text{a}}=1+\sharp D +\sum_{C}(g(C)-1)$$
where $\sharp D$ is the number of nodal pairs in $D$ and where the sum is taken over the finitely many domain components $C$ of the Riemann surface $S$. Two connected nodal Riemann surfaces
$$(S, j, M, D)\quad \text{and}\quad (S', j', M', D')$$
are called {\bf isomorphic}, if there exists a biholomorphic map
$$\varphi:(S, j)\to (S', j'), $$
(i.e., the map $\varphi$ satisfies $T\varphi\circ j=j'\circ T\varphi$), mapping marked points onto marked points (preserving the ordering) and nodal pairs onto nodal pairs. If the two nodal Riemann surfaces  are identical,  the isomorphism above is called an {\bf automorphism} of the nodal surface $(S, j, M, D)$. In the following we denote by
$$[S,j, M, D]$$
the equivalence class of all connected nodal surfaces isomorphic to the connected nodal Riemann surface $(S, j, M, D)$.

A crucial role play the so called stable nodal surfaces.

\begin{defn} The connected nodal Riemann surface $(S, j, M, D)$ is called {\bf stable}, if the group of its automorphisms is finite.
\end{defn}

One knows that a connected nodal Riemann surface $(S,j, M, D)$ is stable if and only if every connected domain component $C$ of $S$ satisfies
$$2g(C)+\sharp M_C \geq 3,$$
where $g(C)$ is the  genus of $C$ and where $M_C=C\cap (M\cup \abs{D})$ are the special points lying on $C$.

After having recalled  the concepts from Riemann surface theory we start with the analytical set up of our approach to the GW-invariants. We consider a symplectic manifold
$(Q,\omega)$ and assume, for simplicity, that
$\partial Q=\emptyset$. We study  maps  $u:S\to Q$ defined on Riemann
surfaces  $S$  into the symplectic manifold having special regularity properties introduced below.  With
$$
u:{\mathcal O}(S,z)\rightarrow Q
$$
we shall denote a germ around  the point $z\in S$ defined on a piece of  the Riemann
surface $S$.
\begin{defn}
Let $m\geq 2$ be an integer and $\varepsilon>0$.  The  germ of
a continuous map $u:{\mathcal O}(S,z)\rightarrow Q$ is  called of {\bf class
$(m,\varepsilon)$ at the point $z$} if for a smooth chart
$\varphi:U(u(0))\rightarrow {\mathbb R}^{2n}$ of $Q$ mapping $u(0)$ to $0$ and for
holomorphic polar coordinates $ \sigma:[0,\infty)\times
S^1\rightarrow S\setminus\{z\}$ around $z$,  the map
$$
v(s,t)=\varphi\circ u\circ \sigma(s,t),
$$
which is defined for $s$ large,  has partial derivatives up to order
$m$, which if weighted by $e^{\varepsilon s}$, belong to
$L^2([R,\infty)\times S^1,{\mathbb R}^{2n})$ if $R$ is sufficiently
large. The germ is of  called of {\bf class $m$ around the point $z\in S$}, if $u$ belongs to the class $H^m_{\text{loc}}$ near $z$.
\end{defn}
The above definition does not depend on the choices involved, like
charts on $Q$ and holomorphic polar coordinates on $S$.

We next consider multiplets
$$\alpha:=(S, j, M, D, u)$$
where $(S, j, M, D)$ is a connected nodal Riemann surface, and where
$$u:S\to Q$$
is a continuous map.
\begin{defn}
The multiplet $\alpha=(S, j, M, D, u)$ is a {\bf stable map of class $(m,\delta )$} for a given integer $m\geq 2$ and a given real number $\delta>0$, if it satisfies the following properties.
\begin{itemize}
\item[(1)] The map $u$ is of class $(m, \delta)$ around the points in $D$ and of class $H^{m}_{\text{loc}}$ around all the other points of $S$.\
\item[(2)] $u(x)=u(y)$ for every nodal pair $\{x,y\}\in D$.
\item[(3)] If  a connected component $C$ of $S$ has genus  $g_C$ and $M_C$ special points, and satisfies $2\cdot g_C +M_C\leq 2$,   then
$$\int_C u^\ast\omega >0.$$
\end{itemize}
Two stable maps $\alpha=(S,j,M,D,u)$ and $\alpha'=(S',j',M',D',u')$ are called
 {\bf  equivalent}  if there exists an isomorphism
 $\varphi:(S,j,M,D)\rightarrow (S',j',M',D') $ between the connected nodal Riemann surfaces  satisfying
 $$u'\circ\varphi=u.$$
 An equivalence class  $[\alpha]=[(S, j, M, D, u)]$ is  called a {\bf stable curve of class
$(m,\delta)$.}
\end{defn}
The following space $Z$ will be equipped with
a  polyfold structure.
\begin{defn}\label{defspace}
Fix a $\delta_0\in (0,2\pi)$. The collection of all equivalence
classes $[\alpha]=[(S, j, M, D, u)]$ of class $(3,\delta_0)$ is called the
space of stable curves into $Q$ of class $(3,\delta_0)$,  and is denoted
by $Z$.
\end{defn}
In \cite{HWZ4} the following  result is proved for the set $Z$.
\begin{thm}\label{th-top} With $Z$ as defined above we have:
\begin{itemize}
\item[(1)] The space $Z$ has a natural second countable
paracompact topology.
\item[(2)] Given a strictly increasing sequence $(\delta_m)$,
starting at the previously chosen $\delta_0$ and staying below
$2\pi$, and given the special gluing profile $\varphi:(0, 1]\to (0, \infty )$ defined by $\varphi(r)=e^{\frac{1}{r}}-e$, the
space $Z$ has a natural polyfold structure where the $m$-th level
consists of equivalence classes of multiplets   $(S,j,M,D,u)$ in which $u$
is of class $(m+3,\delta_m)$.
\end{itemize}
\end{thm}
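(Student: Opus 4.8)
The plan is to construct, around each point of $Z$, an explicit M-polyfold chart in the sense of \cite{HWZ2}, assemble these charts together with the finite automorphism groups of stable curves into an ep-groupoid $Q$ and a homeomorphism $\beta\colon\abs{Q}\to Z$, and then read off the level structure from the chart construction. First I would fix a stable curve $[\alpha_0]=[(S_0,j_0,M_0,D_0,u_0)]$ and a representative $\alpha_0$. Since the components of $S_0$ on which $u_0$ is non-constant need not be stable as pointed domains, I add a finite set of auxiliary marked points so that the stabilized pointed nodal surface is stable in the Deligne--Mumford sense; this is possible precisely because of condition (3) in the definition of a stable map. On the normalization $\widehat S_0$, equipped with cylindrical ends at the preimages of the nodal points via holomorphic polar coordinates, one then has the standard finite-dimensional family of complex structures $j(v)$ for $v$ near $0$ coming from a Teichm\"uller slice of the stabilized domain, together with gluing parameters $a=(a_x)_{x\in\abs{D_0}}$ ranging over small disks. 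Using the special gluing profile $\varphi(r)=e^{1/r}-e$ I convert each nonzero $a_x$ into a neck length $R_x=\varphi(\abs{a_x})$ and a gluing angle, producing for every $(v,a)$ a (partially) glued nodal surface $S_{(v,a)}$ carrying the complex structure $j(v)$.

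For the map component I choose charts of $Q$ near the values of $u_0$ and use a fixed $\ssc$-smooth exponential map on $Q$ to identify continuous maps near $u_0$ with sections $\xi$ of $u_0^\ast TQ$ belonging to the Hilbert space of $H^{m+3}$-sections over $\widehat S_0$ whose derivatives, weighted by $e^{\delta_m s}$, are square-integrable on the cylindrical ends --- that is, with maps of class $(m+3,\delta_m)$. The heart of the matter is the passage from sections over the fixed domain $\widehat S_0$ to maps over the glued domains $S_{(v,a)}$. Following the M-polyfold formalism, this is encoded by a splicing: a family of projections $\pi_{(v,a)}$ on the section space, built from cutoff functions adapted to the neck lengths $R_x$, whose images are the spaces of maps living on $S_{(v,a)}$. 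The local model for $Z$ at $[\alpha_0]$ is then the associated splicing core, i.e.\ the set of triples $(v,a,\xi)$ with $\xi\in\operatorname{im}\pi_{(v,a)}$, with its natural $\ssc$-structure, quotiented by the finite group $\Aut(\alpha_0)$ acting by permuting the data; one checks this quotient is homeomorphic to a neighbourhood of $[\alpha_0]$ in $Z$, independently of the representative, of the stabilizing marked points, of the charts on $Q$, and of the cutoff functions.

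The main obstacle, and the part requiring the bulk of the work in \cite{HWZ4}, is to verify that the transition maps between two such charts are $\ssc$-smooth in the generalized sense of \cite{HWZ2}, and correspondingly that the changes of coordinates on the total spaces carry splicing cores to splicing cores, so that the charts genuinely assemble into an M-polyfold and the $\Aut(\alpha_0)$-actions yield an ep-groupoid. The delicate points are: (i) the gluing and anti-gluing maps, together with the reparametrizations needed to match two different gluing constructions, are $\ssc$-smooth --- this is exactly where the profile $\varphi(r)=e^{1/r}-e$ is essential, since polynomial profiles fail to make these maps differentiable in the sc-sense; (ii) the neck-stretching estimates controlling the behaviour of the weighted $H^{m+3,\delta_m}$-norms under gluing are uniform in the gluing parameters; and (iii) $\Aut(\alpha_0)$ is finite --- this is the stability hypothesis --- so the groupoid is \'etale, and it is proper by Gromov compactness for stable maps.

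Granting these, second countability of $Z$ follows because each local model is a separable metrizable space and there are only countably many combinatorial types of domain (bounded genus, finitely many nodes and marked points, and energy on each connected component controlled by condition (3)); a Lindel\"of argument then extracts a countable subcover of the charts, and paracompactness follows since $Z$ is metrizable in the resulting topology. Finally, since the $m$-th level of each local model is, by construction, the set of triples $(v,a,\xi)$ with $\xi$ of weighted regularity $(m+3,\delta_m)$, the induced filtration on $Z$ is precisely the one asserted in part (2), and compatibility with the sc-structure and with the transition maps has already been checked in establishing the M-polyfold charts. This completes the plan; the detailed estimates and the full verification of $\ssc$-smoothness are carried out in \cite{HWZ4}.
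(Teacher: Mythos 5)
The paper does not prove this theorem: it is explicitly imported from \cite{HWZ4}. Immediately before the statement the text reads ``In \cite{HWZ4} the following result is proved for the set $Z$,'' and immediately after it says ``The construction of the natural polyfold structure is carried out in \cite{HWZ4}.'' There is therefore no in-paper argument to compare against — in this paper the theorem functions as a black box illustrating the abstract machinery of the later sections, and your task of reconstructing its proof is really a task of reconstructing \cite{HWZ4}.

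As an outline of that external construction, your sketch is reasonable and hits the right landmarks: auxiliary marked points to stabilize (with condition (3) of stability supplying the energy bound that makes this geometrically meaningful), a Teichm\"uller slice plus gluing parameters converted to neck lengths via the exponential gluing profile, exponential charts into weighted $H^{m+3,\delta_m}$-sections over the normalization, splicing projections built from neck cutoffs whose images track the glued domains, and the assembly of the local models and automorphism groups into an ep-groupoid. You also correctly isolate the genuinely hard points — sc-smoothness of gluing/anti-gluing and of chart transitions (for which the exponential profile is essential), uniformity of the neck estimates, \'etaleness from finiteness of automorphism groups, properness from Gromov compactness — as what \cite{HWZ4} must actually establish. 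Two small corrections: the object M-polyfold of the ep-groupoid is the disjoint union of the \emph{unquotiented} local splicing cores, with the $\Aut(\alpha_0)$-actions encoded in the morphism set, so the chart itself is not ``quotiented by $\Aut(\alpha_0)$'' (only the corresponding patch of the orbit space $|Q|$ is the quotient); and you use the letter $Q$ simultaneously for the symplectic target manifold and for the ep-groupoid, which the paper keeps distinct.
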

Formulated in the  technical terms introduced below, given the sequence $(\delta_i)$ and the gluing profile $\varphi$, there exists a natural collection of pairs $(X,\beta)$ in which  $X$ is an ep-groupoid and $\beta:\abs{X}\rightarrow
Z$ is a homeomorphism from the orbit space $\abs{X}$ of the ep-groupoid onto the topological space $Z$, so that for any two pairs $(X,\beta)$ and
$(X',\beta')$ there exists a third pair $(X'',\beta'')$ and
equivalences $F:(X'',\beta'')\rightarrow (X,\beta)$ and
$F':(X'',\beta'')\rightarrow (X',\beta')$ satisfying
$$
\beta'' =\beta\circ |F|\ \hbox{and}\ \ \beta'' = \beta'\circ |F'|.
$$
These pairs and their sc-smooth compatibility define the smooth
structure on the topological space $Z$. The construction of the natural polyfold structure
is  carried out \cite{HWZ4}.

 There are natural maps which play an
important role in the  GW-theory. Let us first note that $Z$ has many
connected components. If $g,k\geq 0$ are integers we denote by
$Z_{g,k}$ the subset of $Z$ consisting of of all classes $[\alpha]=[(S, j, M, D, u)]$
of arithmetic genus $g$ and  $k$ marked points. This subset is open
in $Z$ and therefore is equipped with the  induced polyfold structure. If $A\in
H_2(Q,{\mathbb Z})$ is a homology class, we can also consider the open subset $Z_{A,g,k}$  consisting
of  elements in $Z_{g,k}$ for which  the map $u$ represents $A$.
Now we consider for the  fixed pair $(g,k)$ the space $Z_{g,k}$. For every $i=1,\ldots ,k$, we
define  the evaluation map at the $i$-th marked point by
$$
ev_i:Z_{g,k}\rightarrow Q:[\alpha]=[S,j,M,D,u]\rightarrow u(m_i).
$$
If $2g+k\geq 3$, the forgetful map associates with
$[\alpha]$ the underlying stable part of the domain $S$. It is
obtained as follows.

We take a representative $(S,j,M,D,u)$ of our
class $[\alpha]$ and forget  first the map $u$.  Now we  take a component $C$
satisfying $2g(C)+\sharp (C\cap (M\cup \abs{D}))<3$. Since,  by definition,  our  nodal surface $(S, j, M, D)$  is connected, the
following cases arise. Firstly,  $C$ is a sphere with one node. In this case we  remove
the sphere and the node. Secondly,  $C$ is a sphere with two  nodal pairs $\{x,y\}$ and $\{x',y'\}$, where $x$ and
$x'$ lie on the sphere. In this case we  remove the sphere but shortcut the two
nodes by removing the two nodal pairs but adding the nodal
pair $\{y,y'\}$. Thirdly,  $C$ is a sphere with one node and one
marked point. In that case we remove the sphere but replace the
corresponding node on the other component by the marked point.
Continuing this way we end up with a stable noded marked Riemann
surface whose biholomorphic type does not depend on the order we
weeded out the unstable components. The procedure leads to the forgetful map
$$
\sigma:Z_{g,k}\rightarrow \overline{\mathcal
M}_{g,k}:[S,j,M,D,u]\rightarrow [(S,j,M,D)_{\text{stab}}]
$$
where  $\overline{\mathcal M}_{g,k}$ is the standard Deligne-Mumford
compactification  of the moduli space of marked stable Riemann surfaces.
\begin{figure}[htbp]
\mbox{}\\[2ex]
\centerline{\relabelbox \epsfxsize 3.8truein \epsfbox{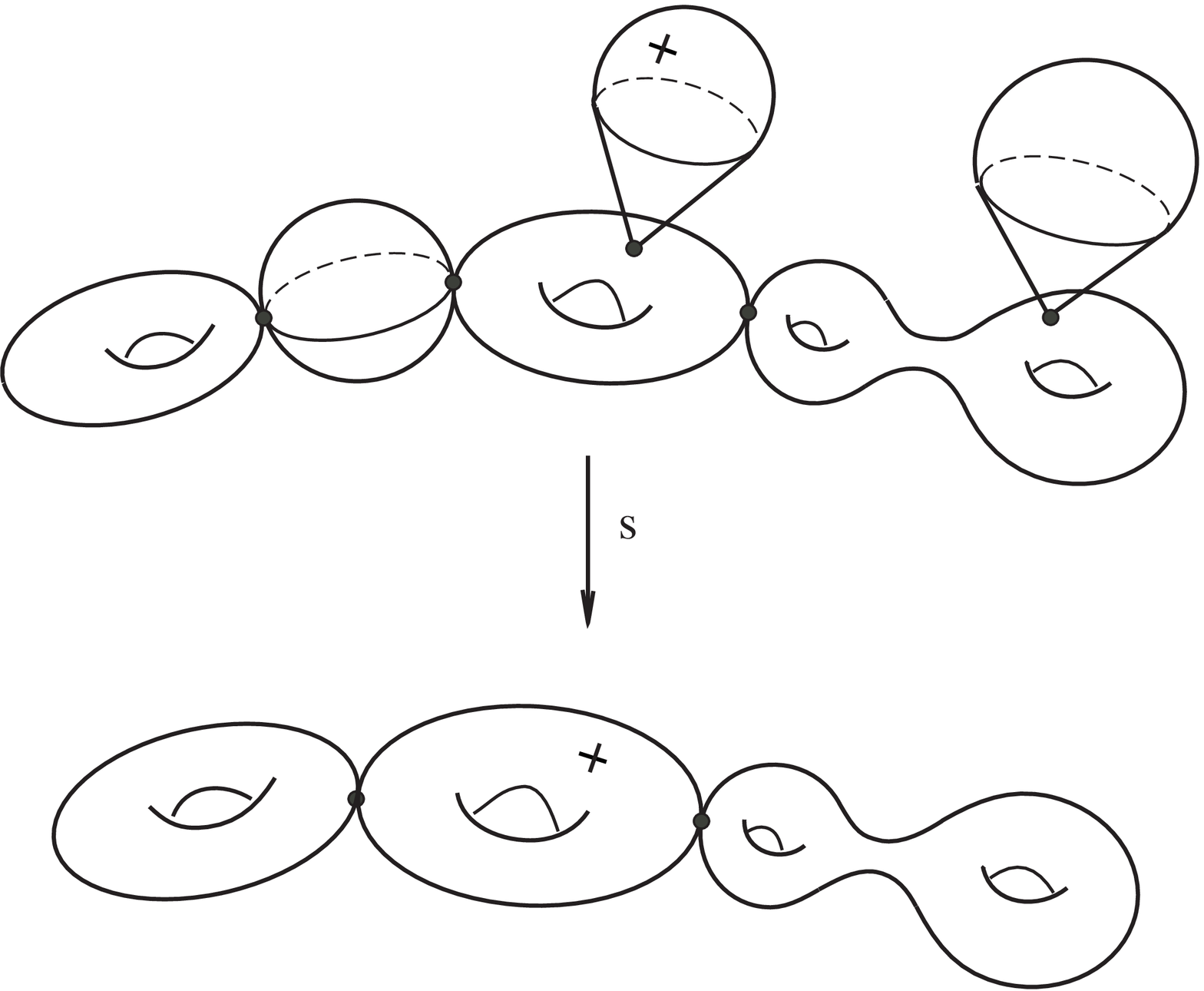}
\relabel {s}{$\sigma$}
\endrelabelbox}
\caption{The forgetful map $\sigma:Z_{g,k}\to \ov{M}_{g,k}$}
\label{Fig3.5.3}
\end{figure}
\begin{thm}
The evaluation maps $ev_i:Z_{g,k}\rightarrow Q$ and  the forgetful map $\sigma:Z_{g,k}\rightarrow
\overline{\mathcal M}_{g,k}$ are sc-smooth.
\end{thm}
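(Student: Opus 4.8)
The plan is to reduce the claim to a computation in local models. Since $\ssc$-smoothness of a map between polyfolds is local and independent of the chosen defining pairs, and since $Q$ and $\overline{\mathcal M}_{g,k}$ carry their standard finite-dimensional structures (as an M-polyfold, resp. as an ep-groupoid with trivially filtered M-polyfold objects), it suffices to fix $[\alpha_0]=[(S_0,j_0,M_0,D_0,u_0)]\in Z_{g,k}$, pick a defining pair $(X,\beta)$ of the polyfold structure of Theorem~\ref{th-top} with $[\alpha_0]$ in the image of $\beta$, and show that the induced maps $X\to Q$ and $X\to\overline{\mathcal M}_{g,k}$ are $\ssc$-smooth functors. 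The first step is to recall from \cite{HWZ4} the explicit local model for $X$ near the object over $[\alpha_0]$: up to the (finite) isotropy group, a neighborhood is coordinatized by a finite-dimensional parameter $b$ recording deformations of the stabilized domain and the positions of the special points, a tuple of gluing parameters $\mathbf a=(\mathbf a_1,\dots,\mathbf a_N)$ in small disks in $\mathbb C$ — one per nodal pair, with the exponential gluing profile $\varphi(r)=e^{1/r}-e$ — and an infinite-dimensional variable $\xi$ in an $\ssc$-Banach space of exponentially weighted Sobolev sections encoding the map near $u_0$, so that the underlying nodal surface, its marked points, and the glued map $u=u_{b,\mathbf a,\xi}$ all depend on $(b,\mathbf a,\xi)$ through the gluing construction. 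I would then isolate the two features of this model that carry all the analytic content: (i) near each special point the glued map $u=u_{b,\mathbf a,\xi}$ is, in a fixed chart of $Q$, of the form $v_b+\xi$ with $b\mapsto v_b$ a finite-dimensional smooth family of local maps, uniformly as the $\mathbf a_\nu$ are switched on; and (ii) the domain part of the chart maps onto plumbing coordinates of $\overline{\mathcal M}_{g,k}$ via the change of gluing profile recorded below.

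For the evaluation maps, fix $i$ and a chart $\psi:U(u_0(m_i^0))\to\R^{2n}$ on $Q$. Because marked points are disjoint from the nodal set, in the local model $m_i=m_i(b)$ depends only on $b$ and, for every $(b,\mathbf a)$, lies in a region of the glued surface on which feature (i) applies; hence, near $[\alpha_0]$,
$$
\psi\circ ev_i\bigl([\beta(b,\mathbf a,\xi)]\bigr)=v_b\bigl(m_i(b)\bigr)+\xi\bigl(m_i(b)\bigr).
$$
The term $b\mapsto v_b(m_i(b))$ is smooth in the ordinary sense and involves no $\ssc$-directions, so it remains to see that $(b,\xi)\mapsto\xi(m_i(b))$ — point evaluation of an $\ssc$-section together with a finite-dimensional smooth dependence of the evaluation point — is $\ssc$-smooth. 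This is exactly what the point-evaluation lemmas of the $\ssc$-calculus of \cite{HWZ2} provide: on the $m$-th level the sections have Sobolev class $m+3$ on a real two-dimensional domain, hence lie in $C^{m+1}$, and the attendant loss of derivatives is compensated by the one-level shift built into $\ssc$-differentiability. Post-composing with $\psi^{-1}$, which is finite-dimensional and smooth (hence $\ssc$-smooth), and invoking the chain rule for $\ssc$-smooth maps, one obtains that $ev_i$ is $\ssc$-smooth near $[\alpha_0]$, and hence everywhere.

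For the forgetful map (where $2g+k\geq 3$), only the domain datum matters: $\sigma$ discards $u$, hence the variable $\xi$, and then stabilizes the domain by the weeding procedure described above. Choosing a plumbing chart $(s,t_1,\dots,t_N)$ for $\overline{\mathcal M}_{g,k}$ around $[(S_0,j_0,M_0,D_0)_{\mathrm{stab}}]$ — with $s$ parametrizing deformations of the stabilized domain that fix its nodes and $t_\nu$ the holomorphic plumbing parameter at the $\nu$-th surviving node — and using feature (ii), one writes $\sigma\circ\beta$ as the projection $(b,\mathbf a,\xi)\mapsto b$ followed by
$$
b\longmapsto\bigl(s(b),(t_\nu(b,\mathbf a))_\nu\bigr),\qquad t_\nu(b,\mathbf a)=c_\nu(b)\,e^{-\varphi(|\mathbf a_{\nu'}|)}\,\frac{\mathbf a_{\nu'}}{|\mathbf a_{\nu'}|}
$$
(set equal to $0$ when $\mathbf a_{\nu'}=0$), where $\nu\mapsto\nu'$ and the rescaling constants $c_\nu(b)$ encode which of the HWZ-gluing parameters survive the stabilization and how the removed $\cpo$-components are shortcut. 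Here $s(b)$ and $c_\nu(b)$ are honestly smooth in $b$, the map does not involve $\xi$, and — because $\varphi$ is the exponential gluing profile — $r\mapsto e^{-\varphi(r)}$ together with all of its derivatives vanishes at $r=0$, so $\mathbf a_{\nu'}\mapsto t_\nu$ is $C^\infty$ and in particular $\ssc$-smooth. Composing with the smooth finite-dimensional plumbing chart of $\overline{\mathcal M}_{g,k}$ then yields $\ssc$-smoothness of $\sigma$.

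I expect the real obstacle to be not the final $\ssc$-calculus — which, as above, collapses to the point-evaluation lemmas and the chain rule of \cite{HWZ2} — but the work needed to establish features (i) and (ii): making precise, uniformly over the gluing strata, that a chart of $Z_{g,k}$ near a special point genuinely splits the map into a smooth finite-dimensional term plus the point-value of an $\ssc$-section, and that the domain coordinates land in plumbing coordinates through the flat reparametrization $\mathbf a_\nu\mapsto t_\nu$. This is precisely the part that rests on the detailed construction of the polyfold structure on $Z_{g,k}$, and it is carried out in \cite{HWZ4}.
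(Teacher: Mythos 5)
The paper itself does not prove this theorem: it appears in the expository Section 1.3 (``Sketch of the Application to Gromov-Witten''), which opens with ``referring to \cite{HWZ4} for the details and the proofs,'' and the theorem is stated as a fact whose verification is part of the construction of the polyfold structure carried out there. So there is no internal argument to compare yours against. That said, your sketch is consistent with the referenced framework and correctly isolates the two structural inputs one actually needs: for $ev_i$, that in the local model of \cite{HWZ4} the marked point stays in a region away from the gluing necks, so that the map reduces to point evaluation of an $\ssc$-section (with the one-level shift in Sobolev regularity supplying the needed $C^{m+1}$-control on level $m$), combined with smooth finite-dimensional dependence and the $\ssc$-chain rule; and for $\sigma$, that the transition from HWZ gluing parameters to plumbing coordinates is governed by the flatness at $r=0$ of $r\mapsto e^{-\varphi(r)}$ for the exponential profile $\varphi(r)=e^{1/r}-e$, which makes $\mathbf a_\nu\mapsto t_\nu$ genuinely $C^\infty$. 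You also correctly flag that the substantive content --- establishing features (i) and (ii) uniformly over the gluing strata --- lives in \cite{HWZ4} rather than in the $\ssc$-calculus itself. This matches the role the theorem plays in the present paper.

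One small point worth stating explicitly in a polished version: $\ssc$-smoothness of a map between polyfolds, as defined in Section 3, is the datum of an equivalence class of pairs $(a,\mathfrak a)$, so the claim really is that the pointwise maps $ev_i$ and $\sigma$ lift to generalized maps of ep-groupoids. Your reduction to checking that the induced maps on the object M-polyfold of a defining ep-groupoid are $\ssc$-smooth functors is the right move, but you should note (as the paper's framework requires) that the candidate lift is automatically equivariant for the morphisms: both $ev_i$ and $\sigma$ are intrinsically defined on equivalence classes $[\alpha]$, so the maps on objects are invariant under the finite isotropy action and descend to functors into $Q$ and $\overline{\mathcal M}_{g,k}$ regarded as ep-groupoids with only identity morphisms (resp. the Deligne-Mumford stack groupoid). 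Without this remark the local computation proves $\ssc$-smoothness of a map out of an M-polyfold, not of a polyfold map in the sense of Definition 3.9.
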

As a consequence we can pull-back the differential forms on $Q$ and  on $
\overline{\mathcal M}_{g,k}$ to obtain sc-differential forms on the  polyfold
$Z_{g,k}$ which can be, suitably wedged together, integrated over
the smooth moduli spaces obtained from transversal Fredholm sections
of  strong bundles over $Z$, using the branched integration
theory from  \cite{HWZ7}.

Next we introduce a strong polyfold bundle $W$ over $Z$. The objects
of $W$ are defined as follows. We consider multiplets
$$\what{\alpha}=(S,j,M,D,u,\xi)$$
in which  the underlying stable map
$\alpha=(S,j,M,D,u)$ is a representative of an element in $Z$.
Moreover, $\xi$ is a continuous section along $u$, where for $z\in
S$, the mapping
 $$
 \xi(z):T_zS\rightarrow T_{u(z)}Q
 $$
 is complex anti-linear. The domain $S$ is equipped with  the complex structure $j$ and
 the target $Q$ is equipped with the almost complex structure  $J$. Moreover, on $S\setminus |D|$, the map
 $z\mapsto \xi(z)$ is of class $H^2_{loc}$. At the points in
 $\abs{D}$ we require that $\xi$ is of class $(2,\delta_0)$. This means,
 taking holomorphic polar coordinates $\sigma$ around a point $z\in \abs{D}$ and
 a chart around its image $u(z)\in Q$,  that  the map
 $$
 (s,t)\mapsto  pr_2\circ
 T\psi(u(\sigma(s,t)))\xi(\sigma(s,t))(\frac{\partial\sigma}{\partial
 s}(s,t))
 $$
and all its partial derivatives up to order $2$, if  weighted
 by $e^{\delta_0 \abs{s}}$,  belong to the space $L^2([s_0,\infty)\times S^1,{\mathbb
 R}^{2n})$ for $s_0$ large enough. The definition does not depend on the
 choices involved.

 We call two such tuples $(S, j, M, D, u, \xi)$ and $(S', j', M', D', u',\xi')$
 {\bf equivalent}  if  there exists an isomorphism
 $$
 \varphi:(S,j,M,D)\rightarrow (S',j',M',D')
 $$
 between the  nodal Riemann surfaces satisfying
$$u'\circ\phi=u\quad \text{and}\quad \xi'\circ\phi=\xi.$$
We denote an
{\bf equivalence class}  by $[\what{\alpha}]=[S,j, M, D,u,\xi]$. The collection of all such
equivalence classes constitutes  $W$.

We have defined what it means that an
element $\alpha$ represents an element on level $m$. Let us observe
that if $u$ has regularity $(m+3,\delta_m)$ it makes sense to talk
about elements $\xi$ along $u$ of regularity $(k+2,\delta_k)$ for
$0\leq k\leq m+1$. In the case $k=m+1$ the fiber
regularity is $(m+3,\delta_{m+1})$ and the underlying base
regularity is $(m+3,\delta_m)$.

The requirement of an
exponential decay in the fiber towards a nodal point  which is faster than the
exponential decay of the underlying base curve is well-defined and
independent of the charts picked to define it. Our conventions for defining the levels are governed by the overall
convention that sections should be horizontal in the sense that they
preserve the level structure, i.e. an element on level $m$ is mapped
by the section to an element on bi-level $(m,m)$. Hence if the
section comes from a first order differential operator we need
the  convention  we have just used. Therefore it makes
sense to say that an element
$$
\hat{\alpha}=(S,j,M,D,u,\xi)
$$
has (bi)-regularity $((m+3,\delta_m),(k+2,\delta_k))$ as long as $k$
satisfies the above restriction $0\leq k\leq m+1$. The equivalence class $[\hat{\alpha}]\in
W$ is said to be on level $(m,k)$ provided the pair $(u,\xi)$ has the above regularity.
One of the main consequences of \cite{HWZ4} is the following
theorem.
\begin{thm}\label{natural} With $Z$ as  just defined the following holds:
\begin{itemize}
\item[(1)] The set $W$ has a natural second countable paracompact
topology so that the natural projection  map
$$
\tau:W\rightarrow Z, \quad [\what{\alpha}]\mapsto  [\alpha]
$$
(forgetting the $\xi$-part) is continuous.
\item[(2)]  If $Z$ is  the previously introduced space with its
polyfold structure, then  the bundle $\tau:W\rightarrow Z$ has
the structure of a strong polyfold bundle  in a natural way.
\end{itemize}
\end{thm}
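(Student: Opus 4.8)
The plan is to build $W$ as a strong polyfold bundle directly over the ep-groupoid charts $(X,\beta)$ underlying the polyfold structure on $Z$ supplied by Theorem \ref{th-top}(2). Fix a point $z_0=[\alpha]\in Z$ with representative $\alpha=(S,j,M,D,u)$ and take a local uniformizer in $X$ around $z_0$: an M-polyfold $O$ whose points are, up to the finite automorphism group $G$ of $\alpha$, nearby stable maps parametrized by a variation of the complex structure $j$, a variation of $u$ in suitable weighted Sobolev neighborhoods, and gluing parameters at the nodes using the gluing profile $\varphi(r)=e^{1/r}-e$. Over $O$ I would define the bundle chart for $W$ by attaching to each nearby stable map the space of complex anti-linear sections $\xi$ along it of fiber regularity $(k+2,\delta_k)$, $0\le k\le m+1$: away from the nodal region this is literally the weighted $H^{k+2}$-space of sections of $\Lambda^{0,1}T^\ast S\otimes_{\mathbb C}u^\ast TQ$, and near a node one transports the section data through the same gluing/anti-gluing construction that builds $O$. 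Pulling this back along the local uniformizer and quotienting by the induced $G$-action on sections, $\xi\mapsto\xi\circ T\varphi$, gives candidate strong bundle charts, and the forgetful projection $\tau$ is the bundle projection in each of them.

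Next I would verify compatibility. Given two such charts coming from different representatives, the induced transition map must be a strong bundle map in the sense of \cite{HWZ2}, i.e. sc-smooth, preserving the bi-level filtration $W_{n,k}$, $0\le k\le n+1$, with the fiber-wise inclusions $W_{m,m+1}\hookrightarrow W_{m,m}$ compact. Fiberwise the linear parts of these transitions are the natural identifications of weighted Sobolev section spaces under changes of chart on $Q$, changes of holomorphic polar coordinates near special points, and changes of gluing parameter; their sc-smoothness reduces to the sc-smoothness of the underlying transition maps on $Z$ (already available from Theorem \ref{th-top}) together with the standard facts that the relevant composition and multiplication operators, and the gluing and anti-gluing maps, are sc-smooth in this exponentially weighted setting. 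The compactness of $W_{m,m+1}\hookrightarrow W_{m,m}$ follows from compactness of the Sobolev embedding $H^{m+3}\hookrightarrow H^{m+2}$ on the precompact pieces away from the nodes, together with the gain coming from $\delta_{k+1}>\delta_k$ in the exponential weights near the nodes. Since the automorphism actions lift compatibly with the equivalence relation defining $W$, $\tau:W\to Z$ becomes a functor of ep-groupoids which is a local strong bundle, and Morita-invariance of these data upgrades it to a strong polyfold bundle, giving (2). Part (1) is then obtained exactly as for $Z$ itself: the topology on $W$ is the one making all bundle charts homeomorphisms onto their images, second countability and paracompactness (via metrizability of the separable local models) are inherited from the charts and from the corresponding properties of $Z$, and continuity of $\tau$ is built into the charts.

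The main obstacle is the local gluing analysis for the sections. One must show that the section spaces over glued curves assemble into a strong M-polyfold bundle, i.e. that the total gluing map sending a gluing parameter $a$ and a pair of sections $(\xi_+,\xi_-)$ on the two branches to a section $\xi_+\oplus_a\xi_-$ on the glued curve, together with its complementary anti-gluing map, defines a family of projections depending sc-smoothly on $a$ in the sense of a strong bundle splicing of \cite{HWZ2}. This is precisely the point at which the exponential weights $\delta_m$ and the logarithmic gluing profile $\varphi$ are forced: a polynomial profile would destroy sc-smoothness of the splicing, and equal weights on base and fiber would destroy the compactness of $W_{m,m+1}\hookrightarrow W_{m,m}$. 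Everything else — coordinate changes on $Q$, reparametrizations of $S$, and the finite-group quotients — is routine once the splicing near the nodes is established, and indeed the bulk of the work in \cite{HWZ4} is exactly this local analysis; the present theorem is its packaging into the bundle statement.
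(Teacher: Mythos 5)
The paper does not prove Theorem~\ref{natural} at all: it is stated as ``one of the main consequences of \cite{HWZ4}'' (the Gromov--Witten paper in preparation), so there is no internal proof to compare against. Your proposal should therefore be read as a strategy sketch, and as such it is consistent with the paper's own informal discussion surrounding the statement: the bi-level $(m,k)$ convention with $0\le k\le m+1$, the insistence that the fiber decay $\delta_{k}$ exceed the base decay $\delta_m$ near nodes so that $W_{m,m+1}\hookrightarrow W_{m,m}$ is compact, and the role of the gluing profile $\varphi(r)=e^{1/r}-e$ in making the splicing sc-smooth are all points the paper itself flags as the content of the construction. Your identification of the strong bundle splicing at the nodes as the crux, with the gluing/anti-gluing map providing the sc-smooth family of projections required by Definition~4.2 of \cite{HWZ2}, is the right place to locate the difficulty.

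Two cautions. First, the sketch of part~(1) is too quick: the topology on $W$ is not defined by declaring bundle charts to be homeomorphisms onto their images in an ad hoc way, but is transported via the homeomorphism $\Gamma:|E|\to W$ from the orbit-space topology of the strong bundle $E$ over the ep-groupoid, and second countability/paracompactness are then inherited from the properness/\'etale structure and separability of the local splicing cores. Your phrase ``$\xi\mapsto\xi\circ T\varphi$'' for the $G$-action is also slightly off (one pulls back along $\varphi$, i.e.\ $\xi\mapsto T\varphi^{-1}\circ\xi\circ T\varphi$ in the relevant sense), though this is immaterial at this level of detail. Second, and more substantively, the entire weight of the theorem rests on the local gluing analysis you correctly isolate in your last paragraph, and that analysis is precisely what \cite{HWZ4} supplies; nothing in the present paper can be used to discharge it, so the proposal is a roadmap rather than a proof and should be presented as such.
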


Finally,  we can introduce , for a compatible smooth almost complex
structure $J$ on  the symplectic manifold $(Q,\omega)$,  the section $\bar{\partial}_J$
of the strong polyfold bundle $W\rightarrow Z$ defined by
$$
\bar{\partial}_J([S,j,M,D,u])=[S,j,M,D,u,\bar{\partial}_{J,j}(u)]
$$
where $\bar{\partial }_{J, j}$ is the Cauchy-Riemann operator defined by
$$\bar{\partial }_{J, j} (u)=\dfrac{1}{2} ( Tu+J\circ Tu\circ j ).$$
Let us call a Fredholm section of a strong polyfold bundle
component-proper if  the restriction to every connected
component of the domain is proper. Then  the following
crucial  result which is a special  case of results proved in \cite{HWZ8}
and \cite{HWZ9} holds true.
\begin{thm}\label{compproper}
The section $\bar{\partial}_{J}$ is an sc-smooth component-proper
Fredholm section of the strong polyfold bundle $\tau:W\rightarrow
Z$ having a natural orientation.
\end{thm}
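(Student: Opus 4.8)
The statement packages four claims: that $\bar\partial_J$ is an sc-smooth section of $\tau:W\to Z$, that it is a polyfold Fredholm section, that it is component-proper, and that it carries a natural orientation. The plan is to reduce each of these to the local analysis of the Cauchy--Riemann operator carried out in \cite{HWZ4}, together with the general machinery of \cite{HWZ2,HWZ3} and the specific results of \cite{HWZ8,HWZ9}; all four are special cases of theorems established there, and the work of the present proof is to identify the reductions.

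For sc-smoothness one works in the charts supplied by Theorem \ref{natural}: near any point $[\alpha]\in Z$ the bundle $\tau:W\to Z$ is, in the overhead, modelled on a strong bundle splicing over an M-polyfold chart built from the gluing profile $\varphi(r)=e^{1/r}-e$ and the weight sequence $(\delta_m)$. In such a chart $\bar\partial_J$ is obtained from the first order operator $u\mapsto\frac{1}{2}(Tu+J\circ Tu\circ j)$ by pre- and post-composing with smooth charts of $Q$, transporting the domain data through the exponential gluing map, and subtracting the pre-glued approximate curve. First I would verify that composition with the charts of $Q$, and the associated nonlinear substitution operator, are sc-smooth on the relevant weighted Sobolev scales, using the chain rule for sc-maps from \cite{HWZ2}. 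The essential point --- and the reason for the particular profile $\varphi$ --- is then that the family of pre-glued curves depends sc-smoothly on the base point and on the gluing parameters; this is the gluing analysis of \cite{HWZ4}, and it is here that the freedom to take the fibre decay $\delta_k$ faster than the base decay $\delta_m$ is exploited to make the \emph{filled} section sc-smooth across nodal configurations. Passing to the filled section $\bar f$ in the sense of \cite{HWZ2} then yields an sc-smooth section whose zero set near a stable $J$-holomorphic map agrees with that of $\bar\partial_J$.

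For the Fredholm property one inspects the linearization of $\bar f$ at a zero. On a fixed nodal domain the linearized Cauchy--Riemann operator is the usual $\bar\partial$-type operator on weighted Sobolev sections along $u$, which is Fredholm on each component by elliptic theory and Riemann--Roch, with the nodal matching conditions and the automorphism and plumbing directions contributing only finitely many dimensions; the filling construction absorbs the gluing directions, so the total linearization is sc-Fredholm in the sense of \cite{HWZ3}, with index the virtual dimension of the relevant moduli space --- on the component $Z_{A,g,k}$ equal to $2\langle c_1(TQ,J),A\rangle+(2n-6)(1-g)+2k$. Elliptic regularity for the Cauchy--Riemann operator supplies the regularizing property of the germ, so $\bar\partial_J$ is a polyfold Fredholm section. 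Component-properness is Gromov compactness in disguise: a sequence in $S(\bar\partial_J)\cap Z_{A,g,k}$ is a sequence of stable $J$-holomorphic maps of fixed energy $\langle[\omega],A\rangle$, and the point to check is that the limiting stable map furnished by Gromov's theorem is the limit in the polyfold topology of Theorem \ref{th-top}, i.e. that bubbling and node degeneration are exactly what the gluing charts with profile $\varphi$ record; this is the compactness statement of \cite{HWZ9}. Finally, the natural orientation comes from the determinant bundle construction of \cite{HWZ8} recalled in the appendix: one trivialises $\det(D\bar f)$ coherently by deforming the linearized Cauchy--Riemann operator, through sc-Fredholm operators, to a complex-linear one, whose kernel and cokernel are complex vector spaces and hence canonically oriented, and one verifies that this choice is compatible with the gluing of the real node parameters.

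The step I expect to be the main obstacle is the sc-smoothness at nodal curves. That pre-gluing with the profile $\varphi(r)=e^{1/r}-e$ defines an sc-smooth section is not formal: it requires the full splicing-based calculus of \cite{HWZ2} together with the delicate gluing and exponential-decay estimates of \cite{HWZ4}, and it is precisely here that the analytic phenomenon informally called breaking of trajectories is brought under control.
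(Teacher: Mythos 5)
The paper itself does not prove Theorem~\ref{compproper}: immediately before its statement the text declares it to be ``a special case of results proved in \cite{HWZ8} and \cite{HWZ9}'', and no argument is given in the present paper. There is therefore no ``paper's own proof'' to measure your proposal against; what can be assessed is whether your strategy correctly identifies the ingredients those references supply and how they fit together.

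On that count your outline is sound and matches the paper's framing. You correctly distribute the burden: the sc-smoothness of $\bar\partial_J$ across nodal configurations rests on the splicing charts, the gluing profile $\varphi(r)=e^{1/r}-e$ and the exponential-decay analysis from \cite{HWZ4} (where the polyfold structure on $Z$ and the strong bundle structure on $W$ are actually constructed, cf.\ Theorems~\ref{th-top} and~\ref{natural}), together with the filling construction of \cite{HWZ2}; the Fredholm property and its index computation reduce to the linearized Cauchy--Riemann analysis with nodal matching and Riemann--Roch; component-properness is Gromov compactness reinterpreted through the polyfold topology; and the natural orientation is the coherent determinant-line trivialization via deformation to a complex-linear operator from \cite{HWZ8}, the nontrivial point there being the continuity of determinants along sc-smooth paths despite discontinuity of the linearizations as operators (the issue flagged in Appendix~\ref{oranddet}). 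Your index formula on $Z_{A,g,k}$ is also correct. One small bookkeeping remark: the present paper cites \cite{HWZ8} and \cite{HWZ9} as the home of the full statement, while you lean on \cite{HWZ4} for the sc-smoothness and on elliptic theory directly for the Fredholm property; \cite{HWZ9} is in fact titled to cover precisely the Fredholm property of the nonlinear Cauchy--Riemann operator in this set-up and is the intended reference for that step, so your attribution there should be shifted. Beyond that, your identification of the sc-smoothness at nodes as the main obstacle is exactly the assessment the paper itself makes (``the understanding of the nodes presents already all the analytical difficulties related to the phenomenon called breaking of trajectories'').
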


Applying Theorem \ref{invar} one derives  the following invariants. By
$(\mathfrak{M}, w)=(S(\bar{\partial}_J, \lambda ), \lambda_{\bar{\partial}_J})$ we abbreviate the oriented  and weighted solution set of the pair $(\bar{\partial}_J, \lambda )$, where $\lambda$ is an $\ssc^+$-multisection  and where  the solution set is given by
$(S(\bar{\partial}_J, \lambda )=\{z\in Z_{g,k}\vert \, \lambda (\bar{\partial}_Jz )>0\}$ and the weighting function $w$ is defined as
$w(z)=\lambda (\bar{\partial}_J z)$.  The set $\mathfrak{M}$ is an oriented  compact  branched suborbifold of $Z_{g,k}$ provided $\lambda$ is generic, i.e. $(\bar{\partial}_J,\lambda)$ is a transversal pair.

\begin{thm}\label{GW-thm}
Let $(Q,\omega)$ be a closed symplectic manifold. There exists for a
given  homology class $A\in H_2(Q)$ and for given  natural numbers $g,k\geq 0$ a multi-linear
map
$$
\Psi^Q_{A,g,k}: H^\ast(Q;{\mathbb R})^{\otimes k}\otimes
H_\ast(\overline{\mathcal M}_{g,k};{\mathbb R})\rightarrow {\mathbb
R}
$$
which on $H^*(Q; \R)^{\otimes k}$  is super-symmetric with respect to the grading by even and odd forms. This map
is uniquely characterized by the following formula. For a given compatible almost complex structure $J$ on $Q$  and a given
small generic perturbation by an $\ssc^+$-multisection $\lambda$ we have the presentation $$
\Psi^Q_{A,g,k}([\alpha_1],\ldots ,[\alpha_k];[\tau])=
\int_{(\mathfrak{M},w)} \hbox{ev}_1^\ast(\alpha_1)\wedge\ldots \wedge
\mbox{ev}_k^\ast(\alpha_k)\wedge \sigma^\ast(PD(\tau)).
$$
Here $PD$ denotes the Poincar\'e dual.
\end{thm}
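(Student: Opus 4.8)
The plan is to realize $\Psi^Q_{A,g,k}$ as the composition of the invariant $\Phi_{\bar\partial_J}$ of Theorem \ref{invar} with a natural transformation from $H^\ast(Q;\R)^{\otimes k}\otimes H_\ast(\overline{\mathcal M}_{g,k};\R)$ into deRham cohomology of the relevant polyfold. First I would fix $A$, $g$, $k$ and pass to the open-and-closed subset $Z_{A,g,k}\subset Z$ with its induced polyfold structure; since $Q$ is closed with $\partial Q=\emptyset$ this polyfold has empty boundary and is built on separable Hilbert spaces, and by Theorem \ref{compproper} the restriction of $\bar\partial_J$ to $Z_{A,g,k}$ is a proper oriented sc-Fredholm section (this is precisely what component-properness amounts to on this piece, using the Gromov-type compactness of \cite{HWZ8},\cite{HWZ9}). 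Hence Theorem \ref{invar} applies and produces a well-defined linear map $\Phi_{\bar\partial_J}\colon H^\ast_{dR}(Z_{A,g,k},\R)\to\R$. Next, given classes $[\alpha_1],\dots,[\alpha_k]$ and $[\tau]$, I would pick closed forms $a_i$ representing $\alpha_i$ on $Q$ and a closed form $p_\tau$ on the closed oriented orbifold $\overline{\mathcal M}_{g,k}$ representing the Poincar\'e dual $PD(\tau)$ (for $2g+k\ge 3$; when $2g+k<3$ the space $\overline{\mathcal M}_{g,k}$ is a point and $\sigma^\ast p_\tau$ is a constant), and form
$$
\omega:=ev_1^\ast a_1\wedge\cdots\wedge ev_k^\ast a_k\wedge\sigma^\ast p_\tau .
$$
Because the $ev_i$ and $\sigma$ are sc-smooth, pullback commutes with the Cartan derivative and with the wedge, so $\omega$ is a closed sc-differential form on $Z_{A,g,k}$ in the sense of \cite{HWZ7} and defines a class $[\omega]\in H^\ast_{dR}(Z_{A,g,k},\R)$. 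I would then \emph{define}
$$
\Psi^Q_{A,g,k}([\alpha_1],\dots,[\alpha_k];[\tau]):=\Phi_{\bar\partial_J}([\omega]).
$$

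Well-definedness and the stated properties are then checked one at a time. Independence of the chosen representatives $a_i,p_\tau$ is immediate: replacing $a_i$ by $a_i+db_i$ (or $p_\tau$ by $p_\tau+dq$) changes $\omega$ by an exact sc-differential form, since the wedge of a closed form with an exact form is exact and pullback of exact is exact; as $\Phi_{\bar\partial_J}$ is defined on cohomology the value does not move. Multilinearity in the $[\alpha_i]$ and in $[\tau]$ is inherited from multilinearity of the wedge, linearity of pullback and of $PD$, and linearity of $\Phi_{\bar\partial_J}$. The representation formula of the theorem is then exactly the displayed formula of Theorem \ref{invar}: for any $\ssc^+$-multisection $\lambda$ supported in a compactness-controlling neighbourhood with $N(\lambda)<1$ and $(\bar\partial_J,\lambda)$ transversal, $\Phi_{\bar\partial_J}([\omega])=\int_{(\mathfrak M,w)}\omega=\mu_\omega^{(\mathfrak M,w)}(\mathfrak M)$ with $(\mathfrak M,w)$ the oriented weighted branched suborbifold described before the theorem; in particular the number vanishes unless $\deg\omega=\dim\mathfrak M$, which reproduces the usual GW dimension constraint. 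Independence of the compatible almost complex structure $J$ (needed so that the formula characterizes a single map) follows from the homotopy-invariance clause of Theorem \ref{invar}: any two compatible $J_0,J_1$ are joined by a smooth path $J_t$ of compatible structures, inducing an sc-smooth homotopy $t\mapsto\bar\partial_{J_t}$ that stays proper on $Z_{A,g,k}$ by uniform Gromov compactness, so $\Phi_{\bar\partial_{J_0}}=\Phi_{\bar\partial_{J_1}}$.

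For super-symmetry on $H^\ast(Q;\R)^{\otimes k}$ I would invoke the symmetric-group action on marked points: the transposition of the $i$-th and $(i+1)$-th marked points induces an sc-diffeomorphism of $Z_{A,g,k}$ and a compatible automorphism of $\overline{\mathcal M}_{g,k}$ intertwining the maps $ev_j$ and $\sigma$; pulling $\omega$ back along it, applying the graded commutativity $\beta\wedge\gamma=(-1)^{\deg\beta\,\deg\gamma}\gamma\wedge\beta$ of sc-differential forms, and using invariance of $\Phi_{\bar\partial_J}$ under an sc-diffeomorphism of the domain, one obtains exactly the Koszul sign $(-1)^{|\alpha_i||\alpha_{i+1}|}$; adjacent transpositions generate $S_k$, giving full super-symmetry. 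Uniqueness is forced by the representation formula itself, which prescribes the value of any candidate map on every tuple $([\alpha_1],\dots,[\alpha_k];[\tau])$, and such tuples span $H^\ast(Q;\R)^{\otimes k}\otimes H_\ast(\overline{\mathcal M}_{g,k};\R)$, so multilinearity does the rest.

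I expect the main obstacle to be bookkeeping at the two interfaces with the general theory rather than a new idea: first, verifying that $ev_i^\ast a_i$ and $\sigma^\ast p_\tau$, hence $\omega$, genuinely are sc-differential forms to which the branched integration of \cite{HWZ7} and the invariant $\Phi$ of Theorem \ref{invar} apply — in particular that the orbifold Poincar\'e dual on $\overline{\mathcal M}_{g,k}$ pulls back correctly under the forgetful map; and second, the reduction from the merely component-proper section of Theorem \ref{compproper} to the honestly proper section demanded by Theorem \ref{invar}, i.e.\ checking that everything takes place on the single subpolyfold $Z_{A,g,k}$, that the compactness-controlling pair $(N,U)$ and the perturbing multisections may be chosen supported there, and that the homotopy $\bar\partial_{J_t}$ remains proper on that piece.
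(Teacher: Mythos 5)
Your approach coincides with the paper's: the paper likewise restricts $\bar\partial_J$ to the open-and-closed piece $Z_{A,g,k}$, invokes Theorem~\ref{compproper} for properness there, and defines $\Psi^Q_{A,g,k}$ by applying Theorem~\ref{o1} (equivalently Theorem~\ref{invar}) to the class of the sc-form $ev_1^\ast\alpha_1\wedge\cdots\wedge ev_k^\ast\alpha_k\wedge\sigma^\ast PD(\tau)$, with independence of $\lambda$ (and by homotopy-invariance, of $J$) following from that theorem. You supply somewhat more detail than the paper does on super-symmetry via the $S_k$-action and on uniqueness, but the underlying argument is the same.
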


The a priori real number
$\Psi_{A,g,k}^Q([\alpha_1],..,[\alpha_k];[\tau])$ is called a
GW-invariant. It is zero  unless the Fredholm index and the degree
of the differential form which is being integrated are the same.
One can  show that the numbers
$\Psi^Q_{A,g,k}([\alpha_1],\ldots ,[\alpha_k];[\tau])$ are  rational
if the (co)homology classes are integer. The defining integral
can be interpreted as a rational count of solutions of some non
linear problem. The integration theory used above is  the ``branched
integration'' introduced in  \cite{HWZ7}.

The various maps $\Psi^Q_{A,g,k}$ are interrelated by the so-called
composition law, which also forms the basis for the Witten-Dijkgraf-Verlinde-Verlinde-equation for which we refer to  \cite{Man,MS2,Tian} and we would like to mention that the  composition law follows readily
from  our  transversality theory as will be  shown in  \cite{HWZ4}.

 \section{Ep-Groupoids and Generalized Maps}\label{polyfold-groupoid-sec}

In this section we shall  introduce the concept  of an ep-groupoid. Ep-groupoids will serve later on as models for polyfolds which is the central topic of this paper.
Ep-groupoids are generalizations of proper \'etale Lie groupoids in which
 the local models for the object and morphism sets  are
M-polyfolds instead of finite-dimensional manifolds.
Different ep-groupoids can be models  for the same polyfold. This leads to the study of
equivalences between ep-groupoids. We shall  also  describe a construction for  inverting equivalences  and introduce the
concept of a generalized map. Our presentation of ep-groupoids follows the presentation of Lie groupoids in  \cite{Mj} and \cite{MM}.  The ideas go back to A. Haefliger and we refer to
\cite{Haefliger1, Haefliger1.5, Haefliger2, Haefliger3}.

\subsection{Ep-Groupoids}
We begin by recalling the notion  of a groupoid.
\begin{defn}
A {\bf groupoid} $\mathfrak{G}$ is a small  category whose
morphisms are all invertible.
\end{defn}
Recall that  the category $\mathfrak{G}$ consists of the set  of objects $G$, the set
${\bf G}$ of morphisms  (or arrows) and the   five  structure maps $(s, t, i, 1, m)$. Namely, the source and the target maps $s,t:{\bf G}\rightarrow G$ assign to every morphism , denoted by $g:x\to y$,  its source $s(g)=x$ and
its target $t(y)=y$, respectively.  The associative multiplication (or composition) map
$$m: {\bf G}{{_s}\times_t}{\bf G}\to {\bf G}, \quad m(h, g)=h\circ g$$
is defined on the fibered product
$$ {\bf G}{{_s}\times_t}{\bf G}=\{(h, g)\in  {\bf G}\times {\bf G}\, \vert \, s(h)=t(g)\}.$$
For every object $x\in G$, there exists the unit morphism $1_x:x\mapsto  x$ in $\bf{G}$ which is a $2$-sided unit for the composition, that is, $g\circ 1_x=g$ and $1_x\circ h=h$ for all morphisms $g, h\in \bf{G}$ satisfying $s(g)=x=t(h)$. These unit morphisms together define the unit map $u:G\to \bf{G}$ by $u(x)=1_x$. Finally, for every morphism $g:x\mapsto  y$ in $\bf{G}$, there exists the inverse morphism $g^{-1}:y\mapsto  x$ which is a $2$-sided inverse for the composition, that is, $g\circ g^{-1}=1_y$ and $g^{-1}\circ g=1_x$. These inverses together define the inverse map $i:\bf{G}\to \bf{G}$ by $i (g)=g^{-1}.$
The {\bf orbit space} of a groupoid $\mathfrak{G}$,
$$\abs{\mathfrak{G}}=G/\sim,$$
is  the quotient of the set of objects $G$ by the equivalence relation $\sim$ defined by
$x\sim y$ if and only if  there exists a morphism $g:x\mapsto  y$.  The equivalence class $\{y\in G\vert \, y\sim x\}$ will be denoted by
$$\abs{x}=\{y\in G\vert \, y\sim x \}.$$
If $x, y\in G$ are two objects, then ${\bf G}(x, y)$ denotes the set of all morphisms $g:x\mapsto y$. In particular, for $x\in G$ fixed, we denote by ${\bf G}(x)={\bf G}(x, x)$ the {\bf stabilizer} (or {\bf isotropy}) group of $x$,
$${\bf G}(x)=\{\text{morphisms}\ g:x\mapsto x\}.$$
For the sake of notational economy
we shall denote in the following a groupoid, as well as its object set by the same letter $G$ and its morphism set  by the bold letter ${\bf G}$.

\newenvironment{Myitemize}{%
\renewcommand{\labelitemi}{$\bullet$}%
\begin{itemize}}{\end{itemize}}

A {\bf homomorphism} $F:G\to G'$ between two groupoids is, by definition, a functor. In particular, the two induced maps $F:G\to G$ and $F:{\mathbf  G}\to {\mathbf  G}'$ between the object sets and the morphism sets commute with the structure maps:
\begin{align*}
s'\circ F=F\circ s&\qquad  \quad t'\circ F=F\circ t\\
i'\circ F=F\circ i& \qquad  \quad u'\circ F=F\circ u
\end{align*}$$m'\circ (F\times F)=F\circ m.$$
Ep-groupoids, as defined next, can be viewed as M-polyfold versions of  \'etale and proper Lie-groupoids discussed e.g. in \cite{Mj} and \cite{MM}.

\begin{defn}
{\em An  {\bf ep-groupoid}  $X$ is a groupoid $X$ together with M-polyfold
structures on the object set $X$ as well as on the morphism set ${\bf X}$
so that all the structure maps  $(s, t, m, u, i)$ are sc-smooth maps and the following
holds true.
\begin{Myitemize}
\item {\em ({\bf \'etale})} The source and target maps
$s$ and $t$ are surjective local sc-diffeomorphisms.
\item {\em ({\bf proper})} For every point $x\in X$,    there exists an
open neighborhood $V(x)$ so that the map
$t:s^{-1}(\overline{V(x)})\rightarrow X$ is a proper mapping.
\end{Myitemize}
}
\end{defn}

We  point out that if  $X$  is a groupoid  equipped  with
M-polyfold structures on the object set $X$ as well as on the
morphism set ${\bf X},$ and  $X$  is \'etale,  then  the
fibered product ${\bf X}{{_s}\times_t}{\bf X}$ has a natural
M-polyfold structure so that the multiplication map $m$ is defined
on an  M-polyfold.  Hence its
sc-smoothness is well-defined. This is proved in Lemma \ref{ert} below.

In an ep-groupoid every morphism $g:x\to y$ can be extended to
a unique local diffeomorphism $t\circ s^{-1}$ satisfying $s(g)=x$
and $t(g)=y$. The properness assumption implies that  the isotropy groups
${\bf G}(x)$ are finite groups.

The local structure of the morphism set of an ep-groupoid in a neighborhood of an isotropy group is described in the following theorem whose proof can be found in Appendix 5.

\begin{thm}\label{localstructure}
Let $x$  be an object of an ep-groupoid $X$.  Then every open neighborhood $V\subset X$ of $x$ contains an open neighborhood $U\subset V$ of $x$,  a group homomorphism
$$
\varphi: {\bf G}(x)\to  \text{Diff}_{\ssc}(U), \quad g\mapsto
\varphi_g,
$$
of the isotropy group into the group of sc-diffeomorphims
of $U$,  and an sc-smooth map
$$
\Gamma: {\bf G}(x)\times U\to {\bf X}
$$
having the following properties.
\begin{Myitemize}
\item $\Gamma(g,x)=g$.
\item $s(\Gamma(g,y))=y$ and $t(\Gamma(g,y))=\varphi_g(y)$.
\item  If $h:y\rightarrow z$  is a morphism between two points in  $U$,  then there exists a
unique element $g\in  {\bf G}(x)$ satisfying  $\Gamma(g,y)=h$.
\end{Myitemize}
\end{thm}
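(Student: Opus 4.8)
The plan is to exploit the étale property together with properness to reduce the statement to a purely local model around the finite isotropy group $\mathbf{G}(x) = \{g_1,\dots,g_n\}$ (with $g_1 = 1_x$). First I would use the étale hypothesis: each $g_i \colon x \to x$ has a source map $s$ which is a local sc-diffeomorphism near $g_i$, so there is an open neighborhood $N_i \subset \mathbf X$ of $g_i$ on which $s|_{N_i}$ is an sc-diffeomorphism onto an open neighborhood of $x$ in $X$; similarly $t|_{N_i}$ is a local sc-diffeomorphism. Composing, one gets an sc-diffeomorphism $\varphi^{(i)} := (t|_{N_i})\circ (s|_{N_i})^{-1}$ defined on some open neighborhood of $x$ and fixing $x$, which is the candidate for $\varphi_{g_i}$. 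The map $\Gamma(g_i, \cdot) := (s|_{N_i})^{-1}$ is then sc-smooth, satisfies $\Gamma(g_i,x)=g_i$, $s(\Gamma(g_i,y))=y$, and $t(\Gamma(g_i,y))=\varphi_{g_i}(y)$ by construction. The issue is that a priori these are defined only on possibly different small neighborhoods, so I would shrink to a common neighborhood $U_0$ of $x$ contained in $V$ on which all $n$ branches $\varphi_{g_1},\dots,\varphi_{g_n}$ and all $\Gamma(g_i,\cdot)$ are simultaneously defined.

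Next I would establish the homomorphism property $\varphi_{gh} = \varphi_g \circ \varphi_h$ and shrink $U_0$ to a $\mathbf G(x)$-invariant neighborhood $U$. The homomorphism identity holds wherever both sides are defined because $\Gamma(g,\varphi_h(y))\circ \Gamma(h,y)$ is a morphism from $y$ to $\varphi_g(\varphi_h(y))$ whose source is $y$, hence agrees with the unique branch through $s^{-1}$; the germ identity at $x$ then propagates to an honest identity on a small enough neighborhood since all maps are continuous and $\mathbf G(x)$ is finite. To get genuine invariance I would pass to $U := \bigcap_{i=1}^n \varphi_{g_i}^{-1}(U_0) \cap \cdots$, iterating finitely many times (again using finiteness of $\mathbf G(x)$) until $\varphi_{g_i}(U) \subset U$ for all $i$; because each $\varphi_{g_i}$ is an sc-diffeomorphism and the group is finite, this stabilizes and $U$ is open, contains $x$, and sits inside $V$. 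On this $U$ the assignment $g_i \mapsto \varphi_{g_i}$ is then a genuine group homomorphism $\mathbf G(x) \to \mathrm{Diff}_{\ssc}(U)$, and $\Gamma \colon \mathbf G(x)\times U \to \mathbf X$ is sc-smooth (it is sc-smooth on each slice $\{g_i\}\times U$ and $\mathbf G(x)$ is discrete).

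The last and genuinely substantive point is the third bullet: every morphism $h\colon y\to z$ between points of $U$ equals $\Gamma(g,y)$ for a \emph{unique} $g\in\mathbf G(x)$. Uniqueness is immediate once $U$ is small, since $s(\Gamma(g,y))=y$ forces $h$ and $\Gamma(g,y)$ to lie in the same $s$-sheet, and if $\Gamma(g,y)=\Gamma(g',y)$ then taking $y\to x$ gives $g=g'$ — more carefully, on a small enough $U$ the sets $\Gamma(\{g_i\},U)$ are disjoint because the $g_i$ are distinct and $s$ is injective on each $N_i$. For existence I would invoke the \emph{properness} hypothesis, which is exactly what has not yet been used: choose the neighborhood $V(x)$ from the properness axiom so that $t\colon s^{-1}(\overline{V(x)})\to X$ is proper, and arrange $U \subset V(x)$. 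A morphism $h\colon y\to z$ with $y,z\in U$ lies in $s^{-1}(\overline{V(x)})$. The key claim is that if $U$ is chosen small enough then $s^{-1}(\overline U)$ meets only the $n$ sheets $N_1,\dots,N_n$ near the $g_i$ and no others — this is where properness does its work: were there a sequence of morphisms $h_k$ with $s(h_k)\to x$ but $h_k$ staying away from all $g_i$, properness of $t$ on $s^{-1}(\overline{V(x)})$ would extract a convergent subsequence $h_k \to h_\infty$ with $s(h_\infty)=x$, so $h_\infty\in\mathbf G(x)=\{g_i\}$, a contradiction. Hence for $U$ small, $h$ lies in one of the $N_i$, and then $h = \Gamma(g_i, s(h)) = \Gamma(g_i, y)$. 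I expect this compactness argument — pinning down that only finitely many sheets of $s^{-1}$ survive over a small neighborhood of $x$, and doing so uniformly so that a single $U$ works — to be the main obstacle; everything else is bookkeeping with local sc-diffeomorphisms and the finiteness of the isotropy group.
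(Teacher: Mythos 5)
Your plan matches the paper's own proof (carried out as Theorem \ref{verynewtheorem8.25} with Lemmas \ref{lem7.9.1} and \ref{lem7.9.2} in the appendix): take disjoint slices $N_g$ of $s$ and $t$ around each $g\in\mathbf G(x)$, set $\varphi_g = t_g\circ s_g^{-1}$ and $\Gamma(g,\cdot)=s_g^{-1}$, then use properness exactly as you describe — a sequence of morphisms escaping all $N_g$ with source and target converging to $x$ would, by properness of $t$ on $s^{-1}(\overline{V(x)})$, have a convergent subsequence with limit in $\mathbf G(x)$, a contradiction — to show that every morphism between points of a small $U$ lies in some $N_g$, which simultaneously gives existence for the third bullet and (in the paper) invariance of $U$. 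The one step in your write-up that does not hold as stated is the construction of the invariant neighborhood by ``iterating finitely many times until it stabilizes'': the decreasing chain $U_{k+1}=\bigcap_i\varphi_{g_i}^{-1}(U_k)$ has no reason to stabilize, and finiteness of $\mathbf G(x)$ alone does not force it (the intersection can shrink to $\{x\}$). There is an immediate one-shot fix using the germ homomorphism property you already established: once $\varphi_g\circ\varphi_h=\varphi_{gh}$ holds on a small $W\subset V$, put $U:=\bigcap_{g\in\mathbf G(x)}\varphi_g(W)$; then $\varphi_h(U)=\bigcap_g\varphi_{hg}(W)=U$ by reindexing, so $U$ is open, invariant, contains $x$, and lies in $V$. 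The paper instead uses $U=\bigcup_g\varphi_g(U_2)$ for small $U_2$ and derives invariance from the properness lemma rather than from the homomorphism identity, but the two routes reach the same place.
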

The group homomorphism $\varphi: {\bf G}(x)\to \text{Diff}_{\ssc}(U)$ is
called a {\bf natural representation of the isotropy group $ {\bf G}(x)$} of
the element $x\in X$. The diffeomorphism $\varphi_g$ is given by
$t\circ s^{-1}$ where $s(g)=t(g)=x$. We see that every morphism
between points in $U$ belongs to the image of the map $\Gamma$ and
so has an extension to precisely one of the finitely many
diffeomorphisms $\varphi_g$ of $U$, where $g\in  {\bf G}(x)$.

 An M-polyfold $X$ is, in view of its definition in \cite{HWZ2},  equipped with a filtration
$$X=X_0\supset X_1\supset X_2\supset \cdots \supset X_{\infty}:=
\bigcap_{k\geq 0}X_k$$
into subsets such that the injection maps $X_{k+1}\to X_k$ are continuous and $X_{\infty}$ is dense in  all the sets $X_{k}$.  Points or  subsets contained in $X_{\infty}$ are called {\bf smooth points} or {\bf smooth subsets}. On the space $X_k$ there is the induced filtration
$X_k\supset X_{k+1}\supset \cdots \supset X_{\infty}$.  This M-polyfold structure of $X_k$ will be denoted by $X^{k}$.

 If $x\in X$, we denote by $\text{ml}(x)\in
\N_0\cup\{\infty\}$ the largest nonnegative  integer $m$ or $\infty$ so that
$x\in X_m$. We call $m(x)$ the  {\bf maximal level} of $x$.

In an ep-groupoid the object set $X$ as well as the morphism set ${\bf X}$ are M-polyfolds and hence are both equipped with filtrations.
Since the source and the target maps are local diffeomorphisms and therefore preserve by definition the  levels, we conclude that
$$\text{ml}(x)=\text{ml}(y)=\text{ml}(g)$$
if $g:x\to y$ is a morphism.  Consequently, the filtration of the object set $X$ induces the filtration
$$\abs{X}=\abs{X_0}\supset  \abs{X_1}\supset \cdots \supset \abs{X_{\infty}}=\bigcap_{k\geq 0}\abs{X_k}$$
of the orbit space $\abs{X}=X/\sim.$

\subsection{Functors and Equivalences}
In this section we introduce  the notion of an equivalence which  plays an important role in the construction of polyfolds later on.
\begin{defn}
A functor $F:X\rightarrow Y$ between two ep-groupoids is called
{\bf sc-smooth} provided the induced maps between object- and
morphism- spaces are sc-smooth.
\end{defn}
In view of the  functoriality of $F$,  the two induced maps commute with the structure maps and therefore an  sc-smooth functor $F:X\rightarrow Y$ induces an $\ssc^0$-map
$\abs{F}:\abs{X}\to \abs{Y}$ between the orbit  spaces. A continuous map is called $\ssc^0$ if it preserves the natural filtrations.   An important
class of sc-smooth functors are the equivalences defined next.
\begin{defn}
{\em
An sc-smooth functor
$F:X\rightarrow Y$ between  two ep-groupoids  is called an {\bf equivalence} provided it has
the following properties.
\begin{Myitemize}
\item $F$ is a local sc-diffeomorphism on objects as well as
morphisms.
\item The induced map $\abs{F}:\abs{X}\to \abs{Y}$ between the
orbit spaces is an $\ssc$-homeomorphism.
\item  For every $x\in X$,  the map $F$ induces a bijection
${\bf X}(x)\rightarrow {\bf Y}(F(x)) $ between the isotropy groups.
\end{Myitemize}
}
\end{defn}
If $F:X\rightarrow Y$ and $G:Y\rightarrow Z$ are
equivalences, then the composition $G\circ F:X\rightarrow Z$ is also an equivalence.

In general,  an equivalence is not invertible as a
functor.  Later on, we will discuss a general procedure of inverting
arrows in a category by only changing the morphism set but
keeping the object set. This ``inverting of arrows'' for a given
class of arrows is a standard procedure in category theory and we refer to
\cite{GZ}.
\begin{defn} \label{ntrans}
{\em
Two sc-smooth functors
$F:X\to Y$ and $G:X\to Y$ between the same ep-groupoids
are called {\bf naturally equivalent}, if there exists an sc-smooth
map $\tau:X\to {\bf Y}$ which associates with every object $x\in X$
an arrow $\tau (x):F(x)\to G(x)$ in ${\bf Y}$  and which is ``natural in
$x$'' in the sense that for every arrow $h:x\to x'$ in ${\bf X}$ the
identity
$$\tau (x')\circ F(h)=G(h)\circ \tau (x)$$
holds. The map $\tau$ is called a {\bf natural transformation}.
\mbox{}\\
\begin{equation*}
\begin{CD}
F(x)@>\tau (x)>>G(x)\\
@VVF(h)V @VVG(h)V \\
F(x')@>\tau (x')>>G(x').\\ \\
\end{CD}
\end{equation*}
 }
\end{defn}
Two naturally equivalent functors $F:X\to Y$ and $G:X\to Y$ induce the same
map between the orbit  spaces,
$$
\abs{F}=\abs{G}:\abs{X}\to \abs{Y}.
$$
From the definitions one deduces  easily the following result.
\begin{prop}
{\em Assume that the sc-smooth functors $F:X\to Y$ and $ G:X\to  Y$ between
ep-groupoids are naturally equivalent.  Then if one of the functors  is an
equivalence so is the other.}
\end{prop}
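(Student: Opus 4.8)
The plan is to unwind the three defining properties of an equivalence (local sc-diffeomorphism on objects and morphisms, induced homeomorphism on orbit spaces, induced bijection on isotropy groups) for one of the functors and transport them across the natural transformation $\tau$ to the other. Suppose $F$ is an equivalence and let $\tau:X\to{\bf Y}$ be a natural transformation from $F$ to $G$, so $\tau(x):F(x)\to G(x)$ in ${\bf Y}$ with $\tau(x')\circ F(h)=G(h)\circ\tau(x)$ for every $h:x\to x'$. The key observation is that in an ep-groupoid every arrow extends to a unique local sc-diffeomorphism of the form $t\circ s^{-1}$ (as recalled after Theorem \ref{localstructure}), and in particular the maps $s,t:{\bf Y}\to Y$ are local sc-diffeomorphisms. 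Since $\tau$ is sc-smooth and $G=t\circ\tau$ while $F=s\circ\tau$ as maps on objects, $G$ is a composition of sc-smooth maps; locally around a point $x$ one has $G=\Theta_x\circ F$ where $\Theta_x$ is the local sc-diffeomorphism of $Y$ determined by the arrow $\tau(x)$ (extended via $t\circ s^{-1}$), and hence $G$ is a local sc-diffeomorphism on objects. The same argument on the morphism set, using the naturality square to write $G(h)=\tau(x')\circ F(h)\circ\tau(x)^{-1}$ and the fact that left/right composition with a fixed arrow is a local sc-diffeomorphism on ${\bf Y}$, shows $G$ is a local sc-diffeomorphism on morphisms.

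Next I would handle the orbit spaces. Two naturally equivalent functors induce the same map $\abs{F}=\abs{G}:\abs X\to\abs Y$ (this is already stated in the excerpt, and follows because $\tau(x)$ is an arrow $F(x)\to G(x)$, so $F(x)\sim G(x)$ for all $x$). Therefore $\abs{G}=\abs{F}$ is an $\ssc^0$-homeomorphism simply because $\abs{F}$ is. Finally, for the isotropy groups: fix $x\in X$. Conjugation by $\tau(x)$ gives a group isomorphism $c_{\tau(x)}:{\bf Y}(F(x))\to{\bf Y}(G(x))$, $g\mapsto \tau(x)\circ g\circ\tau(x)^{-1}$. The naturality square applied to an arrow $h:x\to x$ in ${\bf X}(x)$ reads $\tau(x)\circ F(h)=G(h)\circ\tau(x)$, i.e. $G(h)=c_{\tau(x)}(F(h))$, so on isotropy groups $G = c_{\tau(x)}\circ F:{\bf X}(x)\to{\bf Y}(G(x))$. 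Since $F:{\bf X}(x)\to{\bf Y}(F(x))$ is a bijection by hypothesis and $c_{\tau(x)}$ is a bijection, the composite $G$ is a bijection ${\bf X}(x)\to{\bf Y}(G(x))$. Combining the three verifications, $G$ is an equivalence. The argument is symmetric in $F$ and $G$ (replace $\tau$ by the natural transformation $x\mapsto\tau(x)^{-1}$, which goes from $G$ to $F$), so the statement holds in both directions.

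The only point requiring genuine care — and the step I expect to be the main obstacle — is the sc-smoothness bookkeeping in the first paragraph: one must check that the "translation by an arrow" maps, realized as germs of $t\circ s^{-1}$ on $Y$ and as left/right multiplication on the fibered product ${\bf Y}\,{}_s\!\times_t{\bf Y}$, are genuinely local sc-diffeomorphisms in the M-polyfold sense, so that composing them with the sc-smooth $F$ preserves the "local sc-diffeomorphism" property rather than merely sc-smoothness. This is exactly where the étale hypothesis and the natural M-polyfold structure on the fibered product (Lemma \ref{ert}) are used; everything else is formal manipulation of the groupoid axioms and the naturality square.
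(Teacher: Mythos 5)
Your proof is correct and follows essentially the same route as the paper: the key observations — that $G$ coincides locally with $(t\circ s^{-1})\circ F$ via the natural transformation (and analogously on morphisms via $G(h)=\tau(x')\circ F(h)\circ\tau(x)^{-1}$), that $\abs{G}=\abs{F}$, and that conjugation by $\tau(x)$ transports the isotropy bijection — are exactly the ones the paper uses. The only cosmetic difference is that you phrase the isotropy step as a composition of bijections $G=c_{\tau(x)}\circ F$, whereas the paper checks it by exhibiting, for given $g\in{\bf Y}(G(x))$, the unique preimage $h$; these are the same argument.
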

\begin{proof}
Assume that $F:X\to Y$ is an equivalence. As remarked above the functors  $F, G$ induce the same mappings between  the orbit spaces so that  $\abs{F}=\abs{G}:\abs{X}\to \abs{Y}$.  Hence $\abs{G}$ is a homeomorphism.  If  $x\in X$, we have to show that $G$ induces a bijection between the isotropy groups ${\bf X}(x)$ and ${\bf Y}(G(x))$. It suffices to show that given $g\in {\bf Y}(G(x))$, there exists a unique $h\in {\bf X}(x)$ such that $G(h)=g$.  Let $\tau:X\to {\bf Y}$ be a natural transformation and
define the  morphism $f:=\tau(x)^{-1}\circ g\circ  \tau (x):F(x)\to F(x)$. Since  there is a bijection between ${\bf X}(x)$ and ${\bf Y}(F(x))$, there exists a unique morphism $h:x\mapsto  x$ satisfying $F(h)=f$. From  $\tau (x)\circ F(h)=G(h)\circ \tau (x)$  one concludes that $g=\tau (x)\circ F(h)\circ \tau (x)^{-1}=G(h)$ which proves our claim.
Next we show that $G$ is a local sc-diffeomorphism on objects.
Fix $x_0\in X$.  Let ${\bf U}(\tau(x_0))$ be an open neighborhood  in ${\bf Y}$ of  the morphism $\tau (x_0):F(x_0)\to G(x_0)$ such that the source and the target maps
$s:{\bf U}(\tau (x_0))\to U(F(x_0))$ and $t:{\bf U}(\tau (x_0))\to U(G(x_0))$ are sc-diffeomorphisms. Since, by assumption, $F$ is a local sc-diffeomorphism on objects, there exists a an open neighborhood $U(x_0)$ of $x_0$ in $X$ such that $F:U(x_0)\to F(U(x_0))$ is an sc-diffeomorphism. We take $U(x_0)$  so  small that $\tau (U(x_0))\subset{\bf  U}(\tau (x_0))$. Moreover, we may assume that $s:U(\tau (x_0))\to F(U(x_0))$ is an sc-diffeomorphism.  Then  $G(x)=t\circ s^{-1}\circ F(x)$  for $x\in U(x_0)$ and  $G(U(x_0))=U(G(x_0))$. Because  $t, s^{-1}$ and $F$ are sc-diffeomorphisms also the composition $G$ is an sc- diffeomorphism on the neighborhood  $U(x_0)$.

It remains to show that $G:X\to Y$ is a local sc-diffeomorphism on the  morphisms set. To do this we fix a morphism $g:x_0\to y_0$ between the  objects $x_0$ and $y_0$ in $X$.  By assumption there exists an open neighborhood ${\bf U}(g)$ of $g$ in $ {\bf X}$ such that $F:{\bf U}(g)\to F({\bf U}(g))$ is an sc-diffeomorphism.  Since $F$ and $G$ are local sc-diffeomorphism, we find open neighborhoods $U(x_0)$ and $U(y_0)$ of the objects $x_0$ and $y_0$ such that the maps $F$ and $G$ are sc-diffeomorphism on $U(x_0)$ and $U(y_0)$. We may choose  the neighborhoods  $U(x_0)$, $U(y_0)$ and ${\bf U}(g)$ so that the source and the target maps $s:{\bf U}(g)\to U(x_0)$ and $t:{\bf U}(g)\to U(y_0)$ are sc-diffeomorphisms. Then the set $G({\bf U}(g))$ is an open subset of ${\bf Y}$ and the sc-smooth  map $G: {\bf U}(g)\to G({\bf U}(g))$  has an inverse given by
$G^{-1}(h)=F^{-1}(\tau (x)\circ h\circ (\tau (y))^{-1})$ for a morphism $h:G(x)\to G(y)$ belonging to $G({\bf U}(g))$.  Since this inverse is an sc-diffeomorphism,  we have proved that also $G$ is a local sc-diffeomorphism.

\end{proof}

Next we shall define the {\bf weak fibered product}  $\el =X\times_YZ$ in which  $X, Y$,  and $Z$ are ep-groupoids. Consider  two sc-smooth functors $F:X\to Y$ and $G:Z\to Y$ between ep-groupoids having  the same target ep-groupoid $Y$.
We first define the  fibered product $\el=X\times_Y Z$
as a groupoid  and then, under an additional assumption, as
an ep-groupoid.

The object set of $\el$ consists of triples $(x,
\varphi ,z)\in X\times {\bf Y}\times Z$ where $\varphi:F(x)\to G(z)$
is a morphism in ${\bf Y}$.  Hence
$$
\el=\{ (x, \varphi, z)\in X\times {\bf Y}\times Z\vert \,
\text{$s(\varphi )=F(x)$\, and \, $ t(\varphi )=G(z)$}\}.
$$
In short notation, $\el=X {{_F}\times_s} {\bf Y}
{{_t}\times_G} Z$. A morphism $l:(x, \varphi , z)\to (x', \varphi'
,z'), $ between two objects in $\el$ is a triple $l=(h,
\varphi, k)\in {\bf X}\times {\bf Y}\times {\bf Z}$
with  morphisms $h:x\to x'$ and $k:z\to z'$  satisfying
$$
\varphi' =G(k)\circ \varphi \circ F(h)^{-1}.
$$
Hence the set of morphisms is equal to ${\bf L}={\bf X}{{_{s\circ
F}}\times_s}{\bf Y}{{_t}\times_{s\circ G}}{\bf Z}$ or, explicitly,
 \begin{equation*}
{\bf L}=\{(h, \varphi ,k)\in {\bf X}\times {\bf Y}\times {\bf Z}\vert \,
s\circ F(h)=s(\varphi) \,\, \text{and}\,\, t(\varphi )=s\circ
G(k)\}
\end{equation*}
and the source and the target maps $s, t:{\bf L}\to \el$
 are defined by
\begin{equation}\label{newequation8.2}
\begin{aligned}
s(h, \varphi ,k)&=(s(h), \varphi, s(k))\ \hbox{and}\ \ t(h, \varphi
,k)&=(t(h), \varphi', t(k))
\end{aligned}
\end{equation}
where $\varphi':=G(k)\circ \varphi\circ F(h)^{-1}$. The
multiplication  of two morphisms   is  defined as
\begin{equation*}
(h,\varphi ,k)\circ (h', \psi, k')=(h\circ h', \psi, k\circ k').
\end{equation*}
The identity  morphisms $1_{(x, \varphi, z)}\in {\bf L}$ at the
object $(x, \varphi ,z)\in L$ is  the triple $1_{(x, \varphi,
z)}=(1_x, \varphi , 1_z)$. The inversion map is defined by
$i(h,\varphi,k)=(h^{-1},G(k)\circ\varphi \circ F(h)^{-1}, k^{-1})$.
With the above definitions the fibered product  $L=X\times_YZ$ becomes  a groupoid called the {\bf weak fibered product}.

In the next step we  describe  conditions which guarantee that $\el$  is an ep-groupoid. We need a lemma which is a special case of a
more general result in \cite{HWZ-polyfolds1}.
\begin{lem}\label{ert}
Let $X$, $Y$ and $Z$ be M-polyfolds. Assume that $f:X\rightarrow Y$ is
a local sc-diffeomorphism and $g:Z\rightarrow Y$ is an $\ssc$-smooth
map. Then the fibered product
$$X{{_f}\times_g}Z=\{(x, z)\in X\times  Z\vert f(x)=g(z)\}$$
 has a natural
M-polyfold structure and the projection map
 $\pi_2:X{{_f}\times_g}Z\rightarrow Z$ is a  local
$\ssc$-diffeomorphisms.
\end{lem}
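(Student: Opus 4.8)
The plan is to exploit the local sc-diffeomorphism property of $f$ to realize the fibered product $P:=X{{_f}\times_g}Z$ \emph{locally} as an open subset of $Z$, so that the M-polyfold structure of $Z$ can simply be pulled back through the projection $\pi_2$, and $\pi_2$ becomes a local sc-diffeomorphism essentially by construction.

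First I would equip the set $P$ with the subspace topology coming from the product M-polyfold $X\times Z$, so that as a subspace it inherits all the required point-set properties. Now fix $(x_0,z_0)\in P$, so that $f(x_0)=g(z_0)$. Since $f$ is a local sc-diffeomorphism, choose an open neighborhood $U$ of $x_0$ so that $f|_U:U\to V$ is an sc-diffeomorphism onto an open set $V\subset Y$ with $f(x_0)\in V$, and put $\rho:=(f|_U)^{-1}:V\to U$. By continuity of $g$ and $g(z_0)=f(x_0)\in V$, choose an open neighborhood $W$ of $z_0$ in $Z$ with $g(W)\subset V$, small enough that $W$ lies in the domain $W_\alpha$ of an M-polyfold chart $\psi_\alpha$ of $Z$. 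Define
$$\Phi:W\longrightarrow X\times Z,\qquad z\longmapsto\bigl(\rho(g(z)),z\bigr).$$
A direct check shows that $\Phi$ is a continuous bijection from $W$ onto $P\cap(U\times W)$, a set which is open in $P$: clearly $\Phi$ maps into it, and conversely if $(x,z)\in P\cap(U\times W)$ then $g(z)=f(x)\in V$ and, by injectivity of $f|_U$, $x=\rho(g(z))$, so $(x,z)=\Phi(z)$. The inverse of $\Phi$ is the restriction of $\pi_2$, which is continuous; hence $\Phi$ is a homeomorphism onto an open neighborhood of $(x_0,z_0)$ in $P$.

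Next I would declare
$$\Psi:=\psi_\alpha\circ\pi_2:\ P\cap(U\times W)\longrightarrow\psi_\alpha(W)$$
to be an M-polyfold chart of $P$; it is a homeomorphism onto an open subset of the splicing core underlying $\psi_\alpha$, since $\pi_2$ restricted to $P\cap(U\times W)$ equals $\Phi^{-1}$ and $\psi_\alpha$ is a chart. Letting $(x_0,z_0)$ vary gives an atlas covering $P$. For sc-smooth compatibility, if $\Psi=\psi_\alpha\circ\pi_2$ and $\Psi'=\psi_\beta\circ\pi_2$ are two such charts with overlapping domains, then, using $\pi_2\circ\Phi=\id$, the transition $\Psi'\circ\Psi^{-1}$ agrees on the relevant overlap with $\psi_\beta\circ\psi_\alpha^{-1}$, a transition map of the atlas of $Z$ and therefore sc-smooth. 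Hence $P$ carries a natural M-polyfold structure, with sc-filtration $P_m=\{(x,z)\in X_m\times Z_m\mid f(x)=g(z)\}$ since $f$ and $g$ preserve levels. In the charts $\Psi$ and $\psi_\alpha$ the projection $\pi_2$ reads $\psi_\alpha\circ\pi_2\circ\Phi=\psi_\alpha$, i.e.\ the identity, so $\pi_2$ is a local sc-diffeomorphism; likewise $\pi_1$ reads $\rho\circ g$ in these charts and so is sc-smooth.

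The construction is clearly independent of the auxiliary choices of $U$, $V$ and $W$ up to the compatibility just checked. I expect the only genuinely delicate points to be (i) the verification that $\Phi$ is a homeomorphism onto an \emph{open} subset of $P$ in the subspace topology, which rests on the injectivity of $f|_U$, and (ii) the overlap computation; neither presents any analytic difficulty, precisely because one never attempts to realize $P$ as a sub-M-polyfold of $X\times Z$ but instead transports the structure of $Z$ along $\pi_2$. The more general statement referred to in \cite{HWZ-polyfolds1} follows by the same device.
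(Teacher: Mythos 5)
Your proof is correct and is essentially the argument the paper gives: both constructions pull back the chart structure of $Z$ to the fibered product via $\pi_2$, using the local inverse of $f$ to produce the explicit parametrization $z\mapsto((f|_U)^{-1}(g(z)),z)$, and both conclude that $\pi_2$ is a local sc-diffeomorphism because the transition maps reduce to those of $Z$. The paper phrases the chart directly as $(x,z)\mapsto\varphi(z)$ on $(U\times W)\cap P$, while you phrase it via the homeomorphism $\Phi$ from $W$ onto that set; these are the same chart written inversely.
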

\begin{proof} Let $\triangle=\{(y, y)\vert \, y\in Y\}$ be the diagonal in the space $Y\oplus Y$. Then
$X{{_f}\times_g}Z=(f, g)^{-1}(\triangle )\subset X\oplus Z$.   Hence $X{{_f}\times_g}Z$ is a closed subset  of
 $X\oplus Z$ and consequently carries a  paracompact
second countable topology. Fix a point $(x,z)\in X{{_f}\times_g}Z$. Since $f:X\to Y$ is a local sc-diffemorphism, we find two open neighborhoods $U(x)\subset X$ and $V(g(z))\subset Z$ of the points $x$ and $g(z)$ such that $f:U(x)\to V(g(z))$ is an sc-diffeomorphism.  Next we find an  open neighborhood $W(z)\subset Z$ of $z$ so that
there exists a chart $\varphi:W\rightarrow O$ onto an open subset $O$
of a splicing core $K$ and,  in addition,   $g(W(z))\subset V(g(z))$. Then
 $N:= (U (x)\oplus W (z))\cap X{{_f}\times_g}Z$ is an open neighborhood of $(x,z) $ in $X{{_f}\times_g}Z$ and the map
$$
\Phi:N\rightarrow O, \quad  (x,z)\mapsto  \varphi (z)
$$
is a bijection and of class  $\ssc^0$. The inverse map  $\Phi^{-1}:O\to N$ is equal to
$$
\Phi^{-1}(k)=(f^{-1}\circ g \circ \varphi^{-1}(k),\varphi^{-1}(k)))
$$
and is an $\ssc^0$-map. If $\Phi_1:N_1\to O_1$ is defined by $(x, z)\mapsto \varphi_1(z)$, where $\varphi_1:W_1\to O_1\subset K_1$, is another such chart  map,  then $\Phi_1\circ \Phi^{-1}: \Phi (N\cap N_1)\to \Phi_1(N\cap N_1)$ is  equal to  $\Phi_1\circ \Phi^{-1}(k)=\varphi_1\circ \varphi^{-1}(k)$ for $k\in  \Phi (N\cap N_1)$. Hence $\Phi_1\circ \Phi^{-1}$ is sc-smooth. Consequently, the above charts $(\Phi, N)$ define an M-polyfold structure on $X{{_f}\times_g}Z$. The construction also shows that  the projection $\pi_2$  is a local sc--diffeomorphism.
\end{proof}

We shall use the previous lemma in the construction of the weak fibered product of ep-groupoids.

\begin{thm}\label{major}
Let $X$, $Y$ and $Z$ be ep-groupoids. Assume that  the functor $F:X\rightarrow Y$
is an equivalence and $G:Z\rightarrow Y$ an $\ssc$-smooth functor.
Then the  fibered product $X\times_Y Z$ has in a natural way the
structure of an ep-groupoid. Moreover, the projection functor $p:X\times_Y
Z\rightarrow Z$ is an equivalence.
\end{thm}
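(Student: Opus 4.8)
The plan is to produce the M-polyfold structures on the object set $\el$ and the morphism set ${\bf L}$ by iterating Lemma \ref{ert}, and then to read the \'etale, properness and equivalence properties off the construction. Since $F$ is an equivalence it is a local sc-diffeomorphism on objects and on morphisms, and since $Y$ is \'etale the maps $s,t$ of ${\bf Y}$ are local sc-diffeomorphisms; so in each fibered product occurring in the construction of $\el$ and ${\bf L}$ at least one of the two matching maps is a local sc-diffeomorphism while the other is merely sc-smooth, exactly the situation of Lemma \ref{ert}. Concretely, applying Lemma \ref{ert} to $F:X\to Y$ and $s:{\bf Y}\to Y$ makes $X{{_F}\times_s}{\bf Y}$ an M-polyfold with the projection to ${\bf Y}$ a local sc-diffeomorphism; then $(x,\varphi)\mapsto t(\varphi)$ is a local sc-diffeomorphism $X{{_F}\times_s}{\bf Y}\to Y$, and a second application of Lemma \ref{ert} with $G:Z\to Y$ endows $\el$ with a natural M-polyfold structure for which $p:\el\to Z$ is a local sc-diffeomorphism on objects. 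The same two steps applied to the morphism sides (noting that $s\circ F:{\bf X}\to Y$ is a local sc-diffeomorphism while $s\circ G:{\bf Z}\to Y$ is only sc-smooth) give a natural M-polyfold structure on ${\bf L}$ for which $(h,\varphi,k)\mapsto k$ is a local sc-diffeomorphism ${\bf L}\to{\bf Z}$.

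With these structures fixed I would check that $\el$ is \'etale. The source map $s_\el(h,\varphi,k)=(s(h),\varphi,s(k))$ and the inversion $i_\el(h,\varphi,k)=(h^{-1},G(k)\circ\varphi\circ F(h)^{-1},k^{-1})$ are sc-smooth, being assembled from the structure maps of $X,Y,Z$ and from $F,G$ (here one uses, as noted before Lemma \ref{ert}, that ${\bf Y}{{_s}\times_t}{\bf Y}$ is an M-polyfold, so the sc-smoothness of the composition of $Y$ is meaningful). Near a morphism, $s_\el$ has the sc-smooth inverse $(x,\varphi,z)\mapsto(s_X^{-1}(x),\varphi,s_Z^{-1}(z))$, which lands in ${\bf L}$ precisely by the functoriality identities $s_Y\circ F=F\circ s_X$ and $s_Y\circ G=G\circ s_Z$; moreover $s_\el(1_x,\varphi,1_z)=(x,\varphi,z)$, so $s_\el$ is a surjective local sc-diffeomorphism. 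Since $i_\el$ is an sc-diffeomorphism and $t_\el=s_\el\circ i_\el$ (a direct computation using $s\circ i=t$), the target map is a surjective local sc-diffeomorphism as well; hence $\el$ is \'etale, ${\bf L}{{_s}\times_t}{\bf L}$ is an M-polyfold, and $m_\el$ and $u_\el$ are likewise sc-smooth. The M-polyfold structure is natural because Lemma \ref{ert} produces natural structures.

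Next I would prove properness. Fix $(x_0,\varphi_0,z_0)\in\el$, and use that $X,Y,Z$ are proper to choose neighbourhoods $V(x_0),V(z_0)$ over whose closures the target maps of $X$ and $Z$ are proper, and a neighbourhood $V$ of $(x_0,\varphi_0,z_0)$ in $\el$ with $p_X(V)\subset V(x_0)$ and $p(V)\subset V(z_0)$, where $p_X:\el\to X$ and $p:\el\to Z$ are the natural (sc-smooth) projections. Let $C\subset\el$ be compact. For $(h,\varphi,k)\in s_\el^{-1}(\ov V)$ with $t_\el(h,\varphi,k)\in C$ the element $h$ lies in $s_X^{-1}(\ov{V(x_0)})\cap t_X^{-1}(p_X(C))$, which is compact by properness of $X$, and $k$ lies, likewise, in a compact subset of ${\bf Z}$; then $s_Y(\varphi)=F(s_X(h))$ and $t_Y(\varphi)=G(s_Z(k))$ lie in compact subsets of $Y$, so $\varphi$ lies in the set of morphisms of $Y$ whose source and target lie in fixed compacta, and this set is compact since $Y$ is proper (cover the source compactum by finitely many of the properness neighbourhoods of $Y$, and over the closure of each intersect $t_Y^{-1}$ of the target compactum, a compact set, obtaining a finite union of compacta containing the closed set in question). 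Hence $t_\el^{-1}(C)\cap s_\el^{-1}(\ov V)$ is a closed subset of a compact subset of ${\bf X}\times{\bf Y}\times{\bf Z}$, so it is compact, and $t_\el:s_\el^{-1}(\ov V)\to\el$ is proper. Thus $\el$ is an ep-groupoid.

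Finally, $p:\el\to Z$ is an equivalence. It is a local sc-diffeomorphism on objects and on morphisms by the first step. A morphism in ${\bf L}((x,\varphi,z))$ is a triple $(h,\varphi,k)$ with $h\in{\bf X}(x)$, $k\in{\bf Z}(z)$ and $F(h)=\varphi^{-1}\circ G(k)\circ\varphi$; since $F$ is an equivalence the induced map ${\bf X}(x)\to{\bf Y}(F(x))$ is a bijection, so $h$ is uniquely determined by $k$, whence $p:{\bf L}((x,\varphi,z))\to{\bf Z}(z)$ is a bijection. On orbit spaces, $\abs{p}$ is continuous and open (because $p$ is an open map and the orbit projections are open); it is surjective because for $z\in Z$ the homeomorphism $\abs{F}$ supplies an object $x\in X$ with $F(x)\sim G(z)$, hence a morphism $\varphi:F(x)\to G(z)$ and an object $(x,\varphi,z)\in\el$ mapping to $[z]$; and it is injective because given a morphism $k:z\to z'$ in ${\bf Z}$, injectivity of $\abs{F}$ yields $x\sim x'$ and then bijectivity of $F$ on the isotropy group of $x$ produces $h:x\to x'$ with $F(h)=(\varphi')^{-1}\circ G(k)\circ\varphi$, i.e. a morphism $(h,\varphi,k):(x,\varphi,z)\to(x',\varphi',z')$ in ${\bf L}$. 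Hence $\abs{p}$ is a homeomorphism, and an $\ssc$-homeomorphism because $p$ preserves levels. The main obstacle is properness — in particular the sub-claim that in a proper \'etale groupoid the morphisms with source and target in fixed compact sets form a compact set — together with the routine but delicate bookkeeping ensuring that the iterated fibered-product M-polyfold structures on $\el$ and ${\bf L}$ are compatible enough for all the structure maps to be genuinely sc-smooth.
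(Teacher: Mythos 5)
Your proof is correct and follows the same overall structure as the paper's: iterate Lemma \ref{ert} to put M-polyfold structures on $\el$ and ${\bf L}$, verify the groupoid structure maps are local sc-diffeomorphisms and sc-smooth, check properness, and show $p$ is an equivalence (local sc-diffeomorphism, bijection on isotropy, homeomorphism on orbit spaces). You even use the paper's second bracketing $[X{{_F}\times_s}{\bf Y}]\times_G Z$ and reach the same conclusions about the projection.

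The one genuine difference is the properness argument. The paper runs a sequential argument: given a sequence $(h_j,\varphi_j,k_j)$ with $s$-values in the closed neighbourhood and converging $t$-values, it extracts convergent subsequences of $(h_j)$ and $(k_j)$ from the properness of $X$ and $Z$, and then recovers convergence of $(\varphi_j)$ \emph{not} from properness of $Y$ at all but from the algebraic identity $\varphi_j = G(k_j)^{-1}\circ\varphi_j'\circ F(h_j)$ and continuity of the structure maps. You instead show directly that $t_\el^{-1}(C)\cap s_\el^{-1}(\ov V)$ is closed inside a product of compacta, which for the ${\bf Y}$-component requires the sub-lemma that in a proper \'etale groupoid the morphisms with source and target in prescribed compacta form a compact set — essentially that $(s,t):{\bf Y}\to Y\times Y$ is proper in the Lie-groupoid sense. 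Your covering proof of that sub-lemma is correct. The paper's route is a bit slicker in that it never needs the properness of $Y$ for the $\varphi$-component, exploiting instead the rigid algebraic structure of the weak fibered product; your route is slightly more robust and isolates a reusable fact about proper groupoids. Both are fine, and the rest of your argument matches the paper's step for step.
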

\begin{proof}
In order to apply Lemma \ref{ert}, we view  the object set of the fibered product $X\times_Y Z$ as
\begin{equation}\label{job-x}
X{{_F}\times_{s\circ\pi_1}}\left[{\bf Y}{{_t}\times_G}Z\right].
\end{equation}
Since the target map $t$ is a local sc-diffeomorphism and $G:Z\to Y$ is sc-smooth, Lemma \ref{ert} implies that ${\bf
Y}{{_t}\times_G}Z$ is in a natural way an  M-polyfold. The map $s\circ\pi_1:{\bf Y}{{_t}\times_G}Z\rightarrow Y$
is sc-smooth and $F:X\rightarrow Y$ is a local sc-diffeomorphism. Applying
 Lemma \ref{ert}  again, we conclude that the set  of  objects $X\times_Y Z$ has a
natural M-polyfold structure.  We could have also used instead of
(\ref{job-x}) the different bracketing
\begin{equation}\label{second}
\left[X{{_F}\times_s}{\bf Y}\right]{_{t\circ\pi_2}\times_G}Z,
\end{equation}
 which would lead to the same M-polyfold structure.

 Using the fact that $s\circ F$, $s$, and $t$ are local
sc-diffeomorphisms one also  shows that   the set of morphisms
${\bf L}={\bf X}{{_{s\circ F}}\times_s}{\bf Y}{{_t}\times_{s\circ G}}{\bf Z}$  has a
natural M-polyfold structure. With these  M-polyfold structures on $L$ and ${\bf L}$, the source map
$$
s:{\bf L}\rightarrow \el :s(h,\varphi,k)=(s(h),\varphi,s(k))
$$
is sc-smooth and we show that $s$ is a local diffeomorphism.  Fix a morphism  $(h,\varphi,k)\in {\bf L}$. Since the target map $s:{\bf Z}\to Z$ is a local  sc-diffeomorphism, we find open neighborhoods ${\bf V}(k)\subset {\bf Z}$ of the morphism $k$ and an open neighborhood $V(s(k))\subset Z$ of the point $s(k)$ such that $s:{\bf V}(k)\to V(s(k))$ is an  sc-diffeomorphism. Using  Lemma \ref{ert} and  shrinking these neighborhoods if necessary,  we find an open  neighborhood  ${\bf U}$ of the morphism $(h,\varphi,k)$ in ${\bf L}$ such that the projection ${\bf p}: {\bf U}\subset {\bf L}\to {\bf V}(k)\subset {\bf Z}$ is an sc-diffeomorphism.   Using the second bracketing (\ref{second})  and Lemma \ref{ert} again, we find an open neighborhood $W\subset \el $ of  the object $(s(h), \varphi, s(k))$ such that the projection $p:L\to Z$ is an sc-diffeomorphism from $W$ onto $V(s(k))$. Now observe that
$$
s(h',\varphi',k') = {p}^{-1}\circ s\circ {\bf p}(h',\varphi',k')
$$
for every $(h', \varphi', k')\in {\bf U}$.  Since  the right-hand side is a composition of local
sc-diffeomorphisms, the source map $s:{\bf L}\to \el$ on the left-hand side is a local sc-diffeomorphism.
Also the  inversion map $i:{\bf L}\to {\bf L}$ is an
sc-diffeomorphism  and since the target map  is the composition $t=s\circ i$ of the source map with the inverse, we conclude that  the target map $t:{\bf L}\to \el$ is a local  sc--diffeomorphism as well. Consequently,  the
multiplication map
$$
m:{\bf L}{{_s}\times_t}{\bf L}\rightarrow {\bf
L}, \quad m((h,\varphi,k),(h',\varphi',k'))=(h\circ h',\varphi',k\circ k')
$$
is well-defined and  clearly  sc-smooth. Next  we show that $\el$ is
proper. Pick a point $a=(x,\varphi ,z)\in \el$.  By the properness of the groupoids $X$ and $Z$,  there are open
neighborhoods $U(x)$ and $V(z)$ so that the maps
$$
t:s^{-1}(\overline{U(x)})\rightarrow X\ \text{and}\quad
t:s^{-1}(\overline{V(z)})\rightarrow Z
$$
are proper.  Define  the open $W(a)\subset \el$  by
$$
W(a)=\{\text{$(x',\varphi', z')\in L\vert$ \, $x'\in U(x),$\  $z'\in V(z)$ and $\varphi':F(x')\to G(z')$}\}.
$$
We claim  that $t:s^{-1}(\overline{W(a)})\rightarrow \el $ is
proper. To see this we  take   a sequence $(h_j,\varphi_j,k_j)\in{\bf
L}$ such $s(h_j,\varphi_j,k_j)\in \ov{W(a)}$. We may assume that after taking a subsequence $t(h_j,\varphi_j,k_j)\to (x', \varphi'  z')=:b$. By the definition of the target map $t:{\bf L}\to \el$,  $t(h_j, \varphi_j, k_j)=(t(h_j), \varphi'_j, t(k_j))$ where
$\varphi_j'=G(k_j)\circ \varphi_j\circ F(h_j)^{-1}$. Hence the convergence of $t(h_j, \varphi_j, k_j)$ to $(x', \varphi', z')$ implies that $t(h_j)\to x'$, $t(k_j)\to z'$,  and $\varphi_j'=G(k_j)\circ \varphi_j\circ F(h_j)^{-1}\to \varphi'$.
Since $s(h_j)\in \ov{U(x)}$, the properness of the map $t:s^{-1}(\overline{U(x)})\rightarrow X$, implies that after taking a subsequence, $h_j\to h$.  The same argument shows that after taking a further subsequence we have  $k_j\to k$.
In particular, we conclude that $F(h_j)\to F(h)$ and $G(k_j)\to G(k)$ and since the inversion $i$ is sc-smooth,  we also have the convergence  $G(k_j)^{-1}\to G(k)^{-1}$.
From  $\varphi_j=G(k_j)^{-1}\circ \varphi_j'\circ F(h_j)$ and $\varphi_j'\to \varphi'$, we deduce that $\varphi_j\to
G(k)^{-1}\circ \varphi'\circ F(h)$. Hence the sequence   $(h_j,\varphi_j,k_j)$ has a convergent subsequence which proves that $\el$ is proper.

Next we shall show that the projection functor
\begin{equation*}
p:X\times_Y Z\rightarrow Z
\end{equation*}
is an equivalence. We already know from Lemma \ref{ert} that the projection $p$ is an  sc-smooth functor which is
a local $\ssc$-diffeomorphism on objects and morphisms.  To see  that $p$  induces a bijection ${\bf L}(a)\rightarrow {\bf
Z}(z)$ between the  isotropy groups ${\bf L}(a)$ and ${\bf Z}(z)$, we take $a=(x,\varphi,z)\in L$ and $z=p(a)$.  The isotropy group of $a$ is  equal to
$$
{\bf L}(a)=\{(h,\varphi,k)\ |\ h:x\rightarrow x,\ k:z\rightarrow z,\
\varphi\circ F(h)=G(k)\circ\varphi\}.
$$
Given $k\in {\bf Z}(z)$, the morphism $\varphi^{-1}\circ G(k)\circ \varphi$ belongs to the isotropy group ${\bf Y}(F(x))$.
Since $F$ is an equivalence, there is unique morphism $h$ belonging to the isotropy group ${\bf X}(x)$ such that
$F(h)=\varphi^{-1}\circ G(k)\circ \varphi$.  Hence the map ${\bf L}(a)\to {\bf Z}(z)$ defined by $(h, \varphi, k)\mapsto k$ is a bijection.

It remains to prove  that $\abs{p}:\abs{\el }\rightarrow \abs{Z}$ is an
sc-homeomorphism. As  $p$ is an sc-smooth functor,   the induced map $\abs{p}$  is of class
$\ssc^0$.  If $|p(x,\varphi,z)|=|p(x',\varphi',z')|$, then  $\varphi:F(x)\to G(z)$ and  $\varphi:F(x')\to G(z')$. Moreover,   there exists a
morphism $k:z\rightarrow z'$. Then $(\varphi')^{-1}\circ G(k)\circ \varphi$ is a morphism between $F(x)$ and $F(x')$. Because  $\abs{F}:\abs{X}\to \abs{Y}$ is  a bijection, there exists a unique morphism
$h:x\to x'$ such that $F(h)= (\varphi')^{-1}\circ G(k)\circ \varphi$. This implies that $(h, \varphi, k)\in {\bf L}$  is a morphism between $(x, \varphi, z)$ and $(x', \varphi', z')$ showing that both of these triples belong to the same equivalence class in the orbit space $\abs{L}$. So, $\abs{p}$ is an injection.  If $|z|\in \abs{Z}$, then $\abs{F}|x|=\abs{G}|z|$ for some $|x|\in \abs{X}$, that is, $|G(z)|=|F(x)|$.  This means that  there is a morphism $\varphi:F(x)\to G(z)$ so that  the morphism $\varphi^{-1}\circ 1_{G(z)}\circ \varphi$ belongs to the isotropy group ${\bf Y}(F(x))$. Hence there exists an element $h\in {\bf X}(x)$ so that $F(h)=\varphi^{-1}\circ 1_{G(z)}\circ \varphi$. This implies that $(x, h, y)\in \el$ and  $\abs{p}|(x, h, z)|=|z|$ proving that $\abs{p}$ is  a surjection.
Consider $|a|\in \abs{\el}$ with the representative $a=(x, \varphi, z)\in L$.
Since $p:\el \to Z$ is a local sc-diffeomorphism, there exists an open neighborhood $U(a)$ of the point $a$ in $\el$ and an open neighborhood $V(z)$ of the point $z$ in $Z$ so that $p: U(a)\to V(z)$ is an sc-diffeomorphism.  The quotient maps $\pi_1:\el\to \abs{\el}$ and $\pi_2:Z\to \abs{Z}$ are open.  Hence $\abs{U(a)}$ and $\abs{V(z)}$ are open neighborhoods of the equivalence class $[a]$ and $[z]$ in $\abs{\el}$ and $\abs{Z}$, respectively. From
$\abs{p}(\abs{U(a)})=\pi_2\circ p\circ \pi_1 (\abs{U(a)})=\abs{V(z)}$ it follows that $\abs{p}$ maps open sets in $\abs{\el}$ onto open sets in $\abs{Z}$. Therefore, $\abs{p}^{-1}=\abs{p^{-1}}:\abs{Z}\to \abs{\el}$ is continuous. Because  also $\abs{p}:\abs{\el}\to \abs{Z}$ is continuos and  a bijection, the map  $\abs{p}$ is a homeomorphism.  This  finishes the proof of Theorem \ref{major}.
\end{proof}
The next result is  important for our constructions later on.
\begin{prop}
{\em Assume that the functors $F:X\rightarrow Y$ and $G:Z\rightarrow Y$
between ep-groupoids are equivalences. Then there exists a third
ep-groupoid $\el $ and equivalences
$$
\Phi:\el \rightarrow X\ \ \hbox{and}\ \  \Psi:\el \rightarrow Z
$$
so that the compositions $F\circ\Phi$ and $G\circ \Psi:L\to Y$ are
naturally equivalent.}
\end{prop}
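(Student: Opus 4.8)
I would take the third ep-groupoid to be the weak fibered product $\el:=X\times_Y Z$ constructed above, with $\Phi:\el\to X$ and $\Psi:\el\to Z$ the two projection functors, $(x,\varphi,z)\mapsto x$ and $(x,\varphi,z)\mapsto z$ on objects and $(h,\varphi,k)\mapsto h$, $(h,\varphi,k)\mapsto k$ on morphisms. The plan has two parts: first, show $\el$ is an ep-groupoid and $\Phi,\Psi$ are equivalences; second, write down a natural transformation from $F\circ\Phi$ to $G\circ\Psi$ using the ${\bf Y}$-component built into the objects and morphisms of $\el$.

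For the first part, since $F$ is an equivalence and $G$ is in particular sc-smooth, Theorem \ref{major} applies directly and gives that $\el=X\times_Y Z$ carries a natural ep-groupoid structure and that $\Psi:\el\to Z$ is an equivalence. Since $G$ is also an equivalence and $F$ is sc-smooth, I would apply the same theorem with the roles of $(X,F)$ and $(Z,G)$ interchanged --- using the evident identification $X\times_Y Z\cong Z\times_Y X$, $(x,\varphi,z)\mapsto(z,\varphi^{-1},x)$ --- to conclude that $\Phi:\el\to X$ is an equivalence too. (Equivalently, one repeats the proof of Theorem \ref{major} verbatim with $X$ and $Z$ exchanged; nothing there is asymmetric beyond the labelling of the factors.)

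For the second part, define $\tau:\el\to{\bf Y}$ on objects by $\tau(x,\varphi,z)=\varphi$. Because $(F\circ\Phi)(x,\varphi,z)=F(x)$, $(G\circ\Psi)(x,\varphi,z)=G(z)$, and by the very definition of the object set of $\el$ the arrow $\varphi$ is a morphism $F(x)\to G(z)$ in ${\bf Y}$, the arrow $\tau(x,\varphi,z)$ runs from $(F\circ\Phi)(x,\varphi,z)$ to $(G\circ\Psi)(x,\varphi,z)$, as a natural transformation requires. The map $\tau$ is sc-smooth since it factors as the projection $\el\to{\bf Y}{{_t}\times_G}Z$ followed by the projection ${\bf Y}{{_t}\times_G}Z\to{\bf Y}$, and the proof of Lemma \ref{ert} shows that both projections out of such a fibered product are sc-smooth (indeed one of the two is a local sc-diffeomorphism). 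Finally, for a morphism $l=(h,\varphi,k):(x,\varphi,z)\to(x',\varphi',z')$ of $\el$ one has $(F\circ\Phi)(l)=F(h)$ and $(G\circ\Psi)(l)=G(k)$, and the square
\begin{equation*}
\begin{CD}
F(x)@>\varphi>>G(z)\\
@VVF(h)V @VVG(k)V \\
F(x')@>\varphi'>>G(z')
\end{CD}
\end{equation*}
commutes exactly because the defining relation for morphisms of $\el$ reads $\varphi'=G(k)\circ\varphi\circ F(h)^{-1}$, i.e. $\varphi'\circ F(h)=G(k)\circ\varphi$. Thus $\tau$ is a natural transformation and $F\circ\Phi$, $G\circ\Psi$ are naturally equivalent, which gives the claim.

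The argument is almost entirely formal once Theorem \ref{major} and Lemma \ref{ert} are in hand; the only point that deserves a word of care is that the second projection $\Phi:\el\to X$ is again an equivalence, since Theorem \ref{major} is stated so as to privilege the projection onto the factor over which the merely sc-smooth functor is defined. I expect the cleanest route to be the symmetry observation above (reducing it to Theorem \ref{major} itself), rather than re-running the somewhat lengthy proof of that theorem.
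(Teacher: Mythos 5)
Your proposal is correct and follows essentially the same route as the paper's own proof: take $\el=X\times_Y Z$, invoke Theorem~\ref{major} for the two projections, and take $\tau(x,\varphi,z)=\varphi$ as the natural transformation. The only difference is one of explicitness --- the paper simply asserts that ``Theorem~\ref{major} implies that the projections $\pi_1$ and $\pi_2$ are equivalences'' without comment, while you correctly note that Theorem~\ref{major} as stated privileges the second projection, and you supply the symmetry $X\times_Y Z\cong Z\times_Y X$, $(x,\varphi,z)\mapsto(z,\varphi^{-1},x)$, to deduce that $\pi_1$ is an equivalence as well; your checks of sc-smoothness of $\tau$ and of the naturality square $\varphi'\circ F(h)=G(k)\circ\varphi$ are likewise what the paper leaves implicit.
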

\begin{proof}
Set  $\el =X\times_Y Z$.  Since $F$ and $G$
are equivalences and therefore local sc-diffeomorphisms,  Theorem \ref{major} implies  that
the projections
$$
\pi_1:\el \rightarrow X\ \ \hbox{and}\ \  \pi_2:\el \rightarrow Z
$$
are equivalences. A natural
transformation  between $F\circ\pi_1$ and $G\circ\pi_2$ is  given  by the sc-smooth map
$\tau$ defined  by $ \tau(x,\varphi,z)=\varphi.
$
\end{proof}

\subsection{Inversion of Equivalences and Generalized Maps}\label{sect2.3}
As we shall see later on the ep-groupoids can be viewed  as
models for  polyfolds.  If two ep-groupoids have an equivalence
between them they will turn out to be  models for the same polyfold.
In this subsection we introduce the notion of a generalized map
between ep-groupoids. Later on generalized maps will descend to maps between
polyfolds.

So far we have constructed a category whose  objects are  the polyfold
groupoids and whose morphisms between them are the sc-smooth
functors.  There is a distinguished family of morphisms, namely the
equivalences. Equivalences are usually not invertible (as functors).
However, there is a category-theoretic procedure for inverting a
class of prescribed arrows in a given category while at the same
time keeping the objects and only minimally changing the morphisms.
The general procedure is described in \cite{GZ}. In \cite{Mj},
Moerdijk describes the procedure in the case of Lie groupoids, which
are used to give a definition of an orbifold. Modulo the
modifications necessitated by the fact that we work in the splicing
world we follow Moerdijk's description.
  We define a new category whose objects are
the ep-groupoids and whose morphisms ``$X\Longrightarrow Y$'' are
equivalence classes of diagrams of the form
$$X\xleftarrow{F}A\xrightarrow{\Phi} Y$$
where $X, Y$, and $A$  are ep-groupoids and where $F$ is an equivalence and
$\Phi$  is an sc-smooth functor. A second such diagram
$$X\xleftarrow{F'}A'\xrightarrow{\Phi'} Y$$
is called a {\bf  refinement } of the first diagram, if there exists
an equivalence $H:A'\to A$ so that the functors  $F\circ H$ and $F':A'\to X$ are naturally
equivalent as well as $\Phi\circ H$ and $\Phi':A'\to Y$ as illustrated in  the diagram
below. It is clear that given three diagrams which connect $X$ with
$Y$  so that $d'$ refines $d$ and $d''$ refines $d'$, then $d''$
refines $d$.
\begin{equation*}
\begin{CD}
X@<F<<A@>\Phi>>Y\\
@. @AAHA @. \\
X@<F'<<A'@>\Phi'>>Y.\\ \\
\end{CD}
\end{equation*}
\begin{defn}
{\em Two diagrams $X\xleftarrow{F}A\xrightarrow{\Phi}Y$
and $X\xleftarrow{F'}A'\xrightarrow{\Phi'}Y$ as above are called
{\bf equivalent} \index{equivalent diagrams}  if they have a common
refinement.}
\end{defn}
The notion of having a common refinement is clearly reflexive and
symmetric on diagrams of the above form. Let us show that it is also
transitive, so that it indeed  defines an equivalence relation.
\begin{prop}
{\em  Assume that $d, d'$ and $d''$ are three diagrams connecting the ep-groupoids  $X$ with $Y$ and assume that
the diagrams $(d, d')$  and $(d', d'')$ have common refinements. Then also the two diagrams $d,d''$ have  a common refinement.}
\end{prop}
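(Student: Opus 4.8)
I want to show transitivity of the "has a common refinement" relation on diagrams $X \Leftarrow A \Rightarrow Y$. So suppose $d = (X \xleftarrow{F} A \xrightarrow{\Phi} Y)$, $d' = (X \xleftarrow{F'} A' \xrightarrow{\Phi'} Y)$, $d'' = (X \xleftarrow{F''} A'' \xrightarrow{\Phi''} Y)$, and that $d,d'$ have a common refinement $e = (X \xleftarrow{G} B \xrightarrow{\Psi} Y)$ while $d', d''$ have a common refinement $e' = (X \xleftarrow{G'} B' \xrightarrow{\Psi'} Y)$. By definition of refinement there are equivalences $H: B \to A$, $K: B \to A'$, $H': B' \to A'$, $K': B' \to A''$ such that $F \circ H$, $F' \circ K$, $F' \circ H'$, $F'' \circ K'$ are naturally equivalent to $G$ (resp. $G, G', G'$) and similarly on the $\Phi$-side. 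The natural idea is to glue $B$ and $B'$ over $A'$: form the weak fibered product $C := B \times_{A'} B'$, where the two maps to $A'$ are $K: B \to A'$ and $H': B' \to A'$. I would then produce a single diagram $X \xleftarrow{\ } C \xrightarrow{\ } Y$ that refines both $e$ and $e'$, hence (by transitivity of refinement, which is stated in the excerpt just before the Definition) refines $d$, $d'$, and $d''$; in particular it is a common refinement of $d$ and $d''$.

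\textbf{Key steps.} First, since $K: B \to A'$ is an equivalence and $H': B' \to A'$ is sc-smooth (indeed an equivalence), Theorem \ref{major} applies: $C = B \times_{A'} B'$ is an ep-groupoid and the projection $p_2: C \to B'$ is an equivalence; symmetrically, using that $H'$ is an equivalence and $K$ is sc-smooth, the projection $p_1: C \to B$ is an equivalence. Thus $G \circ p_1: C \to X$ is an equivalence and $\Psi \circ p_1 : C \to Y$ is sc-smooth, so $d_C := (X \xleftarrow{G \circ p_1} C \xrightarrow{\Psi \circ p_1} Y)$ is a legitimate diagram. Second, I claim $d_C$ refines $e$: take the equivalence $p_1 : C \to B$; then $G \circ p_1$ equals $G \circ p_1$ on the nose (naturally equivalent trivially) and likewise $\Psi \circ p_1$, so refinement of $e$ by $d_C$ holds with $H = p_1$. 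Third, and this is the point where the fibered-product construction does real work, I claim $d_C$ also refines $e'$: take the equivalence $p_2: C \to B'$; I must check that $G \circ p_1$ is naturally equivalent to $G' \circ p_2$ as functors $C \to X$, and that $\Psi \circ p_1$ is naturally equivalent to $\Psi' \circ p_2$ as functors $C \to Y$. Here I use the tautological natural transformation on the weak fibered product: recall from the proof of the last Proposition that $\tau(b, \varphi, b') = \varphi$ defines a natural transformation between $K \circ p_1$ and $H' \circ p_2$, i.e. between the two functors $C \to A'$. Composing this natural transformation with $F': A' \to X$ gives a natural transformation $F' * \tau$ between $F' \circ K \circ p_1$ and $F' \circ H' \circ p_2$. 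Now $F' \circ K$ is naturally equivalent to $G$ and $F' \circ H'$ is naturally equivalent to $G'$ by hypothesis; precomposing those natural equivalences with the equivalences $p_1$ and $p_2$ respectively, and pasting all three natural transformations together, yields a natural equivalence $G \circ p_1 \simeq G' \circ p_2$. The identical argument with $\Phi', \Psi, \Psi'$ in place of $F', G, G'$ (and the same $\tau$, since the $Y$-component of a morphism in $B$, $B'$, $C$ is being tracked in the same way) gives $\Psi \circ p_1 \simeq \Psi' \circ p_2$. Hence $d_C$ refines $e'$ via $H = p_2$.

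\textbf{Conclusion of the argument.} Since $e$ refines $d$ (given) and $d_C$ refines $e$, transitivity of refinement — explicitly noted in the excerpt: "if $d'$ refines $d$ and $d''$ refines $d'$ then $d''$ refines $d$" — shows $d_C$ refines $d$. Likewise $e'$ refines $d''$ and $d_C$ refines $e'$, so $d_C$ refines $d''$. Therefore $d_C$ is a common refinement of $d$ and $d''$, which is exactly what transitivity of "having a common refinement" requires. The bookkeeping should also record that $d_C$ refines $d'$ (through either $e$ or $e'$), though that is not needed for the statement.

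\textbf{Main obstacle.} The only genuinely nontrivial point is the second refinement claim, namely producing the natural equivalences $G \circ p_1 \simeq G' \circ p_2$ and $\Psi \circ p_1 \simeq \Psi' \circ p_2$; everything else is assembling already-established facts (Theorem \ref{major} for the ep-groupoid structure and the two projections being equivalences, transitivity of refinement for the final step). The care needed is in: (i) noting that natural equivalence of functors is preserved under pre- and post-composition with (sc-smooth) functors, and is transitive, so that natural equivalences can be pasted; and (ii) checking that the tautological transformation $\tau(b,\varphi,b') = \varphi$ on $C = B \times_{A'} B'$ is indeed natural with respect to morphisms of $C$ — which is precisely the computation already carried out for the weak fibered product in the proof of the preceding Proposition, so it may simply be cited. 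Once these are in hand the proof is essentially a diagram chase and the statement follows.
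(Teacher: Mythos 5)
Your proof is correct and takes essentially the same approach as the paper: you form the weak fibered product of the two intermediate refinements over the middle diagram's ep-groupoid, invoke Theorem \ref{major} to get the two projections as equivalences, and use the tautological natural transformation $\tau(b,\varphi,b')=\varphi$ pasted with the given natural equivalences to show $G\circ p_1\simeq G'\circ p_2$ and $\Psi\circ p_1\simeq\Psi'\circ p_2$. The only cosmetic difference is that the paper writes this pasting out as the single explicit formula $\tau_2(x')\circ\Phi(\varphi)\circ\tau_1(x)^{-1}$ rather than describing it as a composition of whiskered natural transformations.
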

\begin{proof}
Assume that $b$ and $b'$ are the common refinements of the diagrams  $(d, d')$  and $(d', d'')$ , respectively. In particular,  the diagrams $b$ and $b'$ are common refinements of  the diagram $d'$. In view of the remarks proceeding the proposition, it suffices to prove that  $b$ and $b'$ have a common refinement.  In order to prove this,
 assume that  the diagrams $b: X\xleftarrow{G}B\xrightarrow{\Psi} Y$ and $b':X\xleftarrow{G'}B'\xrightarrow{\Psi'} Y$  are refinements of the diagram
$X\xleftarrow{F}A\xrightarrow{\Phi} Y$.  This situation is illustrated by the following  two diagrams
\begin{equation*}
\begin{CD}
X@<F<<A@>\Phi>>Y\\
@. @AAHA @. \\
X@<G<<B@>\Psi>>Y\\.\\
\end{CD}
\qquad  \qquad
\begin{CD}
X@<F<<A@>\Phi>>Y\\
@. @AAH'A @. \\
X@<G'<<B'@>\Psi'>>Y\\.\\
\end{CD}
\end{equation*}
where  the functors $H:B\rightarrow A$ and $H':B'\rightarrow A$ are
equivalences.   We take the fibered product $L=B\times_{A} B'$ and
define the diagram $c$ by
$$
c:\, X\xleftarrow{G\circ\pi_1}L\xrightarrow{\Psi\circ\pi_1}Y.
$$
We claim that $c$  is a common refinement of the diagrams $b$ and $b'$.  In view of Theorem \ref{major}, the projection
$\pi_1:\el \rightarrow B$ is an equivalence. The
projection $\pi_2:\el \rightarrow B'$  has the same properties as
$\pi_1$.  We have to show that $\Psi\circ\pi_1$ and $\Psi'\circ\pi_2:\el \to Y$
are naturally equivalent as well as $G\circ\pi_1$ and
$G'\circ\pi_2:\el \to X$.  Note  that if $(x, \varphi, x')\in\el$, then
$$
\Psi'\circ\pi_2(x, \varphi ,x')=\Psi'(x'), \quad  \Psi \circ\pi_1(x, \varphi ,x')=\Psi(x).
$$
and $\varphi :H(x)\to H'(x').$
Since  the functors  $\Phi \circ H$ and $\Psi$ as well as $\Phi \circ H'$ and $\Psi'$ are naturally equivalent, there exist two sc-smooth maps $\tau_1:B\to {\bf Y}$ and $ \tau_2:B'\to {\bf Y}$ such that $\tau_1(x)$ is a morphism $\Phi \circ H(x)\to \Psi (x)$ and $\tau_2(x')$   is a morphism $\Phi \circ H'(x')\to \Psi' (x')$.  The sc-smooth map $\tau:\el \to {\bf Y}$,  given by
$$\tau (x, \varphi, x')=\tau_2(x')\circ \Phi (\varphi )\circ \tau_1 (x)^{-1},$$
defines the  natural equivalence between  the functors $\Psi' \circ\pi_2$ and  $\Psi\circ\pi_1:\el \to Y$.
Similar arguments prove that  the two functors $G\circ\pi_1$  and $G'\circ\pi_2:\el \to X$ are naturally equivalent. This  completes the proof  of the proposition.
\end{proof}
\begin{defn}
 Let $X$ and $Y$ be ep-groupoids. A {\bf generalized map}
$\mathfrak{a}:X\Rightarrow Y$ is by definition   the equivalence
class of a diagram
$$
d : X\xleftarrow{F}A\xrightarrow{\Phi}Y
$$
where $A$ is an ep-groupoid and where $F$ is an equivalence and $\Phi$ is an sc-smooth functor. We shall use  the notation
$\mathfrak{a}=[d]=[X\xleftarrow{F}A\xrightarrow{\Phi}Y]$.
\end{defn}

 In order to define the composition $ \mathfrak{b}\circ \mathfrak{a}$ of  the generalized maps $\mathfrak{a}:X\Rightarrow Y$ and $\mathfrak{b}:Y\Rightarrow Z$ we
choose a representatives $X\xleftarrow{F}A\xrightarrow{\Phi} Y$ and $Y\xleftarrow{G}B\xrightarrow{\Psi} Z$  of the equivalence classes $\mathfrak{a}$ and $\mathfrak{b}$, and consider the diagram
$$X\xleftarrow{F}A\xrightarrow{\Phi} Y\xleftarrow{G}B\xrightarrow{\Psi} Z.$$
We  replace the middle portion $A\xrightarrow{\Phi} Y\xleftarrow{G}B$ of the diagram by the
ep-groupoid $A\times_YB$. In view of Theorem \ref{major}, the projection $\pi_1:A\times_YB\to A$ is
an equivalence and we can build the diagram
\begin{equation}\label{groupoideq5}
X\xleftarrow{F\circ \pi_1}A\times_YB\xrightarrow{\Psi\circ \pi_2} Z.
\end{equation}
The map  $F\circ\pi_1$ is an equivalence and the map $\Psi\circ \pi_2$ an sc-smooth functor. If
$X\xleftarrow{F'}A'\xrightarrow{\Phi'}Y$ is equivalent to the diagram $d$ and
$Y\xleftarrow{G'}B'\xrightarrow{\Psi'}Z$ is equivalent to the diagram $d_1$, then  the two diagrams
$$\text{$X\xleftarrow{F\circ \pi_1}A\times_YB\xrightarrow{\Psi\circ \pi_2} Z$\  and \
$X\xleftarrow{F'\circ \pi_1}A'\times_YB'\xrightarrow{\Psi'\circ \pi_2} Z$}$$
are equivalent.
Therefore, the composition of the equivalence classes having the
representatives  $X\xleftarrow{F}A\xrightarrow{\Phi} Y$ and
$Y\xleftarrow{G}B\xrightarrow{\Psi} Z$ can be defined as the
equivalence class of the  diagram \eqref{groupoideq5}. The identity
morphism $1_Y$ of the groupoid $Y$ can be identified with the equivalence class of the diagram
\begin{equation}\label{groupoideq6}
Y\xleftarrow{1_Y}Y\xrightarrow{1_Y}Y.
\end{equation}
If the functor $F:X\to Y$ is an equivalence, then the diagram
$Y\xleftarrow{F}X\xrightarrow{F} Y$ is a refinement of
\eqref{groupoideq6} and hence belongs to the class $1_Y$. A smooth
functor $\Phi:X\to Y$ can be identified with the equivalence class
of the diagram
$$
X\xleftarrow{1_X}X\xrightarrow{\Phi}Y,
$$
denoted by $[\Phi]$.
As a special case, an equivalence functor $F:X\to Y$ is identified with the
equivalence class of the diagram
$$
X\xleftarrow{1_X}X\xrightarrow{F}Y.
$$
The diagram defines  the equivalence class  $[F]$. Its inverse, denoted by
$[F]^{-1}$, is readily identified with the equivalence class of the
diagram
$$
Y\xleftarrow{F}X\xrightarrow{\text{Id}_X}X
$$
so that  $[F]\circ [F]^{-1}=1_Y$ and $[F]^{-1}\circ [F]=1_X$.

We see
that  those functors which originally are equivalences become
invertible in the new category.

The following lemma explains the relationship between generalized
maps and induced maps between the underlying orbit space. The proof  is
straightforward.
\begin{lem}
Let $d:\ X\xleftarrow{F}A\xrightarrow{\Phi}Y$ be a diagram  between
ep-groupoids in which $F$ is an equivalence and $\Phi$ an sc-smooth
functor. Then $d$ induces an $\ssc^0$-map $\abs{d}:\abs{X}\rightarrow \abs{Y}$ between the orbit spaces
defined by
$$
\abs{d}= \abs{\Phi}\circ \abs{F}^{-1}.
$$
If a second diagram $d'$ refines $d$ then $\abs{d'}=\abs{d}$. In particular,
equivalent diagrams induce the same map $\abs{X}\rightarrow \abs{Y}$.

Hence a generalized map $X\Rightarrow Y$  between ep-groupoids which is an equivalence class $[d]$ of diagrams
induces the  canonical  map $\abs{[d]}:=\abs{d}:\abs{X}\to \abs{Y}$ between the orbit spaces.
\end{lem}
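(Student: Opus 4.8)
The plan is to dispatch the three assertions in order, each reducing to bookkeeping with facts already assembled in this section; in keeping with the remark that the proof is straightforward, there is no genuine obstacle. First I would check that $\abs{d}$ is well-defined and of class $\ssc^0$: since $F$ is an equivalence, by definition the induced map $\abs{F}\colon\abs{A}\to\abs{X}$ is an $\ssc$-homeomorphism, so $\abs{F}^{-1}\colon\abs{X}\to\abs{A}$ exists and is again of class $\ssc^0$; since $\Phi$ is an sc-smooth functor, $\abs{\Phi}\colon\abs{A}\to\abs{Y}$ is of class $\ssc^0$; hence $\abs{d}:=\abs{\Phi}\circ\abs{F}^{-1}\colon\abs{X}\to\abs{Y}$ is an $\ssc^0$-map.

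Next, suppose $d'\colon X\xleftarrow{F'}A'\xrightarrow{\Phi'}Y$ refines $d$, with witnessing equivalence $H\colon A'\to A$ such that $F\circ H$ is naturally equivalent to $F'$ and $\Phi\circ H$ is naturally equivalent to $\Phi'$. Invoking the fact recorded earlier that naturally equivalent functors induce the same map on orbit spaces, together with the compatibility of the orbit-space construction with composition of functors, I obtain $\abs{F'}=\abs{F}\circ\abs{H}$ and $\abs{\Phi'}=\abs{\Phi}\circ\abs{H}$. Because $H$ is an equivalence, $\abs{H}$ is a bijection, so $\abs{F'}^{-1}=\abs{H}^{-1}\circ\abs{F}^{-1}$, and therefore
$$\abs{d'}=\abs{\Phi'}\circ\abs{F'}^{-1}=\abs{\Phi}\circ\abs{H}\circ\abs{H}^{-1}\circ\abs{F}^{-1}=\abs{\Phi}\circ\abs{F}^{-1}=\abs{d}.$$

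Finally, if $d$ and $d'$ are equivalent diagrams they admit a common refinement $d''$ by definition, so $\abs{d}=\abs{d''}=\abs{d'}$ by the previous step; hence the assignment $[d]\mapsto\abs{d}$ depends only on the equivalence class, and a generalized map $\mathfrak{a}=[d]\colon X\Rightarrow Y$ acquires the well-defined associated orbit-space map $\abs{\mathfrak{a}}:=\abs{d}$. The only point needing any care is the assertion that naturally equivalent functors induce identical orbit-space maps, and this has already been established, so the remaining argument is purely formal.
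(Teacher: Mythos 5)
Your argument is correct and is precisely the straightforward verification the paper has in mind when it omits the proof: you use that equivalences induce sc-homeomorphisms on orbit spaces, that naturally equivalent functors induce the same orbit-space map, and functoriality of $\abs{\cdot}$ under composition, then reduce the equivalence case to the refinement case via a common refinement. Nothing is missing.
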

One might ask if one can characterize invertible generalized maps,
i.e. generalized maps $\mathfrak{a}$ which have an inverse in
$\mathfrak{Ep}$. An obvious conjecture is that an invertible
generalized map $\mathfrak{a}:X\Longrightarrow Y$ can be represented
by a diagram $X\xleftarrow{F}A\xrightarrow{G}Y$, where $F$ and $G$
are equivalences. This is,  in fact,  true if one deals with the special
case of \'etale proper Lie groupoids (of finite dimension). The
standard proof of the above fact uses the inverse function theorem
and a dimension argument and this part cannot be generalized. In
fact,  it seems doubtful to be true in our more general framework.
Therefore,  we introduce the following two notions.
\begin{defn}\label{twonot}
A generalized map $\mathfrak{a}:X\Rightarrow Y$ is called {\bf
invertible} if there exists a generalized map
$\mathfrak{b}:Y\Longrightarrow X$ satisfying  $\mathfrak{b}\circ
\mathfrak{a}=1_X$ and $\mathfrak{a}\circ\mathfrak{b}=1_Y$. We call a generalized map
$\mathfrak{a}$ {\bf strongly invertible},  or {\bf s-invertible} for
short, if it is the equivalence class of a diagram $X\xleftarrow{F}A\xrightarrow{G}Y$  in which both functors $F$ and $G$ are
equivalences. An
s-invertible element is called an {\bf s-isomorphism}.
\end{defn}
If $\mathfrak{a}=[X\xleftarrow{F}A\xrightarrow{G}Y]$  is s-invertible, then  its  inverse $\mathfrak{a}^{-1}$  can be represented by
$Y\xleftarrow{G}A\xrightarrow{F}X$. Indeed,  the composition
$$
[Y\xleftarrow{G}A\xrightarrow{F}X]
\circ [X\xleftarrow{F}A\xrightarrow{G}Y]$$
is represented by the diagram
$X\xleftarrow{F\circ\pi_1}A\times_YA\xrightarrow{F\circ\pi_2}X$ which is refined by the diagram  $X\xleftarrow{F}A\xrightarrow{F}X$ in view of the equivalence
 $A\rightarrow A\times_YA$  defined as $a\rightarrow (a,a)$.
 The diagram   $X\xleftarrow{F}A\xrightarrow{F}X$ also refines the identity diagram
$X\xleftarrow{1_X}X\xrightarrow{1_X}X$ via the
equivalence $F:A\rightarrow X$. Hence our candidate for
$\mathfrak{a}^{-1}$ is indeed the inverse of $\mathfrak{a}$.

This shows that the
inverse of an s-invertible element is s-invertible. It is easily
verified that the composition of two s-invertible maps is
s-invertible.

In the following we denote by $\mathfrak{Ep}$ the
category whose objects are  ep-groupoids and whose morphisms  are  generalized maps.

\subsection{Strong Bundles over Ep-Groupoids}\label{ssect1.4}
The section is devoted to strong bundles  over ep-groupoids and extends the  previously
 developed  ideas of equivalences and generalized maps to
this context.

We consider an ep-groupoid $X=(X, {\bf X})$ and a strong M-polyfold
bundle
$$p:E\to X$$
over the object space $X$ of the ep-groupoid.  Strong M-polyfold bundles are defined in Definition 4.9 of \cite{HWZ2}. In particular, $E$ is an M-polyfold of type $1$ and $X$ is an M-polyfold of type $0$. Moreover, $p:E\to X$ is a surjective sc-smooth map  and the fibers
$$p^{-1}(x)=E_x$$
over $x\in X$ carry the structure  of a Banach space. Since the source map $s:{\bf
X}\to X$ is by definition a local $\ssc$-diffeomorphism, the fibered product
$$
{\bf X}{{_s}\times_{p}}E=\{(g, e)\in {\bf X}\times E\vert \, s(g)=p(e)\}$$
is an M-polyfold in view of Lemma \ref{ert}. Moreover, the bundle
$${\bf E}={\bf X}{{_s}\times_{p}}E\xrightarrow{\pi_1} {\bf X}$$
is, as  the pull-back of a strong M- polyfold bundle, also a strong M-polyfold bundle in view of Proposition 4.11 in \cite{HWZ2}.

 Now we assume that there exists a  {\bf strong bundle
map}  $\mu:{\bf E}\to E$  which covers the target map $t:{\bf X}\to X$ of the ep-groupoid so that
$$t\circ \pi_1(g, e)=p\circ \mu (g, e)$$
for all $(g, e)\in {\bf X}{{_s}\times_{p}}E$,

\begin{equation*}
\begin{CD}
 {\bf X}{{_{s}}\times_{p}}E@>\mu>>E\\
@V\pi_1VV     @VVp V \\
{\bf X}@>t>> X.\\
\end{CD}
\end{equation*}
\mbox{}\\[4pt]
In addition, we assume that the postulated bundle map $\mu$ satisfies the following
properties.
\begin{Myitemize}
\item $\mu$ is a surjective local $\ssc$-diffeomorphism
and linear on the fibers $E_x$.
\item $\mu(1_x, e_x)=e_x$ for  all $x\in X$ and $e_x\in E_x$.
\item  $\mu (g\circ h,  e)=\mu (g, \mu (h, e))$\\
 for  all $g, h\in {\bf X}$ and $e\in E$ satisfying  $s(h)=p(e)$ and $t(h)=s(g)=p(\mu (h,e))$.
\end{Myitemize}
It follows, in particular, that
$$\mu (g, \cdot ):E_x\to E_y$$
is a linear isomorphism if $g:x\mapsto y$ is a morphism in ${\bf X}$.

With the above data we shall define the ep-groupoid
$$E=(E, {\bf E})$$
in which the object set $E$ is the M-polyfold $E$ we started with and ${\bf E}={\bf X}{{_{s}}\times_{p}}E$ is the above fiber product.  The source and target maps $s, t:{\bf E}\to E$ are defined as follows,
\begin{align*}
s(g, e)&=e\\
t(g, e)&=\mu (g, e),
\end{align*}
if $(g, e) \in {\bf X}{{_{s}}\times_{p}}E.$ These maps $s$ and $t$ are fiberwise linear surjective  local $\ssc_\triangleleft$-diffeomorphisms covering the source and target maps ${\bf X}\to X$. Indeed,
\begin{align*}
p\circ s(g, e)&=p(e)=s(g)\\
p\circ t(g, e)&=p\circ \mu(g, e)=t\circ \pi_1(g, e)=t(g).
\end{align*}
The identity morphism at $e_x\in E_x$ is the pair $(1_x, e_x)\in {\bf X}$ and if $g:x\mapsto y$ is a morphism in ${\bf X}$, the inverse of $(g, e_x)\in {\bf E}$ is the pair $i (g, e_x)=(g^{-1}, \mu (g, e_x))$. The multiplication map in $E$ is defined by
$$(h, f)\circ (g, e):=(h\circ g, e)$$
whenever  $f=\mu (g, e)\in E$.

The two sc-smooth projection maps $p:E\to X$ and $\pi_1:{\bf E}\to {\bf X}$ together define an  sc-smooth functor denoted  by
$$P:E\to X$$
between the two ep-groupoids $E=(E, {\bf E})$ and $X=(X, {\bf X})$.

We shall refer to this functor $P:E\to X$ as to a  {\bf strong bundle over the  ep-groupoid $X$}.

 Such a strong bundle is, in particular, an ep-groupoid together with a functor $P$ onto the base ep-groupoid and  we shall use the same letter for the two induced maps on the object and the morphism sets, namely,
$$E\xrightarrow{P}X\quad \text{and}\quad {\bf E}\xrightarrow{P}{\bf X}.$$

\begin{defn}
{\em A {\bf linear strong bundle morphism} $\Phi:P\to P'$ between the two strong bundles $P:E\to X$ and $P':E'\to X'$ over the ep-groupoids  $X$ and $X'$ consists of a functor $\Phi:E=(E, {\bf E})\to E'=(E', {\bf E}')$ between ep-groupoids which is linear on the fibers and which covers an sc-functor $\varphi:X\to X'$ between the bases. Moreover, the functor $\Phi$ induces strong bundle maps $\Phi:E\to E'$ and $\Phi:{\bf E}\to {\bf E}'$ between the object sets and morphism sets,

\begin{equation*}
\begin{CD}
E@>\Phi>>E'\\
@VPVV     @VVP' V \\
X@>\varphi>> X' .\\
\end{CD}
\end{equation*}
\mbox{}\\[1ex]
}
\end{defn}
There is a distinguished class of linear strong bundle morphisms which
generalizes equivalent functors between ep-groupoids.
\begin{defn}
{\em A {\bf linear strong bundle equivalence} $\Phi:P\to P'$ between the two strong bundles $P:E\to X$ and $P':E'\to X'$ over ep-groupoids is a linear strong bundle morphism $\Phi:P\to P'$ satisfying the following properties.
\begin{itemize}
\item[(1)] The functor $\Phi:E\to E'$ is an equivalence of ep-groupoids, covering the equivalence $\varphi:X\to X'$ between the underlying  ep-groupoids.
\item[(2)]  The induced maps $\Phi:E\to E'$ and $\Phi:{\bf E}\to {\bf E}'$ between the object sets and the morphism sets preserve the strong bundle structures and are locally strong bundle isomorphisms.
\end{itemize}
}
\end{defn}
For notational convenience we shall abbreviate these notions as follows.
\begin{itemize}
\item[ ] {\bf bundle map\phantom{valence}}\quad:=\quad{\bf linear strong bundle morphism}
\item[ ]{\bf bundle equivalence}\quad:=\quad{\bf linear strong bundle
equivalence}.
\end{itemize}
In order to generalize the notion of natural equivalence we consider two strong bundles $P:E\to X$ and $P':E'\to X'$ over ep-groupoids and two bundle maps
$$
\Phi, \Psi:P\to P'.
$$

\begin{defn}
{\em The  bundle maps $\Phi$ and $\Psi$  are called {\bf naturally equivalent}, if there exists a natural transformation
$$T:E\to {\bf E}'$$
for the two functors $\Phi, \Psi:E\to E'$ between ep-groupoids (in the sense of Definition \ref{ntrans}) and a natural transformation
$$\tau:X\to {\bf X'}$$
between the underlying functors $\varphi, \psi:X\to X'$ which commute with the functors $P:E\to X$ and $P':E'\to X'$ so that
$$\tau\circ P(e)=P'\circ T(e),\quad e\in E.$$}
\end{defn}

In order to reformulate the above definition we derive a simple description of  the  natural transformation $T:\Phi\rightarrow\Psi$ as a canonical lift of the underlying natural transformation $\tau:\varphi\rightarrow \psi$. By definition, the map
$$
T:E\rightarrow {\bf E}':= {\bf X}'{{_s}\times_{P'}}E',
$$
which covers the natural transfomation $\tau:X\rightarrow {\bf X}'$ has the form
$$
T(e)=(\tau(P(e)),A(e)).
$$
Since $\Phi(e)=s(T(e))=A(e)$ and $\Psi(e)=t(T(e))=\mu(\tau(P(e)),A(e))$
it follows that
$$
T(e)=(\tau(P(e)),\Phi(e))\ \hbox{and}\ \ \Psi(e)=\mu(\tau(P(e)),\Phi(e)).
$$
This implies that $T$ is a strong bundle map between  the bundles $E\rightarrow X$
and ${\bf X}'{{_s}\times_{P'}}E'\rightarrow {\bf X}'$ covering $\tau$.
In view of this discussion we obtain an equivalent definition as follows.
\begin{defn}{\em
A {\bf natural transformation} $T$ between bundle maps $\Phi:P\to P'$ and $\Psi:P\to P'$
(covering the underlying functors $\varphi:X\to X'$  and $\psi:X\to X'$)
is a strong bundle map
$$
T:E\rightarrow {\bf X}'{{_s}\times_{P'}}E'
$$
covering a natural transformation $\tau:X\rightarrow {\bf X}'$ between  the functors $\varphi$ and $\psi$ of the form
$$
T(e)=(\tau(P(e)),\Phi(e))
$$
and satisfying
$$
\mu(\tau(P(e)),\Phi(e))=\Psi(e).
$$}
\end{defn}

Let us denote by $\mathfrak{SEp}$ the category whose objects are
strong bundles over ep-groupoids. As in the case of  the category $\mathfrak{Ep}$
we take as morphisms a class of generalized maps introduced next.   We consider two strong bundles
$P:E\to X$ and $P':E'\to X'$ over ep-groupoids and study the
diagrams
$$
E\xleftarrow{\Phi} E''\xrightarrow{\Psi}E',
$$
where $\Phi'':E''\to X''$ is a  third strong bundle over an ep-groupoid, and where $\Phi$ is a bundle equivalence and $\Psi$ a bundle map.  Following the earlier construction of a generalized map between ep-groupoids, one introduces the notion of a refinement of a diagram and then calls two diagrams equivalent, if they possess a common refinement. The equivalence class of the  diagram $E\xleftarrow{\Phi} E''\xrightarrow{\Psi}E',$ is then, by definition, a {\bf generalized strong bundle map}, for simplicity called a {\bf generalized bundle map} and denoted by
$$\mathfrak{A}:E\Rightarrow E'.$$

By
$$\mathfrak{a}:X\Rightarrow X'$$
we shall denote the associated underlying generalized map between the base ep-groupoids. One has to keep in mind that in the fibers the induced maps are linear and preserve the strong bundle  structure. The notion of invertible and s-invertible are defined as before (Definition \ref{twonot}), namely as follows.
\begin{defn}
{\em  A {\bf generalized bundle map} $\mathfrak{A}:E\Rightarrow E'$ is called {\bf invertible} if there exists a generalized bundle map $\mathfrak{B}:E'\Rightarrow E$ satisfying $\mathfrak{B}\circ \mathfrak{A}=1_E$ and  $\mathfrak{A}\circ \mathfrak{B}=1_{E'}.$ The {\bf generalized bundle map} $\mathfrak{A}:E\Rightarrow E'$ is called {\bf s-invertible}, if it can be represented by a diagram
 $$E\xleftarrow{\Phi} E''\xrightarrow{\Psi}E'$$
 in which both bundle maps are equivalences.}
 \end{defn}
An s-invertible generalized bundle map $\mathfrak{A}:E\Rightarrow E'$ will in the following be called an {\bf s-bundle isomorphism}.

We point out that an s-bundle isomorphism $\mathfrak{A}:E\Rightarrow E'$ covers automatically an s-isomorphism $\mathfrak{a}:X\Rightarrow X'$
between the underlying base ep-groupoids.

\begin{defn}
{\em The two strong bundles $P:E\to X$ and $P':E'\to X'$ over ep-groupoids  are called {\bf strong bundle equivalent}, if there exists an s-bundle isomorphism $\mathfrak{A}:E\Rightarrow E'$.}
\end{defn}

\begin{defn}
{\em A (sc-smooth) {\bf section} of the strong bundle  $P:E\rightarrow
X$ over the ep-groupoid $X$  is an  sc-smooth functor $F:X\rightarrow
E$ satisfying $P\circ F=Id_X$. An {\bf $\ssc^+$-section}  is an
sc-smooth section $F:X\rightarrow E$ inducing an sc-smooth functor
$X\rightarrow E^{0,1}$, where $E^{0,1}$ has the grading
$(E^{0,1})_m=E_{m,m+1}$ for all  $m\geq 0.$

A {\bf Fredholm
section} $F$ of the strong bundle $P:E\rightarrow X$ is an sc-smooth
functor which,  as a section on the object sets,  is an  M-polyfold
Fredholm section as defined in Definition 3.6 in  \cite{HWZ3}. }
\end{defn}
The functoriality of the section $F:X\to E$ requires  that
$$\mu (\varphi, F(x))=F(y)$$
 whenever there is a morphism $\varphi:x\to y$ in ${\bf X}$.

The space of sc-smooth sections  of the bundle $P:E\rightarrow X$ is
denoted by $\Gamma(P)$,  the  space of sc$^+$-sections is
denoted by $\Gamma^+(P)$, and the space of Fredholm sections by ${\mathcal F}(P)$. The following observation will be useful later on.
\begin{prop}\label{newprop2.20}
 Consider the two strong bundles $P:E\to X$ and $P':E'\to Y$ over the ep-groupoids $X$ and $Y$ and let $\Phi, \Psi:P'\to P$ be two bundle equivalences which are naturally equivalent. If $F:X\to E$ is an sc-smooth section of the strong bundle $P$, then the pull-back sections $\Phi^* (F)$ and $\Psi^* (F):Y\to E'$ of the bundle $P'$ agree,
$$\Phi^* (F)(y)=\Psi^*(F)(y),\quad y\in Y.$$

\end{prop}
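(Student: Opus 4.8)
The plan is to unwind the definition of the pull-back sections and then use the natural transformation to compare $\Phi^{\ast}(F)$ and $\Psi^{\ast}(F)$ fibrewise over the objects of $Y$. First I would fix notation: let $\varphi,\psi:Y\to X$ be the sc-smooth functors covered by the bundle equivalences $\Phi$ and $\Psi$, respectively, and let $\mu$ denote the fibrewise linear structure map of the target strong bundle $P:E\to X$, the bundle of which $F$ is a section. Since $\Phi$ and $\Psi$ are bundle equivalences, for each object $y\in Y$ they restrict to linear isomorphisms $\Phi:E'_y\to E_{\varphi(y)}$ and $\Psi:E'_y\to E_{\psi(y)}$, and the pull-back sections are by construction characterized by $\Phi(\Phi^{\ast}(F)(y))=F(\varphi(y))$ and $\Psi(\Psi^{\ast}(F)(y))=F(\psi(y))$. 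Thus the assertion reduces to showing that the single element $e':=\Phi^{\ast}(F)(y)\in E'_y$ also satisfies $\Psi(e')=F(\psi(y))$; injectivity of $\Psi$ on the fibre $E'_y$ then forces $e'=\Psi^{\ast}(F)(y)$, which is exactly the claim.

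Next I would invoke the hypothesis that $\Phi$ and $\Psi$ are naturally equivalent as bundle maps, in the reformulated form given above: there is a natural transformation $\tau:Y\to{\bf X}$ from $\varphi$ to $\psi$ together with a strong bundle map $T:E'\to{\bf X}{{_s}\times_{P}}E$ of the form $T(e')=(\tau(P'(e')),\Phi(e'))$ satisfying $\mu(\tau(P'(e')),\Phi(e'))=\Psi(e')$ for every $e'\in E'$. Evaluating this relation at $e'=\Phi^{\ast}(F)(y)$, for which $P'(e')=y$ and $\Phi(e')=F(\varphi(y))$, and recalling that $\tau(y):\varphi(y)\to\psi(y)$ is a morphism in ${\bf X}$, I obtain $\Psi(e')=\mu(\tau(y),F(\varphi(y)))$.

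The last step is to bring in the functoriality of the section $F$: since $F:X\to E$ is a functor of ep-groupoids, $\mu(g,F(x))=F(x')$ for every morphism $g:x\to x'$ in ${\bf X}$. Applying this with $g=\tau(y):\varphi(y)\to\psi(y)$ gives $\mu(\tau(y),F(\varphi(y)))=F(\psi(y))$, hence $\Psi(e')=F(\psi(y))$, and the reduction above closes the argument for each $y\in Y$. I do not anticipate a real obstacle; the only points that need a little care are purely bookkeeping — checking that the structure map $\mu$ appearing in the reformulated natural transformation is that of the target bundle $P$ and not of $P'$, and noting that a bundle equivalence restricts to fibrewise linear isomorphisms on the $E'_y$, so that the characterizing identities for $\Phi^{\ast}(F)$ and $\Psi^{\ast}(F)$ make sense and $\Psi|_{E'_y}$ is injective.
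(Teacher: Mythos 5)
Your argument is correct and follows the same route as the paper's proof: pull-back characterization, the identity $\mu(\tau(y),\Phi(e))=\Psi(e)$ coming from the natural transformation, functoriality of $F$ at the morphism $\tau(y):\varphi(y)\to\psi(y)$, and injectivity of the bundle equivalence $\Psi$ on fibres. The only cosmetic difference is that you invoke the reformulated strong-bundle-map version of the natural transformation explicitly, whereas the paper reads the same identity off the definition of the target map; the underlying computation is identical.
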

\begin{proof}
Let  $\phi$ resp. $\psi:Y\rightarrow X$ be the functors on the base ep-groupoids associated with the bundle equivalences $\Phi$ resp. $\Psi:E'\to E$. Abbreviating $G=\Phi^* (F)$ and $H=\Psi^*(F)$ we know, by definition of the pull-back of a section,
$$\Phi\circ G (y)=F \circ \varphi (y)\quad \text{and}\quad \Psi \circ H(y)=F\circ \psi (y).$$
Denoting both natural transformations by $\tau$ we have for $y\in Y$ the morphisms
$\tau (y):\varphi (y)\to \psi (y)$ in ${\bf X}$ and $\tau (e):\Phi (e)\to \Psi (e)$ in ${\bf E}$, where $e\in E'$ is in the fiber over $y$, $P'(e)=y$. Hence, by definition of the target map $t: {\bf E}\to E$,
$$\mu (\tau (y), \Phi (e))=\Psi (e)\quad \text{and}\quad P'(e)=y.$$
Consequently,  since the section $F$ is a functor and $\tau (y)$ is a morphism $:\varphi (y)\to \psi (y)$, we find
\begin{equation*}
\begin{split}
\Psi (G(y))&=\mu(\tau(y),\Phi(G(y)))\\
            &=\mu(\tau(y),F(\varphi(y)))=F(\psi(y))\\
            &=\Psi(H(y)).
\end{split}
\end{equation*}
By assumption, $\Psi$ is a bundle equivalence and $H$ and $G$ are sections. Therefore we conclude $G(y)=H(y)$  for all $y\in Y$ what we wanted to prove.
\end{proof}
In order to apply Proposition \ref{newprop2.20}, we consider the diagram
\begin{equation*}
\begin{CD}
E@<\Phi<<E''@>\Psi>>E'\\
@. @AA\Sigma A @. \\
E@<\Theta<<A@>\Xi>>E'\\.
\end{CD}
\end{equation*}
representing a bundle diagram refinement  in which all the functors  are
bundle equivalences. By definition there are  the natural equivalences $\Phi\circ \Sigma \simeq \Theta$ and $\Psi\circ\Sigma\simeq\Xi$ and we conclude from Proposition \ref{newprop2.20} for a section $F:X\to E$ of the strong bundle $P:E\to X$ over the ep-groupoid $X$ that
$$(\Phi\circ\Sigma)^\ast(F)=\Theta^\ast(F).$$
Similarly, if $G$ is a section of the strong bundle $P':E'\to X'$ over the ep-groupoid $X'$, then
$\Xi^* (G)=(\Psi \circ \Sigma)^* (G)$ and we can formulate the following definition.

\begin{defn}
{\em Let $E$ and $E'$ be two strong bundles over ep-groupoids, and let $F$ be a
section of $E$ and $G$ a section of $E'$.  The sections are
{\bf equivalent}  if there exists a diagram
$$E\xleftarrow{\Phi}E''\xrightarrow{\Psi}E'$$
of strong bundle equivalences such that for the pull-back sections, we have  $\Phi^\ast(F)=\Psi^\ast(G)$.}
\end{defn}
This is well-defined in the following sense. If we have two
equivalent diagrams connecting $E$ with $E'$ and the above equality holds for
one diagram, then it holds for the other diagram  as well. Indeed,  by the preceding
discussion,  it holds for a common refinement and therefore the
sections obtained by the pull-backs of the second diagram must be the
same on the image of the refinement. But since the latter is a
bundle equivalence this implies the assertion.

Also observe that
given a bundle equivalence $\Phi:E\rightarrow E'$ and a section $F$
of $E$,  there is a well-defined push forward section $\Phi_\ast(F)$ of the strong bundle $P':E'\to X'$.
Consequently, we obtain the following result.
\begin{prop}\label{rty}
Let $P:E\to X$ and $P':E'\to X'$ be two strong bundles over ep-groupoids.  Then an
 s-invertible generalized bundle map  $\mathfrak{A}:E\Rightarrow E'$  induces a well defined push-forward map $\mathfrak{A}_{\ast}:\Gamma (P)\to \Gamma (P')$ between sc-smooth sections which is a bijection. Its inverse is the pull-back map  $\mathfrak{A}^\ast:\Gamma (P')\to \Gamma (P)$. The same holds rue for the $\ssc^+$-sections $\Gamma^+(P)\to \Gamma^+(P')$ and the Fredholm sections ${\mathcal F}(P)\to {\mathcal F}(P')$.
\end{prop}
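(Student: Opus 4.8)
The plan is to reduce everything to the behaviour of sections under a single bundle equivalence and then assemble the statement along a representative diagram, checking independence of the representative with the common--refinement machinery set up above. Throughout, write $\mathfrak a:X\Rightarrow X'$ for the base s-isomorphism underlying $\mathfrak A$.

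\emph{Step 1 (pull-back and push-forward along a bundle equivalence).} Let $\Phi:P''\to P$ be a bundle equivalence covering $\varphi:X''\to X$, where $P'':E''\to X''$ and $P:E\to X$. For $F\in\Gamma(P)$ I would define $\Phi^\ast(F):X''\to E''$ fibrewise by $\Phi^\ast(F)(x'')=(\Phi|_{E''_{x''}})^{-1}\big(F(\varphi(x''))\big)$, which makes sense since $\Phi$ restricts to a linear isomorphism $E''_{x''}\to E_{\varphi(x'')}$, and which satisfies $P''\circ\Phi^\ast(F)=\mathrm{id}_{X''}$. Because $\Phi$ is a local sc-diffeomorphism on object spaces, locally $\Phi^\ast(F)=\Phi^{-1}_{\mathrm{loc}}\circ F\circ\varphi$, so $\Phi^\ast(F)$ is sc-smooth; functoriality of $\Phi^\ast(F)$ follows by applying the functor $\Phi$ to the morphism $(h,\Phi^\ast(F)(x''))$, using $\Phi(\mu''(h,e''))=\mu(\varphi(h),\Phi(e''))$, functoriality of $F$, and local injectivity of $\Phi$. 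For $G\in\Gamma(P'')$ one defines $\Phi_\ast(G)(x)=\mu(g,\Phi(G(x'')))$, where $x''$ and a morphism $g:\varphi(x'')\to x$ are chosen (possible since $\abs{\varphi}$ is surjective); independence of these choices is exactly where the bijectivity of $\Phi$ on isotropy groups and the injectivity of $\abs{\varphi}$ are used, and this is the technical heart of this step. One then checks $\Phi_\ast\circ\Phi^\ast=\mathrm{id}_{\Gamma(P)}$ and $\Phi^\ast\circ\Phi_\ast=\mathrm{id}_{\Gamma(P'')}$, and that pull-back is contravariant and push-forward covariant under composition of bundle equivalences.

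\emph{Step 2 (definition and well-definedness of $\mathfrak A_\ast$).} Represent $\mathfrak A$ by a diagram $E\xleftarrow{\Phi}E''\xrightarrow{\Psi}E'$ with $\Phi,\Psi$ bundle equivalences (Definition \ref{twonot}) and set
$$\mathfrak A_\ast:=\Psi_\ast\circ\Phi^\ast:\Gamma(P)\to\Gamma(P'),\qquad \mathfrak A^\ast:=\Phi_\ast\circ\Psi^\ast:\Gamma(P')\to\Gamma(P).$$
By transitivity of common refinements it suffices to compare a diagram with a refinement of it: given a connecting bundle equivalence $\Sigma$ with natural equivalences $\Phi\circ\Sigma\simeq\Theta$ and $\Psi\circ\Sigma\simeq\Xi$, Proposition \ref{newprop2.20} gives $\Theta^\ast(F)=(\Phi\circ\Sigma)^\ast(F)=\Sigma^\ast\Phi^\ast(F)$, and passing to inverses gives $\Xi_\ast=(\Psi\circ\Sigma)_\ast=\Psi_\ast\Sigma_\ast$; hence $\Xi_\ast\Theta^\ast(F)=\Psi_\ast\Sigma_\ast\Sigma^\ast\Phi^\ast(F)=\Psi_\ast\Phi^\ast(F)$ using $\Sigma_\ast\Sigma^\ast=\mathrm{id}$ from Step 1. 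Thus $\mathfrak A_\ast$ is well defined, and the same computation with $\Phi$ and $\Psi$ interchanged handles $\mathfrak A^\ast$.

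\emph{Step 3 (bijectivity and the $\ssc^+$ and Fredholm cases).} With the identities of Step 1, $\mathfrak A^\ast\circ\mathfrak A_\ast=\Phi_\ast\Psi^\ast\Psi_\ast\Phi^\ast=\Phi_\ast\Phi^\ast=\mathrm{id}_{\Gamma(P)}$ and likewise $\mathfrak A_\ast\circ\mathfrak A^\ast=\mathrm{id}_{\Gamma(P')}$, so $\mathfrak A_\ast$ is a bijection with inverse $\mathfrak A^\ast$. Since a bundle equivalence is locally a strong bundle isomorphism, it preserves fibre regularity, i.e. it carries $E^{0,1}$ to $(E')^{0,1}$; hence $\Phi^\ast$ and $\Psi_\ast$ restrict to mutually inverse bijections on the spaces of $\ssc^+$-sections. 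Likewise, conjugating an M-polyfold Fredholm section by $\Phi$ and $\Psi$ — which are local sc-diffeomorphisms on objects and local strong bundle isomorphisms — again yields an M-polyfold Fredholm section by the invariance of that notion under such changes of chart (Definition 3.6 in \cite{HWZ3}), so $\mathfrak A_\ast,\mathfrak A^\ast$ also restrict to mutually inverse bijections ${\mathcal F}(P)\leftrightarrow{\mathcal F}(P')$. I expect the only genuinely delicate points to be the independence-of-choices verification in the definition of $\Phi_\ast$ in Step 1 and the refinement bookkeeping in Step 2; the rest is a direct unwinding of the definitions.
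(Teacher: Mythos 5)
Your proposal is correct and follows essentially the same route the paper takes: define $\mathfrak{A}_\ast$ and $\mathfrak{A}^\ast$ via $\Psi_\ast\circ\Phi^\ast$ and $\Phi_\ast\circ\Psi^\ast$ along a representing diagram $E\xleftarrow{\Phi}E''\xrightarrow{\Psi}E'$, invoke Proposition~\ref{newprop2.20} to compare with a common refinement, and then unwind $\Phi_\ast\Phi^\ast=\mathrm{id}$. The paper itself does not print a separate proof for Proposition~\ref{rty} — it assembles exactly this chain of observations in the paragraph preceding it and concludes with ``Consequently, we obtain the following result'' — and your write-up, including the explicit treatment of push-forward well-definedness (which the paper also leaves as an observation) and the one-line reduction to the fibrewise isomorphism for the $\ssc^+$ and Fredholm cases, is a faithful reconstruction of that argument.
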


\subsection{Auxiliary Norms}
In \cite{HWZ3},  we introduced  the notion of an  auxiliary norm  on an M-polyfold. In this section we generalize this concept  to ep-groupoids by incorporating  morphisms.

We recall from Definition 5.5 in  \cite{HWZ3} that an {\bf auxiliary norm} $N$ for the M-polyfold bundle $p:Y\to X$ consists of a continuous map $N:Y_{0,1}\to [0,\infty )$ having the following properties.
\begin{Myitemize}
\item For every $x\in X$, the induced map $N\vert (Y_{0,1})_x\to [0,\infty )$ on the fiber
$ (Y_{0,1})_x$ is a complete norm.
\item If $y_k\xrightarrow{m}y$, then
$$N(y)\leq \liminf_{k\to \infty} N(y_k).$$
\item If $N(y_k)$ is a bounded sequence and the underlying sequence $x_k=p(y_k)$ converges to $x\in X$, then $y_k$ has an $m$-convergent subsequence.
\end{Myitemize}
For the so called $m$-convergence $y_k\xrightarrow{m}y$ we refer to Definition 5.4 in \cite{HWZ3}. Roughly it means for the sequence $y_k=(p(y_k), e_k)\in Y_{0,1}$ that the base points $p(y_k)=x_k\in X$ converge in the M-polyfold to $x\in X$ and the fiber components $e_k\rightharpoonup e$ converge in a weak sense to $e\in Y_1$ satisfying $p(e)=x$.

\begin{defn}
{\em
Let $P:E\rightarrow X$ be a strong bundle over the ep-groupoid $X$.
An auxiliary norm for the strong bundle $P$ is a map
$N^\ast:E_{0,1}\rightarrow [0,\infty)$ having the following
properties.
\begin{Myitemize}
\item  As a map on the object set, the map $N^\ast$ is an auxiliary norm as defined in Definition 5.5 in \cite{HWZ3}.
\item If $\varphi:e\to e'$ is a morphism in ${\bf E}$, then
$N^\ast(e)=N^\ast(e')$.
\end{Myitemize}
}
\end{defn}

The existence of an auxiliary norm is guaranteed by the following
proposition which uses an  existence result in \cite{HWZ3}. {\bf We
assume that the local models for $P$ have reflexive fibers}, for example Hilbert spaces.
\begin{prop}
Every strong  bundle $P:E\rightarrow X$  over an ep-groupoid admits
an auxiliary norm.
\end{prop}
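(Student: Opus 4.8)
The plan is to first obtain an auxiliary norm on the underlying strong M-polyfold bundle $p:E\to X$, then average it locally over the finite isotropy groups to make it morphism-invariant on small saturated neighbourhoods, and finally to glue these local invariant norms together by an invariant partition of unity on the orbit space. Since the local models for $P$ have reflexive fibers, the existence result of \cite{HWZ3} supplies an auxiliary norm $N_0:E_{0,1}\to[0,\infty)$ for the strong M-polyfold bundle $p:E\to X$ viewed over the object M-polyfold alone. In general $N_0$ fails the invariance condition $N_0(e)=N_0(e')$ for morphisms $e\to e'$ in ${\bf E}$, and the remaining steps repair this.

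\textbf{Local invariant norms.} Fix an object $x\in X$ and apply Theorem \ref{localstructure}: there is an open neighbourhood $U=U_x$ of $x$, a natural representation $\varphi:{\bf G}(x)\to\text{Diff}_{\ssc}(U)$, and the map $\Gamma:{\bf G}(x)\times U\to{\bf X}$, such that every morphism $h:y\to z$ with $y,z\in U$ equals $\Gamma(g,y)$ for a unique $g\in{\bf G}(x)$, with $z=\varphi_g(y)$. The bundle maps $\mu(\Gamma(g,\cdot),\cdot)$ cover the $\varphi_g$ and restrict to linear isomorphisms on the fibers, so for $y\in U$ and $e\in(E_{0,1})_y$ one may set
\[
N_x(e)=\frac{1}{|{\bf G}(x)|}\sum_{g\in{\bf G}(x)}N_0\bigl(\mu(\Gamma(g,y),e)\bigr).
\]
Using the homomorphism property of $\varphi$, the cocycle identity $\mu(g\circ h,\cdot)=\mu(g,\mu(h,\cdot))$, and the uniqueness in Theorem \ref{localstructure} (which gives $\Gamma(g',\varphi_g(y))\circ\Gamma(g,y)=\Gamma(g'g,y)$), a reindexing of the sum shows $N_x(\mu(h,e))=N_x(e)$ for every morphism $h$ between points of $U$. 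Thus $N_x$ is a morphism-invariant auxiliary norm for $p$ over $E_{0,1}|_U$.

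\textbf{Globalizing each $N_x$ and patching.} Let $\pi:X\to|X|$ be the quotient map. Each $N_x$ extends to the full saturation $\pi^{-1}(|U_x|)$: given $e\in(E_{0,1})_y$ with $\pi(y)\in|U_x|$, choose a morphism $\psi:y\to y''$ with $y''\in U_x$ and set $\widehat N_x(e):=N_x(\mu(\psi,e))$. This is independent of the choice of $\psi$, since two choices differ by a morphism between points of $U_x$, under which $N_x$ is invariant; it is automatically morphism-invariant; it is continuous because near any point $\psi$ may be chosen sc-smoothly as a local section of $s$ (an extension $t\circ s^{-1}$); and on each fiber it is a complete norm, being the pullback of one by a linear isomorphism. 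Now the orbit space $|X|$ is paracompact by properness of the ep-groupoid, so the open cover $\{|U_x|\}_{x\in X}$ admits a locally finite subordinate partition of unity $\{\beta_j\}$ with $\operatorname{supp}\beta_j\subset|U_{x_j}|$. Define
\[
N^\ast(e)=\sum_j \beta_j\bigl(\pi(p(e))\bigr)\,\widehat N_{x_j}(e),
\]
where each term is meaningful wherever it is nonzero, since $\beta_j(\pi(p(e)))\neq 0$ forces $\pi(p(e))\in|U_{x_j}|$, i.e. $e$ lies in the domain of $\widehat N_{x_j}$. Then $N^\ast$ is continuous; on each fiber it is a finite convex combination of complete norms, hence a complete norm; and the lower-semicontinuity and $m$-compactness properties of an auxiliary norm are preserved under such finite convex combinations, where reflexivity of the fibers is used exactly as in \cite{HWZ3}. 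Finally the weights $\beta_j\circ\pi\circ p$ are pulled back from $|X|$ and the $\widehat N_{x_j}$ are morphism-invariant, so $N^\ast(e)=N^\ast(e')$ whenever $\varphi:e\to e'$ is a morphism in ${\bf E}$; hence $N^\ast$ is an auxiliary norm for the strong bundle $P$.

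\textbf{Main obstacle.} The delicate point is the globalization step: one needs to know that in a neighbourhood of any point all morphisms are governed by the finite isotropy group, so that the locally averaged norm extends well-definedly and continuously over the entire saturated neighbourhood — this is precisely what Theorem \ref{localstructure} provides, and it is why the local averaging over ${\bf G}(x)$ (rather than using $N_0$ directly) is indispensable. The remaining verifications — that finite convex combinations preserve the axioms of an auxiliary norm, in particular the weak-compactness property via reflexivity, and that $|X|$ is paracompact — are routine within the framework of \cite{HWZ2,HWZ3}.
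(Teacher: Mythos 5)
Your proof follows essentially the same route as the paper's: average a local auxiliary norm over the natural representation of the finite isotropy group to obtain morphism-invariance on a distinguished neighbourhood $U_x$, extend to the saturation $\pi^{-1}(\pi(U_x))$ via the uniqueness clause of Theorem~\ref{localstructure}, and glue with a morphism-invariant partition of unity. The only (inessential) divergence is in the starting point: you invoke a global auxiliary norm $N_0$ on the object M-polyfold bundle $E\to X$ and then restrict it, whereas the paper pulls back an auxiliary norm from a local bundle model $K^{\mathcal R}|O$ through a strong bundle chart — but since your averaging formula only evaluates $N_0$ on $E|_{U_x}$, a local auxiliary norm (which is all Proposition~5.6 of \cite{HWZ3} directly provides) suffices, and the two formulations are interchangeable.
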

\begin{proof}
We begin with a strong local bundle $K\rightarrow O$. The existence of an auxiliary norm in this case has been proved in Proposition 5.6 in  \cite{HWZ3}.
Next consider a strong bundle $P:E\to X$ over the ep-groupoid $X$. By the discussion in Section \ref{ssect1.4}, we have the  strong M-polyfold bundle $P:E\to X$ over the ep-groupoid $X$ and the strong bundle map $\mu: {\bf E}\to E$ having  the property that if $g:x\to y$ is a morphism  in  ${\bf X}$, then $\mu (g, \cdot ):E_x\to E_y$ is a linear isomorphism.

Around  every point $x\in X$,  we choose a strong bundle chart (as in Definition 4.8 in \cite{HWZ2})
$$\Phi:E\vert U_{x}\to  K^{\mathcal R}\vert O$$
covering the $\ssc$-diffeomorphism $\varphi: U_{x} \to O$ between an open neighborhood $U_{x}\subset X$ of the point $x$ and the open subset $O$ of the splicing core $K^{\mathcal S}$ associated with the splicing ${\mathcal S}$.
The open neighborhood $U_{x}$ can be taken so small  that  the isotropy group ${\bf X}(x)$ acts  on it by  the natural representation $\varphi_g:U_{x}\to U_{x}$ for $g\in {\bf X}(x)$ in view of Theorem  \ref{localstructure}.   Since  an auxiliary norm exists on $K^{\mathcal R}\vert O$, we  pull it back to $E\vert U_{x}$ via the chart $\Phi$ to obtain an auxiliary norm $N_{x}'$  on $E\vert U_{x}$.
We use the following notation. If $y\in X$ and $g:y\to y'$ is a morphism, then we write $y'=g\cdot y$. In particular, $y'=g\cdot y$ if $y'= \varphi_g (y)$. Also,  we write $e'=g\cdot e$ if  $e\in E_x$ and $e'=\mu (g, e)\in E_y$.

For  $\wh{e}=(y, e)\in U_{x}\oplus  E_y$, we define
\begin{equation*}
\begin{split}
\wh{N}'_{x}(\wh{e})=\wh{N}'_{x}(y, e):&=\dfrac{1}{\sharp {\bf X}(x)}\sum_{g\in {\bf X}(x)}N_{x}'(\varphi_g (y), \mu (g, e))\\
&=\dfrac{1}{\sharp {\bf X}(x)}\sum_{g\in {\bf X}(x)}N_{x}'(g\cdot y, g\cdot e).
\end{split}
\end{equation*}
Then $\wh{N}'_{x}$ is an auxiliary norm on $E\vert U_{x}$ having  the additional property that if there is a morphism in ${\bf E}$ between  two objects $\wh{e}$ and $\wh{e}'$ belonging to $E\vert U_{x}$, then
$\wh{N}'_{x}(\wh{e})= \wh{N}'_{x}(\wh{e}')$.

Next we extend $\wh{N}'_{x}$ to $E\vert  \wh{U}_{x}$ where $\wh{U}_{x}=\pi^{-1}(\pi (U_{x}))$ is the saturation of the set $U_{x}$. Here $\pi:X\to \abs{X}$ is  the quotient map onto the orbits space $\abs{X}$ of the ep-groupoid $X$.

If $\wh{e}=(y, e)\in E\vert  \wh{U}_{x}$, then there is a morphism $g$ having the source $y$ and the target in $U_{x}$ such that $(y',e')=(g\cdot y, g\cdot e)\in U_{x}\oplus E_{y'}$ and we define
\begin{equation*}
\begin{split}
\wh{N}_{x}(\wh{e})=\wh{N}_{x}(y, e)&:=\wh{N}'_{x}(g\cdot y,  g\cdot e)=\wh{N}'_{x}(y', e').
\end{split}
\end{equation*}

This definition is independent of the choice of a morphism having its source equal to $y$ and its target in $U_{x}$.  Indeed, if  there is another morphism $g'$ satisfying $(y'', e'')=(g'\cdot y, g'\cdot e)\in U_{x}\oplus  E_{y'}$, we define  the morphism $h=g'\circ g^{-1}:y'\to y''$. It follows that $(y'', e'')=(h\cdot y', h\cdot e')$, and since $\wh{N}'_{x}$  is,  by construction,  an invariant auxiliary norm on $U_{x}$, we conclude  $\wh{N}_{x}'(y', e')=\wh{N}_{x}'(y'', e'')$ showing that $\wh{N}_{x}$ is well defined.

To see that the map  $\wh{N}_{x}$ is continuous on $E_{0,1}\vert \wh{U}_x$ we take $(x, e)\in E_{0,1}\vert \wh{U}_{x}$. Then  there is a morphism $g$ such that $(y', e')=(g\cdot y, g\cdot e)\in U_{x}\oplus E_y$.  By definition
$\wh{N}_{x}(x, e)=\wh{N}'_{x}(y', e')$ and $\wh{N}'_{x}$ is continuous near $(y', e')$.  The morphism $g$ has an extension to an sc-diffeomorphism of the from $t\circ s^{-1}$ defined near $x$ and the map $\mu$ is also a local sc-diffeomorphism. It follows that the map $\wh{N}_{x}$ near $(y, e)$ is equal  to the composition of a continuous map $\wh{N}_{x}'$ with the sc-diffeomorphisms $(t\circ s^{-1}, \mu \circ (t\circ s^{-1}, \text{id}))$,  and hence is continuous.

Finally, we glue all the local auxiliary norms $\wh{N}_{x}$ to obtain a globally defined auxiliary norm $N$ on $E$. To do this we consider the open covering  $\bigl(\pi^{-1}(\pi (U_{x})) \bigr)_{x\in X}$ of $X$. The ep-groupoid $X$ is a paracompact space. In view of Theorem 4.8 in \cite{HWZ7}, there exists a continuous partition of unity $(\beta_{x})_{x\in X}$ subordinated  to the open covering  $\bigl(\pi^{-1}(\pi (U_{x})) \bigr)_{x\in X}$ which is invariant under morphisms of $X$. For simplicity we use here for the partition of unity the same indices as for the open covering, allowing the supports of $\beta_x$ to be empty.
We would like to point out that  Theorem 4.8 in \cite{HWZ7}  requires the assumption that  the sc-structure of $X$ is based on separable sc-Hilbert spaces in order to obtain sc-smooth partition of unity, however,   in our case at hand  we don't need this assumption since we are only interested in a continuous partition of unity.
Now  for $\wh{e}=(y, e)=(P (\wh{e}), e)$ we set
$$N(\wh{e})=N(y, e)=\sum_{x\in X}\beta_{x}(P(\wh{e}))\cdot \wh{N}_{x}(\wh{e}).$$
Then $N$ is the desired auxiliary norm on $E$ which is compatible with the morphisms.
\end{proof}

\section{Polyfolds}\label{sect7.4}
Polyfolds and their bundles are the basic spaces on which we will
study Fredholm sections. The ep-groupoids can be viewed as the
models for this new class of spaces. The strong bundles over
ep-groupoids are  the  models for strong polyfold
bundles.
\subsection{Basic Definitions and Results}\label{basicresult}
In the following we shall use the category $\mathfrak{Ep}$ whose
objects are ep-groupoids and  morphisms are
generalized maps.

\begin{defn}\label{ngroupdefn3}
{\em Let $Z$ be a second countable paracompact topological space. A {\bf
polyfold structure} on $Z$ is a pair $(X,\alpha  )$ consisting of an
ep-polyfold groupoid  $X$ and a homeomorphism $\alpha:\abs{X}\to Z$ between the  orbit space  and the space $Z$. }
\end{defn}
Given a polyfold structure $(X,\alpha)$ on $Z$,  the
$\ssc^0$-structure on $\abs{X}$ defines a $\ssc^0$-structure on $Z$.

In order to formulate the equivalence relation between two polyfold structures, we recall from the previous section that an s-isomorphism $\mathfrak{a}:X\Rightarrow  Y$ between two ep-groupoids
is the equivalence class of a diagram
$$X\xleftarrow{F}W\xrightarrow{G}Y$$
connecting ep-groupoids in which both functors $F$ and $G$ are equivalences. Since equivalences induce
 $\ssc^0$-homeomorphism between orbit spaces, we obtain the $\ssc^0$-homeomorphism $\abs{G}\circ \abs{F}^{-1}:\abs{X}\to \abs{Y}$.  This homeomorphism is independent of the choice of a representative in the equivalence  class. Indeed, if
$$X\xleftarrow{F'}W'\xrightarrow{G'}Y$$
is an equivalent  diagram, it follows from the definition of the equivalence that
$\abs{G'}\circ \abs{F'}^{-1}=\abs{G}\circ \abs{F}^{-1}$ and we see that the s-isomorphism
$\mathfrak{a}:X\Rightarrow Y$ induces a canonical $\ssc^0$-homeomorphism
$$\abs{\mathfrak{a}}:\abs{X}\to \abs{Y}$$
between the orbit spaces.

\begin{defn}
{\em Let $Z$ be a second countable paracompact topological space.
Two polyfold structures $(X, \alpha)$ and $(Y ,\beta)$ on
$Z$ are called {\bf equivalent}, $(X, \alpha
) \simeq (Y, \beta)$, if there exists an s-isomorphism
$\mathfrak{a}:X\Rightarrow Y$ satisfying  $\beta\circ
\abs{\mathfrak{a}}=\alpha$.

\mbox{}
$$
\begindc{\commdiag}[3]
\obj (20,20){$\abs{X}$}
\obj (50,20){$\abs{Y}$}
\obj (35,0){$Z$}
\mor (20,20)(50,20){$\abs{\mathfrak{a}}$}[\atleft,\solidarrow]
\mor (20,20)(35,0){$\alpha$}[\atright,\solidarrow]
\mor (50,20)(35,0){$\alpha'$}[\atleft,\solidarrow]
\enddc
\mbox{}.
$$

 }
\end{defn}

In more detail, two polyfold structures on $Z$ are equivalent if and only if there exists
 a third ep-groupoid $W$ and two equivalences in the diagram
$
X\xleftarrow{F} W \xrightarrow{G}Y
$
satisfying
$$\alpha \circ \abs{F}=\beta \circ \abs{G}.$$

\begin{defn}\label{ngroupdefn4}
{\em A {\bf polyfold} is a second countable paracompact topological space
$Z$ equipped with an equivalence class of polyfold structures.}
\end{defn}


Consider a   polyfold  $Z$  and  a polyfold structure  $(X, \beta)$  on $Z$. The orbit space $\abs{X}$ of the ep-groupoid $X$ is equipped with a filtration
$\abs{X_0}=\abs{X}\supset \abs{X_1}\supset \cdots \supset  \abs{X_{\infty}}:=
\abs{\bigcap_{i\geq 0}X_i}$
where   $\abs{X_{\infty}}$ is dense in every $\abs{X_i}$. This  filtration induces,  via the homeomorphism $\beta:\abs{X}\to Z$,  the  filtration
$$Z_0
=Z\supset  Z_1\supset \cdots  \supset Z_{\infty}:
=\bigcap_{i\geq 0}Z_i$$
on the polyfold $Z$ in which the space $Z_{\infty}$ is dense in every $Z_i$.  Any  other   equivalent polyfold structure on $Z$ induces the same filtration. Every space $Z_i$ carries a polyfold structure which we denote by $Z^i$.

Next we define an  sc-smooth map between two polyfolds $Z$ and $Z'$.   We assume that   the polyfold structures of $Z$ and $Z'$ are represented by the pairs $(X, \alpha)$ and $(X', \alpha')$, respectively, and consider  a pair $(a,\mathfrak{a})$ consisting of a continuous map
$a:Z\rightarrow Z'$ and a  generalized map
$\mathfrak{a}:X\Longrightarrow X'$ satisfying
$$
\alpha'\circ |\mathfrak{a}| = a\circ\alpha,
$$

\mbox{}
$$
\begindc{\commdiag}[3]
\obj (20,20){$\abs{X}$}
\obj (40,20){$\abs{X'}$}
\obj (20,0){$Z$}
\obj (40,0){$Z'$}
\mor (20,20)(40,20){$\abs{\mathfrak{a}}$}[\atleft, \solidarrow]
\mor (20,0)(40,0){$a$}[\atleft, \solidarrow]
\mor (20,20)(20,0){$\alpha$}[\atright, \solidarrow]
\mor (40,20)(40,0){$\alpha'$}[\atleft, \solidarrow]
\enddc
\mbox{}.
$$
\mbox{}\\[4pt]

Now we take two other  equivalent representatives of the polyfold structures of
$Z$ and $Z'$, namely $(Y,\beta )\simeq (X,\alpha )$ and
$(Y',\beta')\simeq (X', \alpha')$. Hence there exist s-isomorphisms
$$
\mathfrak{f}:Y\Longrightarrow X\ \hbox{and}\ \
\mathfrak{f}':Y'\Longrightarrow X'
$$
satisfying  $\alpha\circ |\mathfrak{f}|=\beta$ and
$\alpha'\circ|\mathfrak{f}'|=\beta'$.

If $(b,\mathfrak{b})$ is another pair consisting of a continuous map $b:Z\to Z'$ and a generalized map $\mathfrak{b}:Y\Longrightarrow  Y'$ satisfying $\beta'\circ \abs{\mathfrak{b}}=b\circ \beta$,  we introduce the
following definition.
\begin{defn}
{\em The two pairs  $(a,\mathfrak{a})$ and $(b,\mathfrak{b})$ are called {\bf
equivalent} provided $a=b$ and
$
\mathfrak{f}'\circ\mathfrak{b}=\mathfrak{a}\circ\mathfrak{f}.
$}
\end{defn}

This defines indeed an equivalence relation. (The composition $\circ$ of two generalized maps is defined in section \ref{sect2.3}.)  The situation is illustrated by the following diagram.

$$
\begindc{\commdiag}[3]
\obj (-10,15){$Z$}
\obj (-10,0){}
\obj (10,0){$\abs{Y}$}
\obj (10,30){$\abs{X}$}
\obj (40,30){$\abs{X'}$}
\obj (40,0){$\abs{Y'}$}
\obj (60,15){$Z'$}
\obj (60,0){}
\mor (-10,15)(-10,-9){}
\mor (60,-9)(60,15){}
\mor (10,28)(-10,15){$\alpha$}
\mor (10,1)(-10,15){$\beta$}
\mor (10,30)(10,0){$\abs{\mathfrak{f}}$}[\atright,\solidarrow]
\mor (40,30)(40,0){$\abs{\mathfrak{f}'}$}[\atright,\solidarrow]
\mor (10,0)(40,0){$\abs{\mathfrak{b}}$}[\atleft,\solidarrow]
\mor (10,30)(40,30){$\abs{\mathfrak{a}}$}[\atleft,\solidarrow]
\mor (40,28)(60,15){$\alpha'$}
\mor (40,1)(60,15){$\beta'$}
\cmor ((-10,-5)(-9,-9)(-5,-10)(30,-10)(55,-10)(59,-9)(60,-5))  \pup(25,-12){$a=b$}
\enddc
\mbox{}\\[2ex]
$$

Finally,  we can define the notion of an $\ssc$-smooth polyfold map.
\begin{defn}
An {\bf sc-smooth map  $:Z\rightarrow Z'$ between two polyfolds} is
an equivalence class $[(a,\mathfrak{a})]$ of pairs  consisting of a
continuous map $a:Z\rightarrow Z'$  between the underlying
topological spaces and a generalized map $$
\mathfrak{a}:X\Rightarrow X',
$$
where $(X,\alpha)$ and $(X',\alpha')$ are representatives of the
polyfold structures of $Z$ and $Z'$ , so that
$$
\alpha'\circ |\mathfrak{a}| = a\circ \alpha.
$$
\end{defn}
Sometimes we shall call the map  $a:Z\rightarrow Z'$ a polyfold map
and do not explicitly mention the `overhead' $\mathfrak{a}$ if there
is no danger of confusion. So we might say $a:Z\rightarrow Z'$ is a
sc-smooth map between polyfolds, but the reader must be aware that
this is just an abbreviation  for a lot of data.

The standard
example of a polyfold is the orbit space $|X|$ of an ep-groupoid,
where a polyfold structure is given by the pair  $(X,Id_{|X|})$. Given two
such polyfolds $|X|$ and $|Y|$ the space of sc-smooth polyfold maps
$\abs{X}\to \abs{Y}$ is nothing else but all pairs
$(\abs{\mathfrak{a}},\mathfrak{a})$ where $\mathfrak{a}$ is a
generalized map $X\Rightarrow Y$. A representative (something
one can work with if one has to make constructions) for such a generalized  map
is a diagram $X\xleftarrow{F}A\xrightarrow{G}Y$ in which $F$ is an
equivalence and $G$ an sc-smooth functor.

In \cite{HWZ2} we have introduced the {\bf degeneracy index}  $d:X\to \N$
on an M-polyfold $X$ as follows.
Around a point $x\in X$ we choose  an
M-polyfold chart $\varphi:U\rightarrow K^{\mathcal S}$ where
$K^{\mathcal S}$ is the splicing core associated with the splicing
${\mathcal S}=(\pi,E,V)$. Here $V$ is an open subset of a partial
quadrant $C$ contained in the sc-Banach space $W$. By definition
there exists a linear sc-isomorphism from $W$ to ${\mathbb R}^n\oplus
Q$ mapping  $C$ onto $[0,\infty)^n\oplus Q$.  Identifying the
partial quadrant $C$ with $[0,\infty )^n\oplus Q$ we shall use the
notation $\varphi=(\varphi_1,\varphi_2)\in [0,\infty)^n \oplus
(Q\oplus E)$ according to the splitting of the target space of
$\varphi$. We associate with the point $x\in U$ the integer  $d(x)$
defined by
\begin{equation*}\label{di}
d(x)=\sharp \{\text{coordinates of $\varphi_1(x)$ which are equal to
$0$}\}.
\end{equation*}
By Theorem 3.11 in \cite{HWZ2}, the integer $d$ does not depend on the choice of the M-polyfold chart used. A point $x\in X$ satisfying $d(x)=0$ is called an interior point of $X$. The set $\partial X$ of  {\bf  boundary points} of $X$ is defined as
$$\partial X=\{x\in X\vert \, d(x)>0\}.$$
A point $x\in X$ satisfying $d(x)=1$ is called a {\bf good boundary point}. A point satisfying $d(x)\geq 2$ is called a {\bf corner} and $d(x)$ is the order of this corner.

\begin{defn}\label{ddindex}
The closure of a connected component of the set $X(1)=\{x\in X\vert \, d(x)=1\}$ is called a {\bf face of the M-polyfold $X$}.
\end{defn}

Around every point $x_0\in X$ there exists an open neighborhood $U=U(x_0)$ so that every $x\in U$ belongs to precisely $d(x)$ many faces of $U$. This is easily verified. Globally it is always true that $x\in X$ belongs to at most $d(x)$ many faces and the strict inequality is possible.

In order to define the degeneracy index on a polyfold
we first look at an ep-polyfold groupoid $X$. Its degeneracy index
$d:X\to \N$ is defined on the M-polyfold $X$ of objects as well as
the M-polyfold ${\bf X}$ of morphisms. As usual we denote by $s,
t:{\bf X}\to X$ the source and target maps of $X$. We have already
seen previously that the existence of a morphism $g:x\rightarrow x'$
implies that these three items $x, x'$ and $g$ have the same maximal
level. The degeneracy map is another integer valued map defined on
$X$ having this property.
\begin{lem}\label{groupoidlem1}
The following holds true for   ep-polyfold groupoids $X$ and $Y$.
\begin{itemize}
\item[(i)] If  $g:x\rightarrow x'$ is a morphism,   then
$d(x)=d(x')=d(g)$.
\item[(ii)] If the functor $F:X\to Y$ is an equivalence,  then
$d_{X}(x)=d_{Y}(F(x))$ for all $x\in X$.
\end{itemize}
\end{lem}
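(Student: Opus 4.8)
The statement has two parts, and the plan is to reduce both to the local behavior of sc-diffeomorphisms together with Theorem \ref{localstructure}. For part (i), recall that in an ep-groupoid the source and target maps $s,t:{\bf X}\to X$ are surjective local sc-diffeomorphisms. Fix a morphism $g:x\to x'$, so $s(g)=x$ and $t(g)=x'$. Choose an open neighborhood ${\bf U}(g)\subset{\bf X}$ so small that $s:{\bf U}(g)\to s({\bf U}(g))=:U(x)$ and $t:{\bf U}(g)\to t({\bf U}(g))=:U(x')$ are both sc-diffeomorphisms onto open neighborhoods of $x$ and $x'$ respectively. Then the composite $t\circ s^{-1}:U(x)\to U(x')$ is an sc-diffeomorphism sending $x$ to $x'$. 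Since the degeneracy index $d$ is an invariant of the M-polyfold structure — by Theorem 3.11 of \cite{HWZ2} it is independent of the choice of M-polyfold chart — and an sc-diffeomorphism between open subsets of M-polyfolds carries charts to charts, $d$ is preserved under sc-diffeomorphisms; hence $d(x)=d(x')$. Moreover $s$ itself is a local sc-diffeomorphism near $g$, so $d(g)=d(s(g))=d(x)$, giving $d(x)=d(x')=d(g)$.

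For part (ii), let $F:X\to Y$ be an equivalence. By definition $F$ is a local sc-diffeomorphism on objects, so for every $x\in X$ there is an open neighborhood on which $F$ restricts to an sc-diffeomorphism onto an open subset of $Y$; as in part (i), invariance of the degeneracy index under sc-diffeomorphisms gives $d_X(x)=d_Y(F(x))$ immediately. In fact this is the only property of $F$ that is used for (ii) — being a local sc-diffeomorphism on objects suffices — so the étale and properness hypotheses on $X$, $Y$ enter only through the ambient requirement that $X$ and $Y$ be ep-groupoids so that the degeneracy index is defined on their object M-polyfolds.

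The only thing to be careful about is making explicit the statement \textbf{``an sc-diffeomorphism between open subsets of M-polyfolds preserves the degeneracy index''}, which is the common engine behind both parts. This follows because the M-polyfold chart of the target, precomposed with the sc-diffeomorphism, is again an M-polyfold chart of the source, and Theorem 3.11 of \cite{HWZ2} asserts that $d$ computed from any chart agrees. I expect this to be the only real step; everything else is bookkeeping with the structure maps $s$, $t$ and the definition of an equivalence. No genuine obstacle arises, since the étale condition is exactly what makes $t\circ s^{-1}$ a local sc-diffeomorphism extending $g$, a fact already recorded in the text immediately after the definition of ep-groupoid.
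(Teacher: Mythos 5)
Your proof is correct and follows essentially the same route as the paper: both reduce everything to the fact that $s$, $t$, and (for part (ii)) $F$ are local sc-diffeomorphisms, together with invariance of the degeneracy index under sc-diffeomorphisms. The only cosmetic difference is that the paper cites the "corner recognition" result (Proposition 3.13 of \cite{HWZ2}) directly, while you derive the same invariance from chart-independence (Theorem 3.11 of \cite{HWZ2}) plus the observation that an sc-diffeomorphism pulls back charts to charts.
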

\begin{proof}

Since $s$ and  $t$  are local $\ssc$-diffeomorphisms,  we conclude
from the statement about the corner recognition, Proposition 3.13 in \cite{HWZ2},  that $d(g)=d(s(g))=d(t(g))$. The same
proposition implies  also the second assertion because  the map $F:X\to
Y$ is a local sc-diffeomorphism.
\end{proof}
Suppose the pair $(X, \alpha  )$ is a polyfold structure on $Z$. If $\abs{x}\in \abs{X}$ and $x, x'\in \abs{x}$, then there exists  a morphism $g:x\to x'$ and we conclude  from Lemma
\ref{groupoidlem1} that $d(x)=d(x')$. Hence the map $\abs{d}:\abs{X}\to \N_0$ given by
$\abs{d}(\abs{x})=d (x)$ is well defined. Using this map,  we define the map $d: Z\to \N_{0}$ on the polyfold by setting
\begin{equation}\label{dindex}
d(z)=\abs{d}(\alpha^{-1}(z)).
\end{equation}
The definition does not depend on the particular choice of the polyfold structure in the   equivalence class. To prove this claim we consider two equivalent  polyfold structures $(X, \alpha)$ and $(Y, \beta)$  on $Z$.  Hence  there exists a diagram $X\xleftarrow{F}W\xrightarrow{G}Y$ connecting ep-groupoids in which the functors $F$ and $G$ are equivalences satisfying
$\alpha\circ \abs{F}=\beta\circ \abs{G}:\abs{W}\to Z$. If
 $\alpha^{-1}(z)=\abs{x}$ and  $\beta^{-1}(z)=\abs{y}$, then
$\abs{G}\circ \abs{F}^{-1}(\abs{x})=\abs{y}$. Hence we find $w\in W$ such that $\abs{F}(\abs{w})=\abs{F(w)}=\abs{x}$ and $\abs{G}(\abs{w})=\abs{G(w)}=\abs{y}$. Therefore, there exists $x'\in \abs{x}$ and $y'\in \abs{y}$ satisfying $F(w)=x'$ and $G(w)=y'$ and with  Lemma \ref{groupoidlem1} we compute,
\begin{equation*}
\begin{split}
\abs{d_{X}}(\alpha^{-1}(z))&=\abs{d_{X}}(\abs{x})= d_X(x)\\
&=d_X(x')=d_X(F(w))=d_{W}(w)\\
&=d_{Y}(G(w))=d_{Y}(y')=d_Y(y)\\
&=\abs{d_{Y}}(\abs{y})=d_Y(\beta^{-1}(z))
\end{split}
\end{equation*}
proving the  claim.

\begin{defn}\label{groupoidprop3}
{\em The function $d:Z\to \N_{0}$ defined by \eqref{dindex} is called the {\bf degeneracy index of the polyfold $Z$}.}
\end{defn}
The corner structure of a polyfold will play an important role in
the symplectic field theory in \cite{HWZ5}.  Therefore, we shall look at the  faces, introduced in \cite{HWZ2}, in
more detail.
\begin{defn}\label{groupdef1}
A {\bf connected component} $C$ of   an ep-groupoid $X$ is a full
subcategory having the following properties.
\begin{itemize}
\item[$\bullet$] If $x\in C$ and if $x'\in X$ is in
 the same component of $X$ as $x$, then $x'\in C$.
\item[$\bullet$] If $x\in C$ and $h:x\to x'$ for some
$h\in {\bf X}$, then $x'\in C$.
\item[$\bullet$]  The orbit space $|C|$ is connected.
\end{itemize}
\end{defn}
Note  that a connected component is an ep-groupoid by definition.
We denote by $X(1)$ the set of all good
boundary points,
$$X(1)=\{x\in X\vert \, d(x)=1\}.$$
The space $X(1)$  carries in a natural way the structure of an M-polyfold induced
by that of $X$. The set of morphisms  between points  in $X(1)$  is an M-polyfold  denoted by ${\bf X}(1)$. Together with
the structure maps induced from $X$  we obtain the ep-polyfold
groupoid $X(1)$. If $C$ is a connected component of this groupoid
$X(1)$, then the  {\bf closure}  $\ov{C}$ is defined as the full subcategory
whose set of objects  is the set theoretical closure of the
object set $C$. It is called a {\bf face} of the ep-groupoid $X$. In general,  a face
need not to have the structure of an ep-groupoid. Given a
ep-groupoid we can view a subset $C$ of the object set as a groupoid
by taking the full subcategory of $X$ whose objects are the points
in $C$. For $C$ as just described we can first take the set
theoretic closure $\ov{C}$ and then the associated full subcategory.

\begin{lem}\label{grouplem2}
An object $x\in X$ belongs globally to at most $d(x)$-many faces.
Locally it belongs to precisely $d(x)$ many faces (this  will be made
precise during the proof).
\end{lem}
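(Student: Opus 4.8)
The plan is to deduce the statement from the local normal form of an M-polyfold near a point together with the description of the morphisms near an isotropy group in Theorem \ref{localstructure} and the fact, already recorded in Lemma \ref{groupoidlem1} and Proposition 3.13 in \cite{HWZ2}, that the degeneracy index is invariant under morphisms and under sc-diffeomorphisms. Write $k=d(x)$. First I would fix a neighbourhood $U=U(x)\subseteq X$ of $x$ with two properties: (a) by Theorem \ref{localstructure} the isotropy group ${\bf G}(x)$ acts on $U$ by sc-diffeomorphisms $\varphi_g$ fixing $x$, and every morphism between two points of $U$ is $\Gamma(g,\cdot)$ for a unique $g\in{\bf G}(x)$; and (b) shrinking $U$ and using an M-polyfold chart at $x$ onto an open subset of a splicing core over a partial quadrant $[0,\infty)^n\oplus Q$, as in the discussion following Definition \ref{ddindex} (the boundary structure being then a finite union of coordinate slices and, by Theorem 3.11 in \cite{HWZ2}, chart-independent), the set $U(1)=\{y\in U\mid d(y)=1\}$ has exactly $k$ connected components $D_1,\dots,D_k$, each having $x$ in its closure in $U$. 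Property (b) is precisely the assertion that $x$ lies on exactly $d(x)$ faces $\overline{D_1},\dots,\overline{D_k}$ of the M-polyfold $U$, and this is the sharp local statement the lemma alludes to. Since each $\varphi_g$ is an sc-diffeomorphism of $U$ it preserves $d$, hence maps $U(1)$ to itself and permutes its components, and since $\varphi_g(x)=x$ it permutes $\{D_1,\dots,D_k\}$; thus ${\bf G}(x)$ acts on this finite set.

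Next I would count the faces of the ep-groupoid $X$ that pass through $x$. Recall that such a face is a closure $\overline{C}$, where $C$ is a connected component of the ep-groupoid $X(1)$; by Definition \ref{groupdef1} the object set of $C$ is a union of connected components of the M-polyfold $X(1)$, is closed under morphisms, and the connected components $C$ are pairwise disjoint. Suppose $x\in\overline{C}$ and pick $y_j\in C$ with $y_j\to x$. For $j$ large $y_j\in U$, and since the degeneracy index is intrinsic $y_j\in U\cap X(1)=U(1)$, so each such $y_j$ lies in one of $D_1,\dots,D_k$; by pigeonhole some $D_i$ contains infinitely many of them. Then $x\in\overline{D_i}$, and moreover $D_i\subseteq C$: the connected set $D_i\subseteq X(1)$ lies in a single connected component $E$ of the M-polyfold $X(1)$, and $E$ meets $C$ at a surviving $y_j$, so $E\subseteq C$ by Definition \ref{groupdef1}. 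I would then assign to each face $\overline{C}$ through $x$ the nonempty index set $I(C)=\{i\mid D_i\subseteq C\}\subseteq\{1,\dots,k\}$; if $\overline{C}\neq\overline{C'}$ then $C\cap C'=\emptyset$, hence $I(C)\cap I(C')=\emptyset$. Thus the sets $I(C)$ are pairwise disjoint nonempty subsets of a $k$-element set, so there are at most $k=d(x)$ faces of $X$ through $x$ — the global bound. Finally, since $C$ is closed under morphisms and every morphism between points of $U$ has the form $\Gamma(g,\cdot)$, the set $C\cap U$ is ${\bf G}(x)$-invariant, so each $I(C)$ is a union of ${\bf G}(x)$-orbits, and these orbit-unions partition $\{1,\dots,k\}$ as $\overline{C}$ runs over the faces through $x$; this is the precise local picture, namely that the $d(x)$ M-polyfold faces $\overline{D_1},\dots,\overline{D_k}$ of $U$ at $x$ are grouped, in ${\bf G}(x)$-invariant packets, into the $\leq d(x)$ faces of the ep-groupoid through $x$. (When $d(x)=0$ one takes $U$ with no boundary points, and the statement reads that an interior point lies on no face.)

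The step I expect to be the main obstacle is the simultaneous choice of $U$ in the first paragraph: I must arrange the conclusion of Theorem \ref{localstructure}, so that the morphisms near $x$ are under control and the ${\bf G}(x)$-action on $\{D_1,\dots,D_k\}$ is available, and at the same time shrink $U$ enough that $U(1)$ consists of exactly $d(x)$ connected components, all accumulating at $x$, with no extra components and no infinite family of small boundary pieces piling up at $x$. Both facts follow from the local model of an M-polyfold chart at $x$ — essentially that near $x$ the corner stratification is that of $[0,\infty)^n\oplus Q$ with the positive coordinates of $x$ staying positive on $U$ — but verifying that a single neighbourhood can be taken to serve both purposes, and that further shrinking does not change the component count, is the point that requires care; everything afterwards is bookkeeping with the pigeonhole argument and Definition \ref{groupdef1}.
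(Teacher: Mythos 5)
Your proposal is correct and follows essentially the same route as the paper's proof: choose an M-polyfold chart at $x$ onto a splicing core over $[0,\infty)^n\oplus Q$ with $n=d(x)$, identify the $d(x)$ local strata $D_1,\dots,D_k$ of $U(1)$ (the sets $O_j$, resp.\ $U_j$, in the paper), and observe that globally these may merge into fewer faces of $X$. You are, however, more careful than the paper at the crucial "at most'' step: the paper merely remarks that different local closed pieces $U_i^\ast$, $U_j^\ast$ can lie in the same face, whereas your pigeonhole argument, assigning to each face $\overline{C}$ through $x$ the nonempty, pairwise disjoint index set $I(C)\subseteq\{1,\dots,k\}$, makes precise why \emph{every} face through $x$ must absorb at least one $D_i$, which is what really gives the upper bound; the additional ${\bf G}(x)$-invariance of the $I(C)$, coming from Theorem \ref{localstructure}, is a pleasant and correct refinement of the local picture that goes beyond what the printed proof records.
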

\begin{proof}
Take an object $x\in X$ and let $U(x)$ be an open neighborhood of $x$  which is
homeomorphic by means of  a chart $(U(x),\varphi,(O,{\mathcal S}))$ to an open
set $O$  in a splicing core defined by
\begin{equation*}
\begin{split}
O:&=\{((v,f),e)\, \vert \, \, \text{$ (v,f)\in [0,\infty)^n\oplus W$ and $e\in E,$}\\
&\phantom{==\, }\pi_v(e)=e,\
 \abs{(v,f)}<1, \ \abs{e} <1\}.
\end{split}
\end{equation*}
We may assume that $x$ corresponds to $((0,f_0),e_0)$ for some
$f_0\in W$ and some $e_0\in E$. Observe that only in the case that
$x$ is smooth we may assume that  also $f_0=0$ and $e_0=0$. The set
$X(1)\cap U(x)$ corresponds to the points $((v,f),e)$ in $O$  for
which $v=(v_1,\ldots v_n)$ has precisely one coordinate  $v_j$ with
$1\leq j\leq n$ vanishing. Define  the subset $O_j\subset O$ by
$$
O_j=\{((v,f),e)\in O\vert \, \,  \text{$v_j=0$\, and $v_i\neq 0$\,
for $ i\in \{1,\ldots, j-1,j+1,\ldots, n$}\}\}
$$
and put $U_j=\varphi^{-1}(O_j)$.  The closure $O_j^\ast$ of $O_j$ in
$O$ consists of all $((v,f),e)\in O$ having the $j$-coordinate
vanishing. It corresponds under $\varphi$ to the closure $U_j^\ast$
of $U_j$ in $X(1)\cap U(x)$. If a point $((v,f),e)\in O$ satisfies
$d((v,f),e)=m\leq n$, then there exist indices  $j_1,\ldots j_m\in
\{1,\ldots , n\}$  so that $v_{j_i}=0$. Hence the point $((v,f),e)$
belongs to $O_{j_i}^\ast$ for all $i=1,\ldots ,m$. The converse  is
also true.  Finally we observe that globally,  two different (local) closed sets
$U_i^\ast$ and $U_j^\ast$ might belong to the same face. This shows
that $x$ belongs to at most $d(x)=m$ faces.
\end{proof}
In our applications the so-called face-structured ep-groupoids show up.
We introduce this notion in the next definition.
\begin{defn}\label{groupdefn2}
An ep-groupoid $X$ is called {\bf face-structured} if every object
$x\in X$ belongs to exactly $d(x)$-many faces.
\end{defn}

\begin{lem}\label{grouplem3}
Let $X$ be a face-structured ep-groupoid and $C$ a connected
component of the good boundary points $X(1)$. Then its closure $\ov{C}$ is an ep-groupoid. In
other words,  the faces are ep-groupoids.
\end{lem}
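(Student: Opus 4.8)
The plan is to give $\overline{C}$ the ep-groupoid structure inherited from $X$: put M-polyfold structures on the object set $\overline{C}$ and on its morphism set by restricting the charts of $X$ and ${\bf X}$, and then check the ep-groupoid axioms one at a time. All of this is essentially forced, and the only point where the face-structured hypothesis (and real work) enters is the construction of the M-polyfold structure on $\overline{C}$.

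First I would record that $\overline{C}$ is invariant under morphisms. If $x\in\overline{C}$ and $h:x\to x'$ in ${\bf X}$, write $x=\lim x_n$ with $x_n\in C$; since $h$ extends to the local sc-diffeomorphism $t\circ s^{-1}$ near $x$, one gets morphisms $h_n:x_n\to x_n'$ with $x_n'\to x'$. By Lemma \ref{groupoidlem1} the points $x_n'$ are again good boundary points, and since $C$, being a connected component of the groupoid $X(1)$ (Definition \ref{groupdef1}), is morphism-invariant, $x_n'\in C$; hence $x'\in\overline{C}$. Consequently the morphism set of $\overline{C}$ is $s^{-1}(\overline{C})=t^{-1}(\overline{C})$, a closed subset of ${\bf X}$, and the five structure maps $(s,t,m,u,i)$ of $X$ restrict to maps on $\overline{C}$.

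Next, the local description of $\overline{C}$. Fix $x\in\overline{C}$ with $d(x)=m$ and choose the M-polyfold chart $(U(x),\varphi,(O,{\mathcal S}))$ from the proof of Lemma \ref{grouplem2}; after shrinking $U(x)$ we may assume the partial quadrant is $[0,\infty)^m\oplus Q$, so that $X(1)\cap U(x)=\bigcup_{j=1}^{m}U_j$ and the local hyperface pieces $U_j^{\ast}=\varphi^{-1}(O_j^{\ast})$, $j=1,\dots,m$, are precisely the local faces through points near $x$. Each $U_j$ is connected and contained in $X(1)$, hence lies in a single connected component of $X(1)$, so $U_j^{\ast}$ lies near $x$ in a single global face through $x$; conversely, every global face through $x$ contains near $x$ at least one $U_j^{\ast}$, because a sequence of its good boundary points converging to $x$ eventually enters some $U_j$. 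Since $X$ is face-structured there are exactly $m$ global faces through $x$, so this assignment is a bijection and, near $x$, $\overline{C}$ coincides with exactly one piece $U_j^{\ast}$. Finally $U_j^{\ast}=\varphi^{-1}(\{v_j=0\}\cap O)$, and $\{v_j=0\}\cap O$ is an open subset of the splicing core of the restricted splicing obtained by deleting the $j$-th quadrant coordinate (the standard fact that boundary hyperfaces of M-polyfolds carry induced M-polyfold structures, cf. \cite{HWZ2}); so $\varphi|_{U_j^{\ast}}$ is an M-polyfold chart for $\overline{C}$ near $x$. Any two such charts differ by a restriction of an ambient sc-diffeomorphism, hence are sc-compatible, and $\overline{C}$ is an M-polyfold. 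Composing with the local sc-diffeomorphism $s:{\bf X}\to X$ yields in the same way an M-polyfold structure on the morphism set $s^{-1}(\overline{C})$.

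It then remains to verify the ep-groupoid axioms, all inherited from $X$. The restricted source and target maps are surjective (using $u(x)=1_x$) local sc-diffeomorphisms between the inherited charts, because they are restrictions of local sc-diffeomorphisms compatible with the new charts; and $m,u,i$ restrict to sc-smooth maps since the object and morphism sets are invariant and the charts are restrictions. For properness, given $x\in\overline{C}$ take $V(x)\subset X$ with $t:s^{-1}(\overline{V(x)})\to X$ proper; then $V(x)\cap\overline{C}$ is a neighborhood of $x$ in $\overline{C}$ and $t$ restricted to $s^{-1}\!\big(\operatorname{cl}_{\overline{C}}(V(x)\cap\overline{C})\big)$ is proper, its domain being a closed subset of the set on which $t$ is already proper. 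Hence $\overline{C}$ is an ep-groupoid. I expect the main obstacle to be the local identification of $\overline{C}$ with a single hyperface piece $U_j^{\ast}$: without face-structuredness a global face can, near $x$, be a union of several of the $U_j^{\ast}$, which fails to be an M-polyfold (it acquires corner points), so the pigeonhole argument combining Lemma \ref{grouplem2} with the face-structured hypothesis is exactly where the assumption is used and is the crux of the proof.
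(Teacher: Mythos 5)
Your proof is correct and follows essentially the same route as the paper: morphism-invariance of $\overline{C}$, then using the face-structured hypothesis to identify $\overline{C}\cap U$ locally with a single hyperface piece $U_j^*=\varphi^{-1}(\{v_j=0\}\cap O)$, giving M-polyfold charts via the restricted splicing, pulling this structure back to the morphism set by $s$ (or $t$), and inheriting the \'etale and proper properties from $X$. The only difference is presentational — the paper writes out the restricted splicing $\mathcal{S}^*=(\rho,E,V^*)$ explicitly where you cite it as standard, while you spell out the bijection between local pieces $U_j^*$ and global faces through $x$ a bit more fully than the paper does.
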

\begin{proof}
Let $F$ be a face in $X$. Take an object  $x_0\in F$ and take a chart
$(U,\varphi,(O,{\mathcal S}))$ around $x_0$. We may assume that
$$
\varphi(x_0)=((0,f),e)\in [0,\infty)^n\oplus
W\oplus E.
$$
Since $X$ is face-structured $x_0$ belongs globally to exactly
$d(x_0)$ many faces. This implies that there exists  an index $j\in
\{1,\ldots ,n\}$ so that $F\cap U$ corresponds to all points
$((v,f),e)\in O$ with $v_j=0$. We abbreviate this set by $O_j$.  We can define in an obvious way a new splicing as follows. We keep $E$, replace $V$ by an open subset
$V^\ast$ in $[0,\infty)^{n-1}\oplus W$ and
$\pi$ by the family of projections
$$
\rho_{(v_1,\ldots ,v_{j-1},v_{j+1},\ldots ,v_n,f)}=
\pi_{(v_1,\ldots,v_{j-1},0,v_{j+1},\ldots, v_n,f)}.
$$
Now  we can use the chart map $\varphi$ to define a chart $(U\cap
X(1),\varphi,(O_j,  {\mathcal S}^\ast))$ with the splicing ${\mathcal S}^*=(\rho, E, V^*)$ .  All charts obtained in
this way are sc-smoothly compatible.

Denote by ${\bf F}$ the morphism set coming from the full
subcategory associated with the face $F$. If a morphism has its
source in $F$, then its target is also in $F$ since its degeneracies
are the same.  Because $s,t:{\bf X}\rightarrow X$ are local
sc-diffeomorphisms the maps $s,t:{\bf F}\rightarrow F$ are local
homeomorphisms. The local transition maps $t\circ s^{-1}$ and
$s\circ t^{-1}$ are restrictions of sc-smooth maps. Hence we can use
them to define a M-polyfold structure on ${\bf F}$ for which
$s,t:{\bf F}\rightarrow F$ are local sc-diffeomorphisms. Finally, the groupoid
$F$ is proper. Indeed, if $x$ is an object in $F$ we take an open neighborhood $U\subset X$
of $x$ so that
$$
t:s^{-1}(\ov{U})\rightarrow X
$$
is proper. Taking $U':=U\cap F$ the result follows because  the
object set $F$ is closed in $X$.
\end{proof}

Next we study the geometry of faces in   more   detail.
\begin{lem}\label{faces0}
Let $F:X\rightarrow Y$ be an equivalence between two ep-groupoids. Assume that
$D$ is a connected component of $Y(1)$. Then there exists a unique  connected component $C$ of $X(1)$ so that for every $y\in
D$ there is a  morphism
$\varphi:F(x)\rightarrow y$ for some $x\in C$. Conversely, if $C$ is a connected component of $X(1)$, then there  exists  a unique connected component $D$ of $Y(1)$  satisfying $F(C)\subset D$.
\end{lem}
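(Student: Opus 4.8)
The plan is to descend to the orbit spaces, where $F$ induces a homeomorphism, and then invoke the fact that homeomorphisms carry connected components to connected components. First I would record two preliminaries. By Lemma~\ref{groupoidlem1} the functor $F$ preserves the degeneracy index, $d_Y(F(x))=d_X(x)$, and $d$ is constant on orbits; hence $F$ maps $X(1)$ into $Y(1)$, and since $\abs{d_X}=\abs{d_Y}\circ\abs{F}$ on $\abs{X}$, the homeomorphism $\abs{F}:\abs{X}\to\abs{Y}$ restricts to a homeomorphism
$$\abs{F}:\abs{X(1)}\longrightarrow\abs{Y(1)}$$
between the orbit spaces of $X(1)$ and $Y(1)$. (Here one uses that the quotient map of an ep-groupoid is open and that a morphism with source in $X(1)$ has its target in $X(1)$, so that the quotient topology on $\abs{X(1)}$ agrees with the subspace topology inherited from $\abs{X}$.) Second, for any ep-groupoid $G$ the assignment $C\mapsto\abs{C}$ is a bijection from the connected components of $G$ in the sense of Definition~\ref{groupdef1} onto the connected components of the topological space $\abs{G}$, with inverse sending a component $K$ of $\abs{G}$ to the full subcategory on $\pi^{-1}(K)$; one checks that $\pi^{-1}(K)$ is a saturated union of connected components of the object M-polyfold $G$ (using that M-polyfolds, hence splicing cores, are locally connected), that $\mathrm{obj}(C)=\pi^{-1}(\abs{C})$ by saturation, and that $\abs{C}=\pi(\mathrm{obj}(C))$ is clopen and connected in $\abs{G}$, hence a component of $\abs{G}$.

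Granting these, the converse statement is formal. Let $C$ be a connected component of $X(1)$. Then $\abs{C}$ is a connected component of $\abs{X(1)}$, so $\abs{F}(\abs{C})$ is a connected component of $\abs{Y(1)}$; let $D$ be the connected component of $Y(1)$ with $\abs{D}=\abs{F}(\abs{C})$. For $x\in C$ we have $\abs{F(x)}=\abs{F}(\abs{x})\in\abs{D}$, hence $F(x)\in\mathrm{obj}(D)=\pi_Y^{-1}(\abs{D})$, so $F(C)\subset D$. If $D'$ is any connected component of $Y(1)$ with $F(C)\subset D'$, then $\abs{D'}\supset\abs{F}(\abs{C})=\abs{D}$, and since distinct connected components of $\abs{Y(1)}$ are disjoint, $\abs{D'}=\abs{D}$, whence $D'=D$. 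This gives existence and uniqueness.

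For the first statement, let $D$ be a connected component of $Y(1)$, so that $\abs{D}$ is a connected component of $\abs{Y(1)}$, and let $C$ be the connected component of $X(1)$ corresponding to the connected component $\abs{F}^{-1}(\abs{D})$ of $\abs{X(1)}$. Given $y\in D$, the orbit $\abs{F}^{-1}(\abs{y})$ lies in $\abs{C}$, so by $\mathrm{obj}(C)=\pi_X^{-1}(\abs{C})$ there is $x\in C$ with $\abs{F(x)}=\abs{y}$, i.e.\ a morphism $\varphi:F(x)\to y$ in ${\bf Y}$, which is the required property. For uniqueness, suppose $C'$ is a connected component of $X(1)$ with the same property; then for every $y\in D$ there is $x\in C'$ with $\abs{F(x)}=\abs{y}$, so $\abs{D}\subset\abs{F}(\abs{C'})$, and since $\abs{F}(\abs{C'})$ is a connected component of $\abs{Y(1)}$ containing the connected component $\abs{D}$, we get $\abs{F}(\abs{C'})=\abs{D}$, hence $\abs{C'}=\abs{F}^{-1}(\abs{D})=\abs{C}$ and $C'=C$.

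The substantive content is entirely in the two preliminaries: the compatibility of $\abs{F}$ with the boundary strata (which is immediate from Lemma~\ref{groupoidlem1}) and, more importantly, the bookkeeping identifying connected components of the ep-groupoid $G(1)$ with connected components of the orbit space $\abs{G(1)}$ — including the mild point that M-polyfolds are locally connected so that these notions line up. Once that correspondence is set up, the statement is just the transport of connected components along the homeomorphism $\abs{F}:\abs{X(1)}\to\abs{Y(1)}$, so I expect no further obstacle.
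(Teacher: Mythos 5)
Your proof is correct, and it takes a genuinely different route from the paper. The paper argues entirely at the level of objects and morphisms: it defines $C$ explicitly as the set of $x\in X(1)$ admitting a morphism $F(x)\to y$ into $D$, and then verifies by hand the three conditions of Definition~\ref{groupdef1}, the heart of the matter being a path-lifting step ("since $F$ is an equivalence, we find a continuous path $\alpha$ in $X$ over $\pi_Y\circ\beta$") to show that $C$ absorbs its topological component. You instead descend to orbit spaces, isolate the correspondence $C\leftrightarrow\abs{C}$ between connected components of $G(1)$ (in the sense of Definition~\ref{groupdef1}) and connected components of the topological space $\abs{G(1)}$, and then the lemma becomes pure transport of components along the homeomorphism $\abs{F}:\abs{X(1)}\to\abs{Y(1)}$. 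Your route trades the paper's path-lifting for the bookkeeping of this correspondence; once that is set up, existence and uniqueness in both directions fall out formally, with no separate argument needed. Both approaches rely, in the end, on local (path-)connectedness of M-polyfolds — the paper implicitly, by freely switching between "component" and "path component" in its Definition~\ref{groupdef1} and proof, and you explicitly — so neither has an advantage there. One small slip: you wrote "M-polyfolds, hence splicing cores, are locally connected"; the implication runs the other way (splicing cores are the local models, so their local connectedness yields that of M-polyfolds). That aside, the argument is sound and, if anything, a bit cleaner to audit than the paper's, since the delicate part is concentrated in one reusable preliminary rather than scattered through the case analysis.
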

\begin{proof}
Let $D$ be a connected component of $Y(1)$ and  $y_0\in D$. Since $F:X\to Y$ is an equivalence, the map $\abs{F}:\abs{X}\to \abs{Y}$ is an homeomorphism. Hence there exists a morphism $\varphi_0:F(x_0)\to y_0$ for some $x_0\in X$. From Lemma  \ref{groupoidlem1} we deduce  $d(x_0)=d(y_0)=1$ implying that $x_0\in X(1)$. Define
the subset $C$ of the object  set  by
$$
C=\{x\in X(1)\vert \, \text{there are an $y\in D$ and a morphism $\varphi :F(x)\to y$}\}.
$$  We claim that $C$ is a connected component of $X(1)$. Take $x\in C$ and assume that  $x'\in X(1)$ belongs to a path component of $X(1)$. Then there is a continuous path $\alpha:[0,1]\to X(1)$ such that $\alpha (0)=x$ and $\alpha (1)=x'$. Consider the continuous path $\beta=F\circ \alpha$. Then $\beta (0)=F(x)$ and $\beta (1)=F(x')$. Since $x\in C$, there exists $y\in D$ and a morphism $\varphi_0:F(x)\to y$. By the definition of the connected component $D$ we have $F(x)\in D$ and  hence $F(x')\in D$. This implies that $x'\in C$. If $x\in C$ and $h:x\to x'$ is a morphism, then $F(h):F(x)\to F(x')$. From $F(x)\in D$ we conclude $F(x')\in D$, and consequently $x'\in C$. Finally, $\abs{C}:=\abs{F}^{-1}(\abs{D})$ is connected since $\abs{F}:\abs{X}\to \abs{Y}$ is a homeomorphism and $\abs{D}$ is connected.

Conversely, assume that $C$ is a connected component of $X(1)$.
Define the subset
$$D=\{y\in Y(1)\vert \, \text{there are an  $x\in C$ and a morphism $\varphi:F(x)\to y$}\}.$$ Then $F(C)\subset D$ and we claim that $D$ is a connected component of $Y(1)$. If $y\in D$ and $y'$ belongs to the path component of $Y(1)$, then there is a continuous path $\beta:[0,1]\to Y(1)$ such that $\beta (0)=y$ and $\beta (1)=y'$. By the definition of $D$, there is $x\in C$ and a morphism $\varphi:F(x)\to y$. Since $F$ is an equivalence, we find a continuous path $\alpha:[0,1]\to X$ such that $\abs{F}\circ \pi_X\circ \alpha =\pi_Y \circ \beta$.   Since $C$ is the connected component, the point $x'=\alpha (1)$ belongs to $C$ and since there is  a morphism $\varphi: F(x')\to y'$, the point $y'$ belongs to $D$.  Next assume  $y\in D$ and  there is a morphism $h:y\to y'$.  Then there are $x\in C$ and a morphism $\varphi:F(x)\to y$. Hence $h\circ \varphi:F(x)\to y'$ is a morphism showing that $y'\in D$. Also, $\abs{D}=\abs{F}(\abs{C})$  showing that $\abs{D}$ is connected.
We have proved that the  equivalence $F:X\to Y$ induces a bijective map  between connected components of $X(1)$ and connected components  of $Y(1)$ by
$$
C\leftrightsquigarrow D,
$$
where we associate with  a connected component $C$ the connected component $D$ of $Y(1)$ satisfying
$F(C)\subset D$ and  with  a  connected component $D$ the connected component $C$ consisting  of those $x\in X(1)$ for which there is a morphism between $F(x)$ and  a point in $D$.
\end{proof}

If $F:X\rightarrow Y$ is an equivalence and $C$ a connected component of $X(1)$,  we denote by $F_\ast(C)$ the component of $Y(1)$ containing $F(C)$. If $D$ is a
connected component of $Y(1)$,  we denote by $F^\ast(D)$ the component
of $X(1)$ mapped into $D$.
We denote  by $\ov{C}$ the closure of the connected component $C$ of
$X(1)$ and similarly by $\ov{D}$ the closure of the connected component  $D$ in $Y(1)$.

\begin{lem}\label{faces1}
If $C$  is a connected component of $X(1)$, then  $F(\ov{C})\subset \ov{F_\ast(C)}$.
 If $D$ is a connected component of $Y(1)$,  then the set
 $$W=\{x\in X(1)\vert \,  \text{there are an  $y\in \ov{D}$ and a morphism $\varphi: F(x)\to y$}\}$$
  is contained in $\ov{F^\ast(D)}$. In other words, $F_\ast$ and $F^\ast$ map faces to faces.
\end{lem}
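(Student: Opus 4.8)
\emph{Approach.} I would prove the two inclusions separately: the first is immediate from continuity of $F$, while the second is a lifting argument through the local sc-diffeomorphism $F$, the two key inputs being that a morphism extends to a local sc-diffeomorphism $t\circ s^{-1}$ and that the degeneracy index is invariant under morphisms and under the equivalence $F$ (Lemma \ref{groupoidlem1}).

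For the first inclusion, recall that $F$, being an sc-smooth functor, induces a continuous map $F\colon X\to Y$ on object M-polyfolds, and that $F(C)\subseteq F_\ast(C)$ by the definition of $F_\ast(C)$. Hence $F(\overline C)\subseteq\overline{F(C)}\subseteq\overline{F_\ast(C)}$, where the closures are taken in $X$ and in $Y$ respectively. In particular the face $\overline C$ of $X$ is carried by $F$ into the face $\overline{F_\ast(C)}$ of $Y$.

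For the second inclusion, fix $x\in W$, so that there are $y\in\overline D$ and a morphism $\varphi\colon F(x)\to y$. The morphism $\varphi$ extends to a local sc-diffeomorphism realized by morphisms: there is an open neighborhood ${\bf U}(\varphi)\subset {\bf Y}$ of $\varphi$ on which $s$ and $t$ restrict to sc-diffeomorphisms onto open neighborhoods $U(F(x))$ of $F(x)$ and $U(y)$ of $y$, and $\psi:=t\circ(s|_{{\bf U}(\varphi)})^{-1}\colon U(F(x))\to U(y)$ satisfies $\psi(F(x))=y$ while each $b\in U(F(x))$ is joined to $\psi(b)$ by a morphism. Since $F$ is a local sc-diffeomorphism on objects, after shrinking we may assume $F$ restricts to an sc-diffeomorphism of an open neighborhood $V(x)$ of $x$ onto $U(F(x))$. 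Now let $O$ be an arbitrary open neighborhood of $x$ with $O\subseteq V(x)$; then $\psi(F(O))$ is an open neighborhood of $y$ in $Y$, so, as $y\in\overline D$, there is a point $y'\in\psi(F(O))\cap D$. Set $x':=F^{-1}(\psi^{-1}(y'))\in O$. There is a morphism $F(x')=\psi^{-1}(y')\to y'$, hence $d(x')=d(F(x'))=d(y')=1$ by Lemma \ref{groupoidlem1}, so $x'\in X(1)$; and since $F(x')$ is joined by a morphism to $y'\in D$, the description of $F^\ast(D)$ recorded in Lemma \ref{faces0} gives $x'\in F^\ast(D)$. Thus every neighborhood of $x$ meets $F^\ast(D)$, i.e. $x\in\overline{F^\ast(D)}$. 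Applying the first inclusion with $C=F^\ast(D)$ and using $F_\ast(F^\ast(D))=D$ yields $F(\overline{F^\ast(D)})\subseteq\overline D$, which together with $W\subseteq\overline{F^\ast(D)}$ is the assertion that $F^\ast$ maps faces to faces.

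\emph{Main obstacle.} The only real content is the lifting step above. Its crucial features are (i) that ``$F(\cdot)$ is joined to a point of $D$ by a morphism'' is an \emph{open} condition on points of $X$, which relies on extending $\varphi$ to the local sc-diffeomorphism $t\circ s^{-1}$; and (ii) that the lifted nearby points $x'$ genuinely lie in $X(1)$ and not at a corner point of higher order, which is exactly where the invariance of the degeneracy index (Lemma \ref{groupoidlem1}) is used. The rest is formal bookkeeping about closures and connected components.
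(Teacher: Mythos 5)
Your proof is correct and follows essentially the same strategy as the paper: continuity of $F$ gives the first inclusion, and the second is the same lifting argument through the local sc-diffeomorphism $\widehat{\varphi}=t\circ s^{-1}$ combined with the invariance of the degeneracy index from Lemma \ref{groupoidlem1}. The only cosmetic difference is that you phrase the lifting step with arbitrary neighborhoods of $x$ where the paper uses a convergent sequence $(y_n)\subset D$ converging to $y$.
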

\begin{proof}
Take $x\in \ov{C}$ and a sequence $(x_n)\subset C$ converging to $x$. Then
$F(x_k)\in F_\ast(C)$ and $F(x_k)\to F(x)$ implying that $F(x)\in \ov{F_\ast(C)}$.
Next take $x\in W$. Then there is $y\in\ov{D}$ and a morphism $\varphi:F(x)\to y$.
Since $F$ is a local sc-diffeomorphism,  we find open neighborhoods $U(x)\subset X$ and $U(F(x))\subset Y$  of the points $x$ and $F(x)$ such that $F:U(x)\to U(F(x))$ is an sc-diffeomorphism. The morphism $\varphi:F(x)\to y$ extends to a local sc-diffeomorphism of the form $\wh{\varphi}=t\circ s^{-1}$. Shrinking the set $U(x)$ if necessary,  we may assume that $\wh{\varphi}:U(F(x))\to  U(y)$ is an sc-diffeomorphim.  Take any sequence $(y_n)\subset D$ such that $y_n\to y$. Then $(y_n)\subset U(y)$ and we find a sequence $(x_n)\in U(x)$ such that $x_n\to x$ and $\wh{\varphi}(F(x_n))=y_n$.   By Lemma  \ref{groupoidlem1},
$d(x_n)=d(y_n)=1$  and since $\wh{\varphi}$  defines a morphism between $F(x_n)$ and $y_n$, we conclude that $x_n\in F^\ast(D)$.  This implies  $x\in \ov{F^\ast(D)}$ as claimed.
\end{proof}

We  summarize the previous discussion in the following proposition.

\begin{prop}\label{groupprop4}
 Assume that $F:X\to Y$ is an equivalence between ep-groupoids.
 Then the maps $F_\ast$ and $F^\ast$ map faces to faces
 and are mutual inverses. In particular,  if one of the ep-groupoids
 is face-structured so is the other.
 \end{prop}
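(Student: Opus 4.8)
The plan is to build everything on the two preparatory lemmas. Lemma~\ref{faces0} already gives a bijection $C\leftrightsquigarrow D$ between connected components of the good-boundary groupoids $X(1)$ and $Y(1)$, with $F(C)\subset F_\ast(C)$ and with $F^\ast(D)$ explicitly described as $\{x\in X(1)\mid \exists\,\varphi:F(x)\to y,\ y\in D\}$; Lemma~\ref{faces1} says that passing to closures turns this into a correspondence on faces, sending the face $\ov C$ to the face $\ov{F_\ast(C)}$ and $\ov D$ to $\ov{F^\ast(D)}$, and that moreover $F(\ov C)\subset\ov{F_\ast(C)}$. The first assertion of the proposition is then essentially a restatement: since $F_\ast$ and $F^\ast$ act on faces through the component correspondence of Lemma~\ref{faces0}, which is a bijection, they are mutually inverse bijections between the set of faces of $X$ and the set of faces of $Y$.

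For the face-structured statement I would introduce, for a point $p$ of an ep-groupoid, the integer $n(p)$ = number of faces containing $p$; face-structuredness of $X$ is exactly the assertion $n_X(x)=d(x)$ for every object $x$. Two routine observations make $n$ manageable. First, every face $\ov C$ is morphism-closed: if $p\in\ov C$ and $g:p\to p'$ is a morphism, choose $p_k\in C$ with $p_k\to p$, extend $g$ to the local sc-diffeomorphism $t\circ s^{-1}$, and push the $p_k$ through it to obtain morphisms $p_k\to g\cdot p_k$ with $g\cdot p_k\to p'$; since $d(p_k)=1$ these morphisms lie in ${\bf X}(1)$, so $g\cdot p_k\in C$ by the morphism-closedness built into Definition~\ref{groupdef1}, whence $p'\in\ov C$. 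Second, by Lemma~\ref{groupoidlem1} the degeneracy $d$ is constant along morphisms and is preserved by the equivalence $F$. Together these show that $n$ is a morphism-invariant, so it suffices to compare $n_X$ on $X$ with $n_Y$ on $Y$ along $F$, and because $\abs{F}$ is surjective every object of $Y$ receives a morphism from some $F(x)$.

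The heart of the matter is then the identity $n_X(x)=n_Y(F(x))$ for all $x\in X$. The inequality $n_Y(F(x))\ge n_X(x)$ follows at once from Lemma~\ref{faces1}: each face $\ov C\ni x$ produces the face $\ov{F_\ast(C)}\ni F(x)$, and distinct faces through $x$ give distinct faces through $F(x)$ since $F_\ast$ is injective on faces. For the reverse inequality, given a face $\ov D\ni F(x)$ I would pick $y_k\in D$ with $y_k\to F(x)$, use that $F$ is a local sc-diffeomorphism to pull these back to $x_k=F^{-1}(y_k)\to x$ with $d(x_k)=d(y_k)=1$, hence $x_k\in X(1)$ and in fact $x_k\in F^\ast(D)$ by the explicit description in Lemma~\ref{faces0} (use the identity morphism $F(x_k)=y_k\to y_k\in D$); letting $k\to\infty$ gives $x\in\ov{F^\ast(D)}$, and injectivity of $F^\ast$ on faces again preserves distinctness, so $n_X(x)\ge n_Y(F(x))$. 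With the identity established, $X$ face-structured $\iff n_X\equiv d$ on $X\iff n_Y(F(x))=d(F(x))$ for all $x\in X$ (using $d(x)=d(F(x))$) $\iff n_Y\equiv d$ on $Y\iff Y$ face-structured, the middle equivalence using the morphism-invariance of $n$ and $d$ together with surjectivity of $\abs{F}$.

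I expect the main obstacle to be the bookkeeping that bridges the gap between the component-level data, which lives entirely on the good boundary $X(1)$ where $d=1$, and the pointwise condition defining face-structuredness, which must be verified at corner points as well. Concretely, the care is needed in the pull-back step of the third paragraph — checking that a face of $Y$ through the (possibly corner) point $F(x)$ genuinely pulls back to a bona fide face of $X$ through $x$, and that this assignment is injective — together with the harmless but essential verification that faces are morphism-closed, so that the counting function $n$ really descends to the orbit space; once these are in place, the rest is formal.
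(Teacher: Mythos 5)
Your proof is correct and follows the same route as the paper's, reducing the face-structured claim to the pointwise identity relating faces through $x$ and faces through $F(x)$ via Lemmas \ref{faces0} and \ref{faces1}. You go somewhat further than the paper's very terse proof in explicitly verifying that faces are morphism-closed (by pushing an approximating sequence in $C$ through the local extension $t\circ s^{-1}$ of a morphism) and that $F^\ast$ carries faces containing $F(x)$ to faces containing $x$ even when $x$ is a corner point (by pulling back $y_k\to F(x)$ along the local inverse of $F$); both of these steps are silently used in the paper and genuinely needed, since Lemma \ref{faces1} is only formulated on the good-boundary stratum $X(1)$.
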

 \begin{proof}
It only  remains  to  prove the last statement.  We assume that$X$ is
face-structured and prove that $Y$ is also face-structured. Take a point $y\in Y$. Then there is $x\in X$ and a morphism $\varphi:F(x)\to y$. By  Lemma
\ref{groupoidlem1}, $ d:=d(x)=d(y)$.  Since $X$ is face-structured, the point $x$ belongs to $d$-many faces. By Lemma \ref{faces1}, the map $F_{\ast}$ takes these faces onto exactly $d$-many faces of $Y$ containing the point $y$.  Conversely, assume that
$Y$ is face-structured. Take a point $x\in X$ and set $y=F(x)$. Then $d:=d(x)=d(y)$ and since $Y$ is face-structured,  the point $y$ belongs to exactly $d$ faces. Then the map $F^\ast$  takes these faces to $d$-many faces of $X$  containing the point  $x$. Hence $X$ is face-structured and the proof of the proposition is complete.
 \end{proof}

 \begin{prop}\label{groupprop5}
 Let $X$ be a face-structured ep-groupoid. Then the intersection
 of an arbitrary number of faces carries in a natural way the structure
 of an ep-groupoid.
 \end{prop}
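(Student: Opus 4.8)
The plan is to run the argument of Lemma \ref{grouplem3} one chart at a time, the family $\mathcal F$ of faces now being arbitrary. Write $S:=\bigcap_{F\in\mathcal F}F$ for the intersection of the underlying object sets; if $S=\emptyset$ there is nothing to do, so assume $S\neq\emptyset$ and regard $S$ as the full subcategory of $X$ on these objects, with morphism set ${\bf S}$. First I would record the elementary structural facts. Each face is a closed subset of $X$ (being a closure) and, by the argument in the proof of Lemma \ref{grouplem3}, is closed under morphisms having their source in it; hence $S$ is closed in $X$, the set ${\bf S}$ coincides with $\{g\in{\bf X}\mid s(g)\in S\}=\{g\in{\bf X}\mid t(g)\in S\}$ and is closed in ${\bf X}$, so that both $S$ and ${\bf S}$ carry second countable paracompact topologies. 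Applying the same closedness to a morphism $g:x\to x'$ and to its inverse shows that the collection of faces through $x$ equals the collection of faces through $x'$; in particular any morphism of $X$ with source in $S$ has target in $S$.

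Next I would produce M-polyfold charts on the object set $S$. Fix $x_0\in S$ and, exactly as in the proof of Lemma \ref{grouplem2}, pick a chart $(U,\varphi,(O,\mathcal S))$ with $\varphi(x_0)=((0,f_0),e_0)\in[0,\infty)^n\oplus W\oplus E$ and $n=d(x_0)$, so that the $n$ local faces through $x_0$ correspond to the coordinate hyperplanes $\{v_j=0\}$, $j=1,\dots,n$. Because $X$ is face-structured, $x_0$ lies on exactly $d(x_0)=n$ global faces, and since every global face through $x_0$ must contain at least one of these local faces, the $n$ local faces belong to $n$ distinct global faces. Consequently the faces $F_1,\dots,F_k\in\mathcal F$ passing through $x_0$ correspond to $k$ distinct indices $j_1,\dots,j_k$, and (after shrinking $U$) $S\cap U$ corresponds under $\varphi$ to $O^*:=\{((v,f),e)\in O\mid v_{j_1}=\dots=v_{j_k}=0\}$. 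As in Lemma \ref{grouplem3}, $O^*$ is an open subset of the splicing core of a new splicing $\mathcal S^*$ obtained by keeping $E$, restricting $V$ to an open subset of $[0,\infty)^{n-k}\oplus W$, and precomposing the projection family with the insertion of zeros in the deleted slots. This yields a chart $(S\cap U,\varphi,(O^*,\mathcal S^*))$, and any two such charts are sc-smoothly compatible because the ambient charts on $X$ are and the transition maps are simply their restrictions; thus $S$ is an M-polyfold.

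For the morphism set I would exploit that $s,t:{\bf X}\to X$ are local sc-diffeomorphisms. Near a morphism $g:x\to x'$ with $x\in S$ choose neighborhoods on which $s$ is an sc-diffeomorphism onto a neighborhood of $x$, and note that the extension $\widehat g=t\circ s^{-1}$ of $g$ is a local sc-diffeomorphism $U(x)\to U(x')$. By the corner recognition result (Proposition 3.13 of \cite{HWZ2}) $\widehat g$ preserves the degeneracy index, hence sends local faces to local faces; being realized by morphisms it carries the trace near $x$ of a global face onto the trace near $x'$ of the same global face, and since the faces through $x$ and $x'$ agree we obtain $\widehat g(S\cap U(x))=S\cap U(x')$. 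Pulling the chart of $S\cap U(x)$ back along $s$ therefore gives a chart for ${\bf S}$ near $g$; the transition maps $t\circ s^{-1}$ and $s\circ t^{-1}$ between such charts are restrictions of sc-smooth maps, so ${\bf S}$ is an M-polyfold and, with this structure, $s,t:{\bf S}\to S$ are surjective local sc-diffeomorphisms (surjectivity from $x\mapsto 1_x$). The remaining structure maps $m,u,i$ are restrictions of the sc-smooth structure maps of $X$ and hence sc-smooth. Finally properness is inherited verbatim from Lemma \ref{grouplem3}: for $x\in S$ pick $V(x)\subset X$ with $t:s^{-1}(\overline{V(x)})\to X$ proper, take $V(x)\cap S$, which is open in $S$, and use that $S$ is closed in $X$ to see that the restriction of the proper map to $s^{-1}(\overline{V(x)\cap S})$ is again proper.

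The one step genuinely requiring the face-structured hypothesis — and which I expect to be the crux — is the local identification of $S$ with the sub-splicing-core $\{v_{j_1}=\dots=v_{j_k}=0\}$. Without it a global face of $\mathcal F$ could meet a chart in a union of several coordinate hyperplanes, so that $S\cap U$ would be a union of sub-splicing-cores rather than a single one, and the local model would collapse; everything else (topology, compatibility of charts, structure maps, properness) is a routine transcription of Lemmas \ref{grouplem2} and \ref{grouplem3}.
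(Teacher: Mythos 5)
Your proposal is correct and fills in, at length, exactly the argument the paper invokes in one line: run the chart construction of Lemma \ref{grouplem3} but intersect the local coordinate hyperplanes $\{v_{j_1}=\dots=v_{j_k}=0\}$ corresponding to the (necessarily $\leq d(x_0)$, hence finitely many distinct) faces through $x_0$, with the face-structured hypothesis guaranteeing that each global face cuts out a single hyperplane locally. The morphism-set, structure-map, and properness arguments you give are likewise the straightforward adaptations of Lemma \ref{grouplem3} that the paper's one-sentence proof has in mind.
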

 \begin{proof}
 The ep-structure on a finite intersection of faces can be
 defined as we did for a face  in  Lemma \ref{grouplem3}.
 \end{proof}
Next, assume that $Z$ is a polyfold and let the pairs $(X,\alpha)$ and $(X', \alpha')$ be two equivalent polyfold structures on $Z$. This means that there exist a third ep-groupoid $X''$ and two equivalences $F:X''\to X$ and $F':X''\to X'$ satisfying  $\alpha \circ \abs{F}=\alpha'\circ \abs{F'}$. If $D$ is a connected component of $X (1)$, then by
Lemma \ref{faces1},
$F^{\ast}(D)$ is the associated  connected component  of $X''(1)$ and $D'=F'_{\ast}\circ F^{\ast} (D)$ the connected component of $X'(1)$. Hence there  is a one--one correspondence between the faces of $X(1)$ and $X'(1)$. In particular, if the ep-groupoid  $X$ is face-structured, then the same is true for the ep-groupoid $X'$.  If $C$ is a face of $X(1)$ and $C'=F'_{\ast}\circ F^{\ast}(C)$ is the corresponding face of $X'(1)$, then $\abs{F}^{-1}(\abs{C})=\abs{F'}^{-1}(\abs{C'})$ and
$$\alpha'(\abs{C'})=\alpha'\circ \abs{F'}\circ \abs{F}^{-1}(\abs{C})=\alpha \circ \abs{F}\circ \abs{F}^{-1}(\abs{C})=\alpha (\abs{C}).$$

This allows to introduce the concepts  of a  face and of  face-structured for polyfolds.
\begin{defn}
Let $Z$ be a polyfold and let $(X, \alpha)$ be a polyfold structure on $Z$.  The polyfold $Z$ is said to be {\bf face-structured} if the ep-groupoid $X$ is face-structured. A {\bf  face}  $D$ of  the polyfold $Z$ is the image $D=\alpha (\abs{C})$ of
the orbit space of a face $C$ in $X$.
\end{defn}

\subsection{Branched Suborbifolds}
Next we shall introduce the notion of a branched suborbifold of a polyfold $Z$ and start with the definition of a branched ep-subgroupoid of the ep-groupoid $X$. It generalizes ideas from \cite{CRS} where quotients of manifolds by global group-actions are studied.

We shall view the nonnegative rational numbers, denoted by $\Q^+=\Q\cap [0,\infty )$,
as the objects in a category having only the identities
as morphisms. We would like to mention  that branched
ep-subgroupoids will show up as solution sets of polyfold Fredholm
sections.

\begin{defn}\label{def1}  A {\bf  branched ep-subgroupoid of the ep-groupoid $X$}  is  a
functor
 $$\Theta:X\rightarrow {\mathbb Q}^+$$  having the following properties.
\begin{itemize}
\item[(1)] The support  of $\Theta$, defined by $\supp \Theta=\{x\in X\vert \, \Theta(x)>0\}$,  is
contained in $X_\infty$.
\item[(2)] Every  point $x\in \supp \Theta$ is contained in an open
neighborhood $U(x)=U\subset X$ such that
$$\supp \Theta \cap U= \bigcup_{i \in I}M_i,$$
where $I$ is a finite index set and where the sets $M_i$  are finite dimensional submanifolds of $X$ (in the sense of Definition 4.19 in \cite{HWZ3}, which is recalled in the Appendix \ref{sub}) all having the same dimension, and all in good position to the boundary $\partial X$ in the sense of Definition 4.14 in \cite{HWZ3}. The submanifolds $M_i$ are called {\bf local branches} in $U$.
\item[(3)] There exist  positive rational numbers $\sigma_i$, $i\in I$, (called {\bf weights}) such that if $y\in \supp \Theta \cap U$, then
$$
\Theta(y)=\sum_{\{i \in I \vert   y\in M_i\}} \sigma_i.
$$
\item[(4)] The inclusion maps $M_i\to U$ are proper.
\item[(5)] There is a natural representation of the isotropy group ${\bf X}(x)$ acting by sc-diffeomorphisms on $ U$.
\end{itemize}
\end{defn}

The branches  $(M_i)_{i\in I}$ together with the weights
$(\sigma_i)_{i\in I}$ constitute  a {\bf local branching structure}
of $X$ in $U$. The role of local branching structures and their properties have been studied in detail in \cite{HWZ7} where also the notion of an orientation for a branched ep-subgroupoid as well as for a branched suborbifold is introduced.

\begin{remark}
Given a branched ep-subgroupoid $\Theta$ of the ep-groupoid $X$  we obtain the
induced ep-subgroupoids $\Theta^i:X^i\rightarrow {\mathbb Q}^+$ for $i\geq 1$. Observe that the supports of $\Theta=\Theta^0$ and $\Theta^i$ coincide. The reader should notice that all upcoming constructions carried out for $\Theta$ will lead to the same result if carried out for $\Theta^i$.
\end{remark}

\begin{defn}
A branched ep-subgroupoid is called  {\bf $\mathbf{n}$-dimensional} if all
its local branches are n-dimensional submanifolds of $ M$. It is called {\bf compact} if $|\supp \Theta|$ is compact.
\end{defn}
From the assumption that $\Theta:X\to \Q^+$  is a functor it follows that
$$\Theta (x)=\Theta (y)$$
if there is a morphism $\varphi:x\mapsto y$. Therefore a branched ep-subgroupoid $\Theta$ induces a canonical map $\abs{\Theta}:\abs{X}\to \Q^+$ defined as
$$\abs{\Theta} (\abs{x})=\Theta (x).$$

Next we recall the definition of an orientation from \cite{HWZ7}.
\begin{defn}
{\em Let $\Theta:X\to \Q^+$ be a branched ep-subgroupoid on  the ep-groupoid $X$ and $S$ its support.
An {\bf orientation  for $\Theta$},   denoted by $\mathfrak{o}$,   consists of an
orientation for every local branch of the tangent set
$T_xS$ at every point  $x\in S$ so that the
following compatibility conditions  are satisfied.
\begin{itemize}
\item[(1)]
At  every point $x$,  there
exists a local branching structure $(M_i)_{i\in I}$ where the finite dimensional submanifolds  $M_i$ of $X$
can be oriented in such a way  that the orientations of $T_xM_i$ induced from $M_i$ agree with the given ones for the local branches of the tangent set $T_xS$.
\item[(2)] If  $\varphi:x\rightarrow y$ is a morphism between two points in $S_{\Theta}$, then the tangent map
 $T\varphi:T_xS\rightarrow T_yS$ maps every oriented branch to
 an oriented branch preserving the
orientation.
\end{itemize}}
\end{defn}

\begin{defn}  A {\bf branched suborbifold $S$ of the polyfold $Z$ } is  a subset $S\subset Z$ contained in $Z_{\infty}$ equipped with a positive function $\vartheta:S\to \Q\cap (0,\infty )$ called {\bf weight function}, together with an equivalence class of triples $(X, \alpha ,\Theta)$ in which the pair $(X, \alpha)$ is  a polyfold structure on $Z$ and $\Theta:X\rightarrow {\mathbb Q}^+$  is a branched ep-subgroupoid  of the ep-groupoid $X$ satisfying
\begin{Myitemize}
\item $S=\alpha (\abs{\supp \ \Theta} )$
\item $\vartheta (\alpha (\abs{x}))=\Theta (x)$.
\item Define $\supp (\vartheta):=\{z\in Z\vert \, \vartheta (z)>0\}$.  If the subsets $\supp(\vartheta)\subset Z$  and  $\abs{\supp(\Theta)}\subset \abs{X}$ are  equipped with the induced topologies, then  the map
$\alpha :\abs{\supp(\Theta)}\rightarrow \supp(\vartheta)$
is a homeomorphism.
\end{Myitemize}
\end{defn}

Here two such triples $(X,\alpha ,\Theta)$ and $(X',\alpha', \Theta')$  are called {\bf equivalent} if there exists a third triple $(X'', \alpha'', \Theta'')$ and two equivalences
$X\xleftarrow{F}X''\xrightarrow{F'}X'$ satisfying $\alpha''=\alpha \circ \abs{F}=\alpha'\circ \abs{F'}$ for the induced maps $\abs{F}$ and $\abs{F'}$ on the orbit spaces, and in addition,
$$\Theta''=\Theta'\circ F'=\Theta \circ F.$$

We note that if $\mathfrak{a}:X\Rightarrow X'$ is the s-isomorphism of the equivalence class $[X\xleftarrow{F}X''\xrightarrow{F'}X']$,  then $\alpha'\circ \abs{\mathfrak{a}}=\alpha$ and we obtain (without the last requirement above) the following relation between the two branched ep-subgroupoid  $\Theta:X\to \Q^+$ and $\Theta':X'\to \Q^+$,
$$\abs{\Theta'}\circ \abs{\mathfrak{a}}=\abs{\Theta}.$$
To verify the formula we simply compute $\abs{\Theta}(\abs{x})=\Theta (x)=\vartheta (\alpha (\abs{x}))=\vartheta (\alpha'\circ \abs{\mathfrak{a}}(\abs{x}))=\abs{\Theta'}\circ \abs{\mathfrak{a}}(\abs{x}).$

The  {\bf branched suborbifold $S$ of the polyfold $Z$} is called {\bf $n$-dimensio\-nal},  resp. {\bf compact}, resp. {\bf oriented}, if the representative $(X,\alpha, \Theta)$ of the ep-groupoid is
$n$-dimensional, resp. compact, resp. oriented. In the latter case the
equivalences $F$ and $F'$ involved are required to preserve the orientations.

\subsection{Branched Integration}\label{branchedint}
There is a canonical measure on an oriented branched suborbifold of a polyfold and an associated integration theory which we would like to describe briefly in this section referring to \cite{HWZ7} for details and proofs.

The measure will be induced from the overhead $(X, \alpha ,\Theta)$ where $\Theta:X\to \Q^+$ is a branched ep-subgroupooid of the ep-groupoid $X$   of dimension $n$. We  assume that the orbit space of $\Theta$, namely $S=\abs{\supp \Theta}$,  is compact. We abbreviate the orbit space of the boundary  by
$$\partial S=\abs{\supp \Theta
\cap \partial X}=\{ |x|\in S\vert \, x\in \supp \Theta \cap \partial X\}.
$$
The orbit space $\partial
S$, in general, is  not necessarily  the boundary of $S$ in the sense of
topology. Locally the sets $S$ and $\partial S$ can be
represented as quotients of finite unions of
smooth submanifolds by a sc-smooth group action on the ambient space. This structure suffices to generalize the familiar
notion of the Lebesgue $\sigma$-algebra of subsets of smooth
manifolds as follows.
\begin{thm}[{\bf Canonical $\sigma$-Algebra}]\label{th0}
 The compact topological spaces $S$ and $\partial S$ possess canonical $\sigma$-algebras ${\mathcal
L}(S)$ and ${\mathcal L}(\partial S)$ of subsets containing the
Borel $\sigma$-algebras of $S$ and $\partial S$, respectively.
\end{thm}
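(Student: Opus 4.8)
The plan is to reduce the construction of the $\sigma$-algebra on $S$ to the known notion of the Lebesgue $\sigma$-algebra on a finite-dimensional smooth manifold, transported through the local branching structure. First I would fix a representative $(X,\alpha,\Theta)$ of the branched suborbifold, so that $S=|\supp\Theta|$ with $\supp\Theta\subset X_\infty$ and $\dim\Theta=n$. Around each point $x\in\supp\Theta$ Definition \ref{def1} supplies an open neighbourhood $U=U(x)$, a finite collection of $n$-dimensional submanifolds $M_i\subset U$ in good position to $\partial X$, weights $\sigma_i$, and a natural representation of the finite isotropy group ${\bf X}(x)$ on $U$. I would declare a subset $A\subset |\supp\Theta\cap U|$ to be \emph{measurable} if, writing $\pi:X\to|X|$ for the quotient map, the preimage $\pi^{-1}(A)\cap M_i$ is Lebesgue-measurable in the smooth $n$-manifold $M_i$ for every $i\in I$. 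Since each $M_i$ is a genuine finite-dimensional (manifold with corners, by the good-position hypothesis) manifold, it carries its usual Lebesgue $\sigma$-algebra containing its Borel sets, so this definition makes sense and manifestly yields a $\sigma$-algebra on $|\supp\Theta\cap U|$ containing all Borel subsets (continuity of $\pi|_{M_i}$ and openness of $\pi$ give that relatively open sets pull back to Borel sets).

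The second step is to check that this local notion is independent of the auxiliary choices (the neighbourhood $U$, the particular local branching structure $(M_i,\sigma_i)$) and is invariant under the isotropy action, so that the local $\sigma$-algebras glue to a global one on the compact space $S$. Independence of the branching structure is the heart of the matter: two local branching structures representing the same functor $\Theta$ near $x$ must have, as the results of \cite{HWZ7} show, the property that each branch of one is, near any point, a union of pieces of branches of the other with matching weights; since a finite union and intersection of smooth submanifolds of complementary behaviour is Lebesgue-measurable in each, the two resulting $\sigma$-algebras coincide. Isotropy invariance is immediate because the natural representation acts by sc-diffeomorphisms of $U$, hence permutes the branches $M_i$ by diffeomorphisms, and diffeomorphisms of smooth manifolds preserve the Lebesgue $\sigma$-algebra. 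With local consistency in hand, I would cover the compact $S$ by finitely many such charted pieces $|\supp\Theta\cap U_k|$ and define ${\mathcal L}(S)=\{A\subset S : A\cap|\supp\Theta\cap U_k|\text{ is measurable for all }k\}$; this is a $\sigma$-algebra, contains the Borel $\sigma$-algebra of $S$ (by the corresponding local fact and the fact that a set Borel in each member of a finite open cover is Borel), and by the gluing consistency does not depend on the chosen cover.

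The third step is to check independence of the whole construction from the representative triple $(X,\alpha,\Theta)$. Given an equivalent triple $(X',\alpha',\Theta')$ there is, by definition, a common refinement $(X'',\alpha'',\Theta'')$ with equivalences $F:X''\to X$, $F':X''\to X'$ satisfying $\Theta''=\Theta\circ F=\Theta'\circ F'$ and $\alpha\circ|F|=\alpha'\circ|F'|=\alpha''$. Since $F$ is a local sc-diffeomorphism, it carries local branching structures for $\Theta''$ to local branching structures for $\Theta$ by diffeomorphisms on the branch manifolds, and similarly induces the sc$^0$-homeomorphism $|F|:|\supp\Theta''|\to|\supp\Theta|$ restricting to a homeomorphism onto $\supp\vartheta$ intertwined with $\alpha'',\alpha$. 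Transporting ${\mathcal L}$ through $|F|$ therefore matches the locally-defined $\sigma$-algebras on the nose, and the same for $F'$; composing, the $\sigma$-algebra on $S$ built from $(X,\alpha,\Theta)$ equals the one built from $(X',\alpha',\Theta')$. Finally, the entire argument applies verbatim to $\supp\Theta\cap\partial X$ in place of $\supp\Theta$ — here one uses that the branches $M_i$, being in good position to $\partial X$, meet $\partial X$ in submanifolds (with corners) of one lower dimension, which again carry a canonical Lebesgue $\sigma$-algebra — yielding ${\mathcal L}(\partial S)$ with the analogous properties.

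I expect the main obstacle to be the representative-independence at the level of branching structures (the core of step two): one must invoke the structural results of \cite{HWZ7} that relate two local branching structures of the same $\Theta$, and verify carefully that the measurability notion is robust under the finite Boolean combinations of branch pieces that arise, including near corner strata where the branches are only manifolds-with-corners; everything else is a routine gluing-and-functoriality bookkeeping, for which I would simply cite \cite{HWZ7} for the detailed verifications.
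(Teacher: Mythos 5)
The paper does not actually prove Theorem \ref{th0}: it states the result and explicitly refers to \cite{HWZ7} for the precise definition of the $\sigma$-algebras and for the verification of all the properties in Section \ref{branchedint}. So there is no in-paper proof to compare your proposal against; what you have produced is a plausible reconstruction of how the argument in \cite{HWZ7} is organized.

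On its own terms your sketch is on the right track. The local definition (a set $A\subset|\supp\Theta\cap U|$ is measurable iff $(\pi|_{M_i})^{-1}(A)$ is Lebesgue measurable in each branch $M_i$) is the natural one, the argument that it contains the Borel $\sigma$-algebra via continuity of $\pi|_{M_i}$ is correct, and you have correctly isolated the genuine difficulty: showing that two local branching structures for the same $\Theta$ near the same point generate the same local $\sigma$-algebra. This is indeed the nontrivial structural input, and it requires the results of \cite{HWZ7} relating different branchings of a fixed branched ep-subgroupoid; it does not follow merely from the definitions, because distinct collections $(M_i,\sigma_i)$ and $(M_j',\sigma_j')$ can produce the same weight functor $\Theta$ while the individual branches differ. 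You acknowledge that you would cite \cite{HWZ7} for this step, which is consistent with what the authors themselves do. Two small cautions worth flagging if you were to write this out in full: (i) the gluing step needs the local $\sigma$-algebras to be compatible not only under change of branching structure at a fixed point but also on overlaps of two distinct distinguished neighborhoods $U(x)$, $U(x')$, which is the same kind of comparison but must be stated; and (ii) for the $\partial S$ case the assertion that "good position" of $M_i$ to $\partial X$ forces $M_i\cap\partial X$ to be a codimension-one submanifold-with-corners of $M_i$ is correct but should be traced back to Definition 4.14 of \cite{HWZ3}, which is what "good position" refers to here.
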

The sets belonging to ${\mathcal L}(S)$ and ${\mathcal L}(\partial S)$
are  called measurable. The precise definition of the $\sigma$-algebras is given in \cite{HWZ7}.  The canonical $\sigma$-algebras for the branched ep-subgroupoid  $\Theta$
and  the induced ep-subgroupoids $\Theta^i$ are identical.

If  $X$ is an ep-groupoid,
then the tangent space  $TX$ is again an ep-groupoid. Recall that $TX$
is defined by equivalence classes $[U,\varphi,(O,{\mathcal
S}),x,h]$, where $x\in X^1$, $\varphi:U\rightarrow O$ is a chart
where $O$ is open in the splicing core $K^{\mathcal S}$ and $h\in
T_xX$. The map $TX\rightarrow X^1$ is given by
$[U,\varphi,(O,{\mathcal S}),x,h]\rightarrow x$.
If  $\psi:x\rightarrow y$ is a morphism
between points $x,y\in X_1$, then $\psi$ belongs to ${\bf X}_1$  and extends to an sc-diffeomorphism  $t\circ s^{-1}:O(x)\rightarrow O(y)$. We define $T\psi:T_xX\rightarrow
T_yX$ to be the linearization of $t\circ s^{-1}$ at $x$.
Hence $TX$ is an ep-groupoid whose  structure maps are the
tangents of the structure maps of $X$. If $F:(X',\beta')\rightarrow
(X,\beta)$ is an equivalence of polyfold structures, then
$TF:TX'\rightarrow TX$ is an equivalence. It also defines an
equivalence between the Whitney sums $\oplus_k TX'\rightarrow
\oplus_k TX$ covering $F:X\rightarrow X'$.
In  the following  we  denote by
$\oplus_kTX\rightarrow X^1$  the Whitney sum of $k$-many copies of
$TX$.

At this point we  would like to make some comments about our  notation.
For us the tangent bundle of $X$ is $TX\rightarrow X^1$,  that is, it is  only defined for the base points in $X^1$. An   {\bf sc-vector field} on $X$  is an  sc-smooth section of  the tangent bundle $TX\rightarrow X^1$ and hence it is  defined on $X^1$. Similarly, an  sc-differential form  on $X$ which we will define next,  is  only defined over the base points in $X^1$. The definition of a vector field and the following  definition of an sc-differential form
are justified since the construction of $TX$, though only defined over $X^1$, requires the knowledge of $X$.
\begin{defn}
An {\bf $\boldsymbol{\ssc}$-differential ${\boldsymbol{k}}$-form on the ep-groupoid $X$}  is
an sc-smooth map $\omega:\oplus_k TX \to \R$ which is
linear in each argument separately, and skew symmetric.   In addition,  we require that
$$(T\varphi)^\ast\omega_y=\omega_x$$
for all morphisms   $\varphi:x\rightarrow y$ in ${\bf X}_1.$
\end{defn}

If $\omega$ is an  sc-differential form on $X$,  we may also  view it  as an  sc-differential form on $X^i$.  We denote by $\Omega^\ast(X^i)$ the graded commutative algebra of sc-differential forms on $X^i$. Then we have the inclusion map
$$
\Omega^\ast(X^i)\rightarrow\Omega^\ast(X^{i+1}).
$$
which  is injective since $X_{i+1}$ is dense in $X_i$ and the forms are sc-smooth. Hence we have a directed system whose  direct limit is denoted by
$\Omega^\ast_\infty(X)$. An element $\omega$ of  degree $k$ in $\Omega^\ast_\infty(X)$ is a skew-symmetric map $\oplus_k(TX)_\infty\rightarrow {\mathbb R}$ such that it has an  sc-smooth extension to an  sc-smooth k-form
$\oplus_kTX^i\rightarrow {\mathbb R}$ for some $i$. We shall refer to an element of $\Omega^k_\infty(X)$ as an  {\bf sc-smooth differential form}  on $X_\infty$. We note, however, that it is part of the structure that the $k$-form is defined and sc-smooth on some $X^i$.

Next we associate with an sc-differential $k$-form $\omega$ its exterior differential
$d\omega$ which is a $(k+1)$-form on the ep-groupoid $X^1$. The compatibility with morphisms will be essentially an automatic consequence of the definition. Hence it suffices for the moment to consider the case that $X$ is an M-polyfold.
Let $A_0, \ldots , A_k$ be $k+1$ many sc-smooth vector fields on $X$. We define
$d\omega$ on $X^1$, using the familiar  formula,   by
\begin{equation*}
\begin{split}
d\omega (A_0, \ldots ,A_k)&=\sum_{i=0}^k(-1)^iD(\omega (A_0, \ldots , \wh{A}_i, \ldots , A_k))\cdot A_i\\
&+\sum_{i<j}(-1)^{(i+j)}\omega ([A_i, A_j], A_0, \ldots , \wh{A}_i,\ldots ,\wh{A}_j, \ldots ,A_k).
\end{split}
\end{equation*}

The right-hand side of the formula above only makes sense at the base points $x\in X_2$. This  explains why  $d\omega$ is a $(k+1)$-form on $X^1$.
By the previous discussion the differential $d$ defines a map
$$
d:\Omega^k(X^i)\rightarrow \Omega^{k+1}(X^{i+1})
$$
and consequently induces a map
$$
d:\Omega^\ast_\infty(X)\rightarrow \Omega^{\ast+1}_\infty(X)
$$
having the usual property $d^2=0$. Then $(\Omega^\ast_\infty(X),d)$ is a  graded differential algebra which we shall call the de Rham complex.

If $\varphi:M\to X$ is an sc-smooth map from a finite-dimensional manifold $M$ into an M-polyfold $X$, then it induces an  algebra homomorphism
 $$
 \varphi^\ast:\Omega^\ast_\infty(X)\rightarrow \Omega_\infty^\ast(M)
 $$
 satisfying
$$d (\varphi^*\omega )=\varphi^*d\omega.$$

To formulate the  next theorem we recall the natural representation $\varphi:G_x\to \text{Diff}_{\ssc}(U)$ of the isotropy group $G_x=G$. This group can  contain a subgroup  acting trivially on $U$. Such a subgroup is a normal subgroup of $G$ called the {\bf ineffective part} of $G$ and denoted by $G_0$. The  {\bf effective part} of $G$ is the quotient group
$$G_{\text{e}}:=G/G_0.$$
We  denote  the order of $G_{\text{e}}$ by $\sharp G_{\text{e}}$.

\begin{thm}[{\bf Canonical Measures}]\label{th1}
Let $X$ be an ep-groupoid and assume that $\Theta:X\to \Q^+$ is an oriented branched
ep-subgroupoid of dimension $n$ whose orbit space $S=\abs{\supp \Theta}$  is compact and equipped with the weight function
$\vartheta:S\to \Q^+$ defined by
$$
\vartheta (|x|):=\Theta (x),\quad |x|\in S.
$$
Then there exists a map
$$
\Phi_{(S,\vartheta)}:\Omega^n_\infty(X)\rightarrow {\mathcal M}(S,{\mathcal
L}(S)),\quad \omega \mapsto    \mu_\omega^{(S,\theta)}
$$
which  associates to every  sc-differential $n$-form $\omega$ on $X_\infty$ a signed finite measure
$$\mu_{\omega}^{(S, \vartheta )}\equiv \mu_{\omega}$$
on  the canonical measure space $(S,{\mathcal L}(S))$. This map is uniquely characterized by the
following properties.
\begin{itemize}
\item[(1)] The map $\Phi_{(S,\theta)}$ is linear.
\item[(2)] If $\alpha=f\tau$ where $f\in\Omega^0_\infty(X)$ and $\tau\in\Omega^{n}_\infty(X)$, then
$$
\mu_\alpha(K)=\int_K
fd\mu_\tau
$$
for  every set $K\subset S$ in the $\sigma$-algebra ${\mathcal L}(S)$.
\item[(3)] Given a  point $x\in \supp \Theta$ and an oriented branching structure
${(M_i)}_{i\in I}$  with  the associated weights $(\sigma_i)_{i\in I}$ on the open neighborhood  $U$ of $x$ according to Definition \ref{def1}, then  for every set  $K\in {\mathcal L}(S)$ contained in a compact subset of $\abs{\supp \Theta \cap U}$,  the $\mu_{\omega}$-measure of $K$ is given by the formula
$$
\mu_\omega (K)=\frac{1}{\sharp G_{e}}\sum_{i\in I}
\sigma_i\int_{K_i}\omega\vert M_i
$$
where $K_i\subset M_i$ is the preimage of $K$ under  the projection map
$M_i\to \abs{\supp \Theta \cap U}$ defined by $x\rightarrow |x|$.
\end{itemize}
\end{thm}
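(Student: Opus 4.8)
The plan is to prove uniqueness first and then to construct $\Phi_{(S,\vartheta)}$ by a local-to-global argument, deferring the finer measure-theoretic bookkeeping to \cite{HWZ7}. \textbf{Uniqueness.} Property (3) pins down $\mu_\omega(K)$ whenever $K\in\mathcal L(S)$ lies in a compact subset of some $\abs{\supp\Theta\cap U}$ with $U$ a branching neighborhood as in Definition \ref{def1}. Since $S$ is compact it is covered by finitely many such sets; decomposing an arbitrary measurable $K$ into a finite disjoint union of pieces each contained in one chart, and using additivity of a signed measure, the value $\mu_\omega(K)$ is forced on the Borel sets, hence on all of $\mathcal L(S)$ once the $\sigma$-algebra $\mathcal L(S)$ is described as in \cite{HWZ7}. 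Linearity in $\omega$, i.e.\ property (1), is then immediate, and property (2) comes out of the local formula since $\int_{K_i}(f\tau)|M_i=\int_{K_i}(f|M_i)\,(\tau|M_i)$, and summing these against the weights and unwinding the definition of $\mu_\tau$ gives $\int_K f\,d\mu_\tau$.

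\textbf{Existence: the local model.} Fix $\abs{x}\in S$ and a branching neighborhood $U$ with branches $(M_i)_{i\in I}$, weights $(\sigma_i)$, and the natural representation of $G={\bf X}(x)$ on $U$. The orientation $\mathfrak o$ of $\Theta$ orients each $M_i$ compatibly, so
$$
\mu^U_\omega(K):=\frac{1}{\sharp G_{\text{e}}}\sum_{i\in I}\sigma_i\int_{K_i}\omega|M_i
$$
defines a finite signed Borel measure on $\abs{\supp\Theta\cap U}$, where $K_i\subset M_i$ is the preimage of $K$ under $M_i\to\abs{\supp\Theta\cap U}$; finiteness and the measure axioms follow from properness of $M_i\to U$, from compactness, and from the elementary integration theory of $n$-forms over oriented $n$-manifolds with boundary with corners. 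The key point is the well-definedness claim: if $U'$ is a second branching neighborhood around a point of the overlap, then $\mu^U_\omega=\mu^{U'}_\omega$ there. This is where one invokes the structural results on branched ep-subgroupoids of \cite{HWZ7}: two local branching structures admit a common refinement, the weights add according to Definition \ref{def1}(3), the $G$-action permutes the branches so that summing over $I$ and dividing by $\sharp G_{\text{e}}$ yields an orbit-invariant quantity, and the compatibility $(T\varphi)^\ast\omega_y=\omega_x$ for morphisms together with the orientation-preservation built into $\mathfrak o$ makes the local measures transform correctly under the morphisms of $X$.

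\textbf{Existence: gluing.} Cover the compact space $\abs{\supp\Theta}$ by finitely many $\abs{\supp\Theta\cap U_j}$, choose a partition of unity $(\rho_j)$ subordinate to this cover with each $\supp\rho_j$ compact inside $\abs{\supp\Theta\cap U_j}$, and set
$$
\mu_\omega(K):=\sum_j\int_K\rho_j\,d\mu^{U_j}_\omega .
$$
By the well-definedness claim the local measures agree on overlaps, so $\mu_\omega$ is independent of the chosen cover and partition of unity; countable additivity and finiteness of $\mu_\omega$ follow from those of the finitely many $\mu^{U_j}_\omega$. Properties (1) and (2) pass from the local formula, and property (3) holds because on a compact subset of a single branching neighborhood the $\rho_j$ sum to $1$ and the refinement argument identifies $\sum_j\rho_j\,d\mu^{U_j}_\omega$ with $d\mu^U_\omega$. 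I expect the well-definedness claim to be the main obstacle: reconciling two finite families of weighted submanifolds over an overlap while simultaneously tracking the isotropy-group action and the morphisms, so that the normalization $1/\sharp G_{\text{e}}$ is precisely what makes the local formula descend to the orbit space. Once that is in hand the remaining steps are routine, the detailed verification being the content of \cite{HWZ7}.
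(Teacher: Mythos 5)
The paper itself does not prove Theorem~\ref{th1}: Section~\ref{branchedint} explicitly states that all proofs for the branched integration theory are deferred to \cite{HWZ7}, so there is no in-paper argument against which to compare your proposal. What you have written is a reasonable roadmap for the deferred proof. You correctly identify the crux, namely well-definedness of the local measure $\mu^U_\omega$: two branching structures over an overlap must yield the same value, which requires a common-refinement argument for the branches, invariance of the weighted sum under the $G_{x}$-action (this is where the normalization by $\sharp G_{\mathrm e}$ enters), and compatibility under morphisms via $(T\varphi)^\ast\omega_y=\omega_x$ together with the orientation conventions. The local-to-global step via a finite partition of unity and the uniqueness argument from property (3) alone are both standard.

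One imprecision worth flagging in the uniqueness step: to apply property (3) to the pieces of a decomposition $K=\bigsqcup_j K_j$, you need each $K_j$ to lie in a \emph{compact} subset of some $\abs{\supp\Theta\cap U_j}$, not merely in the open set $\abs{\supp\Theta\cap U_j}$. Since $S$ is compact this is easily arranged by first shrinking the finite open cover to a closed (hence compact) refinement before decomposing $K$; as written, ``each contained in one chart'' skips this shrinking step. Also, in property (2) you silently use that the formula in (3) extends from measures of sets to integrals of functions against $\mu_\tau$ on a chart, i.e.\ that $\int_K f\,d\mu_\tau$ is computed by the same weighted sum with $f|M_i$ inserted; this is the expected statement but is an extension of (3) that itself needs the same approximation argument, not a literal consequence. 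Neither point undermines the structure of the argument, but both should be made explicit before claiming the reduction is ``routine.''
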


In the theorem above we denoted  by $\int_{K_i}\omega\vert M_i$ the signed measure of the set
$K_i$ with respect to the Lebesgue signed measure associated with the smooth $n$-form $\omega\vert M_i$ on the finite dimensional manifold $M_i$. It is given by
$$\int_{K_i}\omega\vert M_i=\lim_k\int_{U_k}j^*\omega$$
where $j:M_i\to X$  is the inclusion map and $(U_k)$ is  a decreasing sequence of open neighborhoods of $K_i$ in $M_i$ satisfying $\bigcap_k U_k=K_i$.

The analogous result holds true for the orbit space $\partial S=\{|x|\in S\vert \,  x\in \supp \Theta\cap \partial X\}$ of the boundary.

\begin{thm}[ {\bf Canonical Boundary  Measures}] Under the same assumptions
as in Theorem \ref{th1} there exists a map
$$
\Phi_{(\partial S,\vartheta)}:\Omega^{n-1}_\infty(X)\rightarrow {\mathcal
M}(\partial S,{\mathcal L}(\partial S)), \quad \tau\mapsto
\mu_\tau^{(\partial S,\vartheta)}
$$
which assigns to every sc-differential $(n-1)$-form $\tau$ on $X_\infty$ a signed finite measure
$$\mu_{\tau}^{(\partial S, \vartheta)}\equiv \mu_{\tau}$$
on  the canonical measure  space
$(\partial  S, {\mathcal L} (\partial S))$.
 This map is uniquely characterized by the
following properties.
\begin{itemize}
\item[(1)] The map $\Phi_{(\partial S,\vartheta )} $ is linear.
\item[(2)] If $\alpha=f\tau$ where  $f\in\Omega^0_\infty(X)$ and $\tau\in\Omega^{n-1}_\infty(X)$, then every $K\in {\mathcal L}(\partial S)$
has the $\mu_{\alpha}$-measure
$$\mu_\alpha(K)=\int_K
fd\mu_\tau.$$
\item[(3)] Given a  point $x\in \supp \Theta \cap \partial X$  and an oriented branching structure  ${(M_i)}_{i\in I}$ with weights $(\sigma_i)_{i\in I}$ on the open neighborhood $U\subset X$ of $x$, then the measure of  $K\in {\mathcal L}(\partial S)$  contained in a compact subset of $\abs{\supp \Theta \cap U\cap \partial X}$ is given by the formula
$$
\mu_\tau(K)=\frac{1}{\sharp G_{e}}\sum_{i\in I}
\sigma_i\int_{K_i}\tau \vert \partial M_i
$$
where $K_i\subset  \partial M_i$ is the preimage of $K$ under the projection map  $\partial M_i\to \abs{\supp \Theta \cap U\cap \partial X}$ defined by $x\mapsto |x|$.
\end{itemize}
\end{thm}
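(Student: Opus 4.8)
The plan is to follow the proof of Theorem~\ref{th1} almost verbatim, systematically replacing each local branch $M_i$ of $\Theta$ by its boundary $\partial M_i$. Fix a point $x\in\supp\Theta\cap\partial X$ together with a local branching structure $(M_i)_{i\in I}$ and weights $(\sigma_i)_{i\in I}$ on a neighborhood $U=U(x)$ as in Definition~\ref{def1}. Since each $M_i$ is a finite-dimensional submanifold in good position to $\partial X$, the set $\partial M_i:=M_i\cap\partial X$ is an $(n-1)$-dimensional submanifold with corners of $X$, the inclusion $\partial M_i\hookrightarrow U\cap\partial X$ is proper, and
$$
\supp\Theta\cap U\cap\partial X=\bigcup_{i\in I}\partial M_i .
$$
Because morphisms preserve the degeneracy index (Lemma~\ref{groupoidlem1}), the natural representation ${\bf X}(x)\to\text{Diff}_{\ssc}(U)$ restricts to an action on $U\cap\partial X$ which permutes the branches $\partial M_i$, preserves the weights, and, using the boundary-orientation convention on a manifold with corners, preserves the orientations induced from an orientation $\mathfrak{o}$ of $\Theta$. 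Thus $(\partial M_i)_{i\in I}$ with weights $(\sigma_i)_{i\in I}$ is a genuine oriented local branching structure of dimension $n-1$ living on $\partial X$, with weight function the restriction of $\Theta$ to $\supp\Theta\cap\partial X$.

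For an sc-differential $(n-1)$-form $\tau$ on $X_\infty$ and a set $K\in{\mathcal L}(\partial S)$ contained in a compact subset of $\abs{\supp\Theta\cap U\cap\partial X}$ one sets
$$
\mu_\tau^U(K):=\frac{1}{\sharp G_e}\sum_{i\in I}\sigma_i\int_{K_i}\tau|\partial M_i ,
$$
where $K_i\subset\partial M_i$ is the preimage of $K$ under $\partial M_i\to\abs{\supp\Theta\cap U\cap\partial X}$ and $\int_{K_i}\tau|\partial M_i$ is the signed Lebesgue measure of $K_i$ for the smooth $(n-1)$-form $\tau|\partial M_i$ on the finite-dimensional manifold $\partial M_i$. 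As in the proof of Theorem~\ref{th1}, this quantity is independent of the chosen chart and branching structure: two local branching structures of $\Theta$ on $U$ restrict to branching structures on $U\cap\partial X$ with the same support and the same weight function, so the same independence argument applies, and the factor $1/\sharp G_e$ absorbs the isotropy action. Moreover $\mu^U_\tau$ is countably additive on its domain, being a finite sum of pushforwards of honest signed Lebesgue measures on the manifolds $\partial M_i$, and finite since $\tau$ is sc-smooth hence bounded on each compact branch.

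To globalize, cover the compact orbit space $S=\abs{\supp\Theta}$ by finitely many branching neighborhoods $U_1,\dots,U_N$ and choose, by Theorem~4.8 in~\cite{HWZ7}, a continuous partition of unity $(\beta_j)$ subordinate to the saturations of the $U_j$ and invariant under the morphisms of $X$. Since $\beta_j\tau$ is supported in $U_j$, each local measure $\mu^{U_j}_{\beta_j\tau}$ extends by zero to a finite signed measure on the canonical measure space $(\partial S,{\mathcal L}(\partial S))$ supplied by Theorem~\ref{th0}, and one sets
$$
\mu_\tau:=\sum_{j=1}^{N}\mu^{U_j}_{\beta_j\tau}.
$$
Independence of the covering and of the partition of unity follows from the local uniqueness above. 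Properties~(1) and~(2) are immediate from the formula, property~(3) holds by construction on the generating sets, and uniqueness follows because every $K\in{\mathcal L}(\partial S)$ is a finite disjoint union of sets each contained in a compact subset of some $\abs{\supp\Theta\cap U_j\cap\partial X}$, so~(3) together with finite additivity and~(1),(2) pins $\mu_\tau$ down on all measurable functions.

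The step I expect to be the main obstacle is the consistency of the local formulas along the corner locus $\{x\in\supp\Theta\cap\partial X\mid d(x)\geq 2\}$, where a point of $\partial S$ lies in the closures of several local faces and where the boundary branches $\partial M_i$ carry their own lower-dimensional corner strata. One has to make sure that $\tau|\partial M_i$ is integrated over the entire manifold with corners $\partial M_i$, i.e.\ over all of its codimension-one faces; that the stratum where $d\geq 2$ is a finite union of submanifolds of dimension $\leq n-2$ and is therefore Lebesgue-null for every $(n-1)$-form; and that off this null set the branches and weights near a corner point are unambiguously determined by $\Theta$ and the good-position hypothesis, so that the local formulas attached to overlapping branching neighborhoods agree and no overcounting occurs. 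Granting this, the argument is word-for-word the one carried out for Theorem~\ref{th1} in~\cite{HWZ7}.
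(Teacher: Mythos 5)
The paper itself offers no proof of this theorem; Section~\ref{branchedint} closes with ``For all the details and the proofs of the results of Section~\ref{branchedint} we refer to \cite{HWZ7},'' so there is no argument in this text to compare against. Your strategy---replacing each local branch $M_i$ by $\partial M_i=M_i\cap\partial X$ (an $(n-1)$-dimensional manifold with corners because $M_i$ is in good position to $\partial X$), observing that the natural ${\bf X}(x)$-action preserves $U\cap\partial X$ and permutes the $\partial M_i$ with their weights and induced boundary orientations, and then gluing via a morphism-invariant partition of unity on the saturated cover---is precisely the boundary analogue of the construction of $\mu_\omega^{(S,\vartheta)}$ that the statement invites, and as far as can be checked against the material in this paper it is sound. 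The subtleties you single out at the end are indeed the substantive ones: $\partial M_i$ is only a manifold away from its own corner strata, overlapping branching neighborhoods must assign consistent weights near points with $d\geq 2$, and these caveats are resolved exactly as you say, by noting that the locus $\{d\geq 2\}$ meets each $\partial M_i$ in a closed set of dimension $\leq n-2$ and is therefore null for every $(n-1)$-form, while finiteness and countable additivity of the resulting $\mu_\tau$ follow from compactness of $S$ and the fact that each $\mu^U_\tau$ is a finite weighted sum of pushforwards of Lebesgue-type measures. Granting that the canonical $\sigma$-algebra ${\mathcal L}(\partial S)$ of Theorem~\ref{th0} is generated by sets sitting inside compacta $\abs{\supp\Theta\cap U_j\cap\partial X}$ (which is how uniqueness via property~(3) works for $\mu_\omega$ as well), your uniqueness argument also goes through.
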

Finally,  the following   version of Stokes' theorem holds.

\begin{thm}[{\bf Stokes Theorem}]\label{thmst0}
Let $X$ be   an ep-groupoid and let  $\Theta:X\to \Q^+$ be an oriented $n$-dimensional
branched ep-subgroupoid of $X$  whose orbit space  $S=\abs{\supp \Theta}$ is compact. Then,
for every  sc-differential $(n-1)$-form $\omega$ on $X_\infty$,
$$
\mu^{(S,\vartheta)}_{d\omega}(S)=\mu^{(\partial S,\vartheta)}_{\omega}(\partial
S).
$$
\end{thm}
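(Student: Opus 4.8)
The plan is to reduce the asserted global identity to the ordinary Stokes theorem for oriented finite-dimensional manifolds with corners, localized with the help of a partition of unity adapted to the branching structure of $\Theta$. First I would cover the compact orbit space $S=\abs{\supp\Theta}$ by the images of the open sets $U$ produced by Definition \ref{def1}, on each of which $\supp\Theta\cap U=\bigcup_{i\in I}M_i$ carries a local branching structure with weights $\sigma_i$ and a natural representation of the isotropy group; by compactness of $S$ one passes to a finite subcover with centers $x_1,\dots,x_N$ and underlying open sets $U_1,\dots,U_N$. Then I would choose a morphism-invariant sc-smooth partition of unity $(\beta_j)_{j=1}^N$ subordinate to it, as provided by \cite{HWZ7}, arranged so that $\supp\beta_j$ has compact image inside $\abs{\supp\Theta\cap U_j}$ and $\sum_j\beta_j\equiv 1$ on $\supp\Theta$. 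Since each $\beta_j$ is invariant, $\beta_j\omega\in\Omega^{n-1}_\infty(X)$, and $d(\beta_j\omega)=d\beta_j\wedge\omega+\beta_j\,d\omega$ is supported in $\supp\beta_j$; moreover $\sum_j\beta_j\omega$ and $\omega$ induce the same measures by properties (1) and (2) of Theorem \ref{th1} and its boundary analogue, because $1-\sum_j\beta_j$ vanishes on $\supp\Theta$. Hence, by linearity, it suffices to prove for each fixed $j$ the local identity $\mu^{(S,\vartheta)}_{d(\beta_j\omega)}(S)=\mu^{(\partial S,\vartheta)}_{\beta_j\omega}(\partial S)$.

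For fixed $j$ both of these measures are concentrated on the compact set $\abs{\supp\beta_j}\subset\abs{\supp\Theta\cap U_j}$, so property (3) of the two measure theorems applies and rewrites the left-hand side as $\frac{1}{\sharp G_{e}}\sum_{i\in I}\sigma_i\int_{M_i}j_i^{\ast}\bigl(d(\beta_j\omega)\bigr)$ and the right-hand side as $\frac{1}{\sharp G_{e}}\sum_{i\in I}\sigma_i\int_{\partial M_i}j_i^{\ast}(\beta_j\omega)$, where $j_i:M_i\hookrightarrow X$ is the inclusion and $G_{e}$ is the effective isotropy group at $x_j$ — the same group, the same branches $M_i$, and the same weights $\sigma_i$ entering both formulas. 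Since pullback commutes with the Cartan differential, $j_i^{\ast}(d(\beta_j\omega))=d\bigl(j_i^{\ast}(\beta_j\omega)\bigr)$; because the inclusion $M_i\to U_j$ is proper (Definition \ref{def1}(4)) and $\supp\beta_j$ has compact image, $j_i^{\ast}(\beta_j\omega)$ is a compactly supported smooth $(n-1)$-form on the oriented $n$-dimensional manifold-with-corners $M_i$, whose manifold boundary is exactly $M_i\cap\partial X$ with the orientation induced from $M_i$, by good position of $M_i$ to $\partial X$. The classical Stokes theorem for manifolds with corners then gives $\int_{M_i}d(j_i^{\ast}(\beta_j\omega))=\int_{\partial M_i}j_i^{\ast}(\beta_j\omega)$ for every $i\in I$; multiplying by $\sigma_i/\sharp G_{e}$, summing over $i$, and then summing the resulting local identities over $j$ and using $\sum_j\beta_j\equiv 1$ on $\supp\Theta$ yields the claim.

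The part I expect to require the most care is not the partition-of-unity bookkeeping but the corner geometry underlying the two local formulas: one must verify, at the level of \cite{HWZ3} and \cite{HWZ7}, that \emph{good position to $\partial X$} makes each branch $M_i$ a genuine manifold with corners whose boundary faces are cut out by $M_i\cap\partial X$; that the collection $(\partial M_i)_{i\in I}$ with the \emph{same} weights $\sigma_i$ and the \emph{same} effective isotropy group is precisely a local branching structure for $\partial S$, so that property (3) of the boundary measure theorem applies with these data; and that the orientation $\mathfrak{o}$ of $\Theta$ restricts to each $\partial M_i$ as exactly the boundary orientation used in the boundary measure theorem, with sign conventions consistent at higher corners — where the corner version of the classical Stokes theorem already encodes the combinatorics. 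A secondary technical point is the existence of a morphism-invariant sc-smooth partition of unity in the required generality. Once these compatibilities are recorded, the reduction above makes the statement a formal consequence of finite-dimensional Stokes.
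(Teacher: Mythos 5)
Theorem \ref{thmst0} is stated in this paper without proof: the end of Section \ref{branchedint} explicitly refers to \cite{HWZ7} for all proofs of the branched-integration results, so there is no in-paper argument to compare yours against. That said, your reduction---finite cover of the compact orbit space by branching neighborhoods $U_j$, invariant sc-smooth partition of unity, localization to $\supp\beta_j$, use of property (3) of Theorem \ref{th1} and its boundary analogue to rewrite both measures as the same weighted sums over branches $M_i$ and $\partial M_i$ with the same effective isotropy weight $1/\sharp G_e$, and then classical Stokes with corners on each oriented $M_i$---is the canonical argument, and your closing paragraph singles out precisely the points that carry the real weight: good position must be unwound to show $\partial M_i = M_i\cap\partial X$ with the induced orientation, the boundary branching data for $\partial S$ must be the traces of the interior data with the same weights, and one needs the invariant sc-smooth partition of unity of \cite{HWZ7} (here Theorem \ref{scpounity}, under the Hilbert-space hypothesis).

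One small misattribution worth fixing: the step $\sum_j\mu_{d(\beta_j\omega)}=\mu_{d\omega}$ is not a consequence of properties (1) and (2) of Theorem \ref{th1} alone. Property (2) does handle the term $\bigl(\sum_j\beta_j\bigr)\,d\omega$ and the boundary side $\sum_j\mu^{\partial}_{\beta_j\omega}=\mu^{\partial}_{\omega}$, since $\sum_j\beta_j\equiv 1$ on $\supp\Theta$. But $d\bigl(\sum_j\beta_j\bigr)\wedge\omega$ is not of the form $f\tau$ with $f\in\Omega^0_\infty$ and a fixed $n$-form $\tau$, so property (2) does not apply to it; you need property (3), observing that $\sum_j\beta_j$ pulls back to the constant $1$ on every branch $M_i$, hence $d\bigl(\sum_j\beta_j\bigr)\wedge\omega$ pulls back to $0$ on each $M_i$ and therefore has vanishing $\mu$-measure. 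This is the same branch-level computation you already perform implicitly when applying Stokes, so it is a citation of the wrong property rather than a gap in substance.
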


Our  construction is compatible with
equivalences between ep-groupoids as the following  theorem shows.

\begin{thm}[{\bf Equivalences}]\label{th4}
Assume that  $F:X\to  Y$ is an equivalence between the
ep-groupoids $X$ and $Y$.  Assume that  $\Theta:Y\rightarrow {\mathbb Q}^+$ is  an
oriented $n$-dimensional branched ep-subgroupoid of $Y$  whose orbit
space $S=\abs{\supp \Theta}$ is compact and equipped with the weight
function $\vartheta:S\to \Q^+$ defined by $\vartheta (|y|)=\Theta
(y)$ for $|y|\in S$. Define the $n$-dimensional branched
ep-subgroupoid  on $X$ by $\Theta':=\Theta\circ F:X\to \Q^+$ and denote by $S'$ and $\vartheta'$  the associated orbit space   and the  weight function on $S'$. Moreover, assume that
$\Theta'$ is equipped with the induced orientation. Then, for every
sc-differential $n$-form $\omega$ on $Y_\infty$,
$$
\mu_{\omega}^{(S,\vartheta)}\circ \abs{F} = \mu_{\omega'}^{(S',\vartheta')}
$$
where the $n$-form $\omega'$ on $X_\infty$ is the pull back form $\omega'=F^*\omega$.
Similarly,
$$
\mu_{\tau}^{(\partial S,\vartheta)}\circ \abs{F}=
\mu_{\tau'}^{(\partial S',\vartheta')}
$$
for every $(n-1)$-form $\tau$ on $Y_\infty$.
\end{thm}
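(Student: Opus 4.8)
The plan is to prove both identities by reducing them, via the local characterization of the canonical measures in Theorem \ref{th1} and of the boundary measures in the Canonical Boundary Measures theorem, to the classical change-of-variables formula for the Lebesgue signed measure associated to a smooth $n$-form under an orientation-preserving diffeomorphism of finite-dimensional manifolds. The only genuinely analytic input is that classical formula; everything else is the verification that $\Theta':=\Theta\circ F$ carries all the structure claimed and that $\abs{F}$ matches the local data on the two sides.

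First I would record the structural facts about $\Theta'$. Since $F$ is an equivalence it is a local sc-diffeomorphism on objects and morphisms, induces for every $x\in X$ a group isomorphism ${\bf X}(x)\to {\bf Y}(F(x))$, and satisfies $d_X(x)=d_Y(F(x))$ by Lemma \ref{groupoidlem1}. As $\supp\Theta$ is saturated (a functor to ${\mathbb Q}^+$ is constant on isomorphism classes), $\supp\Theta'=F^{-1}(\supp\Theta)$ is saturated too, so $\abs{F}$ restricts to a homeomorphism $S'=\abs{\supp\Theta'}\to S=\abs{\supp\Theta}$; in particular $S'$ is compact, and likewise $\abs{F}$ restricts to a homeomorphism $\partial S'\to\partial S$ because $F$ preserves the degeneracy index. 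Given $x\in\supp\Theta'$ with $y=F(x)$, I would pick an open $V\ni y$ carrying a local branching structure $(N_j)_{j\in J}$ with weights $(\tau_j)$ for $\Theta$ on which ${\bf Y}(y)$ acts by its natural representation, and then a small open $U'\ni x$ with $F\vert U':U'\to V$ an sc-diffeomorphism (shrinking $V$ if need be) on which ${\bf X}(x)$ acts. Because $F$ intertwines the source and target maps one gets $F\circ\varphi_g=\varphi_{F(g)}\circ F$ on $U'$, so $F$ conjugates the two natural representations, carries the ineffective part of ${\bf X}(x)$ onto that of ${\bf Y}(y)$, and hence $\sharp({\bf X}(x))_{\text{e}}=\sharp({\bf Y}(y))_{\text{e}}$. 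Setting $M_j:=(F\vert U')^{-1}(N_j)$ with the same weights $\tau_j$ yields a local branching structure for $\Theta'$ on $U'$: the $M_j$ are $n$-dimensional submanifolds, $\supp\Theta'\cap U'=\bigcup_{j\in J}M_j$, the inclusions are proper, good position to $\partial X$ is inherited from good position to $\partial Y$, and $\Theta'(z)=\Theta(F(z))$ makes the weights add up correctly. Pulling back the orientation of each $N_j$ along $F\vert M_j$ is exactly the induced orientation on $\Theta'$, so each $F\vert M_j:M_j\to N_j$, and each $\partial M_j\to\partial N_j$, is an orientation-preserving diffeomorphism.

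Next I would check the two measures agree on local pieces. Fix $\omega\in\Omega^n_\infty(Y)$ and $\omega'=F^*\omega$; then $\mu_{\omega'}^{(S',\vartheta')}$ and $\mu_\omega^{(S,\vartheta)}\circ\abs{F}$ are finite signed measures on $(S',{\mathcal L}(S'))$, using that $\abs{F}$ maps ${\mathcal L}(S')$ bijectively onto ${\mathcal L}(S)$, which follows from the local definition of these $\sigma$-algebras in \cite{HWZ7} together with the correspondence of local branching structures just established. For $K'\in{\mathcal L}(S')$ contained in a compact subset of $\abs{\supp\Theta'\cap U'}$, set $K:=\abs{F}(K')$, contained in a compact subset of $\abs{\supp\Theta\cap V}$. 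Property (3) of Theorem \ref{th1} gives $\mu_{\omega'}^{(S',\vartheta')}(K')=\frac{1}{\sharp({\bf X}(x))_{\text{e}}}\sum_{j\in J}\tau_j\int_{K'_j}\omega'\vert M_j$ and $\mu_\omega^{(S,\vartheta)}(K)=\frac{1}{\sharp({\bf Y}(y))_{\text{e}}}\sum_{j\in J}\tau_j\int_{K_j}\omega\vert N_j$, with $K'_j\subset M_j$, $K_j\subset N_j$ the preimages of $K'$, $K$ under the projections to the orbit spaces. The prefactors coincide; the square relating these projections through $F\vert M_j$ and $\abs{F}$ commutes, so $F\vert M_j$ maps $K'_j$ onto $K_j$; and $\omega'\vert M_j=(F\vert M_j)^*(\omega\vert N_j)$. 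Since $F\vert M_j$ is an orientation-preserving diffeomorphism, the change-of-variables formula for the Lebesgue signed measure of an $n$-form yields $\int_{K'_j}\omega'\vert M_j=\int_{K_j}\omega\vert N_j$, hence $\mu_{\omega'}^{(S',\vartheta')}(K')=(\mu_\omega^{(S,\vartheta)}\circ\abs{F})(K')$. The boundary identity is proved verbatim, invoking property (3) of the Canonical Boundary Measures theorem and the orientation-preserving diffeomorphisms $\partial M_j\to\partial N_j$.

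Finally I would globalize: by compactness of $S'$ the sets $\abs{\supp\Theta'\cap U'}$, one per point, give a finite open cover; a closed (hence compact) shrinking $C_\ell\subset\abs{\supp\Theta'\cap U'_\ell}$, available since $S'$ is compact Hausdorff, produces the finite measurable partition $S'=\bigsqcup_\ell A_\ell$ with $A_\ell:=C_\ell\setminus\bigcup_{m<\ell}C_m$ inside a compact subset of its chart. For any $K'\in{\mathcal L}(S')$ the previous step applies to each $K'\cap A_\ell$, and finite additivity of the two signed measures gives $\mu_{\omega'}^{(S',\vartheta')}(K')=(\mu_\omega^{(S,\vartheta)}\circ\abs{F})(K')$; the same argument over $\partial S'$ settles the boundary case. \textbf{The main obstacle} is not any single computation — the heart is the classical change-of-variables formula — but making precise and checking all the ``obvious'' compatibilities hidden in the polyfold bookkeeping: that $\Theta\circ F$ is again a branched ep-subgroupoid carrying the induced orientation, that $\abs{F}$ respects the canonical $\sigma$-algebras, and that the isotropy-group data, in particular $\sharp G_{\text{e}}$, transports correctly under $F$.
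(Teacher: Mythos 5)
The paper itself does not prove Theorem~\ref{th4}; it is stated without proof in Section~\ref{branchedint}, and the closing sentence of that section explicitly defers all proofs to \cite{HWZ7}. So there is no in-paper proof to compare against, and your attempt has to be judged on its own.

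Your argument is sound and does follow what is the natural route given the local characterization in Theorem~\ref{th1}: pull the branching structure back through a local sc-diffeomorphism $F\vert U':U'\to V$, check that the weights, dimensions, properness, good position to the boundary, and orientations transport, observe that $F$ conjugates the natural representations of the isotropy groups (via $F\circ\varphi_g=\varphi_{F(g)}\circ F$, which you correctly derive from $F$ being a functor intertwining $s$ and $t$) and hence carries the ineffective part onto the ineffective part so that the prefactors $1/\sharp G_{\text{e}}$ match, reduce to the classical change-of-variables formula on each branch $F\vert M_j:M_j\to N_j$, and globalize by a compactness/partition argument. You correctly identify the key bookkeeping points (that $S'\to S$ and $\partial S'\to\partial S$ are homeomorphisms via $\abs F$, using Lemma~\ref{groupoidlem1} for the boundary, and that $\sharp({\bf X}(x))_{\text{e}}=\sharp({\bf Y}(F(x)))_{\text{e}}$). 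The one step you flag but do not fully justify, namely that $\abs F$ maps ${\mathcal L}(S')$ bijectively onto ${\mathcal L}(S)$, is genuinely the place where the definitions from \cite{HWZ7} have to be consulted; your proposed route (the $\sigma$-algebras are defined locally from the branching structures, which correspond under $F$) is the right one, but it cannot be closed from the material reproduced in the present paper alone. A slightly cleaner way to organize the final step, instead of a direct partition-and-additivity argument, would be to note that $\omega'=F^*\omega\mapsto\mu_\omega^{(S,\vartheta)}\circ\abs F$ visibly satisfies properties (1)--(3) of Theorem~\ref{th1} for the data $(S',\vartheta')$, and then invoke the uniqueness assertion of that theorem directly; this is logically equivalent to what you did and avoids re-proving additivity by hand.
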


Theorem \ref{th4}  allows  to
rephrase the previous theorems  in the  polyfold set-up.

An sc-differential form on the topological space $Z$ or $Z^i$ is defined
via the overhead of the postulated polyfold structures. We define an
sc-differential form $\tau$ on the polyfold $Z$ as an equivalence
class of triples  $(X, \beta , \omega)$ in which the pair $(X,
\beta)$ is a polyfold structure on $Z$ and $\omega$ an
sc-differential form on $X_\infty$. Two such  triples $(X, \beta , \omega)$
and $(X', \beta' , \omega')$ are called equivalent, if there is a
third ep-groupoid $X''$ and two equivalences $X\xleftarrow{F}
X''\xrightarrow{F'} X'$ satisfying $\beta\circ \abs{F} = \beta'\circ
\abs{F'}$ and, in addition,
$$
F^*\omega =(F')^*\omega'
$$ for the pull back forms on $X''$.
We shall abbreviate an equivalence class by $\tau =[\omega]$ where $(X, \beta ,\omega)$ is a representative of the class and call $\tau$ an sc-differential form on the polyfold $Z$. Again we have a directed system obtained from the inclusions $Z^{i+1}\rightarrow Z^i$ which allows us to define, analogously to the ep-groupoid-case, the notion of a differential form on $Z_\infty$. We shall  still use the symbol $[\omega]$ for such forms.  It is important to keep in mind that a representative $\omega$ of $[\omega]$ is an  sc-differential form on $X_\infty$ where $(X,\beta)$ is a polyfold structure on  $Z$.

The exterior derivative of an sc-differential form $[\omega ]$ is defined by
$$
d[\omega] =[d\omega].
$$

Let $S$ be a   branched suborbifold $S$ of a polyfold $Z$ equipped with a weight
 function $w:S\to \Q^+\cap (0, \infty )$ together
 with an equivalence class of triples
 $(X, \beta ,\Theta)$ in which the pair $(X, \beta)$
 is a polyfold structure on $Z$ and $\Theta:X\to \Q^+$
 is an ep-subgroupoid  of $X$ satisfying $S=\beta (\abs{\supp \Theta })$
 and $w (\beta ( \abs{x}))=\Theta (x)$ for $x\in \supp\Theta$.
 The ``boundary'' set $\partial S$ of a
branched suborbifold $S$ by setting
$\partial S=\beta (\abs{\supp \Theta \cap \partial X})$
 for a representative $(X, \beta , \Theta)$ of the equivalence class.
 From the  previous results it follows
that for a compact and oriented branched suborbifold  $S\subset Z$ of a polyfold $Z$ there is a canonical
$\sigma$-algebra $(S, {\mathcal L}(S))$ of measurable subsets and a
well defined integration theory for which Stokes' theorem holds.

\begin{thm}\label{thmst1}
Let $Z$ be a polyfold and $S\subset Z$ be an oriented compact branched suborbifold
defined by the equivalence class $[(X, \beta ,\Theta)]$ and equipped with the weight function $w:S\to \Q^+\cap (0,\infty )$.
For an sc-differential $n$-form $\tau$ on $Z_\infty$ and $K\in {\mathcal L}(S)$,  we  define
$$
\int_{(K,w)}\tau:=\int_{\beta^{-1}(K)}
d\mu_\omega^{(\beta^{-1}(S),\vartheta)}=\mu_\omega^{(\beta^{-1}(S),\vartheta)}(\beta^{-1}(K)),
$$
where  the equivalence class $\tau$ is represented by the triple $(X, \beta ,\omega)$ and where the weight function $\vartheta$  on
$\beta^{-1}(S)=\abs{\supp \Theta}$ is defined by $\vartheta (|x|)=\Theta (x).$
Then    the integral  $\int_{(K,w)}\tau$ is independent of the representative $(X,\beta, \omega)$ in the equivalence class.
Moreover,   if $\tau$ is an sc-differential $(n-1)$-form on $Z_\infty$, then
$$
\int_{(\partial S,w)}\tau =\int_{(S,w)} d\tau.
$$
\end{thm}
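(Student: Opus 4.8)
The plan is to reduce both assertions to their ep-groupoid counterparts: independence of the representative will follow from Theorem \ref{th4} on the behaviour of the canonical measures under equivalences, and the Stokes identity will follow from the ep-groupoid Stokes theorem, Theorem \ref{thmst0}. No new analytic input is needed; the work is entirely bookkeeping about how orbit spaces, weight functions, orientations and the canonical $\sigma$-algebras are transported by equivalences.

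First I would treat well-definedness. Suppose $(X,\beta,\omega)$ and $(X',\beta',\omega')$ are two triples in which $(X,\beta)$, $(X',\beta')$ are polyfold structures on $Z$ representing simultaneously $\tau$ and the branched suborbifold $S$ (with branched ep-subgroupoids $\Theta$ on $X$ and $\Theta'$ on $X'$). By the definition of equivalence for the representing triples --- and using that any two refinements of a diagram admit a common refinement, as established earlier in the paper --- one may choose a single ep-groupoid $X''$ together with equivalences $X\xleftarrow{F}X''\xrightarrow{F'}X'$ satisfying $\beta\circ\abs{F}=\beta'\circ\abs{F'}=:\alpha''$, $F^*\omega=(F')^*\omega'=:\omega''$, and $\Theta\circ F=\Theta'\circ F'=:\Theta''$ compatibly with the orientations. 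Put $S_X=\beta^{-1}(S)=\abs{\supp\Theta}$, $S_{X'}=(\beta')^{-1}(S)$, $S_{X''}=\abs{\supp\Theta''}$, with associated weight functions $\vartheta,\vartheta',\vartheta''$. Since $\Theta''=\Theta\circ F$ and $\supp\Theta$ is saturated, the homeomorphism $\abs{F}$ restricts to a homeomorphism $S_{X''}\to S_X$ carrying $(\alpha'')^{-1}(K)$ onto $\beta^{-1}(K)$ for every $K\in{\mathcal L}(S)$, and likewise for $\abs{F'}$. Applying the first formula of Theorem \ref{th4} to $F:X''\to X$ and to $F':X''\to X'$ gives
\[
\mu_\omega^{(S_X,\vartheta)}\circ\abs{F}=\mu_{\omega''}^{(S_{X''},\vartheta'')}=\mu_{\omega'}^{(S_{X'},\vartheta')}\circ\abs{F'}.
\]
Evaluating on $(\alpha'')^{-1}(K)$ yields $\mu_\omega^{(S_X,\vartheta)}(\beta^{-1}(K))=\mu_{\omega'}^{(S_{X'},\vartheta')}((\beta')^{-1}(K))$, which is exactly the asserted independence of $\int_{(K,w)}\tau$ of the representative. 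The identical argument, using the boundary formula of Theorem \ref{th4}, shows that $\int_{(\partial S,w)}\tau:=\mu_\omega^{(\partial S_X,\vartheta)}(\partial S_X)$ with $\partial S_X=\abs{\supp\Theta\cap\partial X}$ is likewise well defined.

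Next I would prove the Stokes identity. I would fix a representative $(X,\beta,\omega)$ of the $(n-1)$-form $\tau$ in which $(X,\beta)$ is also the polyfold structure carrying the representative $(X,\beta,\Theta)$ of $S$; then $d\tau$ is represented by $(X,\beta,d\omega)$ by the definition of the exterior derivative on polyfold forms, and $\Theta$ is an oriented $n$-dimensional branched ep-subgroupoid of $X$ whose orbit space $S_X=\beta^{-1}(S)$ is compact. Unwinding the definitions in the statement,
\[
\int_{(S,w)}d\tau=\mu_{d\omega}^{(S_X,\vartheta)}(S_X),\qquad \int_{(\partial S,w)}\tau=\mu_\omega^{(\partial S_X,\vartheta)}(\partial S_X).
\]
The equality of the two right-hand sides is precisely Theorem \ref{thmst0} applied to the ep-groupoid $X$, the branched ep-subgroupoid $\Theta$, and the sc-differential $(n-1)$-form $\omega$ on $X_\infty$. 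Together with the first part this gives the asserted formula with a value independent of the chosen representative.

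The only genuinely delicate point is the one buried in the second paragraph: one must be certain that all the data feeding into Theorem \ref{th4} really match up across the equivalence --- that $F^*\omega$ is again a representative of $\tau$, that $\Theta\circ F$ carries the correctly induced orientation, that $\abs{F}$ identifies the canonical $\sigma$-algebras ${\mathcal L}(S_{X''})$ and ${\mathcal L}(S_X)$, and that under this identification $(\alpha'')^{-1}(K)$ corresponds to $\beta^{-1}(K)$. Each of these is exactly the kind of compatibility that Theorem \ref{th4} (together with the construction of the canonical $\sigma$-algebra in \cite{HWZ7}) is designed to provide, so once these identifications are recorded the remainder is a short diagram chase with no further analysis.
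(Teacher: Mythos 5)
Your proposal is correct and is precisely the reduction the paper intends: the sentence immediately preceding Theorem~\ref{thmst1} announces that Theorem~\ref{th4} allows one to rephrase the ep-groupoid results in the polyfold set-up, and you carry this out by using Theorem~\ref{th4} (applied twice across a common refinement, which exists via the weak fibered product of the two refining diagrams) for well-definedness and Theorem~\ref{thmst0} for the Stokes identity. The paper itself defers the written-out details to \cite{HWZ7}, but your argument is exactly the one its setup calls for.
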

For all the details and the proofs of the results of Section \ref{branchedint} we refer to \cite{HWZ7}.

\subsection{Strong Polyfold Bundles}
In the following we assume that $p:W\to Z$ is a continuous  and surjective map between two
paracompact second countable spaces.
\begin{defn}
{\em A  {\bf strong (polyfold) bundle structure}  for $p$ consists of a
triple  $(E,\Gamma,\gamma)$ in which $E\xrightarrow{P} X$ is a strong
bundle over the ep-groupoid $X$, $\Gamma$ is a homeomorphism between
the orbit space $\abs{E}$ and $W$,  and $\gamma$ is a homeomorphism
from the orbit space $\abs{X}$ to $Z$. Further, we require that
$$
p\circ\Gamma=\gamma\circ \abs{P}.
$$

\begin{equation*}
\begin{CD}
\abs{E}@>\abs{P}>>\abs{X}\\
@V\Gamma VV     @VV\gamma V \\
W@>p>>Z.\\
\end{CD}
\end{equation*}
\mbox{}\\[4pt]

}
\end{defn}

The two triples  $(E,\Gamma,\gamma)$ and $(E',\Gamma',\gamma')$ are called {\bf equivalent} if
there exists an  s-bundle isomorphism $\mathfrak{A}:E\Rightarrow
E'$ with the underlying s-isomorphism $\mathfrak{a}:X\Rightarrow X'$  satisfying on the orbit spaces
$$
\Gamma=\Gamma'\circ \abs{\mathfrak{A}}\quad \text{and}\quad \gamma =\gamma'\circ \abs{\mathfrak{a}}.
$$

We note that the second relation is a consequence of the first relation.

\mbox{}
$$
\begindc{\commdiag}[3]
\obj (20,20){$\abs{E}$}
\obj (50,20){$\abs{E'}$}
\obj (35,0){$W$}
\mor (20,20)(50,20){$\abs{\mathfrak{A}}$}[\atleft, \solidarrow]
\mor (20,20)(35,0){$\Gamma$}[\atright, \solidarrow]
\mor (50,20)(35,0){$\Gamma'$}[\atleft, \solidarrow]

\enddc
\mbox{}.
$$

We are in the position to define
the notion of a strong polyfold bundle.
\begin{defn}
A  {\bf strong polyfold bundle}  $p:W\rightarrow Z$ consists of a
surjective continuous map between two second countable paracompact
topological spaces together with  an equivalence class of strong bundle
structures  $(E,\Gamma,\gamma)$ for $p$.
\end{defn}

Next  we introduce the notion of an sc-smooth section of the strong polyfold bundle $p:W\to Z$. In  order to do so, we take a model $(E, \Gamma,\gamma)$ where $P:E\rightarrow X$ is a strong bundle
over the ep-groupoid $X$ and consider a pair  $(f,F)$ in which $f$ is a continuous section of the bundle
$p$ and  $F\in\Gamma(P)$ an sc-smooth section of the bundle $P$ satisfying
$$f\circ\gamma=\Gamma\circ |F|,$$
in diagrams,

\mbox{}
$$
\begindc{\commdiag}[3]
\obj (20,20){$\abs{X}$}
\obj (40,20){$Z$}
\obj (20,0){$\abs{E}$}
\obj (40,0){$W$}
\mor (20,20)(40,20){$\gamma$}[\atleft, \solidarrow]
\mor (20,0)(40,0){$\Gamma$}[\atright, \solidarrow]
\mor (20,20)(20,0){$\abs{F}$}[\atright, \solidarrow]
\mor (40,20)(40,0){$f$}[\atleft, \solidarrow]
\enddc
\mbox{}.
$$

Let  $(E', \Gamma',\gamma')$  be an equivalent strong bundle structure for $p$ where  $P':E'\to X'$  is a strong bundle over the ep-groupoid $X'$.  We  consider the pair $(f',F')$  of sections of the bundle $p$ and $F'\in \Gamma (P')$ satisfying $f'\circ \gamma'=\Gamma' \circ \abs{F'}$ and  call  the two pairs $(f, F)$ and $(f', F')$ {\bf equivalent} if $f'=f$ and if there exists an s-bundle isomorphism $\mathfrak{A}:E\Rightarrow E'$ whose push-forward satisfies
$$\mathfrak{A}_*(F)=F'.$$

\begin{defn}
With the above notation  an equivalence class $f=[f,F]$ of sections  is called an
{\bf sc-smooth section of the polyfold bundle}  $p:W\rightarrow Z$ and the pair
$(f,F)$ is called a  {\bf representative} of the map $f$ for the model
$P:E\rightarrow X$. The $\ssc^+$-sections  and the
Fredholm sections are defined  similarly.
\end{defn}
In view of  Proposition \ref{rty}, these concepts are well defined. We will denote
the space of sc-smooth sections of the bundle $p:W\rightarrow Z$ by $\Gamma(p)$
and the corresponding $\ssc^+$-sections by $\Gamma^+(p)$.

The auxiliary norm of a strong polyfold bundle is defined as follows.

\begin{defn}
An {\bf auxiliary norm}  for the strong polyfold bundle $p:W\rightarrow Z$ consists of a map
$N:W_{0,1}\rightarrow [0,\infty)$ having the following property. If
the strong bundle $P:E\rightarrow X$ over the ep-groupoid $X$ is a model
representing the strong bundle structure for $p$,  then
$$
N^\ast:E_{0,1}\rightarrow [0,\infty), \quad e\mapsto  N(\Gamma(\abs{e}))
$$
is an auxiliary norm on the bundle $P:E\to X$. The subset $W_{0,1}\subset W$ is defined by
$W_{0,1}=\Gamma (\abs{E_{0,1}}).$
\end{defn}
Let us observe that in general $N$ is fiber-wise not a norm since in the fibers do not have a linear structure. Nevertheless, a
local representative $N^\ast$ is an auxiliary norm having  the
additional property that the existence of a morphism
$\varphi:h\rightarrow h'$ implies $N^*(h)=N^*(h')$. Alternatively,  given an
auxiliary norm $N^\ast$ for the local model $E\rightarrow X$
satisfying $N^\ast(h)=N^\ast(k)$ if there exists a morphism
$h\rightarrow k$, then  we  can define $N(w)$ for $w\in W_{0,1}$ by
$N(w)=N^\ast(h)$ if $\Gamma(\abs{h})=w$. This defines an auxiliary
norm for the strong polyfold bundle $p$.

Next we introduce the notion of mixed convergence in $W$.
\begin{defn}
A sequence $(w_k)$ in $W_{0,1}$ is  called {\bf mixed
convergent} to $w$ if there exists a local model $(E,\Gamma)$ so
that the sequence $(|h_k|):=(\Gamma^{-1}(w_k))\subset |E|$ has
suitable representatives, say $h_k$ of $|h_k|$ and $h$ of $|h|$ so
that $h_k$ is m-convergent to $h$ on $E_{0,1}$. Let us note that the particular
choice of local coordinates in the definition is irrelevant.
\end{defn}
For the the definition of the  m-convergence we refer to \cite{HWZ3}.

\subsection{Sc$^+$-Multisections}\label{sect7.9}

In this section we define $\ssc^+$-multisections. Again we view  the
non-negative rational numbers ${\mathbb Q}^+=\Q\cap [0,\infty )$ as a category with  the identities as
morphisms. The following definition brings a definition in
\cite{CRS} into the groupoid framework.

\begin{defn}\label{defn7.9.2}
Let $P:E\to X$ be a strong bundle over the ep-groupoid $X$. Then an
 $\mathbf{sc}^{+}$-$\mathbf{multisection}$ of $P$  is a functor
$$\Lambda:E\to \Q^+$$
such that the following local representation (called {\bf local
section structure})  holds true. Every object $x\in X$ possesses
 an open neighborhood $U_x$ on which the isotropy group $G_x$ acts by its natural representation $\varphi_g \in \text{Diff}_{\text{sc}}(U_x)$ and finitely many
$\ssc^+$-sections $s_1,\ldots ,s_k:U_x\to E$ (called {\bf local
sections}) with associated positive rational numbers
$\sigma_1,\sigma_2,\ldots ,\sigma_k$ (called {\bf weights})
satisfying the following properties.
\begin{itemize}
\item[(1)] $$\sum_{j=1}^k\sigma_j=1.$$
\item[(2)] $$ \Lambda (e)=\sum_{\{j\in I\vert
s_j(P(e))=e\}}\sigma_j$$
\end{itemize}
for all $e\in E\vert U$ for which there exists a section $s_j$ satisfying $s_j (P(e))=e$. If there is no such section, then  $\Lambda (e)=0$.
\end{defn}

The functoriality of $\Lambda$ implies $ \Lambda ( e')=\Lambda ( e), $
if there is a morphism $e'\to e$ in ${\bf E}$. Explicitly,
$$\Lambda (\mu (\varphi , e))=\Lambda (e)$$
for all $(\varphi, e)\in     {\bf X}{{_s}\times_{p}}E.$ Hence $\Lambda$
induces the  map $\abs{\Lambda}:|E|\to \Q^+$ on the orbit space. We shall denote the
collection of all $\text{sc}^{+}$-multisections of  the strong bundle $P:E\to X$ by
$\Gamma^+_m(P)$. For every $x\in X$, the set
$$
\supp (x):=\{e\in E\ |\ P(e)=x,\ \Lambda (e)>0\}
$$
is finite, and $ \sum_{e\in \supp(x)}\Lambda (e)=1$. Moreover,
if $x\in U$, then
$$
\supp(x)=\{s_1(x), \ldots ,s_k (x)\},
$$
where $s_1,\ldots ,s_k:U\to E$ are the  local $\ssc^+$-sections.

\begin{figure}[htbp]
\mbox{}\\[2ex]
\centerline{\relabelbox \epsfxsize 3.8truein \epsfbox{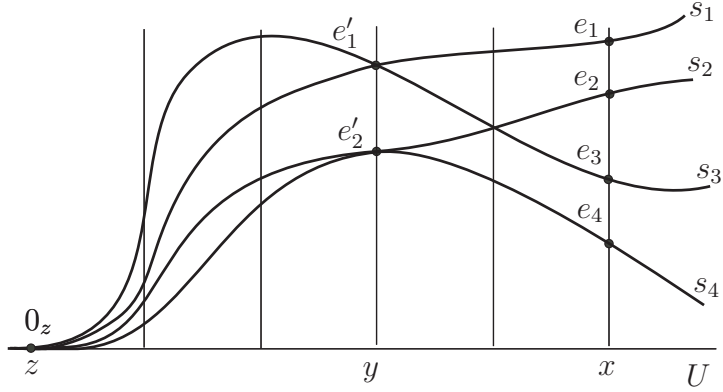}
\relabel {u}{$U$}
\relabel {x}{$x$}
\relabel {y}{$y$}
\relabel {z}{$z$}
\relabel {o}{$0_z$}
\relabel {e1p}{$e_1'$}
\relabel {e2p}{$e_2'$}
\relabel {o}{$0_z$}
\relabel {e1}{$e_1$}
\relabel {e2}{$e_2$}
\relabel {e3}{$e_3$}
\relabel {e4}{$e_4$}
\relabel {s1}{$s_1$}
\relabel {s2}{$s_2$}
\relabel {s3}{$s_3$}
\relabel {s4}{$s_4$}
\endrelabelbox}
\caption{Here $k=4$. If $P(e_j)=x$ for $j=1,2,3,4$, then $\Lambda (e_j)=\sigma_j$ while if $P(e_j')=y$ for $j=1,2$, then $\Lambda (e_1')=\sigma_1+\sigma_3$ and $\Lambda (e_2')=\sigma_2+\sigma_4$. Moreover, $\Lambda (0_z)=\sum_{j=1}^4\sigma_j=1$ where $0_z\in E$ is the  zero vector in the fiber over $z$. }
\label{Fig3.5.1}
\end{figure}

\begin{defn}\label{trivial}
The $\ssc^+$-multisection $\Lambda$ is called {\bf trivial on the set $V\subset X$} if $\Lambda$ is identically equal to $1$ on the zero section over $V$, i.e.,
$\Lambda (0_x)=1$ for all $x\in V$ (and hence $\Lambda (e_x)=0$ for all $e_x\neq 0_x$).
\end{defn}

The {\bf support of the $\ssc^+$-multisection $\Lambda$ } is the smallest closed set in $X$ outside of which $\Lambda$ is trivial.

If $\Lambda $ and $\Lambda'$ are two multisections in $\Gamma_m^+(P)$ we define their {\bf sum} as the multisection
$$
(\Lambda \oplus \Lambda' )(e)=\sum_{e'+e''=e}\Lambda (e')\cdot \Lambda' (e'').
$$
Explicitly, if the local section structure of $\Lambda$ is represented by the $\ssc^+$-sections $s_1,\ldots ,s_k:U\to E$ having the associated weights $\sigma_1,\ldots \sigma_k$ and the local section structure of $\Lambda'$ by the $\ssc^+$-sections $s_1',\ldots ,s_l':U\to E$ with
the associated weights $\sigma_1',\ldots \sigma_l'$, then at the vector $e_x\in E$ satisfying $P(e_x)=x$,
$$
(\Lambda \oplus \Lambda' )(e_x)=\sum_{s_i(x)+s_j'(x)=e_x}\sigma_i\cdot \sigma_j'.
$$
Hence  $s_i+s_j':U\to E$ are the  $\ssc^+$-sections and $\sigma_i\cdot \sigma_j'$ the associated weights of the multisection $\Lambda \oplus \Lambda'\in \Gamma^+_m (P)$  where $1\leq i\leq k$ and $1\leq j\leq l$.

The sum of two $\ssc^+$-multisections is by definition their
associated convolution product. We prefer to call it sum rather than
convolution product since in the single-valued case it precisely
corresponds to the sum.

 If $\Lambda_0,\Lambda_1\in \Gamma^+_m(P)$
and $\alpha \in (0, 1)\cap \Q^+$, we can define the $\ssc^+$-multisection  $\Lambda_{\alpha}\in
\Gamma_m^+(P)$ by
$$\Lambda_{\alpha}( e)=\alpha \Lambda_1( e)+(1-\alpha )\Lambda_0( e).$$
It is locally represented by all the local sections of
$\Lambda_0$ and $\Lambda_1$,  together with all the associated weights
multiplied by $\alpha$,  respectively  by $(1-\alpha)$.

We shall make use of
an auxiliary norm $N^\ast$ for a strong bundle   $P:E\rightarrow X$ over the
ep-groupoid $X$.

The first step in the construction of
an $\ssc^+$-multisection is the following lemma.
\begin{lem}\label{lem7.9.4}
Consider the strong bundle $P:E\rightarrow X$ over the ep-groupoid $X$ equipped with a
compatible auxiliary norm $N^\ast$. Assume the ep-groupoid $X$ is
modeled on sc-Hilbert spaces. Given a smooth point $x_0\in X$, a smooth
$h_0\in E_{x_0}$ and an open neighborhood $U\subset X$ of $x_0$,  then there exists an $\ssc^+$-section $s$ of the strong
bundle $E\rightarrow X$ (of objects, ignoring the morphisms)
satisfying
$$
s(x_0)=h_0
$$
and having its support in $U$. Moreover,  if $N^*(h)<\varepsilon$ we can
choose  the section $s$ in such a way that
$$
N^\ast(s(y))<\varepsilon.
$$
for all $y\in X$.
\end{lem}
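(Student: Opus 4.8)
The plan is to reduce the lemma to the corresponding statement for a strong local bundle $K^{\mathcal R}\vert O$ and then patch by a cutoff. First I would choose a strong bundle chart around the smooth point $x_0$: by Definition 4.8 in \cite{HWZ2} there is an $\ssc$-diffeomorphism $\varphi:U(x_0)\to O$ onto an open subset $O$ of a splicing core, covered by a strong bundle isomorphism $\Phi:E\vert U(x_0)\to K^{\mathcal R}\vert O$. Shrinking $U(x_0)$, I may assume $U(x_0)\subset U$ and that $\varphi(x_0)=0$ (or more precisely a smooth base point), with $\Phi(h_0)$ a smooth vector $k_0$ in the fiber over that point. Since $K^{\mathcal R}$ has reflexive (e.g.\ Hilbert) fibers, the auxiliary norm $N^\ast$ transports via $\Phi$ to an auxiliary norm on $K^{\mathcal R}\vert O$ in the sense of Proposition 5.6 in \cite{HWZ3}. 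Thus it suffices to produce, in the local model, an $\ssc^+$-section through $k_0$ supported near the base point with small $N^\ast$-norm; pulling it back by $\Phi^{-1}$ and extending by the zero section gives the desired $s$ on all of $X$ (as a section of objects only, so no compatibility with morphisms is required here).

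For the local construction I would work in the sc-Banach space $W$ underlying the splicing. Write the splicing as ${\mathcal S}=(\pi,F,V)$ so that the fiber of $K^{\mathcal R}$ at $v$ is $\pi_v(F)$, a complemented subspace. The natural candidate is $s(v)=\beta(v)\,\pi_v(k_0)$, where $\beta:V\to[0,1]$ is an sc-smooth cutoff function equal to $1$ near the base point $v_0=\varphi(x_0)$ and supported in a small neighborhood contained in $\varphi(U(x_0))$; here $\pi_v(k_0)$ makes sense because $k_0$ lies in the ambient space $F$ and $v\mapsto \pi_v$ is sc-smooth by the splicing axioms, so $v\mapsto \pi_v(k_0)$ is an sc-smooth section of $K^{\mathcal S}$. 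Because $k_0$ is a smooth point, $\pi_{v_0}(k_0)=k_0$, hence $s(v_0)=k_0$, i.e.\ $s(x_0)=h_0$ after transport. That $s$ is an $\ssc^+$-section (raising the fiber regularity by one level) follows from the fact that $k_0$ is smooth together with the strong-bundle structure of $K^{\mathcal R}$, exactly as in the single-valued constructions of \cite{HWZ3}. The existence of such sc-smooth cutoffs on $V$ uses that $X$, hence the local models, are built on sc-Hilbert spaces, which is precisely the standing hypothesis of the lemma (sc-smooth bump functions on Hilbert space).

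It remains to arrange the smallness $N^\ast(s(y))<\varepsilon$ for all $y$, given $N^\ast(h_0)<\varepsilon$. The point is that $v\mapsto N^\ast(\pi_v(k_0))$ is continuous near $v_0$ (continuity of auxiliary norms under $m$-convergence, together with sc-smoothness of $v\mapsto\pi_v(k_0)$) and equals $N^\ast(h_0)<\varepsilon$ at $v=v_0$. Since $N^\ast$ is a genuine norm on each fiber, for $0\le\beta(v)\le 1$ we get $N^\ast(\beta(v)\pi_v(k_0))=\beta(v)\,N^\ast(\pi_v(k_0))\le N^\ast(\pi_v(k_0))$, so by shrinking the support of $\beta$ to a small enough neighborhood of $v_0$ on which $N^\ast(\pi_v(k_0))<\varepsilon$ still holds, we obtain $N^\ast(s(y))<\varepsilon$ everywhere (the estimate being vacuous where $\beta=0$ since there $s=0$). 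I expect the main obstacle to be the regularity bookkeeping — verifying that $v\mapsto\pi_v(k_0)$ is genuinely $\ssc^+$, i.e.\ that composing with the strong-bundle splicing raises the level as required, and that the fiberwise norm estimate survives this — rather than anything conceptually deep; the cutoff and the continuity of $N^\ast$ are routine once the local model is set up. Finally, transporting back and extending by $0$ yields the section $s$ on $X$ with all the asserted properties.
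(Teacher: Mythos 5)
Your proposal is correct and takes essentially the same route as the paper: reduce via a strong bundle chart to a local model over a splicing core, define the candidate section as the (bundle) splicing projection applied to the smooth fiber vector, note that this raises fiber regularity by one level because the vector is smooth (so the section is $\ssc^+$), cut off by an sc-smooth bump function (which exists by the sc-Hilbert hypothesis, cf.\ Lemma \ref{l2}), and control $N^\ast$ by shrinking the support. The only cosmetic difference is notational — the paper distinguishes the base splicing $\pi$ from the bundle splicing $\rho_{(v,e)}$, where your $\pi_v$ plays the role of the latter — and the paper is slightly more explicit about invoking the strong bundle splicing axiom ($\rho_{(v,e)}(u)\in F_{m+1}$ when $(v,e)\in O_m$ and $u\in F_{m+1}$, Definition 4.2 in \cite{HWZ2}) to justify the $\ssc^+$ claim.
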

\begin{proof}
The result is local. Hence we take a strong M-polyfold bundle chart
$\Phi:p^{-1}(U)\to K^{\mathcal R}$ covering the sc-diffeomorphism $\varphi:U\to O$ as defined in Definition 4.8  of \cite{HWZ2}.
The set $O$ is an open subset of
of the splicing core $K^{\mathcal S}=\{(v, e)\in V\oplus E'\vert \pi_v (e)=e\}$ associated with the  splicing ${\mathcal S}=(\pi, E', V)$ and  $K^{\mathcal R}=\{((v, e), u)\in O\oplus F\vert \, \rho_{(v, e)}(u)=u\}$ is the splicing core associated with the strong bundle splicing ${\mathcal R}=(\rho, F, (O, {\mathcal S}))$.
 In these coordinates the smooth  point $x_0$ corresponds to the smooth point $\varphi (x_0)=:(v_0, e_0)\in O$. Moreover,  $\Phi (x_0,h_0)=((v_0, e_0), h_0')\in K^{\mathcal R}$ where $h_0'$ is a smooth point in $F$, i.e., $h_0'\in F_{\infty}$.  For points $(v, e)\in O$ close to $(v_0, e_0)$ we define  the local section $s:K^{\mathcal R}\to O$ by
$$s(v, e)=((v, e), \rho_{(v, e)}(h_0'))\in K^{\mathcal R}.$$
At the point $(v_0, e_0)$ we have $\rho_{(v_0, e_0)}(h_0')=h_0'$ and  $s(v_0, e_0)=((v_0, e_0), h_0')$ as desired. In view of the definition of a  strong bundle splicing, $\rho_{(v, e)}(u)\in F_{m+1}$ if $(v, e)\in O_m\oplus F_{m}$ and $u\in F_{m+1}$, see Definition 4.2 in \cite{HWZ2}. Moreover,
the triple ${\mathcal R}^1=(\rho,  F^1, (O, {\mathcal S}))$ is also a general sc-splicing which
together with the fact that $h_0'$ is a smooth point in $F$ implies that the section $s$ of the bundle $K^{{\mathcal R}^1}\to O$ is sc-smooth. Consequently, $s$ is an $\ssc^+$-section  of the local strong M-polyfold bundle $K^{\mathcal R}\to O$.  We transport this section by means of the map $\Phi$ to obtain a local  $\ssc^+$ -section $s$ of  the given strong bundle $P:E\to X$; it  satisfies  $s(x_0)=h_0$. Using Lemma \ref{l2} in the appendix (whose proof makes use of the sc-Hilbert structure),  we find an sc-smooth  bump-function $\beta$ which is equal to $1$ near $x_0$ so that the section $\beta \cdot s$  has the desired properties.
\end{proof}

In general, the section constructed  in the proof of Lemma \ref{lem7.9.4} will  not be
compatible with morphisms.  Below we shall describe  a  general recipe for the
construction of  $\ssc^+$-multisections which are compatible with the morphisms. We fix  a smooth point $x\in X$
and an open neighborhood $U\subset X$ of $x$ which has the distinguished properties
listed in the structure Theorem \ref{lem7.9.3}.  In particular,  we have the natural
representation $\varphi:G_{x}\rightarrow \hbox{Diff}_{sc}(U)$  of the isotropy group $G_{x}$ at $x$ and a
sc-smooth map $\Gamma:G_x\times U\rightarrow {\bf X}$ having the
following properties.
\begin{itemize}
\item[$\bullet$] $\Gamma(g,x)=g$.
\item[$\bullet$] $s(\Gamma(g,y))=y$ and $t(\Gamma(g,y))=\varphi_g(y)$.
\item[$\bullet$] If $h:y\rightarrow z$  is a morphism between $y,z\in U$,
then there exists a unique $g\in G_x$ with $\Gamma(g,y)=h$.
\item[$\bullet$\: ] Assume that $y_0\in X$ is an object for which
there exists no morphism $y_0\to x'$ for an  $x'$ in $\ov{U}$. Then
there exists an open neighborhood $V$ of $y_0$ so that for every
$z\in V$ there is no morphism to an element in $\ov{U}$.
\item[$\bullet$\: ] Assume that  $y_0\in X$ is an object for which
there exists no morphism $y_0\to x'$ for every $x'\in U$, but there
exists a morphism to some element in $\ov{U}$. Then,  given an open
neighborhood $W$ of $\partial U$ (the set theoretic boundary of
$U$), there exists an open neighborhood $V$ of $y_0$ so that if
there is a morphism $y\to x'$ for some $y\in V$ and $x'\in U$, then
$x'\in W$.
\end{itemize}
With the smooth point $x\in U\subset X$ already chosen above we now choose a smooth vector $e_0\in E$ satisfying $P(e_0)=x$ and take, using Lemma \ref{lem7.9.4}, an $\ssc^+$-section $h$ of the strong bundle $E\to X$ (of objects) having its support in an open neighborhood $V$ of $x$ which is invariant under the sc-diffeomorphism
$\varphi_g \in \mbox{Diff}_{\ssc}(U)$ for all $g\in G_x$ and whose closure satisfies $\ov{V}\subset U$.

Recall the definition of the  strong bundle map $\mu:{\bf X}{{_s}\times_{p}}E\to E$ from
Section \ref{sect7.4}. It acts as follows. If $\varphi:x\to y$ is a
morphism in ${\bf X}$, then
$$\mu (\varphi,\cdot ):E_x\to E_y$$
is a linear isomorphism.  For every $g\in G_x$,  we define the $\ssc^+$-section $h_g$ of $E\vert U$  by
$$
h_g (\varphi_g(y)):=\mu (\Gamma(g,y),h(y)), \quad y\in U,
$$
and introduce  the map
$$\Lambda_{U}:E\vert U\to \Q^+$$
by
$$
\Lambda_{U}(e)=\frac{1}{\sharp G_x}\cdot \sharp \{g\in G_x\vert \,
h_g(P(e))=e\}.
$$

\begin{figure}[htbp]
\mbox{}\\[2ex]
\centerline{\relabelbox \epsfxsize 4truein \epsfbox{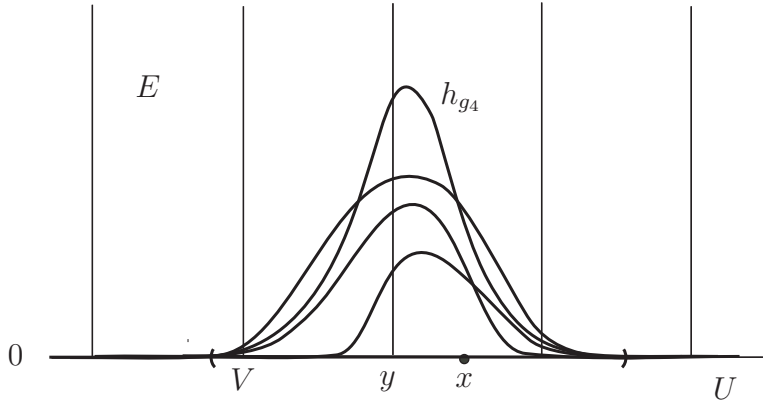}
\relabel {u}{$U$}
\relabel {v}{$V$}
\relabel {x}{$x$}
\relabel {y}{$y$}
\relabel {o}{$0$}
\relabel {e}{$E$}
\relabel {h}{$h_{g_4}$}
\endrelabelbox}
\caption{The associated weights are $\frac{1}{\sharp G_x}$. }
\label{Fig3.5.2}
\end{figure}

\begin{lem}\label{lem7.9.5}
The map $\Lambda_U$  defined on $E|U$ satisfies  $\Lambda_U (e)=\Lambda_U (e')$
if there exists a morphism $e\to e'$ in ${\bf E}$.
\end{lem}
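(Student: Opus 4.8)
The plan is to unwind the definition of a morphism in ${\bf E}$ and then exhibit an explicit bijection between the two finite index sets counting the relevant $g$'s. First I would observe that a morphism $e\to e'$ in ${\bf E}$ is, by the construction of ${\bf E}={\bf X}{{_s}\times_{P}}E$, a pair $(\varphi ,e)$ with $\varphi:P(e)\to P(e')$ a morphism in ${\bf X}$ and $e'=\mu(\varphi ,e)$. Since $e,e'\in E\vert U$ we have $y:=P(e)\in U$ and $y':=P(e')\in U$, so the third property of the structure Theorem~\ref{localstructure} furnishes a unique $g_0\in G_x$ with $\varphi=\Gamma(g_0,y)$, and then $y'=t(\Gamma(g_0,y))=\varphi_{g_0}(y)$.

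The one genuinely non-formal ingredient is the cocycle identity
$$\Gamma(g_1g_2,z)=\Gamma(g_1,\varphi_{g_2}(z))\circ\Gamma(g_2,z)\qquad\text{for all }z\in U,\ g_1,g_2\in G_x,$$
together with $\Gamma(1,z)=1_z$. I would prove it as follows. Both $z\mapsto\Gamma(g_1,\varphi_{g_2}(z))\circ\Gamma(g_2,z)$ and $z\mapsto\Gamma(g_1g_2,z)$ are continuous (indeed sc-smooth) local sections of the source map $s:{\bf X}\to X$ over $U$: each has source $z$ and, since $\varphi:G_x\to\text{Diff}_{\ssc}(U)$ is a homomorphism, target $\varphi_{g_1g_2}(z)\in U$. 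Two continuous local sections of the local homeomorphism $s$ that agree at one point agree on an open and closed subset of $U$ (open by local injectivity of $s$, closed since ${\bf X}$ is Hausdorff), hence agree everywhere once $U$ is taken connected — which we may do by shrinking $U$. At $z=x$ they agree because $\Gamma(g_i,x)=g_i$, $\varphi_{g_2}(x)=x$, and composition of morphisms in ${\bf X}(x)$ is the group law of $G_x$, so $\Gamma(g_1,x)\circ\Gamma(g_2,x)=g_1g_2=\Gamma(g_1g_2,x)$; the same comparison with the unit section gives $\Gamma(1,z)=1_z$. In particular $\Gamma(g_0^{-1},y')\circ\Gamma(g_0,y)=\Gamma(1,y)=1_y$, i.e.\ $\varphi^{-1}=\Gamma(g_0^{-1},y')$.

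Now I would establish the bijection. Fix $g\in G_x$ and put $z:=\varphi_{g^{-1}}(y)\in U$; using $y'=\varphi_{g_0}(y)$ and the homomorphism property one checks $\varphi_{(g_0g)^{-1}}(y')=z$ as well, so the defining formula for the auxiliary sections gives $h_g(y)=\mu(\Gamma(g,z),h(z))$ and $h_{g_0g}(y')=\mu(\Gamma(g_0g,z),h(z))$. If $h_g(y)=e$, then by functoriality of $\mu$ and the cocycle identity,
$$e'=\mu(\varphi ,e)=\mu\bigl(\Gamma(g_0,y),\mu(\Gamma(g,z),h(z))\bigr)=\mu\bigl(\Gamma(g_0,\varphi_g(z))\circ\Gamma(g,z),h(z)\bigr)=\mu(\Gamma(g_0g,z),h(z))=h_{g_0g}(y').$$
Hence $g\mapsto g_0g$ sends $\{g\in G_x\mid h_g(y)=e\}$ into $\{g'\in G_x\mid h_{g'}(y')=e'\}$. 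Running the same computation with the inverse morphism $\varphi^{-1}=\Gamma(g_0^{-1},y'):y'\to y$ and $e=\mu(\varphi^{-1},e')$ shows $g'\mapsto g_0^{-1}g'$ sends the second set into the first. Since $g\mapsto g_0g$ and $g'\mapsto g_0^{-1}g'$ are mutually inverse bijections of $G_x$, the two sets have the same cardinality, and therefore
$$\Lambda_U(e)=\frac{1}{\sharp G_x}\,\sharp\{g\mid h_g(y)=e\}=\frac{1}{\sharp G_x}\,\sharp\{g'\mid h_{g'}(y')=e'\}=\Lambda_U(e').$$

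The main obstacle is the cocycle identity for $\Gamma$: the rest is bookkeeping with the homomorphism $\varphi$ and with functoriality of $\mu$, but $\Gamma(g_1g_2,\cdot)=\Gamma(g_1,\varphi_{g_2}(\cdot))\circ\Gamma(g_2,\cdot)$ really does need the uniqueness clause of the structure theorem plus the local-section/connectedness argument above, and one must take care that the neighbourhood $U$ may be assumed connected and $G_x$-invariant so that all the sections $h_g$ are defined on $E\vert U$ in the first place.
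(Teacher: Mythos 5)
Your proof is correct and follows essentially the same route as the paper's: exhibit the unique $g_0\in G_x$ with $\Gamma(g_0,y)=\varphi$, compute that $h_g(y)=e$ iff $h_{g_0g}(y')=e'$, and conclude by counting. The one place you are more careful than the paper is the cocycle identity $\Gamma(g_1g_2,z)=\Gamma(g_1,\varphi_{g_2}(z))\circ\Gamma(g_2,z)$: the paper's displayed chain of equalities uses it silently in the passage from $\Gamma(g_0g_0^{-1}g,\cdot)$ to $\psi\circ\Gamma(g_0^{-1}g,\cdot)$, whereas you isolate it and prove it by the local-section uniqueness argument. Your caveat about connectedness and $G_x$-invariance of $U$ is in fact automatically satisfied by the way $U$ is built in Theorem~\ref{verynewtheorem8.25}: it is the union $\bigcup_g\varphi_g(U_2)$ of connected sets each containing $x$ (for $U_2$ chosen connected), so it is connected, and it is $G_x$-invariant by Lemma~\ref{lem7.9.2}.
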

\begin{proof}
Assume that $e$ and $e'\in E\vert U$ are related by a morphism in ${\bf E}$. hence, if  $P(e)=y$ and $P(e')=y'$ where $y$ and $y'\in U$ so that we can use the notation $e=e_y$ and $e'=e_{y'}$, then there is a morphism
$\psi$ in ${\bf X}$  satisfying
$$
\psi:y\rightarrow y'\quad \text{and}\quad \mu(\psi,e_y)=e_{y'}.
$$
By Theorem \ref{localstructure} there exists a uniquely determined $g_0\in G_x$ satisfying  $\Gamma(g_0,y)=\psi$
and $\varphi_{g_0}(y)=y'$. If  $g\in G_x$ we compute, using the properties  of  the strong bundle map $\mu:{\bf E}\to E$,
\begin{equation*}
\begin{split}
h_g(\varphi_{g_0}(y))&=
\mu(\Gamma(g,\varphi_{g^{-1}}(y')),h(\varphi_{g^{-1}}(y')))\\
&=\mu(\Gamma(g,\varphi_{g^{-1}g_0}(y)),h(\varphi_{g^{-1}g_0}(y)))\\
&=\mu(\Gamma(g_0g_0^{-1}g,\varphi_{g^{-1}g_0}(y)),h(\varphi_{g^{-1}g_0}(y)))\\
&=\mu(\psi\circ\Gamma(g_0^{-1}g,\varphi_{g^{-1}g_0}(y)),h(\varphi_{g^{-1}g_0}(y)))\\
&=\mu(\psi,\mu(\Gamma(g_0^{-1}g,\varphi_{g^{-1}g_0}(y)),h(\varphi_{g^{-1}g_0}(y)))\\
&=\mu(\Gamma (g_0, y),h_{g_0^{-1}g}(y)).
\end{split}
\end{equation*}
Since $\mu (\psi, \cdot ):E_y\to E_{y'}$ is an isomorphism, it follows that  $h_{g_0^{-1}g}(y)=e_y$ for some $g\in G_x$ if and only if
$h_{g}(\varphi_{g_0}(y)=e_{y'}$. This implies
$$
\Lambda_U(e)=\Lambda_U(e').
$$
The proof  of Lemma \ref{lem7.9.5} is complete.
\end{proof}

Lemma \ref{lem7.9.5} shows that the map $\Lambda_U:E\vert U\to \Q^+$  possesses  on $U$  the local section structure defined by the $\ssc^+$-sections $h_g:U\to E\vert U$ for all $g\in G_x$.

Next  we extend the local $\ssc^+$-multisection $\Lambda_U$ from $E\vert U$  to all of $E$. In order to first extend $\Lambda_U$ from $E\vert U$ to $E\vert \ov{U}$ we recall that the invariant set $V\subset U$ contains the supports of all sections $h_g$ for $g\in G_x$   and satisfies $\ov{V}\subset U$. Consequently, the multisection $\Lambda_U$ is trivial over the set $U\setminus V$ and, in view of the properties listed in the structure Theorem \ref{lem7.9.3}, we  can define the multisection $\Lambda$ on the fiber $E_y$ over the boundary point $y\in \ov{U}\setminus U$ by $\Lambda (e)=0$ if $e\neq 0_y$ and by $\Lambda (0_y)=1$ for the zero $0_y$ in $E_y$. If $y\not \in \ov{U}$ and if there is no morphism $y\to x'$ to some  $x'\in \ov{U}$ we
define $\Lambda$ to be trivial over $y$.  On the other hand, if there exists a morphism $e\to e'$ with $P(e')\in \ov{U}$, we define $\Lambda  (e)=\Lambda (e')$.

So far we have defined a map $\Lambda:E\to \Q^+$ and it follows from the construction of $\Lambda$ and from Lemma \ref{lem7.9.5} that $\Lambda (e)=\Lambda (e')$ if there is a morphism $e'\to e$ in  ${\bf E}$,  so that $\Lambda$ induces a functor $\Lambda:E \to \Q^+$.

\begin{prop}\label{prop7.9.6}
Let $P:E\rightarrow X$ be a strong bundle over the  ep-groupoid $X$.
We assume that  the sc-smooth structures are based on separable  Hilbert spaces. Then
the functor $\Lambda:E\to \Q^+$ constructed above is an $\ssc^+$-multisection.
\end{prop}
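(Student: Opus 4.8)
The plan is to verify directly that the functor $\Lambda$ satisfies the local section structure demanded by Definition \ref{defn7.9.2} at every object $y_0\in X$; functoriality of $\Lambda$ and the fact that it is $\Q^+$-valued were already obtained during the construction (through Lemma \ref{lem7.9.5} and the extension rule), so the only thing left is to exhibit near each $y_0$ finitely many $\ssc^+$-sections with positive rational weights summing to $1$ that represent $\Lambda$. Throughout, the decisive geometric fact is that the common support $\overline V$ of the sections $h_g$ satisfies $\overline V\subset U$, so that $\overline V$ and the set-theoretic boundary $\partial U$ are disjoint closed subsets of $X$, and on the open set $X\setminus\overline V$ every $h_g$ vanishes, hence $\Lambda$ is trivial there.

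First I would record that each $h_g$ is an $\ssc^+$-section of $E|U$, being assembled from the sc-smooth maps $\Gamma(g,\cdot)$, $h$ and the strong bundle map $\mu$ together with the sc-diffeomorphism $\varphi_g$; since the isotropy group $G_x$ is finite, there are finitely many of them. Let $t_1,\dots,t_m$ be the distinct sections among $\{h_g\}_{g\in G_x}$ and set $\tau_i=\tfrac1{\sharp G_x}\,\sharp\{g\in G_x\mid h_g=t_i\}$. Then $\sum_i\tau_i=1$, and because $\{g\in G_x\mid h_g(P(e))=e\}$ is the disjoint union of the sets $\{g\mid h_g=t_i\}$ over the indices $i$ with $t_i(P(e))=e$, one obtains
$$\Lambda_U(e)=\sum_{\{i\,\mid\, t_i(P(e))=e\}}\tau_i,\qquad e\in E|U.$$
Thus for $y_0\in U$ the sections $t_i$, restricted to a smaller neighborhood on which $G_{y_0}$ acts by its natural representation (Theorem \ref{localstructure}), together with the weights $\tau_i$, form a local section structure for $\Lambda$ at $y_0$. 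If $y_0$ admits a morphism $\psi:y_0\to x'$ with $x'\in U$, I would extend $\psi$ to a local sc-diffeomorphism $\widehat\psi=t\circ s^{-1}$ between neighborhoods of $y_0$ and $x'$, which $\mu$ covers by a local strong bundle isomorphism; pulling the $t_i$ back through this isomorphism produces $\ssc^+$-sections near $y_0$, and the morphism-invariance of $\Lambda$ shows they represent $\Lambda$ near $y_0$ with the unchanged weights $\tau_i$.

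It remains to treat an object $y_0$ admitting no morphism into $U$; for such $y_0$ I would show that $\Lambda$ is trivial on a neighborhood, so that the single zero section with weight $1$ is a local section structure there. If $y_0$ admits no morphism into $\overline U$ at all, the corresponding property in the structure Theorem \ref{lem7.9.3} yields a neighborhood of $y_0$ with the same property, on which $\Lambda$ is trivial by the extension rule. If instead $y_0$ admits a morphism into $\overline U$ — necessarily into $\partial U=\overline U\setminus U$ — I would apply the other (corner-type) property of Theorem \ref{lem7.9.3} with an open neighborhood $W$ of $\partial U$ chosen disjoint from $\overline V$, obtaining a neighborhood $V_0$ of $y_0$ such that every morphism from a point of $V_0$ into $U$ has image in $W\subset X\setminus\overline V$; since all $h_g$ vanish on $X\setminus\overline V$, since $\Lambda$ is trivial over points of $\partial U$ by construction, and since $\Lambda$ is trivial over points admitting no morphism into $\overline U$, it follows that $\Lambda$ is trivial on $V_0$.

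The main obstacle I anticipate is precisely this last step: guaranteeing that $\Lambda$ really is trivial on a full neighborhood of points sitting on or just beyond $\partial U$, which forces one to combine the separation $\overline V\subset U$ with both technical morphism-control properties of the structure theorem. Once the appropriate neighborhood has been produced in every case, conditions (1) and (2) of Definition \ref{defn7.9.2} hold by inspection, and since $y_0\in X$ was arbitrary this proves that $\Lambda\in\Gamma_m^+(P)$.
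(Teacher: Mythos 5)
Your proposal is correct and follows essentially the same case analysis as the paper: $y_0\in U$ handled by the $h_g$'s, and $y_0\notin U$ handled via the two morphism-control properties of Theorem \ref{lem7.9.3} together with the local diffeomorphism $t\circ s^{-1}$. You are somewhat more careful than the paper in the case where $y_0$ admits morphisms only into $\partial U$, where your explicit choice of the neighborhood $W\supset\partial U$ disjoint from $\overline V$ makes precise a step that the paper leaves implicit when it simply says the structure near $y$ is ``inherited'' from $U(x')$.
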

\begin{proof}
Let $U\subset X$ be the distinguished open set considered above and let $y\in U$.  Then there are by construction finitely many local
$\ssc^+$-sections, namely $h_g$ for $g\in G_x$ with associated weights $\frac{1}{\sharp G_x}$,  so that $\Lambda$ has the desired local structure. If  $y\not \in U$ and if  there is no morphism $y\to x'$  to some
point $x'\in \ov{U}$, then $\Lambda$ is trivial over $y$. By the structure
Theorem \ref{lem7.9.3}, there is
an open neighborhood $V$ of $y$ which does not admit morphisms into
$\ov{U}$. Consequently, the multisection $\Lambda$ is trivial over the open set $V$. Finally, if $y\not \in U$ and if there exists a morphism $y\to x'\in \ov{U}$, then   $t\circ s^{-1}$ defines a local diffeomorphism $U(y)\to U(x')$ between open neighborhoods so that $\Lambda\vert U(y)$ inherits the local section structure from $\Lambda\vert U(x')$.  This completes the proof of Proposition \ref{prop7.9.6}.
\end{proof}
We  summarize
the previous discussion in the following theorem.
\begin{thm}\label{thm13.1}
Let $P:E\rightarrow X$ be a strong bundle over an ep-groupoid $X$ and let $N^\ast$ be  an auxiliary norm for $P$.
Assume the sc-smooth structures are based on separable  Hilbert spaces. Assume that  $e$ is a smooth point in $E$ and $U$ is a saturated open neighborhood in $X$ of
the point $x=P(e)$. Then  there exists an  $\ssc^+$-multisection $\Lambda:E\to \Q^+$ having its  support in $U$ (i.e., $\Lambda$ is trivial on $X\setminus U$)  and
satisfying $\Lambda(e)>0$. In addition,  if $N^\ast(e)<\varepsilon$,  then
$N^\ast(h)<\varepsilon$ for all $h$ satisfying  $\Lambda(h)>0$.
\end{thm}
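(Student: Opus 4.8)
The statement is essentially the summary of the construction carried out in this section, so the plan is to record how the pieces assembled above fit together. First I would replace the given saturated neighborhood $U$ by a smaller open neighborhood $U_0\subset U$ of $x$ enjoying the distinguished properties used throughout: the natural representation $\varphi:G_x\to\text{Diff}_{\ssc}(U_0)$ and the map $\Gamma:G_x\times U_0\to{\bf X}$ of Theorem \ref{localstructure}, together with the two properties concerning the (non)existence of morphisms into $\ov{U_0}$ listed above. Since $U$ is saturated and $U_0\subset U$, the saturation of any subset of $U_0$ is again contained in $U$; this is the only place where the hypothesis that $U$ is saturated enters.

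Next, as $e$ is a smooth point with $P(e)=x$ and $x$ is smooth, Lemma \ref{lem7.9.4} produces an $\ssc^+$-section $h$ of the bundle $E\to X$ of objects with $h(x)=e$, with support in a $G_x$-invariant open set $V$ satisfying $\ov V\subset U_0$, and — when $N^\ast(e)<\varepsilon$ — with $N^\ast(h(y))<\varepsilon$ for all $y\in X$. I would then form, for $g\in G_x$, the translated sections $h_g$ determined by $h_g(\varphi_g(y))=\mu(\Gamma(g,y),h(y))$ and the local map
$$\Lambda_{U_0}(e)=\frac{1}{\sharp G_x}\cdot\sharp\{g\in G_x\mid h_g(P(e))=e\}.$$
By Lemma \ref{lem7.9.5} this map is invariant under morphisms of ${\bf E}$ over $U_0$, and by construction it carries over $U_0$ the local section structure of Definition \ref{defn7.9.2}, given by the finitely many $\ssc^+$-sections $h_g$ with the equal weights $1/\sharp G_x$, which sum to $1$. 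One then extends $\Lambda_{U_0}$ to a functor $\Lambda:E\to\Q^+$ exactly as in the paragraphs preceding Proposition \ref{prop7.9.6}: $\Lambda$ is declared trivial (in the sense of Definition \ref{trivial}) over every object admitting no morphism into $\ov{U_0}$; one sets $\Lambda(0_y)=1$ and $\Lambda(e_y)=0$ for $e_y\neq 0_y$ over the boundary points $y\in\ov{U_0}\setminus U_0$; and $\Lambda(e')=\Lambda(e)$ whenever there is a morphism $e'\to e$ with $P(e)\in\ov{U_0}$. Lemma \ref{lem7.9.5} together with the construction guarantee that $\Lambda$ is a functor, and Proposition \ref{prop7.9.6} gives that it is an $\ssc^+$-multisection.

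It remains to verify the three assertions. Since the identity $1\in G_x$ satisfies $h_1=h$ and $h(x)=e$, the element $g=1$ is counted in $\Lambda_{U_0}(e)$, whence $\Lambda(e)\geq 1/\sharp G_x>0$. For the support: each $h_g$ vanishes outside the $G_x$-invariant set $V$, so $\Lambda$ is trivial over $U_0\setminus V$ and over $\ov{U_0}\setminus U_0$; and if $y\notin U$ then, $U$ being saturated, $y$ admits no morphism to any point of $U\supset V$, so any morphism from $y$ lands in $\ov{U_0}\setminus V$, where $\Lambda$ is trivial, hence $\Lambda$ is trivial over $y$; thus $\supp\Lambda\subset U$. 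Finally, if $\Lambda(h)>0$ then $h$ is connected by a morphism in ${\bf E}$ to an element over $\ov{U_0}$ which is either a zero vector or of the form $h_g(\varphi_g(y'))=\mu(\Gamma(g,y'),h(y'))$ with $y'\in U_0$; since $N^\ast$ is compatible with morphisms and $h_g(\varphi_g(y'))$ is itself morphism-related to $h(y')$, we get $N^\ast(h)=0$ or $N^\ast(h)=N^\ast(h(y'))<\varepsilon$. All the genuine analytic content sits in Lemma \ref{lem7.9.4} (whose proof uses the separable Hilbert structure for the sc-smooth bump function), Lemma \ref{lem7.9.5}, and Proposition \ref{prop7.9.6}; the only real work here is the bookkeeping, and the subtlest point — consistency of the extension across $\partial U_0$ with the morphisms — is exactly what the last two distinguished properties of the structure theorem were arranged to deliver, so I do not expect a genuine obstacle.
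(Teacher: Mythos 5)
Your proposal reproduces the paper's own argument: the theorem is stated there precisely as a summary of the construction built up in Lemma \ref{lem7.9.4}, the definition of the translated sections $h_g$, Lemma \ref{lem7.9.5}, and Proposition \ref{prop7.9.6}, and you assemble exactly those pieces, correctly inserting a smaller distinguished neighborhood $U_0\subset U$ from Theorem \ref{lem7.9.3} and using saturation of $U$ only to ensure that points outside $U$ admit no morphisms into $V\subset U_0$, so that the extended functor is trivial there. The verifications of $\Lambda(e)\geq 1/\sharp G_x$, of $\supp\Lambda\subset U$, and of the norm bound via morphism-invariance of $N^\ast$ are all carried out as intended.
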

If $\Lambda:E\to \Q^+$ is an $\ssc^+$- multisection for the strong bundle $E\rightarrow X$ and $\Phi:E'\rightarrow E$
a strong bundle equivalence  covering the equivalence
$\varphi:X'\rightarrow X$ between the underlying ep-groupoids, then  the pull-back
$\Phi^\ast\Lambda:E'\to \Q^+$  is again an  $\ssc^+$-multitsection. If
$\Theta$ is an $\ssc^+$-multisection for $E\rightarrow X$ we can
also define  the push-forward $\Phi_\ast(\Lambda)$. As in the
discussion of sections and generalized isomorphisms we have the
following result.
\begin{prop}
If  $\mathfrak{A}:P\rightarrow Q$ is  an s-bundle isomorphism, where
$P:E\rightarrow X$ and $Q:L\rightarrow Y$ are strong bundles over
ep-groupoids, then $\mathfrak{A}$ induces a bijection
$$
\mathfrak{A}_\ast:\Gamma_m^+(P)\rightarrow\Gamma^+_m(Q)
$$
between the corresponding spaces of multisections. The inverse is
the pull-back $\mathfrak{A}^\ast$.
\end{prop}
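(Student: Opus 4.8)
The plan is to imitate, essentially verbatim, the proof of Proposition~\ref{rty} for single-valued sections, replacing throughout sc-smooth sections by $\ssc^+$-multisections and each ingredient used there by its multisection analogue. By definition an s-bundle isomorphism $\mathfrak{A}:E\Rightarrow L$ is the class of a diagram $E\xleftarrow{\Phi}E''\xrightarrow{\Psi}L$ in which both $\Phi$ and $\Psi$ are bundle equivalences; fixing such a representative one sets
\[
\mathfrak{A}_\ast(\Lambda):=\Psi_\ast\bigl(\Phi^\ast\Lambda\bigr),\qquad
\mathfrak{A}^\ast(\Lambda'):=\Phi_\ast\bigl(\Psi^\ast\Lambda'\bigr)
\]
for $\Lambda\in\Gamma^+_m(P)$, $\Lambda'\in\Gamma^+_m(Q)$. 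The proof then splits into three steps: (a) pull-back and push-forward along a single bundle equivalence behave well on multisections; (b) these maps are independent of the chosen representative; (c) they are mutually inverse.

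Step~(a): here one checks that if $\Phi:E'\to E$ is a bundle equivalence covering the equivalence $\varphi:X'\to X$ and $\Lambda\in\Gamma^+_m(P)$, then $\Phi^\ast\Lambda:=\Lambda\circ\Phi$ is a functor $E'\to\Q^+$, and that it carries a local section structure because $\Phi$ restricts to strong bundle isomorphisms which send the local $\ssc^+$-sections $s_1,\dots,s_k:U\to E$ representing $\Lambda$ to local $\ssc^+$-sections $\Phi^{-1}\circ s_j\circ\varphi$, with the weights $\sigma_j$ unchanged (still summing to $1$). Since $\abs{\Phi}:\abs{E'}\to\abs{E}$ is a homeomorphism and multisections, being functors, are determined by their induced maps on orbit spaces, $\Phi^\ast:\Gamma^+_m(P)\to\Gamma^+_m(P')$ is a bijection whose inverse is the push-forward $\Phi_\ast$, with $\Phi^\ast\circ\Phi_\ast=\operatorname{id}$ and $\Phi_\ast\circ\Phi^\ast=\operatorname{id}$; this is the $\ssc^+$-multisection version of the constructions already recorded for sections.

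Step~(b): I would first note the multisection analogue of Proposition~\ref{newprop2.20}, namely that if $\Phi,\Psi:P'\to P$ are naturally equivalent bundle equivalences, then $\Phi^\ast\Lambda=\Psi^\ast\Lambda$ for every $\Lambda\in\Gamma^+_m(P)$. This is easier than the section statement: writing $\tau$ for the natural transformation with canonical lift $T(e)=(\tau(P'(e)),\Phi(e))$, the arrow $T(e)$ is a morphism $\Phi(e)\to\Psi(e)$ in ${\bf E}$, and since $\Lambda$ is a functor into $\Q^+$ (whose only morphisms are identities) one gets $(\Phi^\ast\Lambda)(e)=\Lambda(\Phi(e))=\Lambda(\Psi(e))=(\Psi^\ast\Lambda)(e)$. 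Independence of the representative then follows as for sections: any two equivalent representatives of $\mathfrak{A}$ admit a common refinement, which each of them refines; and if $E\xleftarrow{\Theta}A\xrightarrow{\Xi}L$ refines $E\xleftarrow{\Phi}E''\xrightarrow{\Psi}L$ via an equivalence $\Sigma:A\to E''$ with $\Phi\circ\Sigma\simeq\Theta$ and $\Psi\circ\Sigma\simeq\Xi$, then by the above analogue $\Theta^\ast\Lambda=\Sigma^\ast(\Phi^\ast\Lambda)$ and $\Xi^\ast=\Sigma^\ast\circ\Psi^\ast$, hence $\Xi_\ast=\Psi_\ast\circ\Sigma_\ast$ and
\[
\Xi_\ast\bigl(\Theta^\ast\Lambda\bigr)=\Psi_\ast\Sigma_\ast\Sigma^\ast\bigl(\Phi^\ast\Lambda\bigr)=\Psi_\ast\bigl(\Phi^\ast\Lambda\bigr).
\]
Thus a refinement computes the same push-forward as the diagram it refines, so $\mathfrak{A}_\ast$ is well defined, and likewise $\mathfrak{A}^\ast$.

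Step~(c) and the obstacle: with both maps well defined, pick any representative $E\xleftarrow{\Phi}E''\xrightarrow{\Psi}L$ and use step~(a):
\[
\mathfrak{A}^\ast\bigl(\mathfrak{A}_\ast\Lambda\bigr)=\Phi_\ast\Psi^\ast\bigl(\Psi_\ast\Phi^\ast\Lambda\bigr)=\Phi_\ast\bigl(\Psi^\ast\Psi_\ast\bigr)\bigl(\Phi^\ast\Lambda\bigr)=\Phi_\ast\Phi^\ast\Lambda=\Lambda,
\]
and symmetrically $\mathfrak{A}_\ast\circ\mathfrak{A}^\ast=\operatorname{id}$ on $\Gamma^+_m(Q)$; hence $\mathfrak{A}_\ast$ is a bijection with inverse $\mathfrak{A}^\ast$, which is moreover the push-forward along the inverse s-bundle isomorphism $\mathfrak{A}^{-1}=[L\xleftarrow{\Psi}E''\xrightarrow{\Phi}E]$. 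I expect the only genuinely non-formal work to be in step~(a) — verifying that a bundle equivalence transports local section structures together with their weights and that $\Phi^\ast$ is a bijection on $\Gamma^+_m$, plus the bookkeeping needed to set up $\Phi_\ast$ — since step~(b)'s key input is almost immediate for multisections and step~(c) is a two-line computation. So the main obstacle is organizational bookkeeping rather than any real difficulty.
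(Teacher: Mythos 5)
Your proof is correct and follows exactly the route the paper itself sketches: define $\mathfrak{A}_\ast = \Psi_\ast\circ\Phi^\ast$ on a representative, establish independence of the representative via the natural-equivalence/common-refinement argument (the multisection analogue of Proposition~\ref{newprop2.20}), and then invert formally. Indeed the paper gives no separate proof of this proposition, stating only "As in the discussion of sections and generalized isomorphisms we have the following result," and your transposition is faithful; your observation that the analogue of Proposition~\ref{newprop2.20} becomes immediate for $\Q^+$-valued functors is a correct and pleasant simplification.
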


The previous construction of the multisection $\Lambda$ allows  also a parameterized version as  explained  in the following remark, which will be useful in the perturbation theory later on.
 \begin{remark}{\bf (Parametrized $\ssc^+$-multisections)}\label{rem13.2}
Take a point $x\in X$ and choose an open neighborhood $U$ of $x$ in the object set of the ep-gropupoid $X$. We take the set $U$ so small that the isotropy group $G_x$
 acts on $U$  by its   natural representation.  By means of Lemma \ref{lem7.9.4}, we choose  a finite number of $\ssc^+$-sections $h^1,\ldots,  h^k$  of the strong polyfold bundle $P:E\to X$ (of objects ignoring the morphisms) having their supports in $U$ and taking at some  smooth point $x_0\in U$  the prescribed smooth values $e^j:=h^j(x_0)$.
Assuming that $N^*(e^j)<\frac{\varepsilon}{k}$,  we may
achieve, again by Lemma \ref{lem7.9.4}, that
$$
N^*(h^j(y))<\frac{\varepsilon}{k}\quad \text{ for all $1\leq j\leq k$ and all $y\in U$.}
$$
By $\abs{t}_{\infty}$ we denote the ${\ell }^{\infty}$-norm  of $t=(t_1,\ldots,t_k)\in \R^k$. For every $t$ satisfying  $\abs{t}_\infty< 1$,  we  define the $\ssc^+$-section  $h_t$  as the linear combination
$$
h_t (y)= \sum_{j=1}^{k} t_j\cdot h^j(y).
$$
The support of  the section $h_t$ is contained in $U$ and  $N^*(h_t (y))<\varepsilon$ for all $y\in X$ and all $\abs{t}_{\infty}<1$. Using the group action by  the isotropy group $G_x$,  we  define for  every $g\in G_x$ the $\ssc^+$- sections  $h_{t,g} $ of $E\vert U$ by
\begin{equation*}
\begin{split}
h_{t, g}(\varphi_g(y))&=\mu \bigl(\Gamma(g,y),h_t(y)\bigr)\\
&=
\sum_{j=1}^kt_j\cdot \mu\bigl(\Gamma (g, y), h^j (y)\bigr).
\end{split}
\end{equation*}
Now proceeding as in Lemma \ref{lem7.9.4} and Proposition \ref{prop7.9.6}, we obtain for every $\abs{t}_{\infty}<1$ an  $\ssc^+$-multisection $\Lambda^t:E\to \Q^+$.

In order to construct transversal perturbations of Fredholm sections later on we will make use of the free choice of the smooth images $e^j$ at  the smooth point $x_0$ to  fill up the
cokernel of the linearized  Fredholm section at $x_0$.
\end{remark}

Multisections for polyfolds will also be defined by means of the overheads as follows. If
$p:W\rightarrow Z$ is a strong polyfold bundle, we choose a strong polyfold bundle structure $(E, \Gamma ,\gamma )$ for $p$ in which $P:E\to X$  is a strong bundle over the ep-groupoid $X$ and $\Gamma:\abs{E}\to W$  the homeomorphism covering the homeomorphism
$\gamma:\abs{X}\to Z$.

Consider a pair $(\lambda,\Lambda)$ in which $\Lambda:E\to \Q^+$ is an $\ssc^+$-multisection on $P$ and
$\lambda:W\rightarrow {\mathbb Q}^+$ is the  function satisfying
$$
\lambda(w)= \Lambda(e)\quad \text{if $w=\Gamma (\abs{e}).$}
$$
If $(E', \Gamma', \gamma')$ is a second such model for our strong polyfold bundle $p$ and if $(\lambda',\Lambda')$ is the corresponding pair, we call the two  {\bf  pairs equivalent} if
$$\lambda =\lambda'$$
and, moreover, if there exists an s-bundle isomorphism $\mathfrak{A}:E\Rightarrow E'$ so that for a representative
$E\xleftarrow{\Phi}E''\xrightarrow{\Psi}E'$ (in which $\Phi$ and $\Psi$ are strong bundle equivalences), the $\ssc^+$-multisections $\Lambda:E\to \Q^+$ and $\Lambda':E'\to \Q^+$ are related by
$$\Phi^\ast\Lambda=\Psi^\ast \Lambda'.$$
\begin{defn}
An {\bf $\ssc^+$-multisection for the strong polyfold bundle} $p:W\to Z$ is an equivalence class $[\lambda, \Lambda]$ of pairs.
\end{defn}
\section{Global Fredholm Theory}

In this section we transplant the basic ideas from the Fredholm
theory in \cite{HWZ2} to the polyfold set-up.
\subsection{Fredholm sections}
We begin with the notion of a Fredholm section of a strong polyfold bundle.

\begin{defn}
{\em The section $f$ of the strong polyfold bundle $p:W\rightarrow Z$ is
called a {\bf Fredholm section}  provided there exists a
representative $F$ of $f$ which is a Fredholm section of the strong bundle
$P:E\rightarrow X$ over the ep-groupoid $X$.  The latter means that
$F:X\to E$ is an sc-smooth functor and a Fredholm section  of the strong bundle $E\rightarrow X$, where
$X$ is the M-polyfold of objects, as defined in  \cite{HWZ3}.

The Fredholm section
$f$ is  called {\bf proper}  provided the solution set $S=S(f)=\{z\in Z\vert \, f(z)=0\}$
is compact in $Z$.}
\end{defn}

Let us observe the following fact, already established in the
M-polyfold case.
\begin{prop}
If $f$ is a proper Fredholm section of the strong polyfold bundle $p:W\to Z$, then the solution set $S=f^{-1}(0)$ is compact in $Z_\infty$.
\end{prop}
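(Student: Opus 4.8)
The plan is to reduce the assertion to the already established M\nobreakdash-polyfold case and then globalize it using the compactness of $S(f)$ in $Z$. First I would fix a strong polyfold bundle structure $(E,\Gamma,\gamma)$ for $p$, so that $P\colon E\to X$ is a strong bundle over an ep\nobreakdash-groupoid $X$ and $\gamma\colon\abs{X}\to Z$ is an $\ssc^0$\nobreakdash-homeomorphism, and choose a representative $(f,F)$ of the section $f$ in which $F\in\Gamma(P)$ is, on objects, an M\nobreakdash-polyfold Fredholm section of $E\to X$ in the sense of \cite{HWZ3}. Since $F$ is a functor and $\mu(g,\cdot)\colon E_x\to E_y$ is linear for every morphism $g\colon x\to y$, the object solution set $S(F)=\{x\in X:F(x)=0_x\}$ is invariant under morphisms, and $\gamma$ identifies $\abs{S(F)}=\pi(S(F))$ with $\gamma^{-1}(S(f))$; in particular $\abs{S(F)}$ is compact in $\abs{X}$. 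Everything that follows is independent of the chosen model, since $Z_\infty$, its topology, and $f$ itself are already known from the earlier sections to be model independent.

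Next I would import the regularity and local structure of sc\nobreakdash-Fredholm sections from \cite{HWZ3}: every solution $x\in S(F)$ is a smooth point, so $S(F)\subset X_\infty$, and near each such $x$ there is an open neighbourhood $U_x\subset X$ on which $S(F)\cap U_x$ is a locally compact set (locally contained in a finite dimensional submanifold of $X$ consisting of smooth points), so that the subspace topologies induced on $S(F)\cap U_x$ by $X=X_0$ and by $X_m$ agree for all $m\ge 0$. Because the structure maps $s,t$ are local sc\nobreakdash-diffeomorphisms and $\gamma$ is an $\ssc^0$\nobreakdash-homeomorphism, and all of these are level preserving, these local statements descend to $Z$: one obtains $S(f)\subset Z_\infty$, and every $z\in S(f)$ has an open neighbourhood $W_z\subset Z$ on which the $Z_0$\nobreakdash- and $Z_\infty$\nobreakdash-topologies induced on $S(f)\cap W_z$ coincide.

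Finally I would patch. Since $Z$ is paracompact and Hausdorff, hence regular, I can pick for each $z\in S(f)$ an open set $W'_z$ with $z\in W'_z\subset\ov{W'_z}\subset W_z$. The family $\{W'_z\}_{z\in S(f)}$ covers the $Z$\nobreakdash-compact set $S(f)$, so finitely many $W'_{z_1},\dots,W'_{z_n}$ suffice. For each $i$ the set $K_i:=S(f)\cap\ov{W'_{z_i}}$ is closed in $S(f)$, hence compact in $Z$, and it lies inside $S(f)\cap W_{z_i}$, where the $Z_0$\nobreakdash- and $Z_\infty$\nobreakdash-topologies agree; therefore $K_i$ is compact also in $Z_\infty$. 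Consequently $S(f)=\bigcup_{i=1}^n K_i$ is a finite union of $Z_\infty$\nobreakdash-compact sets and is $Z_\infty$\nobreakdash-compact, which is the claim. The only substantive ingredient is the M\nobreakdash-polyfold regularity and local structure result of \cite{HWZ3}; the remaining steps are the passage through a model and an elementary shrinking\nobreakdash-and\nobreakdash-covering argument. The one point requiring care — and the reason the argument is run on $Z$ rather than on $X$ — is that the solution set $S(F)$ in the object M\nobreakdash-polyfold is in general not compact (the ambient $X$ is infinite dimensional and admits no compact neighbourhoods); only its image $\abs{S(F)}$ in the orbit space is compact, which is exactly what is used above.
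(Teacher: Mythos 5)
Your proof is correct and arrives at the same conclusion via a genuinely different packaging. The paper argues sequentially: it takes an arbitrary sequence $(z_k)\subset S(f)$, uses compactness in $Z_0$ to extract a limit $z$, lifts $z_k,z$ to representatives $x_k,x$ in an isotropy-invariant neighbourhood $U\subset X$, and then invokes the local normal form of a Fredholm section (as in Theorem 5.11 of \cite{HWZ3}) to upgrade $x_k\to x$ in $X_0$ to $x_k\to x$ in $X_\infty$, projecting back to conclude $z_k\to z$ in $Z_\infty$. You instead formalize that same upgrade as a local statement -- on each $S(F)\cap U_x$ the $X_0$- and $X_\infty$-subspace topologies coincide -- push it down to $Z$, and then run a standard shrink-and-patch compactness argument. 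Both routes rest on exactly the same analytic input from \cite{HWZ3}: the regularizing property (solutions are smooth) and the convergence statement coming out of the local normal form. Your version is a bit longer but separates the ``local regularity'' content cleanly from the elementary topology, and it avoids having to choose lifts along a single sequence; the paper's is shorter and more direct, using that the topology is sequential.

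One small inaccuracy worth flagging: the parenthetical justification ``locally contained in a finite dimensional submanifold of $X$ consisting of smooth points'' is not what \cite{HWZ3} gives you without a transversality hypothesis, and in any case mere local compactness of $S(F)\cap U_x$ in $X_0$ would not by itself force the $X_0$- and $X_\infty$-topologies to agree there. The statement you actually need and use -- that solutions converging in $X_0$ converge in $X_\infty$ -- is exactly the conclusion of the local normal form / elliptic-regularity argument that the paper cites (Theorem 5.11 in \cite{HWZ3}), and it is correct; the parenthetical should simply be replaced by a reference to that result. Also note that passing the local topological equivalence from $X$ down to $Z$ requires choosing the neighbourhoods $U_x$ to be isotropy-invariant and using that $\pi$ is open with finite fibers over $U_x/G_x$, the same point the paper glosses at ``consequently, the sequence $(x_k)$ converges to $x$ in $X_0$.''
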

\begin{proof}
Let $(E, \Gamma, \gamma )$ be a strong bundle structure for $p$ in which $P:E\to X$ is a strong bundle over the ep-groupoid $X$, $\Gamma$ is a homeomorphism between the orbit space $\abs{E}$ and $W$, and $\gamma:\abs{X}\to Z$ is a homeomorphism.  Assume that the section $F$  of the bundle $P:E\to X$ is  a  representative  of the section $f$. Then $F$ is a proper Fredholm section of the bundle $P$ and, in addition, the section  $F$ is a functor. Since $F$ is regularizing the solution set $F^{-1}(0)$ consists of smooth points
and therefore $f^{-1}(0))=\gamma(\abs{F^{-1}(0)})$ consists of smooth
points in $Z$. Take a sequence $(z_k)\subset   f^{-1}(0)$. Since, by assumption,  the solution set $S$ is compact in $Z=Z_0$,  we may assume  possibly taking a subsequence
that $z_k\rightarrow z$ in $Z=Z_0$.   Choose a point $x\in X$ such that $\pi (x)=\abs{x}=\gamma^{-1}(z)$ where $\pi:X\to \abs{X}$ is  the  quotient map.  Take an open neighborhood $U$ of $x$ in $X$  which is invariant under the isotropy group $G_x$. Then the set $\pi (U)$ is an open neighborhood of $\abs{x}=\gamma (z)$ and
 $\gamma^{-1}(z_k)\in \pi (U)$ for  $k$ large.   Since the set $\pi^{-1}(\pi (U))$ consists of   points which can be connected by morphisms with points in $U$, we  find  points $x_k\in U$ such that $\pi (x_k)=\abs{x_k}=\gamma^{-1}(z_k)$.  Consequently,  the sequence $(x_k)$ converges to $x$ in $X_0$.
Since  $F(x_k)=0$ and $x_k\rightarrow x$, it follows from the local normal form of a Fredholm section,  arguing as in the proof of Theorem 5.11 in \cite{HWZ3},   that $x_k\rightarrow x$ in $X_\infty$.
Therefore,  $z_k\rightarrow z$ in $Z_\infty$ as claimed.
\end{proof}
\subsection{Properness}

We shall introduce a useful notion   for the constructions later on. Assume that $p:W\to Z$ is a  strong polyfold bundle and let $f:Z\to W$ be a Fredholm section of $p$. We assume further that $N:W_{0,1}\to [0,\infty )$ is an auxiliary norm and $U$ is an open neighborhood of $f^{-1}(0)$.
\begin{defn}\label{control}
We say that the pair $(U, N)$ {\bf controls compactness} if  every sequence $(z_k)\subset \ov{U}$  satisfying
$$\liminf_{k\to \infty} N(f(z_k))\leq 1$$
possesses  a converging subsequence.
\end{defn}
\begin{lem}\label{lemcontrol}
Let   $P:E\to X$ be  a local model for the strong polyfold bundle $p$ and let $F:X\to E$  Fredholm section   representing $f$. Assume that $N^*:E_{0,1}\to [0,\infty )$ is  the auxiliary norm on $P$ representing $N$.
If the pair $(U,N)$ controls compactness, then the pair $(U^*, N^*)$ where $U^*=\pi^{-1}(U)$ has the following property. If $(x_k)$ is any sequence in  $\ov{U^*}$  satisfying
$$\liminf_{k\to \infty}N^* (F(x_k))\leq 1,$$
then there exist a sequence $(y_k)$ of points in $\ov{U^*}$ and a sequence  $(\varphi_k)$ of morphisms $\varphi_k:x_k\to y_k$ so that
$(y_k)$ has a converging subsequence.
\end{lem}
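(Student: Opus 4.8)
The plan is to transport the problem to the orbit space $\abs{X}$, invoke there the hypothesis that $(U,N)$ controls compactness, and then lift the resulting convergent sequence back up to $X$ modulo morphisms, using the local structure Theorem \ref{localstructure}. Throughout I would identify $Z$ with $\abs{X}$ via $\gamma$, so that $U$ is a saturated open subset of $\abs{X}$ and $U^\ast=\pi^{-1}(U)$, where $\pi:X\to\abs{X}$ is the quotient map; recall that $\pi$ is open, since $\pi^{-1}(\pi(O))=t(s^{-1}(O))$ is open for every open $O\subset X$.

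The first block of the argument is the reduction. I would begin by noting that $\overline{U^\ast}$ is again saturated: if $x\in\overline{U^\ast}$ and $\psi:x\to x'$ is a morphism, then $\psi$ extends to a local sc-diffeomorphism $t\circ s^{-1}$ near $x$, which maps a sequence in $U^\ast$ converging to $x$ to a sequence in $U^\ast$ (again by saturation) converging to $x'$. Now take $(x_k)\subset\overline{U^\ast}$ with $\liminf_k N^\ast(F(x_k))\le 1$; passing to a subsequence we may assume $N^\ast(F(x_k))$ is bounded, so $F(x_k)\in E_{0,1}$. Put $z_k:=\abs{x_k}$. Since $\pi$ is continuous and surjective and $x_k\in\overline{\pi^{-1}(U)}$, we get $z_k\in\overline U$. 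Since $F$ represents $f$ we have $f\circ\gamma=\Gamma\circ\abs F$, hence $f(z_k)=\Gamma(\abs{F(x_k)})\in W_{0,1}$, and the defining relation $N^\ast(e)=N(\Gamma(\abs e))$ gives $N(f(z_k))=N^\ast(F(x_k))$, so $\liminf_k N(f(z_k))\le 1$. As $(U,N)$ controls compactness, after a further subsequence $z_k\to z$ in $Z$.

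The step I expect to be the genuine obstacle is lifting this convergence back to $X$; it is the analogue of the passage "consequently $(x_k)$ converges to $x$" in the proof of the properness proposition above. Pick $x\in X$ with $\abs x=z$. By Theorem \ref{localstructure} there is an open neighbourhood $V$ of $x$ on which $G_x={\bf X}(x)$ acts by its natural representation and such that two points of $V$ lie in the same $\pi$-fibre exactly when they differ by some $\varphi_g$, $g\in G_x$; since $\pi$ is open this means $\pi$ induces a homeomorphism $V/G_x\cong\pi(V)$, and the quotient by the finite group $G_x$ of sc-diffeomorphisms is a closed map with finite fibres, hence proper, so $\pi|_V:V\to\pi(V)$ is proper. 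Because $\pi(x_k)=z_k\to z=\pi(x)$ we have $\pi(x_k)\in\pi(V)$ for all large $k$; for such $k$ choose $y_k\in V$ with $\pi(y_k)=\pi(x_k)$, giving a morphism $\varphi_k:x_k\to y_k$, with $y_k\in\overline{U^\ast}$ because $\overline{U^\ast}$ is saturated and $x_k\in\overline{U^\ast}$; for the finitely many remaining indices set $y_k:=x_k$, $\varphi_k:=1_{x_k}$. Finally $K:=\{\pi(y_k):k\text{ large}\}\cup\{z\}$ is compact and contained in $\pi(V)$, so by properness of $\pi|_V$ the set $(\pi|_V)^{-1}(K)$ is a compact subset of the M-polyfold $V$ containing all the $y_k$, whence $(y_k)$ has a convergent subsequence, as required. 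Here the finiteness of the isotropy groups, i.e. the properness axiom for ep-groupoids, is precisely what makes the last extraction work; the remaining verifications — the identifications and the auxiliary-norm bookkeeping — are routine.
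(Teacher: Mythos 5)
Your proof is correct and follows the same broad strategy as the paper: descend to the orbit space, apply the hypothesis that $(U,N)$ controls compactness, and lift the resulting convergent sequence back to $X$ modulo morphisms. The only genuine point of divergence is in the final lifting step. The paper chooses a shrinking neighborhood basis $(V_j)$ of the limit point $x$, observes that $\pi(V_j)$ are open neighborhoods of $z$, and for each $j$ picks an index $k_j$ with $z_{k_j}\in\pi(V_j)$, lifting to $y_{k_j}\in V_j$; convergence $y_{k_j}\to x$ is then immediate because $(V_j)$ shrinks to $x$. You instead invoke Theorem \ref{localstructure} to identify $\pi|_V$ with the quotient by the finite group $G_x$, argue that such a quotient map is closed with finite (hence compact) fibres and therefore proper, and then extract a convergent subsequence of $(y_k)$ from compactness of $(\pi|_V)^{-1}(K)$. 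Both arguments work; the paper's is a shade more elementary (it uses only openness of $\pi$ and the morphism-lifting property of $\pi^{-1}(\pi(V_j))$, not the group-action structure), whereas your route makes the role of the properness axiom for ep-groupoids — finiteness of isotropy — more explicit. You also take care to verify $y_k\in\overline{U^\ast}$ via saturation, a point the paper leaves implicit.
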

\begin{proof}
Without lost of generality we may assume that $Z=\abs{X}$ and $W=\abs{E}$. Take a sequence $(x_k)\subset
\ov{U^*}$ satisfying $\liminf_{k\to \infty} N^*(F(x_k))\leq 1$. Consider the equivalence classes $z_k=\abs{x_k}$. Because $x_k\in \ov{U^*}$, the  points $z_k$ belong to $\ov{U}$. By definition, $f(z_k)=\abs{F(x_k)}$ and  it follows that $N(f(z_k))=N^*(F(x_k))$ so that  $\liminf_{k\to \infty} N(f(z_k))\leq 1$.  Since, by assumption,  the pair $(U, N)$ controls compactness, it follows that the there is a subsequence, again denoted by $(z_k)$, converging to some point $z\in \ov{U}$.  Choose a point $x$ such that $z=\abs{x}$. There exists a neighborhood basis  $(V_j)_{j\in \N}$ of $x$ such that $V_{j+1}\subset  V_j$ for all $j\in \N$.  Then the sets $\pi (V_j)$ form a decreasing sequence of  open neighborhoods  of $z$ in $\abs{X}$.  Since $z_k\to z$,   we find for every $j\in \N$ an index $k_j$ such that  $z_{k_j}\in \pi (V_j)$ and $k_{j+1}>k_j$.  Hence $x_{k_j}\in \pi^{-1}(\pi (V_j))$ and since  the set $\pi^{-1}(\pi (V_j))$ consists of points which are related by morphisms to the points in $V_j$,  we find  for every index $k_j$ a point  $y_{k_j}\in V_j$ and a morphism $\varphi_{k_j}: x_{k_j}\to y_{k_j}$.  We note that $y_{k_j}\to x$ as $j\to \infty$. For $k\neq k_j$, we choose  $y_k=x_k$ and take  as morphism $\varphi_k$ the identity  morphism $1_{x_k}$. Then $(y_{k_j})$ is the desired subsequence of the sequence $(y_k)$ converging to $x$. The proof  of the lemma is complete.
\end{proof}

As a consequence of the local properness of a Fredholm section (Theorem 5.9 in \cite{HWZ3})  we obtain the following result.

\begin{thm}\label{thm11.12}
Let $p:W\to Z$ be the strong polyfold bundle with reflexive fibers and let $N$ be an auxiliary norm for $p$.  Assume that $f$ is a proper
Fredholm section of the bundle $p$.  Then there exists an open neighborhood $U$ of the
set $S=f^{-1}(0)$ so that  the pair $(U, N)$ controls compactness.
\end{thm}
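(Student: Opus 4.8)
The plan is to localize the problem to the M-polyfold of objects, invoke the local properness of an M-polyfold Fredholm section near each of its zeros, patch these local statements together over the compact solution set by a covering argument, and finally push the resulting compactness property down to the orbit space.

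First I would fix a model $(E,\Gamma,\gamma)$ for $p$, so that $P\colon E\to X$ is a strong bundle over an ep-groupoid $X$, $F\colon X\to E$ is a Fredholm section of $P$ on the object M-polyfold representing $f$, and $N^{\ast}\colon E_{0,1}\to[0,\infty)$ is the auxiliary norm on $P$ representing $N$; conjugating by $\Gamma$ and $\gamma$ I may assume $Z=\abs{X}$, $W=\abs{E}$, with $\pi\colon X\to\abs{X}$ the quotient map, $f(\abs{x})=\abs{F(x)}$ and $N(\abs{e})=N^{\ast}(e)$. Since $F$ is regularizing, the set $F^{-1}(0)$ is contained in $X_{\infty}$, is saturated under morphisms (functoriality plus linearity of $\mu(g,\cdot)$ on fibers), and $\abs{F^{-1}(0)}=\gamma^{-1}(S)$ is compact in $\abs{X}$. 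For each $x\in F^{-1}(0)$, the local properness of $F$ with respect to $N^{\ast}$ (Theorem 5.9 in \cite{HWZ3}) provides an open neighborhood $W(x)\subset X$ of $x$ such that every sequence $(y_k)\subset\overline{W(x)}$ with $\liminf_{k\to\infty}N^{\ast}(F(y_k))\le 1$ has a convergent subsequence. The sets $\abs{W(x)}$ cover the compact set $\abs{F^{-1}(0)}$; using that $\abs{X}$ is paracompact and Hausdorff, hence normal, I would extract a finite subcover and shrink it, obtaining $x_1,\dots,x_m\in F^{-1}(0)$ and open sets $O_1,\dots,O_m\subset\abs{X}$ with $\abs{F^{-1}(0)}\subset\bigcup_{i}O_i$ and $\overline{O_i}\subset\abs{W(x_i)}$. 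Set $U^{\ast}:=\bigcup_{i=1}^{m}\pi^{-1}(O_i)$, an open saturated neighborhood of $F^{-1}(0)$ in $X$, and $U:=\abs{U^{\ast}}=\pi(U^{\ast})$, which is open in $Z$ and contains $S$. Since $\pi$ is continuous, $\overline{\pi^{-1}(O_i)}\subset\pi^{-1}(\overline{O_i})\subset\pi^{-1}(\pi(W(x_i)))$, hence $\overline{U^{\ast}}\subset\bigcup_{i=1}^{m}\pi^{-1}(\pi(W(x_i)))$; and because $U^{\ast}$ is open and saturated, $\overline{U}=\abs{\overline{U^{\ast}}}$.

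It remains to check that $(U,N)$ controls compactness. Let $(z_k)\subset\overline{U}$ with $\liminf_{k}N(f(z_k))\le 1$; after passing to a subsequence, $N(f(z_k))\to L\le 1$. Writing $z_k=\abs{x_k}$ with $x_k\in\overline{U^{\ast}}$, we have $N^{\ast}(F(x_k))=N(f(z_k))\to L\le 1$. By the inclusion above and the pigeonhole principle, after a further subsequence all $x_k$ lie in one set $\pi^{-1}(\pi(W(x_{i_0})))$, so there are points $y_k\in W(x_{i_0})$ and morphisms $\varphi_k\colon x_k\to y_k$ in $\mathbf{X}$. Since $F$ is a functor, $F(\varphi_k)\colon F(x_k)\to F(y_k)$ is a morphism in $\mathbf{E}$, and since $N^{\ast}$ is compatible with morphisms, $N^{\ast}(F(y_k))=N^{\ast}(F(x_k))\to L\le 1$. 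As $y_k\in\overline{W(x_{i_0})}$, the defining property of $W(x_{i_0})$ yields a subsequence $y_{k_j}\to y$ in $X$; then $z_{k_j}=\abs{y_{k_j}}\to\abs{y}$ in $Z$ by continuity of $\pi$, which is the desired convergent subsequence.

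The step I expect to be the main obstacle is the bookkeeping in the transition between $U^{\ast}$ and $U$: one must ensure that $\overline{U}$ is controlled by the orbit spaces of the closures of the $W(x_i)$ — this is the purpose of the intermediate shrinking $\overline{O_i}\subset\abs{W(x_i)}$ — and one must keep in mind that a point of $\overline{U^{\ast}}$ need not itself lie in any $W(x_i)$ but only be joined to one by a morphism, which is precisely where the functoriality of $F$ and the morphism-invariance of $N^{\ast}$ enter. The genuinely analytic content, that bounded $N^{\ast}$-sublevel sets of $F$ are pre-compact near a zero, is packaged entirely in the cited Theorem 5.9 of \cite{HWZ3}.
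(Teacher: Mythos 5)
Your proof follows the same strategy as the paper's: localize via a model $P\colon E\to X$, invoke Theorem 5.9 of \cite{HWZ3} at each zero of $F$, and use compactness of the solution orbit set to extract a finite cover that defines $U$. You are in fact slightly more careful than the paper's own argument at the step where one must produce, for $z\in\overline{U}$, a representative in one of the sets $\overline{W(x_i)}$ — the intermediate shrinking $\overline{O_i}\subset\abs{W(x_i)}$ and the explicit passage through a morphism $\varphi_k\colon x_k\to y_k$ together with the morphism-invariance of $N^{\ast}$ fill in bookkeeping that the paper compresses into the single sentence ``we find a sequence $(y_k)\in\overline{U(x_0)}\cup\cdots\cup\overline{U(x_K)}$ with $\abs{y_k}=z_k$.''
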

\begin{proof}
We choose a local model  $P:E\to X$  for the strong polyfold bundle $p$ and a proper  Fredholm section $F$  of the bundle $P$ representing  the section $f$. Without loss of generality we may assume that $Z=\abs{X}$ and $W=\abs{E}$.  The map
$N^*: E_{0,1}\to [0,\infty )$ defined by $N^*(e)=N(\abs{e})$ is an  auxiliary norm for the strong polyfold bundle $P$.
 The solution set $S=f^{-1}(0)\subset Z$ is,   by assumption,  compact.  Take a point
$x\in X$  for which  $\abs{x}\in S$. Then $F(x)=0$ and,  by the regularizing
property of the Fredholm section $F$, the solution  $x$ is a smooth point in $X$. In view the local compactness for Fredholm sections of fillable strong M-polyfold bundles, there
exists  (Theorem 5.9 in \cite{HWZ3}) an open neighborhood $U(x)\subset X$ so that  every sequence
$(x_k)$  in $\overline{U(x)}$ satisfying
$$
\liminf_{k\to \infty}N^*(F(x_k))\leq 1
$$
possesses a  convergent subsequence.  Shrinking $U(x)$ is necessary, we may assume that $U(x)$ is invariant under the action of the isotropy group $G_x$. Then the set  $\abs{U(x)}$  is open in
$Z$. By the compactness of $S$ in $Z$ we find finitely
many points $x_0,\ldots ,x_K\in X$  so that
$U:=\abs{U(x_0)}\cup \cdots \cup \abs{U(x_K)}$ is an open neighborhood of
$S$.   Abbreviate  $U^\ast=\pi^{-1}(U)$ where $\pi:X\to \abs{X}=Z$ is the quotient map onto the orbit space. Since $F$ is a functor, the set $U^*$ is an open neighborhood of  $F^{-1}(0)$.  Let
$(z_k)$ be a  sequence of points in $\ov{U}$ satisfying
$$
\lim_{k\rightarrow\infty} N(f(z_k))\leq 1.
$$
Then we  find a sequence $(y_k)\in \ov{U(x_0)}\cup\cdots \cup \ov{U(x_K)}$ satisfying
$\abs{y_k}=z_k$. Consequently,
$$
\lim_{k\rightarrow \infty} N^*(F(y_k))\leq 1.
$$
By construction,  the sequence $(y_k)$ has a convergent subsequence.
Therefore, the sequence $z_k=\abs{y_k}$ in the orbit space $\abs{X}=Z$  has a convergent subsequence and the proof of the theorem is complete.
\end{proof}

\subsection{Transversality and Solution Set}
We consider  a strong polyfold bundle $p:W\rightarrow
Z$ and let $f$ and $\lambda$ be a   proper Fredholm section  and an
$\ssc^+$-multisection of the bundle $p$, respectively.  We denote by  $P:E\to X$ the local model of the bundle $ p$ and by $F$  the  Fredholm section  representing $f$  and by $\Lambda:E\to \Q^+$  the correspondng $\ssc^+$-multisection representing  $\lambda$. First we define the solution set.
\begin{defn}
The {\bf solution set} $S:=S(f,\lambda)$ of the pair $(f,\lambda)$
is the set
$$
S(f,\lambda)=\{z\in Z |\
\lambda(f(z))>0\}.
$$
\end{defn}
At this point $S$ as a subset of $Z$ is just a second countable
paracompact topological space (as a closed subset of $Z$). We shall
see, however, that in case a certain transversality condition is
met, $S$ carries an additional structure so that not only one can
talk about orientability of $S$ but also about  the  integration of
sc-differential forms over $S$.
 Of course, in developing this additional structure
 the {\bf overhead}
given by the various  representatives $(F,\Lambda)$ describing $(f,\lambda)$ will be
important.  Taking the representative $(F,\Lambda)$  of
$(f,\lambda)$,  we consider the solution set
$${\mathcal S}=S(F,\Lambda)=\{x\in X\vert \Lambda (F(x))>0\}.$$

Recall that if $x$ belongs to the solution set, then there exist, in view of the definition of an $\ssc^+$-multisection, an open neighborhood $U\subset X$ of $x$ and finitely many local $\ssc^+$-sections $s_i:U\to E$, for $i\in I$, having the associated positive rational weights $\sigma_i$, $i\in I$, so that
$$\sum_{i\in I}\sigma_i=1\quad \text{and}\quad \Lambda (F(x))=\sum_{\{j\in I\, \vert F(x)=s_j(x)\}}{\sigma_j}$$
and  there is at least one index $j\in I$ such that
$$F(x)=s_j(x). $$
If $\Lambda (F(x))=0$, then there is no index $j\in I$ for which $F(x)=s_j(x).$

The  natural map
$$
{\mathcal S}\rightarrow S, \quad x\mapsto \gamma(\abs{x})
$$
 induces a homeomorphism $|{\mathcal S}|\rightarrow S$.  The
solution set $S$ comes with the  natural map $\lambda_f:S\rightarrow
{\mathbb Q}^+\cap(0, \infty)$,  defined by
$$
\lambda_f(z) = \lambda(f(z)),
$$
and called  the {\bf weight function}  on $S$.

We want to study  the pairs $(S(f,\lambda),\lambda_f)$ provided some
transversality conditions are met so  that the pair
$(S(f,\lambda),\lambda_f)$ has the structure of a smooth branched
suborbifold with boundary with corners. These transversality
conditions are defined in terms of a representative $(F,\Lambda)$ as follows.

\begin{defn}\label{DER1}  Let $p:W\to Z$ be a strong polyfold bundle and let $(f, \lambda)$ be a pair in which $f$ is a
Fredholm section of $p$ and
$\lambda:W\to \Q^+$ an $\ssc^+$-multisection on $p$. Assume that  the strong bundle $P:E\to X$ over the ep-groupoid $X$ is a model representing $p$ and  the Fredholm section $F:X\to E$ of the bundle $P$
and the $\ssc^+$--multisection $\Lambda :E\to \Q^+$ of  $P$ are  representatives of $f$ and $\lambda$.
\begin{itemize}
\item[(1)]  The pair  $(f,\lambda)$ is called a  {\bf transversal pair}  if  for every
$z\in S(f,\lambda)$ the following holds.
 If $x\in X$ represents $z$ and $(s_i)$ is a local section structure for $\Lambda$ near $x$, then for every $i$ for which  $F(x)-a_{i}(x)=0$,
the linearisation $(F-a_i)'(x):T_xX\rightarrow E_x$  is  surjective.
\item[(2)]   The pair   $(f,\lambda)$ is in {\bf good position} if  for every
$z\in S(f,\lambda)$ the following holds. If $x\in X$ represents $z$ and $(a_i)$ is a
local section structure for $\Lambda$ near $x$,  then for every $i$
for which $F(x)-a_i(x)=0$,  the linearization $(F-a_i)'(x)$ is surjective  and
its  kernel is in good position to the corner structure of $X$ as defined in  \cite{HWZ3}.
\item[(3)]  The pair  $(f,\lambda)$ is in {\bf general position to
the boundary}  $\partial Z$  if for every $z\in S(f, \lambda)$ the following
holds. If $x$ represents $z$ and $(a_i)$ is a local section
structure for $\Lambda$ near  $x$,  then for every $i$ for which
$F(x)-a_i(x)=0$ the linearization $(F-a_i)'(x)$ is  surjective  and
the kernel of $(F-a_i)'(x)$ is transversal to $T^\partial_xX$ in $T_xX$. Here
$T^\partial_xX$ is the intersection of all tangent spaces at $x$ to
the local faces containing $x$ and we refer to  \cite{HWZ3} for more details.
\end{itemize}
\end{defn}
\begin{prop}\label{lemmontreal}
If one of the above properties holds true for one  local section structure, then it holds true for all the other local sections structures.
\end{prop}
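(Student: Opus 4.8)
The plan is to reduce the statement to a purely local comparison of two local section structures at a single point of the solution set, and then to show that the two structures give rise to exactly the \emph{same} set of linearized operators $T_xX\to E_x$. Since all three notions in Definition \ref{DER1} are formulated only in terms of these operators — surjectivity of $(F-a_i)'(x)$, and the position of its kernel relative to the corner structure — independence of the local section structure is then immediate. Throughout I fix a representative $(F,\Lambda)$ of $(f,\lambda)$; recall that solution points are smooth, so the linearizations below make sense.

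First I would fix $z\in S(f,\lambda)$, an object $x$ with $\gamma(|x|)=z$, and two local section structures $(s_i)_{i\in I}$ on $U$ and $(a_j)_{j\in J}$ on $V$ for $\Lambda$ near $x$; set $W=U\cap V$ and $e_0=F(x)$, and put $A=\{i\in I:s_i(x)=e_0\}$, $B=\{j\in J:a_j(x)=e_0\}$. By property (2) of Definition \ref{defn7.9.2} and positivity of all weights, for $y\in W$ one has $\Lambda(e)>0$ for $e\in E_y$ precisely when $e=s_i(y)$ for some $i\in I$, equivalently when $e=a_j(y)$ for some $j\in J$; in particular $A\neq\emptyset\iff\Lambda(e_0)>0\iff B\neq\emptyset$. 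The heart of the matter is the claim: for every $j_0\in B$ there exists $i_0\in A$ with $(F-a_{j_0})'(x)=(F-s_{i_0})'(x)$ as linear maps $T_xX\to E_x$ (and, by symmetry, conversely).

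To prove the claim I would proceed as follows. Shrinking $W$, for each $y\in W$ the vector $a_{j_0}(y)$ has positive $\Lambda$-value and hence equals $s_i(y)$ for some $i\in I$; since $a_{j_0}(y)\to e_0$ as $y\to x$ while $s_i(y)$ stays bounded away from $e_0$ in the fibre for $i\notin A$, after further shrinking we get $a_{j_0}(y)=s_i(y)$ for some $i\in A$. Thus $W=\bigcup_{i\in A}C_i$ with $C_i:=\{y\in W:a_{j_0}(y)=s_i(y)\}$ closed. Given $v\in T_xX$, the ray $t\mapsto x+tv$ lies in $W$ for small $t>0$, so $(0,\varepsilon)=\bigcup_{i\in A}\{t:x+tv\in C_i\}$ is a finite union of closed sets and for at least one $i\in A$ the set $\{t\in(0,\varepsilon):x+tv\in C_i\}$ accumulates at $0$; choose $t_n\downarrow 0$ in it. Since $a_{j_0}(x)=s_i(x)=e_0$ and $a_{j_0},s_i$ are sc-smooth, hence differentiable at $x$, dividing $a_{j_0}(x+t_nv)-s_i(x+t_nv)=0$ by $t_n$ and letting $n\to\infty$ yields $\big((F-a_{j_0})'(x)-(F-s_i)'(x)\big)v=0$, i.e. $v\in D_i:=\ker\big((F-a_{j_0})'(x)-(F-s_i)'(x)\big)$. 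Hence $T_xX=\bigcup_{i\in A}D_i$, a finite union of linear subspaces; since a real vector space is never a finite union of proper subspaces, some $D_{i_0}$ is all of $T_xX$, which says exactly $(F-a_{j_0})'(x)=(F-s_{i_0})'(x)$, proving the claim.

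Finally I would close the argument. Suppose one of the three properties of Definition \ref{DER1} holds when checked against the structure $(s_i)$ at every solution point. Given $z$, $x$, the structure $(a_j)$ as above, and an index $j_0$ with $F(x)-a_{j_0}(x)=0$ (so $j_0\in B$), the claim provides $i_0\in A$ with $(F-a_{j_0})'(x)=(F-s_{i_0})'(x)$; as $s_{i_0}(x)=e_0=F(x)$, the hypothesis on $(s_i)$ gives that $(F-s_{i_0})'(x)$ is surjective (resp. has kernel in good position to the corner structure, resp. transversal to $T^\partial_xX$), and since $(F-a_{j_0})'(x)$ is the same operator — in particular has the same kernel — the same conclusion holds for $(a_j)$. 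As $z$, $x$, $(a_j)$ and $j_0$ were arbitrary, the property holds for every local section structure. The only delicate step is the claim, and inside it the passage from "$a_{j_0}$ agrees, along various directions at $x$, with several of the $s_i$" to "$a_{j_0}$ has the same linearization at $x$ as one single $s_{i_0}$"; the covering-by-subspaces argument is precisely what overcomes the fact that $a_{j_0}$ need not coincide with any one $s_i$ on a full neighbourhood of $x$.
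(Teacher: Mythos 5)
Your overall strategy — reduce to showing that the two local section structures produce the same set of linearized operators at each solution $x$, then observe that all three properties in Definition~\ref{DER1} depend only on those operators — is the same as the paper's. Your finishing move (the set $D_i=\ker\bigl((F-a_{j_0})'(x)-(F-s_i)'(x)\bigr)$ cover $T_xX$, and a vector space cannot be a finite union of proper subspaces) is a clean alternative to the paper's Baire-category step and works equally well, since the $D_i$ are closed. The claim you formulate is exactly the content of Proposition~\ref{linearlambda} in the paper, and your way of extracting it is essentially the same covering argument.

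There is, however, a genuine gap in the middle of your claim. You write that ``the ray $t\mapsto x+tv$ lies in $W$ for small $t>0$.'' In the M-polyfold setting the local chart is not an open subset of a Banach space but an open subset of a \emph{splicing core} $K^{\mathcal S}=\{(v,e):\pi_v(e)=e\}$, and the straight segment $(v_0+t\,\delta v,\ e_0+t\,\delta e)$ will generally leave $K^{\mathcal S}$ because the projection $\pi_v$ varies with $v$. So the path you use to compare $a_{j_0}(y)$ with the $s_i(y)$ and to form the difference quotient is not a path in $X$. The paper gets around this precisely by replacing the ray by the corrected curve $t\mapsto\bigl(v_0+t\,\delta v,\ e_0+t\,\delta e+o(t)\bigr)$, which does stay in the splicing core (this uses $(D_v\pi)(x)\delta v+\pi_{v_0}\delta e=\delta e$ for $(\delta v,\delta e)\in T_xX$). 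Once you substitute that corrected curve for the ray, your accumulation-at-$0$ argument and the difference-quotient limit go through, and the rest of your proof is sound. A second, smaller omission: the paper first proves the pointwise identity of linearizations for \emph{smooth} tangent vectors and then passes to arbitrary $\delta x\in T_xX$ by density and finiteness of the index set; you should make the same reduction, since the corrected curve is constructed for smooth $\delta x$.
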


In order to prove the proposition we first  introduce the concept of a linearization of a Fredholm section with respect to a multisection. It then allows a new elegant formulation of Definition \ref{DER1}. We consider the Fredholm section $F:X\to E$ of the strong bundle $p:E\to X$ and let $\Lambda:E\to \Q^+$ be an $\ssc^+$-multisection. We fix a point belonging to the solution set $S(F, \Lambda )=\{x\in X\vert \, \Lambda (F(x))>0\}$ and define the linearization of $F$ at the solution $x$ with respect to $\Lambda$ as follows.

In view of the definition of the $\ssc^+$-multisection $\Lambda$, there exist an open neighborhood $U_x$ of $x$ and a finite collection of local sections $(a_i)_{i\in I}$ with associated weights $(\sigma_i)_{i\in I}$ such that
$$\Lambda (e)=\sum_{\{i\in I\vert \, a_i (Pe)=e\}}\sigma_i.$$

By $I'$  we denote the set of indices $i\in I$ for  which $F(x)-a_i(x)=0$. If $i, j\in I'$, we call the two linearizations $(F-a_i)'(x)$ and $(F-a_j)'(x):T_xX\to E_x$ {\bf equivalent} if
$$(F-a_i)'(x)\cdot \delta x =(F-a_j)'(x)\cdot \delta x \quad \text{for all $\delta x\in T_xX$}.$$
Denoting by $[(F-a_i)'(x)]$  the equivalence class of the operator $(F-a_i)'(x)$, we define the {\bf linearization $F_{\Lambda}'(x)$ of $F$ at $x$ with respect to  the multisection $\Lambda$ }  to be the finite collection
of all equivalence classes
$$[(F-a_{i_1})'(x)], \, [(F-a_{i_2})'(x)],\, \ldots \, ,[(F-a_{i_n})'(x)].$$
The notion of the linearization $F_{\Lambda}'(x)$ is independent of the choice of the local section structure of the multisection $\Lambda$ as the following proposition, which has Proposition \ref{lemmontreal} as an immediate consequence, shows.

\begin{prop}\label{linearlambda}
Assume that  $(a_i)_{i\in I}$ and $(b_j)_{j\in J}$ are  two  local section structures for the multisection $\Lambda$ in $U_x$ and let
$$[(F-a_{i_1})'(x)], \, [(F-a_{i_2})'(x)],\, \ldots \, ,[(F-a_{i_n})'(x)]$$
and
$$[(F-b_{j_1})'(x)], \, [(F-b_{j_2})'(x)],\, \ldots \, ,[(F-b_{j_m})'(x)]$$
be the  equivalence classes defined above.  Then $n=m$ and for every $i\in \{i_1, \ldots ,i_n\}$, there exists exactly one $j\in \{j_1, \ldots ,j_m\}$ such that
$$[(F-a_{i})'(x)]=[(F-b_j)'(x)].$$
\end{prop}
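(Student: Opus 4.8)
The plan is to reduce the statement to a purely local comparison of two local section structures for the same $\ssc^+$-multisection $\Lambda$ near a solution point $x\in S(F,\Lambda)$, and then to exploit the fact that both structures recover the \emph{same} functor $\Lambda:E\to\Q^+$. First I would recall that by Definition \ref{defn7.9.2} a local section structure $(a_i)_{i\in I}$ near $x$ represents $\Lambda$ in the sense that for $e\in E\vert U_x$ one has $\Lambda(e)=\sum_{\{i\in I\mid a_i(P(e))=e\}}\sigma_i$ with $\sum_{i\in I}\sigma_i=1$, and likewise $(b_j)_{j\in J}$ with weights $(\rho_j)_{j\in J}$, $\sum_j\rho_j=1$. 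Shrinking $U_x$ if necessary, I may assume both structures are defined on a common neighbourhood. Evaluating the equality $\sum_{\{i\mid a_i(y)=e\}}\sigma_i=\sum_{\{j\mid b_j(y)=e\}}\rho_j$ at points $y$ near $x$ and at $e=F(x)$ (and nearby values) shows that the two unordered collections of ``branches passing through a given value'' carry the same total weight.

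The key step is to organise the indices by the value of the section at $x$ and by the $1$-jet at $x$. For $i\in I'=\{i\in I\mid F(x)-a_i(x)=0\}$ the section $a_i$ passes through $F(x)$ at $x$; similarly define $J'$. Since a point $y$ in a neighbourhood of $x$ which lies on $\{F=a_i\}$ has $F(y)=a_i(y)$, and since the value $\Lambda(F(y))$ is determined by which branches agree with $F$ there, I would first argue: for each $i\in I'$ there is at least one $j\in J'$ with $a_i(x)=b_j(x)=F(x)$ and, moreover, with the same linearisation at $x$, i.e. $(F-a_i)'(x)=(F-b_j)'(x)$ as maps $T_xX\to E_x$. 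The mechanism is the standard one for comparing branchings: two $\ssc^+$-sections agreeing to first order at $x$ contribute ``the same'' to $\Lambda$ in directions tangent to their common jet, while sections with different $1$-jets are distinguished because along a suitable curve through $x$ the set $\{F=a_i\}$ and $\{F=b_j\}$ separate. Concretely I would use that the $a_i$ and $b_j$ are $\ssc^+$, hence $F-a_i$ and $F-b_j$ are sc-Fredholm with the same index, and that the local solution sets $\{y\mid F(y)=a_i(y)\}$ near $x$ only depend, to first order, on $\ker (F-a_i)'(x)$ together with the obstruction; comparing the weight function $y\mapsto\Lambda(F(y))$ restricted to short line segments emanating from $x$ in $X_\infty$ forces the multiset $\{[(F-a_i)'(x)]:i\in I'\}$ with multiplicities $\sigma_i$ to coincide with $\{[(F-b_j)'(x)]:j\in J'\}$ with multiplicities $\rho_j$.

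Having matched the two multisets of equivalence classes of linearisations (with weights), the conclusion $n=m$ and the bijection $i\mapsto j$ with $[(F-a_i)'(x)]=[(F-b_j)'(x)]$ follow immediately: $n$ is the number of \emph{distinct} classes in $\{[(F-a_i)'(x)]:i\in I'\}$, $m$ the number of distinct classes in $\{[(F-b_j)'(x)]:j\in J'\}$, and these two sets of distinct classes are equal. For Proposition \ref{lemmontreal} I would then simply observe that the three conditions in Definition \ref{DER1}—surjectivity of $(F-a_i)'(x)$, kernel in good position to the corner structure, kernel transversal to $T^\partial_x X$—are all properties of the operator $(F-a_i)'(x)$ that depend only on its equivalence class $[(F-a_i)'(x)]$ (surjectivity, the image, and the kernel are unchanged within a class), and since the multiset of classes is intrinsic to $\Lambda$ and $F$ at $x$, the validity of any of these conditions does not depend on the chosen local section structure.

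\textbf{Main obstacle.} The delicate point is the rigorous justification that the weight function $y\mapsto\Lambda(F(y))$ on $X_\infty$ near $x$ ``sees'' the $1$-jets of the branches, i.e. that two branches with distinct linearisations at $x$ cannot be confused. This requires a careful local analysis of the sets $\{F=a_i\}$ using the sc-implicit function theorem / local normal form for sc-Fredholm sections (as in \cite{HWZ3}), valid because $F-a_i$ is sc-Fredholm; one must also handle the possibility that several $a_i$ share the same linearisation (these are grouped into one equivalence class, which is exactly why the statement is phrased with equivalence classes rather than individual operators), and must track the interaction with the corner structure of $X$ so that the comparison curves can be chosen inside $X_\infty$. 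Everything else—the bookkeeping with weights summing to $1$, the functoriality of $\Lambda$, the invariance of the three transversality conditions under passing to the equivalence class—is routine.
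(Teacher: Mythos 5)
Your proposal starts from the right observation --- the two local section structures represent the same functor $\Lambda$ on all of $E\vert U_x$, not just along the graph of $F$ --- but the concrete mechanism you propose for extracting $1$-jet information from that equality does not work. You suggest comparing the weight function $y\mapsto\Lambda(F(y))$ along short curves through $x$, arguing that the local solution sets $\{F=a_i\}$ ``separate.'' That quantity is supported on $\bigcup_i\{y: F(y)=a_i(y)\}$; if, for instance, $(F-a_i)'(x)$ is bijective (Fredholm index $0$, surjective), then by the sc-implicit function theorem this set is just $\{x\}$ near $x$, so $\Lambda(F(y))=0$ for all nearby $y\neq x$ and the restriction of $\Lambda\circ F$ to curves carries no information whatsoever about the linearizations. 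So the ``delicate point'' you flag is in fact not recoverable by the route you outline, and the invocation of the local normal form for sc-Fredholm sections is the wrong tool here.

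The paper's Lemma \ref{lemB} evaluates $\Lambda$ not on the graph of $F$ but on the graph of $a_i$ itself: for every $y$ near $x$ one has $\Lambda(a_i(y))\geq\sigma_i>0$, and expressing $\Lambda(a_i(y))$ via the $(b_j,\tau_j)$ structure forces, for each $y$, some $j\in J$ with $b_j(y)=a_i(y)$. One then constructs in a splicing-core chart a curve $x_n\to x$ with prescribed tangent $\delta x$ (with the $o(t)$ correction needed to stay in the splicing core), extracts by finiteness of $J$ a subsequence along which a single $j$ works, and differentiates the identity $a_i(x_n)=b_j(x_n)$ to obtain $(F-a_i)'(x)\delta x=(F-b_j)'(x)\delta x$. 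This is a pointwise statement in $\delta x$; a Baire-category argument on the kernels of $a_i'(x)-b_j'(x)$ then upgrades it to a single $j\in J'$ with $[(F-a_i)'(x)]=[(F-b_j)'(x)]$. No implicit function theorem, no Fredholm property of $F-a_i$, and no analysis of solution sets are needed. You also assert a matching of multiplicities (weights), which is neither claimed by the Proposition nor established by the paper; only the equality of the \emph{sets} of equivalence classes is needed, and that is what the symmetric argument (swapping the roles of $I'$ and $J'$) delivers. Your last paragraph deducing Proposition \ref{lemmontreal} is fine: the three transversality conditions concern only the image and kernel of $(F-a_i)'(x)$, which are invariants of the equivalence class.
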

To prove the proposition  we will need  the following lemma.
\begin{lem}\label{lemB}
Let $(a_i)_{i\in I}$ and $(b_j)_{j\in J}$ be   two  local section structures for the multisection  $\Lambda$ in the  open neighborhhod $U_x\subset X$ around the solution $x\in S(F, \Lambda)$.
Then given $i\in I$ for which $F(x)-a_i(x)=0$ and given $\delta x\in T_xX$, there exists an index $j\in J$ such that $F(x)-b_j(x)=0$ and
$$[F(x)-a_i(x)]'\cdot \delta x=[F(x)-b_j(x)]'\cdot \delta x.$$
\end{lem}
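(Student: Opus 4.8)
The plan is to exploit the local structure of $\ssc^+$-multisections together with the defining property of the convolution sum to match branches of the two local section structures. First I would unwind what it means for $(a_i)_{i\in I}$ and $(b_j)_{j\in J}$ to represent the same multisection $\Lambda$ near $x$: for every $e\in E\mid U_x$ one has
$$\sum_{\{i\in I\mid a_i(Pe)=e\}}\sigma_i=\Lambda(e)=\sum_{\{j\in J\mid b_j(Pe)=e\}}\sigma_j'.$$
Apply this at the point $x$ itself. The indices $i\in I$ with $F(x)-a_i(x)=0$ are exactly the $i$ with $a_i(x)=F(x)$, and for these $\Lambda(F(x))=\sum_{\{i\mid a_i(x)=F(x)\}}\sigma_i>0$; the same value is computed from the $b_j$'s. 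So there is at least one $j\in J$ with $b_j(x)=F(x)$, i.e.\ $F(x)-b_j(x)=0$. This handles the existence of \emph{some} $j$ with the first required property, but not yet the agreement of the linearizations along a prescribed $\delta x$.

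Next I would promote the pointwise identity to a neighborhood identity along a suitable curve. Fix $\delta x\in T_xX$ and choose an sc-smooth path $c:(-\varepsilon,\varepsilon)\to U_x$ with $c(0)=x$ and $\dot c(0)=\delta x$ (one exists since $X$ is locally modeled on splicing cores and $x$ is a smooth point, so there is a smooth chart in which such a path is visible; alternatively work directly in the chart). Consider the finite set of sc-smooth real functions $t\mapsto \|F(c(t))-a_i(c(t))\|$ and $t\mapsto\|F(c(t))-b_j(c(t))\|$ measured in a local trivialization of $E$ over the image of $c$. Using that $\Lambda$ is a well-defined function on $E\mid U_x$ and that the two families represent it, I would argue that the set of indices $i$ with $a_i(c(t))=F(c(t))$, as a multiset weighted by $\sigma_i$, matches the corresponding $b_j$-multiset \emph{for each $t$}; in particular if $a_i(x)=F(x)$ then for $i$ to contribute to $\Lambda(F(c(t)))$ on a set of $t$ accumulating at $0$ we need $a_i(c(t))=F(c(t))$ there, and then by continuity and the matching of weights a corresponding $b_j$ with $b_j(x)=F(x)$ must satisfy $b_j(c(t))=F(c(t))$ on that same accumulating set. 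The cleanest way to make this rigorous is: partition $I':=\{i: a_i(x)=F(x)\}$ according to the germ at $0$ of the section $t\mapsto F(c(t))-a_i(c(t))$ (equivalently, which $a_i$ agree with $F$ on a neighborhood of $0$ along $c$, and among those which have the same $1$-jet), do the same for $J'$, and use the weight-matching identity restricted to the various strata to biject the strata; this forces $n=|I'|$-type cardinality bookkeeping to agree after passing to jets, i.e.\ the number of distinct $1$-jets $[(F-a_i)'(x)]$ equals the number of distinct $[(F-b_j)'(x)]$ and they coincide as sets. Differentiating $F(c(t))-a_i(c(t))=0$ at $t=0$ gives $[(F-a_i)'(x)]\cdot\delta x=0$ only when $a_i\equiv F$ along $c$; in general one extracts $[(F-a_i)'(x)]\cdot\delta x$ as $\frac{d}{dt}\big|_0(F-a_i)(c(t))$, and the germ-matching of the previous sentence yields a $j$ with $b_j(x)=F(x)$ and the same directional derivative, which is exactly the conclusion of Lemma~\ref{lemB}.

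The main obstacle I anticipate is precisely this germ/jet-matching step: the convolution identity $\sum\sigma_i=\sum\sigma_j'$ holds for each $e$, but to conclude that the \emph{set of branches passing through $F(x)$ with a given first-order behaviour} is the same on both sides, one must rule out ``accidental'' cancellations where several $a_i$'s with distinct jets happen to sum to the same $\Lambda$-values as a different collection of $b_j$'s. This is handled by noting that $\sigma_i,\sigma_j'>0$ (no cancellation is possible among positive weights) and by evaluating the identity not just at $x$ but at all points of a small neighborhood: on the locally closed set where exactly the branches in a fixed subset $S\subset I'$ coincide with $F$, the identity pins down the matching subset of $J'$ by the partition-of-weights argument, and these sets for varying first-order data stratify a neighborhood of $x$ densely enough (using that $X_\infty$ is dense and the sections are sc-smooth) to separate distinct $1$-jets. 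Once Lemma~\ref{lemB} is established, Proposition~\ref{linearlambda} follows by applying it symmetrically in $i$ and $j$ and for a spanning set of directions $\delta x$, which gives the bijection $\{i_1,\dots,i_n\}\leftrightarrow\{j_1,\dots,j_m\}$ with $n=m$ and $[(F-a_i)'(x)]=[(F-b_j)'(x)]$, and Proposition~\ref{lemmontreal} is then immediate since surjectivity, good position of the kernel, and transversality to $T^\partial_xX$ are all properties of the equivalence class $[(F-a_i)'(x)]$ alone.
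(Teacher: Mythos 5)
Your proposal has the right general shape — approach $x$ along a curve $c(t)$ with $\dot c(0)=\delta x$, match branches of the two section structures by finiteness and continuity, and then differentiate — but there is a genuine gap in how you extract the matching, and it is exactly the gap that the paper's argument is designed to avoid.

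You propose to evaluate $\Lambda$ at $F(c(t))$ and then match the multiset $\{i : a_i(c(t))=F(c(t))\}$ against the multiset $\{j : b_j(c(t))=F(c(t))\}$. The problem is that for $t\neq 0$ both multisets may well be empty: nothing forces $F(c(t))$ to lie on any branch of $\Lambda$ away from $t=0$, so $\Lambda(F(c(t)))=0$ carries no information. Your subsequent germ/jet ``stratification'' is an attempt to compensate, but it does not repair the difficulty — if $a_i(c(t))\neq F(c(t))$ for all small $t\neq 0$, the stratum is $\{0\}$ and you have nothing to differentiate. The paper's proof instead evaluates $\Lambda$ at $a_i(x_n)$ (with $x_n\to x$ along the direction $\delta x$), where positivity is \emph{automatic}: $a_i$ is itself a branch of $\Lambda$, so $\Lambda(a_i(x_n))\geq\sigma_i>0$ for every $n$. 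This forces, for every $n$, some $j_n\in J$ with $b_{j_n}(x_n)=a_i(x_n)$; the pigeonhole principle applied to the finite set $J$ gives a fixed $j$ along a subsequence, hence $a_i$ and $b_j$ coincide along that subsequence, and differencing the Taylor expansions of $F-a_i$ and $F-b_j$ yields $[(F-a_i)'(x)]\delta x=[(F-b_j)'(x)]\delta x$. Nothing about matching whole weighted multisets along the curve is needed for this lemma — only the existence of one $j$ per $(i,\delta x)$ — and trying to establish the full jet-level bijection at once is what leads to the extraneous stratification machinery.

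Two further technical remarks. First, the sequence $x_n$ in the paper is not a generic smooth curve: one has to verify that $(v_0+t\delta v, e_0+t\delta e+o(t))$ actually lies in the splicing core, which the paper does by a short computation with $\pi_{v_0+t\delta v}$; a cavalier ``choose an sc-smooth path with the given tangent vector'' skips this. Second, $\delta x$ is initially only a smooth tangent vector, and the passage to general $\delta x\in T_xX$ requires another pigeonhole over the finite set $J$ applied to an approximating sequence of smooth vectors; your proposal does not address the general case.
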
\begin{proof}
Since the smooth tangent vectors are dense on every level, we first consider the case of smooth tangent vectors and  deal with the arbitrary tangent vectors  later on.
We fix  the index $i\in I$ for which $F(x)-a_i(x)=0$. We work in local coordinates around the solution $x$. Hence we assume that $U=U_x$ is a $G_x$-invariant open set of the splicing core $K^{{\mathcal S}}=\{(v, e)\in V\oplus G\vert \pi (v, e)=e\}$ associated with the splicing ${\mathcal S}=(\pi, V, G)$ in which $V$ is an open subset of the partial quadrant $C$ of the sc-Banach space $W=\R^n$ for some $n$, and $G$ is an sc-Banach space. We introduce the notation $x=(v_0, e_0)$ and define for $y=(v, e)\in U_x\subset V\oplus G$ the map $A:U_x\to E$ by
$$A(y)=F(y)-a_i(y).$$
If $\delta x=(\delta v, \delta e)\in W\oplus G$ is a smooth tangent vector in $T_xU$, then
$$(D_v\pi )(x)\cdot \delta v+\pi_{v_0}(\delta e)=\delta e.$$
For $t$ small we therefore find
\begin{equation*}
\begin{split}
&\pi_{v_0+t\delta v}(e_0+t\delta e)\\
&=\pi_{v_0}(e_0)+t\cdot D\pi(v_0, e_0)[\delta v, \delta  e]+o(t)\\
&=e_0+t\cdot [(D_v\pi )(v_0, e_0)\cdot \delta v+\pi_{v_0}( \delta e]+o(t)\\
&=e_0+t\delta e+o(t),
\end{split}
\end{equation*}
where $\frac{o(t)}{t}\to 0$ as $t\to 0$ on every level of the sc-Banach space $Q$. Consequently, applying the linear projection
$\pi_{v_0+\delta v}$ to both sides,
\begin{equation*}
\begin{split}
\pi_{v_0+t\delta v}(e_0+t\delta e+o(t))&=(\pi_{v_0+t \delta v})^2 (e_0+t \delta e)\\
&=\pi_{v_0+t \delta v}(e_0+t  \delta e)\\
&=e_0+t\delta e +o(t).
\end{split}
\end{equation*}
This implies that the curve $t\mapsto (v_0+t\delta v, e_0+t\delta e+o(t))$ through $x=(v_0, e_0)$ belongs to $U_{x}\subset K^{{\mathcal S}}$ for small values of $t$.  Let $(t_n)$ be any sequence converging to $0$. We define the sequence of points $x_n\in U_x$ by
$$x_n=(v_0+t_n\delta v, e_0+t_n \delta e +o(t_n)).$$
Then $x_n\to x=(v_0, e_0)$ as $n\to \infty$.
Since the local system sections $(a_i)_{i\in I}$ and $(b_j)_{j\in J}$ define the multisection
$\Lambda$, it follows that
$$\Lambda (a_i(x_n))=\sum_{\{j\in J\vert \, b_j(x_n)=a_i(x_n)\}}\tau_j>0$$
for every $n$.
Therefore, there exists a sequence $(j_n)\in J$ of indices such that
$$b_{j_n}(x_n)=a_i(x_n).$$
Because $J$  is a finite set, there must exist an index $j\in I$ and a subsequence of $(x_n)$ (denoted again by $(x_n)$) such that  for all $n$
$$b_j(x_n)=a_i(x_n).$$
As  $n\to \infty$,  it follows that
$$b_j(x)=a_i(x).$$
Introducing the map $B:U_x\to E$ by
$$B(y)=F(y)-b_j (y)$$
we have proved that  $A(x)=B(x)=0$ and
$$A(x_n)=B(x_n)$$
for all $n$.

Hence, in view of $A(x_n)=A(x)+DA(x)\cdot (x_n-x)+o(x_n-x)$ and $B(x_n)=B(x)+DB(x)\cdot (x_n-x)+o(x_n-x)$ and $A(x)=B(x)$, we conclude that
$$[DA(x)-DB(x)]\cdot (x_n-x)=o(x_n-x)$$
on every level.
Dividing by $t_n$ and taking the limit as $n\to \infty$ we find for the tangent vector $\delta x$,
$$DA(x)\cdot \delta x=DB(x)\cdot \delta x.$$

If  $\delta x\in T_xX$ is an arbitrary tangent vector, we take a sequence $(\delta x^n)$ of smooth tangent vectors converging to $\delta x$.  By the first part of the proof,  there exists a sequence $(j_n)\subset J$ such that
$$D[F-a_i](x)\cdot \delta x^n=D[F-b_{j_n}](x)\cdot \delta x^n.$$  Because  $J$ is a finite set, there exist an index $j$ and a subsequence, again denoted by $(\delta x^n)$, such that
$$D[F-a_i](x)\cdot \delta x^n=D[F-b_{j}](x)\cdot \delta x^n $$
for all $n$.
Taking the limit as $n\to \infty$,  we conclude
$$DA(x)\cdot \delta x=DB(x)\cdot \delta x.$$
This completes the proof of Lemma \ref{lemB}.
\end{proof}

\begin{proof}[Proof of Proposition \ref{linearlambda}]
We abbreviate by $I'$ the set of indices $i\in I$ for which $F(x)-a_i(x)=0$ and by $J'$ the set of indices $j\in J$ for which $F(x)-b_j(x)=0$.  In order to  prove the proposition it suffices to show that  for given $i\in I'$ there exists $j\in J'$ such that $(F-a_i)'(x)\cdot \delta x=(F-b_j)'(x)\cdot \delta x$ for all $\delta x\in T_xX$. If this is not the case,  then for every $j\in J'$, there exists a vector $\delta x^j\in T_xX$ such that $(F-a_i)'(x)\cdot \delta x^j\neq (F-b_j)'(x)\cdot \delta x^j$ for all $j\in J'$.   From
\begin{equation}\label{eqAA}
a'_i(x)-b_j'(x)=(F-b_j)'(x)-(F-a_i)'(x),
\end{equation}
if follows that the  kernels $\text{ker}\ (a_i'(x)-b_j'(x))$ are closed proper subspaces of $T_xX$.
Consequently, applying the Baire category theorem, we conclude  $T_xX\setminus  \bigcup_{j\in J'}\text{ker}\, \bigl(a_i'(x)-b_j'(x)\bigr)\neq \emptyset$.
Hence, in view of  \eqref{eqAA},
\begin{equation}\label{eqBB}
(F-a_i)'(x)\cdot \delta x\neq (F-b_j)'(x)\cdot \delta  x
\end{equation}
for every $\delta x\in T_xX\setminus  \bigcup_{j\in J'}\text{ker}\, \bigl( a_i'(x)-b_j'(x)\bigr)$ and every $j\in J'$.  But by Lemma \ref{lemB}, given $\delta x\in T_xX\setminus  \bigcup_{j\in J'}\text{ker}\, (F-b_j)'(x)$,  we find some $j\in J'$  such that
$(F-a_i)'(x)\cdot \delta x=(F-b_j)'(x)\cdot \delta x$,
contradicting \eqref{eqBB}.  Consequently, for given $i\in I'$, there exists  an index $j\in J'$ such that
$[(F-a_i)'(x)]=[(F-b_j)'(x)]$.  By the same token,  for  given $j\in J'$, there exists an index $i\in I$ such that
$[(F-b_j)'(x)]=[(F-a_i)'(x)]$. It follows that the number of equivalence classes is the same, that is, $n=m$. The proof of Proposition \ref{linearlambda} is complete.
\end{proof}

\begin{defn}\label{newlinearization}
The linearization $F_{\Lambda}'(x)$ at the solution $x$  with respect to the multisection $\Lambda$ is called {\bf surjective}, {\bf in good position}, or {\bf in general position}, if the representatives of each of the equivalence classes
$$[(F-a_{i_1})'(x)], \, [(F-a_{i_2})'(x)],\, \ldots \, ,[(F-a_{i_n})'(x)].$$
are surjective, in good position, or in general position.
\end{defn}

In view of Proposition \ref{linearlambda}, we can reformulate  Definition \ref{DER1} independently of the choice of the local section structure of  the multisection $\Lambda$ as follows.

\begin{defn}\label{DER11}  Let $p:W\to Z$ be a strong polyfold bundle and let $(f, \lambda)$ be a pair in which $f$ is a
Fredholm section of $p$ and
$\lambda:W\to \Q^+$ an $\ssc^+$-multisection on $p$. Assume that  the strong bundle $P:E\to X$ over the ep-groupoid $X$ is a model representing the bundle $p$ and let   the Fredholm section $F:X\to E$ of the bundle $P$
and the $\ssc^+$--multisection $\Lambda :E\to \Q^+$ of  $P$ be   representatives of $f$ and $\lambda$.
\begin{itemize}
\item[(1)]  The pair  $(f,\lambda)$ is called a  {\bf transversal pair} if for every $x$ satisfying  $\Lambda(F(x))>0$, the linearization $F'_{\Lambda}(x)$  is  surjective.
\item[(2)]   The pair   $(f,\lambda)$ is in {\bf good position} if  for every $x$ satisfying
 $\Lambda(F(x))>0$, the linearization $F'_{\Lambda}(x)$ is surjective and in good position to the corner structure of $X$.
\item[(3)]  The pair   $(f,\lambda)$ is in {\bf general  position to the boundary} $\partial Z$  if  for every $x$ satisfying
 $\Lambda(F(x))>0$, the linearization $F'_{\Lambda}(x)$ is surjective and in general position to
the boundary $\partial X$.
\end{itemize}
\end{defn}

We note that the actual  choice of the representing pair
$(F,\Lambda)$ in the previous definition is irrelevant. The
condition of being in good position is very important  and we refer to  \cite{HWZ3} for a comprehensive discussion.

\begin{thm}\label{trans1}
Let $p:W\rightarrow Z$ be a strong polyfold bundle in which  $\partial
Z=\emptyset$ and let  $f$ be  a proper Fredholm section of $p$.
Assume that the pair  $(f,\lambda)$ is transversal and that the solution
set $S=S(f,\lambda)=\{z\in Z\ |\ \lambda(f(z))>0\}$ is
compact. Then  the pair $(S,\lambda_f)$ carries
in a natural way the structure of a compact branched suborbifold of $Z$
without boundary. If $f$ is oriented,  then  the branched suborbifold $S$ is oriented.
\end{thm}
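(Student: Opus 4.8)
The statement is a polyfold-level assertion that, after choosing a representative overhead $(F,\Lambda)$ for $(f,\lambda)$, reduces to an ep-groupoid-level statement about the solution set $\mathcal{S} = S(F,\Lambda) \subset X$. The plan is to work with such a representative, establish that $\mathcal{S}$ is a branched ep-subgroupoid of $X$ in the sense of Definition \ref{def1}, and then check that this structure descends to $Z$ to give a branched suborbifold with the weight function $\lambda_f$. First I would fix a strong bundle $P:E\to X$ over an ep-groupoid modelling $p$, together with a proper Fredholm functor $F:X\to E$ representing $f$ and an $\ssc^+$-multisection $\Lambda:E\to\Q^+$ representing $\lambda$; the compactness of $S(f,\lambda)$ plus properness means $\mathcal{S}$ has compact orbit space, and by the regularizing property of Fredholm sections every point of $\mathcal{S}$ is smooth, giving condition (1) of Definition \ref{def1} (support in $X_\infty$).

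\textbf{The local branching structure.} The heart of the argument is the local model near a solution point $x\in\mathcal{S}$. Choose a $G_x$-invariant open neighborhood $U$ of $x$ on which $\Lambda$ has a local section structure $(s_i)_{i\in I}$ with weights $(\sigma_i)$, $\sum\sigma_i=1$. Let $I'\subset I$ be the indices with $F(x)-s_i(x)=0$. For each $i\in I'$, the section $s_i$ is $\ssc^+$, so $F-s_i$ is again an sc-Fredholm section (an $\ssc^+$-perturbation of $F$ does not change the Fredholm property or index, by the theory in \cite{HWZ3}), and the transversality hypothesis says $(F-s_i)'(x):T_xX\to E_x$ is surjective. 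By the implicit function theorem / local normal form for transversal sc-Fredholm sections established in \cite{HWZ3} — since $\partial Z=\emptyset$ there is no corner condition to worry about — the local solution set $M_i:=\{y\in U \mid F(y)=s_i(y)\}$ is, after possibly shrinking $U$, a finite-dimensional smooth submanifold of $X$ of dimension equal to the Fredholm index of $F$; all the $M_i$ have the same dimension $= \ind(F)$. Properness of $F$ on the local model gives that the inclusions $M_i\hookrightarrow U$ are proper (condition (4)), and since $\partial X$ is empty the ``good position to the boundary'' clause (condition (2)) is vacuous. The identity $\supp\Theta\cap U = \bigcup_{i\in I}M_i$ where $\Theta:=\Lambda\circ F$ and the weight formula $\Theta(y)=\sum_{\{i: y\in M_i\}}\sigma_i$ follow directly from the definition of $\Lambda(F(y))=\sum_{\{i: s_i(y)=F(y)\}}\sigma_i$; note one must pass to indices in $I$ rather than $I'$ after shrinking $U$ so that $M_i\cap U=\emptyset$ for $i\notin I'$. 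Condition (5), the natural $G_x$-action by sc-diffeomorphisms on $U$, is Theorem \ref{localstructure}. This shows $\Theta=\Lambda\circ F$ is a branched ep-subgroupoid of $X$; it is a functor because both $\Lambda$ and $F$ are, and it is $n$-dimensional with $n=\ind(F)$ and compact by the hypothesis on $S$.

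\textbf{Descent to $Z$ and independence of representative.} It remains to package $(\mathcal{S},\Theta)$ as a branched suborbifold of $Z$. Set $S=\gamma(|\supp\Theta|)$, which by the earlier discussion equals $S(f,\lambda)$, and define $\vartheta=\lambda_f$; one checks $\vartheta(\gamma(|x|))=\Theta(x)$ and that $\gamma:|\supp\Theta|\to\supp\vartheta$ is a homeomorphism onto $S$ with its induced topology (this is part of the discussion preceding Definition \ref{DER1}). To see the structure is well-defined, i.e. independent of the choice of representative $(F,\Lambda)$: two representatives are related by an s-bundle isomorphism $\mathfrak{A}:E\Rightarrow E'$ with $\mathfrak{A}_*(F)=F'$ and $\mathfrak{A}^*\Lambda'=\Lambda$ (up to the chain of bundle equivalences in a common refinement), so the induced branched ep-subgroupoids $\Theta=\Lambda\circ F$ and $\Theta'=\Lambda'\circ F'$ satisfy $\Theta''=\Theta\circ G=\Theta'\circ G'$ over a common refinement $X''$ — exactly the equivalence relation on triples $(X,\alpha,\Theta)$ in the definition of branched suborbifold. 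Finally, if $f$ is oriented, the determinant-bundle orientation of $F$ together with surjectivity of $(F-s_i)'(x)$ orients each kernel $\ker(F-s_i)'(x)=T_xM_i$; one checks compatibility under morphisms using $d_X$-preservation and Lemma \ref{groupoidlem1}, and invariance under the bundle equivalences (which preserve orientation by hypothesis), so $S$ is oriented in the sense of the orientation definition from \cite{HWZ7}.

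\textbf{Main obstacle.} The routine parts are the descent and the equivalence-class bookkeeping. The real content — and the step I expect to be the crux — is the local normal form: producing the finite-dimensional smooth submanifolds $M_i$ with the correct dimension, properness of their inclusions, and the compatibility of all of this with the finite group action $G_x$. This rests entirely on the M-polyfold Fredholm theory of \cite{HWZ3} (local properness, the implicit-function-theorem-type normal form for transversal sc-Fredholm sections, stability of the Fredholm property under $\ssc^+$-perturbations), and the only genuinely new ingredient here is checking that the construction can be done $G_x$-equivariantly — which one arranges by shrinking $U$ to a $G_x$-invariant neighborhood using Theorem \ref{localstructure} and averaging/intersecting over the finite group, exactly as in the construction of invariant auxiliary norms and the multisections $\Lambda_U$ earlier in the paper.
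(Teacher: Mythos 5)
Your proposal follows the same route as the paper's own argument, which is given for Theorem \ref{trans2} (the case with boundary) and specializes to the present case: fix a representative $(F,\Lambda)$, pass to a local section structure $(s_i)$ near a solution $x$, invoke the implicit-function-theorem normal form from \cite{HWZ3} to realize each $M_i=\{F=s_i\}$ as a finite-dimensional submanifold of dimension $\ind(F)$, read off the branched ep-subgroupoid structure $\Theta=\Lambda\circ F$, and handle orientations via the determinant-bundle theory of \cite{HWZ8}. The only difference is that you are somewhat more explicit than the paper about two bookkeeping points — shrinking $U$ so that $M_i\cap U=\emptyset$ for $i\notin I'$ (the paper just declares those $M_i$ empty), and verifying independence of the chosen representative $(F,\Lambda)$ — both of which are correct and consistent with the paper's framework.
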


In the case with boundary we have the following result.
\begin{thm}\label{trans2}
Let $p:W\rightarrow Z$ be a strong polyfold bundle  and let  $f$ be  a proper Fredholm section of $p$. Assume that  the pair $(f,\lambda)$ is in good position and the solution set $S=S(f,\lambda)=\{z\in Z\ |\ \lambda(f(z))>0\}$ is compact. Then the pair $(S,\lambda_f)$ carries
in a natural way the structure of a compact branched suborbifold of $Z$  with boundary with corners.
Moreover, if   $f$ is oriented,  then the  branched suborbifold $(S, \lambda_f)$ is oriented.
\end{thm}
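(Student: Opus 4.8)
The plan is to reduce Theorem \ref{trans2} to its local M-polyfold version via the overhead, exploiting the fact that all the relevant structures — Fredholm sections, $\ssc^+$-multisections, the notions of good position — are defined on a representative $(F,\Lambda)$ and are, by Proposition \ref{linearlambda} and Proposition \ref{rty}, independent of the chosen representative. First I would fix a strong bundle structure $(E,\Gamma,\gamma)$ for $p$ with $P:E\to X$ a strong bundle over an ep-groupoid $X$, a representative $F$ of $f$, and a representative $\Lambda$ of $\lambda$; set $\mathcal S=S(F,\Lambda)=\{x\in X\mid \Lambda(F(x))>0\}$, so that the natural map $x\mapsto\gamma(\abs{x})$ induces a homeomorphism $\abs{\mathcal S}\to S$. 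The first task is therefore to show that $\mathcal S$ is a branched ep-subgroupoid of $X$ in the sense of Definition \ref{def1}, carrying the weight function $\Theta(x)=\Lambda(F(x))$; one then checks that $\abs{\mathcal S}=S$ with $\vartheta(\gamma(\abs{x}))=\Theta(x)$ gives $S$ the structure of a branched suborbifold of $Z$, and finally that the structure is independent of the representative, which is exactly where the equivalence-class definitions come in.

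The heart of the argument is local and lives at a point $x_0\in\mathcal S$. Choose the distinguished neighborhood $U=U(x_0)$ from Theorem \ref{localstructure} on which $G_{x_0}$ acts by its natural representation, together with a local section structure $(a_i)_{i\in I}$, weights $(\sigma_i)_{i\in I}$, $\sum_i\sigma_i=1$, representing $\Lambda$ on $E|U$. Let $I'\subset I$ be those indices with $F(x_0)-a_{i}(x_0)=0$. For each $i\in I'$ the section $a_i$ is an $\ssc^+$-section, hence $F-a_i$ is again a Fredholm section of $P|U$ with the same linearization structure, and by the good-position hypothesis $(F-a_i)'(x_0)$ is surjective with kernel in good position to $\partial X$. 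The local regularizing property and the implicit function theorem for Fredholm sections in good position from \cite{HWZ3} (the same theorem underlying Theorem 5.11 there) then produce, for each $i\in I'$, a finite-dimensional submanifold $M_i=\{x\in U'\mid F(x)=a_i(x)\}$ of $X$ (shrinking $U$ to some $U'$ if necessary), all of the same dimension $\mathrm{ind}(F)$, all in good position to $\partial X$; discarding those $i\in I$ with $F(x_0)-a_i(x_0)\neq 0$ is legitimate because by continuity $F-a_i$ stays nonzero near $x_0$, so it contributes nothing to $\supp\Theta$ near $x_0$. One verifies $\supp\Theta\cap U'=\bigcup_{i\in I'}M_i$, that $\Theta(y)=\sum_{\{i\in I'\mid y\in M_i\}}\sigma_i$, that the inclusions $M_i\hookrightarrow U'$ are proper (using that $F$ is proper together with the auxiliary-norm bound, or directly the local properness of \cite{HWZ3}), and that $G_{x_0}$ acts on $U'$ by sc-diffeomorphisms — so all five conditions of Definition \ref{def1} hold. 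Properness of $f$ gives compactness of $S=S(f,\lambda)$ by hypothesis, hence $\abs{\supp\Theta}$ is compact. Functoriality of $F$ and $\Lambda$ forces $\Theta$ to be a functor $X\to\Q^+$, and Lemma \ref{groupoidlem1} together with the invariance of the whole construction under morphisms shows the local branching structures glue across the groupoid.

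For the well-definedness on $Z$: given two equivalent strong bundle structures, pick the common refining diagram of bundle equivalences and use Proposition \ref{rty} and Proposition \ref{newprop2.20} to see that the pulled-back pairs $(F,\Lambda)$ agree after pullback, hence their solution sets and weight functions correspond under $\abs{F},\abs{F'}$; thus $(S,\lambda_f)$ with its overhead $[(X,\gamma,\Theta)]$ is a genuine branched suborbifold of $Z$ with boundary with corners, the boundary being $\partial S=\gamma(\abs{\supp\Theta\cap\partial X})$, which is itself locally a union of the $\partial M_i$ because the $M_i$ are in good position to $\partial X$. Finally, when $f$ is oriented the determinant bundle of the linearized operator carries a nowhere-vanishing section; for each $i\in I'$ the operator $(F-a_i)'(x)$ differs from $F'(x)$ by the $\ssc^+$-operator $a_i'(x)$, which is a compact perturbation, so it has the same determinant line and inherits the orientation, inducing compatible orientations on $T_xM_i$ and hence on the tangent set $T_xS$; compatibility with morphisms follows from the compatibility of the orientation of $f$ with the groupoid structure, and the representatives $F,F'$ in an equivalence of oriented overheads preserve orientations by hypothesis — this verifies the two conditions of the orientation definition. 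The main obstacle I expect is not any single step but the bookkeeping of the index set $I'$ across overlapping charts and morphisms: one must check that the local branches $M_i$ selected near different points, using possibly different local section structures, patch into a consistent branched ep-subgroupoid, and here Proposition \ref{linearlambda} (identifying the linearizations $[(F-a_i)'(x)]$ independently of the section structure) is the essential tool that makes the patching canonical.
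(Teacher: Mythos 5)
Your proposal follows essentially the same line as the paper's proof: fix a representative $(F,\Lambda)$ of $(f,\lambda)$ over a model $P:E\to X$, set $\Theta(x)=\Lambda(F(x))$, use the good-position hypothesis together with the implicit function theorem from \cite{HWZ3} (Theorem 5.22 there, not 5.11 as you cite) to produce the local branches $M_i=\{y\mid F(y)=a_i(y)\}$, verify the conditions of Definition \ref{def1}, and appeal to \cite{HWZ8} for the orientation. Your additional remarks — that Proposition \ref{linearlambda} is what makes the patching of branches across charts canonical, and the explicit verification of properness of the inclusions and of well-definedness via Propositions \ref{rty} and \ref{newprop2.20} — are details the paper leaves implicit but do not constitute a different route.
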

\begin{proof}
Let $(E, \Gamma, \gamma )$ be a strong bundle structure for $p$ in which $P:E\to X$ is a strong bundle over the ep-groupoid $X$ and  $\Gamma:\abs{E}\to W$ is the  homeomorphism covering the  homeomorphism  $\gamma:\abs{X}\to Z$.  We  choose  a Fredholm section $F$ of the bundle $P$ representing the section $f$ and an  $\ssc^+$-multisection $\Lambda:E\to \Q^+$ of the bundle  $P$ representing the $\ssc^+$-multisection $\lambda$ of the strong polyfold bundle $p$.
The $\ssc^+$-multisections $\lambda$ and $\Lambda$ are related as follows,
$$\lambda (w)=\Lambda (e)$$
where $w=\Gamma (\abs{e})$.
We define the functor $\Theta:X\to \Q^+$ by
$$\Theta (x)=\Lambda (F(x))\quad \text{ for $x\in X$}$$
 and recall that $\lambda_f (z)=\lambda (f(z))$ for $z\in Z$.
We claim that $\Theta:X\to \Q^+$ is a branched ep-subgroupoid of the ep-groupoid $X$ with boundary with corners.

To see this we take $x\in X$ such that $\Lambda (F(x))>0$. In view of the definition of an $\ssc^+$-multisection,  there exist an open neighborhood $U$ of $x$ in $X$,  finitely many local $\ssc^+$-sections $s_i$ for $i\in I$,  and positive rational  weights
$(\sigma_i)_{i\in I}$   so that
\begin{equation*}
\sum_{j=1}^k\sigma_j=1\quad \text{and}\quad \Lambda (F(x ))=\sum_{\{j\vert
F(x)=s_j(x)\}}\sigma_j.
\end{equation*}
The sum over the empty set is equal to $0$. Since $\Lambda (F(x))>0$  there exists at least one index $i\in I$ such that $F(x)=s_i(x)$.  In view of the regularizing property of  $F$ and the fact that $s_i$ is an  $\ssc^+$-section, it follows that the point $x$ is smooth. Hence the support of $\Theta$, defined by  $\supp \Theta =\{x\in X\vert \, \Theta (x)>0\}$, is contained in $X_{\infty}$. By our assumption,  for every $i\in I$ for  which the point $x$ solves $F(x)=s_i(x)$, the linearization $(F-s_i)'(x)$ is surjective and its kernel is in good position to the cornel structure of $X$. Consequently,
in view of Theorem 5.22 in \cite{HWZ3},  the solution set
$$M_i:=\{y\in U\vert \, (F -s_i)(y)=0\}$$
is  a smooth manifold with boundary with corners. If $F(x)-s_i(x)\neq 0$, then we set $M_i=\emptyset$.  Hence
$$\supp \Theta \cap U=\bigcup_{i\in I}M_i.$$
In addition,
$$
\Theta(y)= \Lambda(F(y))=\sum_{\{i\in I\ |\
F(y)=s_i(y)\}}\sigma_i=\sum_{\{i\in I\ |\ y\in M_i\}}\sigma_i
$$
for every $y\in U$. Since $U$ can also be  taken so that the isotropy group $G_x$ acts  of  $U$ by  sc-diffeomorphisms of  $U$, we have proved our claim,  that $\Theta=\Lambda \circ F:X\to \Q^+$ is a branched ep-subgroupoid of the ep-groupoid $X$.

Let $z\in S$, i.e., $\lambda (f(z))>0$.
Since, by the definition of the overhead,  $f(z)=\Gamma \circ \abs{F}\circ \gamma^{-1}(z)=\Gamma \circ \abs{F}(\abs{x})=\Gamma (\abs{F(x)})$, it follows  that $\lambda (f(z))=\Lambda ( F(x))=\Theta (x)$,  where $z=\gamma (x)$. Hence, $x\in X_{\infty}$. We conclude that  $S=\gamma (\abs{\supp \Theta})$ and that  if $z=\gamma (\abs{x})$, then $\lambda_f (z)= \Lambda (F(x))=\Theta (x)$.  Moreover, the map $\gamma:\abs{\supp \Theta }\to S$ is a homeomorphism if both $\abs{\Theta}\subset \abs{X}$ and $S\subset Z$ are
equipped with the  induced topologies.

For orientation questions  we make use
of the results in \cite{HWZ8}. The relevant facts are briefly  summarized  in Appendix \ref{oranddet}. Recall that an orientation for $f$
is given by an orientation of  the determinant bundle $\text{DET}(F)\rightarrow X_\infty$  defined by the projection map  $\text{DET}(F, x)\mapsto x$ introduced in Appendix \ref{oranddet}.  In contrast to the usual definition of a determinant bundle $\det(F)\rightarrow X$  the fiber over a smooth point $x$ consists of a convex family of linear sc-Fredholm operators
which differ by $\ssc^+$-operators and are obtained as linearizations (These are not unique except at solutions!).
Since locally the multisection is represented by $\ssc^+$-sections
we conclude from  \cite{HWZ8} that the
local solution sets have natural orientations compatible with the morphisms. This completes the proof of Theorem \ref{trans2}.
\end{proof}

Next we shall prove parts of Theorem \ref{poil}. We consider a  strong polyfold bundle $p:W\rightarrow Z$ and two proper oriented
pairs   $(f_0,\lambda_0)$ and $(f_1,\lambda_1)$ in general position, where $f_j:Z\to W$ are proper Fredholm sections and where $\lambda_j:W\to \Q^+$ are  $\ssc^+$-multisections of $p$. Abbreviating by $\pi:[0,1]\times Z\to Z$ the projection onto the second factor we denote by $\pi^*(p)$ the strong polyfold pull-back bundle over $[0,1]\times Z$. If $t\mapsto \lambda_t$ is an  sc-smooth homotopy of $\ssc^+$-multisections of the bundle $p$ connecting the $\ssc^+$-multisectios $\lambda_0$ with $\lambda_1$, we consider the pair $(\wh{f}, \wh{\lambda})$ consisting of the Fredholm section $\wh{f}$ and the $\ssc^+$-multisection $\wh{\lambda}$ of $\pi^*(p)$ defined by
\begin{align*}
\what{f}(t,z)=f_t(z)\quad&\text{for $(t, z)\in [0,1]\times Z$}\\
\what{\lambda}(t,z)=\lambda_t(z)\quad&\text{for $(t, z)\in [0,1]\times W$}.
\end{align*}
Now we assume that   the pair $(\what{f}, \what{\lambda})$  is in general position and that the solution set
$S=\{(t, z)\in [0,1]\times Z\vert \, \wh{\lambda}(t, \wh{f}(t, z))>0\}$ is compact. Then by Theorem \ref{trans2} the solution set carries the structure of a branched compact suborbifold of $[0,1]\times Z$ whose boundary is in good position to $\partial ([0,1]\times Z)$. Introducing the weighting function $w(t, z)=\wh{\lambda}(t, \wh{f}(t, z))$ on $S$, the pair $(S, w)$ contains two obvious boundary pieces, namely $(S_1, w_1)$ associated with the pair $(f_1, \lambda_1)$ and $-(S_0, w_0)$ associated with $(f_0, \lambda_0)$. Here we have to take the minus sign if we equip $(S_0, w_0)$ with the orientation coming from $(f_0, \lambda_0)$ by using the obvious orientation convention for the family $f_t$ as explained in \cite{HWZ8}.
These two pieces can be identified as part of the solution space contained in the two faces of $[0,1]\times Z$ defined by $\{i\}\times Z$ for $i=0,1$. There is another boundary piece $\partial S$ which lies in faces of the form $[0,1]\times (\text{face in $Z$})$ intersecting $S_0$ and $S_1$ only in points of degeneracy at least $2$ (with respect to the degeneracy index of the polyfold $[0,1]\times Z$) and which is denoted by $(\wh{\partial}S, w)$. The boundary $(\partial S, w)$ has a natural orientation, it is a branched suborbifold only after having removed the points of degeneracy  at least $2$ which is a closed set of measure $0$.

If the pair $(\omega,\vartheta)$ represents a cohomology class in $H_{dR}(Z,\partial Z)$ and
$j:\partial Z\rightarrow Z$ is  the inclusion map, then
$$
d\omega =0\quad \mbox{and}\quad  j^\ast\omega =d\theta.
$$
\begin{lem}
For an oriented and proper pair in general position
the integration map
$$
[\omega,\vartheta]\rightarrow
\int_{(S,w)}\omega-\int_{(\partial S,w)}\vartheta
$$
defines a linear map on the deRham  cohomology group $H^\ast_{dR}(Z,\partial Z)$ which is an
invariant under nice homotopies.
\end{lem}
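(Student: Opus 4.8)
The plan is to reduce the statement to an application of Stokes' theorem (Theorem \ref{thmst1}) together with the homotopy invariance supplied by the cobordism $(S,w)$. First I would check well-definedness: a class in $H^\ast_{dR}(Z,\partial Z)$ is represented by a pair $(\omega,\vartheta)$ with $d\omega=0$ and $j^\ast\omega=d\vartheta$, and two such pairs differ by $d(\eta,\zeta)=(d\eta,\,j^\ast\eta-d\zeta)$. So I must show that for $\omega=d\eta$ and $\vartheta=j^\ast\eta-d\zeta$ the value $\int_{(S,w)}\omega-\int_{(\partial S,w)}\vartheta$ vanishes. Since $(S,w)$ is, by Theorem \ref{trans2}, a compact oriented branched suborbifold with boundary with corners (of the expected dimension $n$, so that the degree-$n$ part of $\omega$ is what is being integrated), Theorem \ref{thmst1} gives $\int_{(\partial S,w)}\eta = \int_{(S,w)} d\eta = \int_{(S,w)}\omega$. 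On the other hand the "boundary'' $\partial S$ of a branched suborbifold decomposes, up to a closed set of measure zero (the strata of degeneracy index $\geq 2$ in $[0,1]\times Z$, which do not contribute to the $\sigma$-additive measures of Theorem \ref{th1}), into $S_1$, $-S_0$, and $\widehat\partial S$. But in the present lemma we are not yet looking at the homotopy $\widehat f$; we are fixing a single oriented proper pair $(f,\lambda)$ in general position, so $(S,w)$ is its solution set with $\partial S=\alpha(|\operatorname{supp}\Theta\cap\partial X|)$ its ordinary boundary and $j:\partial Z\to Z$ the polyfold inclusion. Then $\int_{(\partial S,w)}\vartheta=\int_{(\partial S,w)}(j^\ast\eta-d\zeta)=\int_{(\partial S,w)}j^\ast\eta-\int_{(\partial S,w)}d\zeta$. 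The first term equals $\int_{(S,w)}\omega$ by Stokes (and naturality of pullback along $j$, i.e. $\int_{(\partial S,w)}j^\ast\eta$ is the integral of $\eta$ over the boundary suborbifold sitting inside $\partial Z$), and the second term equals $\int_{(\partial(\partial S),w)}\zeta$, which vanishes because $\partial S$ has no boundary as an oriented branched suborbifold after removal of the measure-zero corner set (a "boundary of a boundary is empty'' argument, phrased measure-theoretically via Theorem \ref{th1}). Hence $\int_{(S,w)}\omega-\int_{(\partial S,w)}\vartheta=0$, so the map is well defined; linearity is immediate from linearity of $\Phi_{(S,\vartheta)}$ and $\Phi_{(\partial S,\vartheta)}$ in Theorems \ref{th1} and the boundary analogue.

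Next I would establish independence of the overhead: the value must not depend on the chosen representative $(X,\alpha,\Theta)$ of the branched suborbifold nor on the representative $(X,\beta,\omega)$ of the form. This is exactly the content of Theorem \ref{th4} (equivalences): if $F:X'\to X$ is an equivalence intertwining the two structures, then $\mu_\omega^{(S,\vartheta)}\circ|F|=\mu_{F^\ast\omega}^{(S',\vartheta')}$ and similarly on the boundary, so the two computations of $\int_{(S,w)}\omega-\int_{(\partial S,w)}\vartheta$ agree. One also uses Theorem \ref{trans2}'s assertion that the orientation is natural and compatible with morphisms, so the sign conventions match across equivalences. This makes the integration map a genuinely well-defined linear functional $\Psi_f:H^\ast_{dR}(Z,\partial Z)\to\mathbb R$, at least for the fixed pair $(f,\lambda)$.

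Finally, for invariance under nice homotopies: given two oriented proper pairs $(f_0,\lambda_0)$, $(f_1,\lambda_1)$ in general position joined by an sc-smooth oriented proper homotopy $f_t$ with $t\mapsto\lambda_t$, form $(\widehat f,\widehat\lambda)$ over $\pi^\ast(p)$ on $[0,1]\times Z$ as in the excerpt, and assume (after a generic perturbation, which is legitimate since $\widehat f$ is proper on the relevant region and one may apply the perturbation theorem of the excerpt) that $(\widehat f,\widehat\lambda)$ is in general position with compact solution set $S$. Then $(S,w)$ is a compact oriented branched suborbifold of $[0,1]\times Z$ with boundary with corners, and its boundary is $(S_1,w_1)-(S_0,w_0)+(\widehat\partial S,w)$ up to measure zero. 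Applying Stokes (Theorem \ref{thmst1}) to the $(n-1)$-form $\eta$ on $[0,1]\times Z$ pulled back appropriately, and to $d\omega=0$, I get
\[
0=\int_{(S,w)}\widehat\pi^\ast d\omega = \int_{(\partial S,w)}\widehat\pi^\ast\omega
= \int_{(S_1,w_1)}\omega-\int_{(S_0,w_0)}\omega+\int_{(\widehat\partial S,w)}j^\ast\omega,
\]
and a parallel computation with $\vartheta$ on the faces of the form $[0,1]\times(\text{face of }Z)$ handling the $\widehat\partial S$ contribution via $j^\ast\omega=d\vartheta$ and another application of Stokes on $\widehat\partial S$. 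Combining these yields $\Psi_{f_1}([\omega,\vartheta])=\Psi_{f_0}([\omega,\vartheta])$. The main obstacle I anticipate is purely bookkeeping rather than conceptual: carefully tracking the orientation/sign conventions on the three boundary pieces $S_0$, $S_1$, $\widehat\partial S$ of the cobordism (the minus sign on $(S_0,w_0)$, and the fact that $\widehat\partial S$ is only a branched suborbifold after deleting the codimension-$\geq 2$ corner set, which is $\mu$-null and hence invisible to the integration theory), and verifying that the "controls compactness'' hypothesis is preserved under the homotopy so that the perturbation theorem applies uniformly in $t$. None of these requires new ideas beyond Theorems \ref{trans2}, \ref{th1}, \ref{th4}, \ref{thmst1}, but assembling them with correct signs is where the real work lies.
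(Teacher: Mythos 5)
Your proposal is correct and uses the same Stokes-theorem/cobordism computation as the paper's own proof for homotopy invariance (decomposing $\partial S$ into $S_1$, $-S_0$, $\what{\partial}S$, applying Stokes together with $d\omega=0$, $j^\ast\omega=d\vartheta$, and the vanishing of exact forms integrated over the boundaryless $\partial S$). You additionally verify well-definedness on cohomology classes and independence of the polyfold overhead via Theorem \ref{th4}, points the paper's own proof leaves implicit.
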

\begin{proof}
We compute,  assuming that $\omega$ is of degree $n$ and $S_0$ and $S_1$
are of dimension $n$,
\begin{equation*}
\begin{split}
&\biggl[ \int_{(S_1, w_1)}\omega - \int_{(\partial S_1, w_1)}\theta \biggr]-\biggl[ \int_{(S_0, w_0)}\omega - \int_{(\partial S_0, w_0)}\theta \biggr]\\
&\phantom{\quad}=
\int_{(S_1,w_1)}\omega-\int_{(S_0,w_0)}\omega
+\int_{(\what{\partial}S,w)}\omega-\int_{(\partial S_1,w_1)}\vartheta
+\int_{(\partial S_0,w_0)}\vartheta-\int_{(\what{\partial}S,w)}\omega\\
&\phantom{\quad}=\int_{(\partial S,w)}\omega-\int_{(\partial S_1,w_1)}\vartheta
+\int_{(\partial S_0,w_0)}\vartheta-\int_{(\what{\partial} S,w)}\omega\\
&\phantom{\quad}=\int_{(S,w)} d\omega
-\int_{(S_1,w_1)}d\vartheta+\int_{(S_0,w_0)}d\vartheta-
\int_{(\what{\partial} S,w)}\omega\\
&\phantom{\quad}=-\int_{(S_1,w_1)} d\vartheta+ \int_{(S_0,w_0)}d\vartheta-
\int_{(\what{\partial} S,w)}d\vartheta\\
&\phantom{\quad}=-\int_{(\partial S,w)} d\vartheta= 0.
\end{split}
\end{equation*}

The last integral vanishes   since integration of a global sc-form over the boundary
$(\partial S,w)$ is like integration over a closed manifold, so that  in particular
the integral of an exact form vanishes.
\end{proof}

The perturbation theory below guarantees such  nice
homotopies referred to in lemma, and will then complete the proof of  Theorem \ref{poil}.

\subsection{Perturbation Results}

In this section we shall adapt the techniques introduced for M-polyfolds in \cite{HWZ3} to the functorial setting and show how given sections can be made transversal by small perturbations.

We first recall the kind of problems studied in
\cite{HWZ3}. In there we consider a proper Fredholm section of the strong M-polyfold bundle $P:E\to X$ and want to bring the compact set of solutions of $F(x)=0$ into a general position by a small perturbation section $s$, i.e., we study the solution set of $F(x)-s(x)=0$. To do so one  first  constructs  finitely many
$\ssc^+$-sections $s_j$, $j=1,\ldots ,k$, so that they fill up the cokernel of the linearizations $F'(x)$ at all solutions $x$ of $F(x)=0$ and then considers the parametrized proper Fredholm section $F(t, x)=F(x)+\sum_{j=1}^kt_js_j(x)$ for $(t, x)\in \R^k\times X$ with small parameters $t=(t_1, \ldots ,t_k)$. It has the property that its linearizations $F'(0, x)$ at $t=0$ and the solutions $x\in X$ of $F(x)=0$ are surjective. Assuming that $\partial X=\emptyset$ one concludes that the set $M=\{(t, y)\in \R^k\times X\vert \, \text{$F(t, x)=0$ and $\abs{t}$ sufficiently small}\}$ of the parametrized Fredholm section is a smooth manifold. Then  the regular values of the projection $M\to \R^k$ given by $(t, y)\mapsto  t$ give the parameter value $t^*=(t^*_1, \ldots ,t^*_k)$ near $t=0$ for which the perturbation $s=\sum_{j=1}^kt^*_js_j$ has the desired properties in order to conclude that the solution set $\{x\in X\vert \, F(t^*, x)=0\}$ of the perturbed problem is a compact smooth manifold.

In the case of $\partial X\neq \emptyset$, one has to add enough $\ssc^+$-sections so that also  the kernels of the linearizations $F'(x)$ at the solutions $x$ of $F(x)=0$ are in good position to the boundary $\partial X$. Then the corresponding solution set $M$ of the parametrized Fredholm section for small parameter values is a smooth manifold with boundaries with corners.  Again one looks at the projection map $M\to \R^k$ defined by
$(t, y)\mapsto t$. This map can be restricted to the local faces (which are smooth manifolds with boundaries with corners) and one finds small regular values $t^*$ for a finite number of problems. For these parameter values the solution set
$\{x\in X\vert \, F(t^*, x)=0\}$ of the perturbed problem is in general position to the boundary $\partial X$ and is, therefore, a compact manifold with boundaries with corners.

In the multivalued case we shall have to consider  locally a finite number of problems of the kind just described and,   using a compactness and a covering argument,   in total a finite number of such problems. This way  we shall obtain finitely many finite dimensional submanifolds (perhaps with boundary with corners) and look again at the projection map
$(t, y)\mapsto t$. A common regular value $t^*$ which exists by the Sard theorem gives rise to an $\ssc^+$-multisection $\Lambda^{t^*}$ which  is regular, so that the pair $(F, \Lambda^{t^*})$ is in general position and gives rise to the branched ep-subgroupoid $\Lambda^{t^*}\circ F:X\to \Q^+$ of the ep-groupoid $X$.

Now we assume that $\lambda$ is an $\ssc^+$-multisection on the strong bundle $p:W\to Z$ and let $N:W_{0,1}\to [0,\infty )$ be an auxiliary norm on $p$.  We choose an $\ssc^+$-multisection $\Lambda:E\to \Q^+$ on the strong bundle $P:E\to X$ representing $\lambda$ and let  $N^*:E_{0,1}\to [0,\infty )$ be an auxiliary  norm on $P$ representing $N$.
Then we define the  auxiliary norm  $N(\lambda)$ of the multisection $\lambda$ as follows.

We start by defining the auxiliary norm $N^*(\Lambda )$  of the $\ssc^+$-multisection $\Lambda$.
For every $x\in X$, there exists an open neighborhood $U_x$ of $x$ on which there is a local section structure $(s_i)_{i\in I}$ consisting of $\ssc^+$-sections and the associated  set of positive rational numbers $(\sigma_i)_{i\in I}$ so that
$\Lambda (e)=\sum_{\{i\vert \, s_i(Pe)=e\}}\sigma_i$ and we define
$$N^*(\Lambda )(y)=\max_{i\in I}N^* (s_i(y)),\quad y\in U_x.$$
This definition is independent of the choice of the local section structure. Indeed, if $(t_j)_{j\in J}$ is another local sections structure on $U_x$ defining the multisection  $\Lambda$ and  $y\in U_x$, then for every $i\in I$   there exists  an index $j\in J$ such that $s_i (y)=t_j(y)$. Conversely,   for every $j\in J$ there is an index $i\in I$ such that $t_j(y)=s_i(y)$. Hence $\max_{i\in I}N^* (s_i(y))=\max_{j\in J}N^* (t_i(y))$ as claimed.  Since the morphisms extend to local sc-diffeomorphisms, it follows that $N^*(\Lambda)$ is invariant under morphisms, that is, $N^*(\Lambda )(x)=N^*(\Lambda )(x')$  if there exists a morphism  $\varphi:x\to x'$. Consequently we define  the {\bf auxiliary norm of the $\ssc$-multisection $\lambda$}  by
$$N(\lambda )(z)=N^*(\Lambda )(x), \quad \text{where $z=\abs{x}$}.$$

\begin{lem}\label{lemcomactA}
Assume that $f:Z\to W$ is a proper Fredholm section of the strong bundle $p:W\to Z$  and that the pair $(U, N)$ controls compactness. Then given an $\ssc^+$-multisection $\lambda$ satisfying $N(\lambda )\leq 1$, the solution set
$$S(f, \lambda )=\{z\in Z\vert \, \lambda (f(z))>0\}$$ is compact.
\end{lem}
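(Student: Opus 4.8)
The plan is to reduce the statement to the M-polyfold compactness result already established, essentially by observing that the solution set $S(f,\lambda)$ lifts to the orbit space of the corresponding solution set at the level of the representing strong bundle $P\colon E\to X$, and that the hypothesis $N(\lambda)\le 1$ forces the support of the perturbing multisection to contribute only bounded-norm sections. First I would fix a strong bundle structure $(E,\Gamma,\gamma)$ for $p$ with $P\colon E\to X$ a strong bundle over the ep-groupoid $X$, a Fredholm section $F\colon X\to E$ representing $f$, an $\ssc^+$-multisection $\Lambda\colon E\to\Q^+$ representing $\lambda$, and the auxiliary norm $N^\ast$ on $P$ representing $N$; without loss of generality $Z=\abs{X}$ and $W=\abs{E}$. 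By the definition of $N(\lambda)$, the condition $N(\lambda)\le 1$ translates to $N^\ast(\Lambda)(x)\le 1$ for all $x\in X$, which means that every local section $s_i$ in a local section structure for $\Lambda$ satisfies $N^\ast(s_i(y))\le 1$.

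Next I would consider the representing solution set $\mathcal{S}=S(F,\Lambda)=\{x\in X\mid \Lambda(F(x))>0\}$ and recall from the earlier discussion that the natural map $\mathcal{S}\to S(f,\lambda)$, $x\mapsto\gamma(\abs{x})$, induces a homeomorphism $\abs{\mathcal{S}}\to S(f,\lambda)$. So it suffices to prove that $\abs{\mathcal{S}}$ is compact, and for that, in view of the quotient map $\pi\colon X\to\abs{X}$ being continuous, it is enough to find a compact subset of $X$ whose image under $\pi$ covers $\mathcal{S}$, or more precisely to argue sequentially: given a sequence $(z_k)\subset S(f,\lambda)$, choose representatives $x_k\in X$ with $\Lambda(F(x_k))>0$. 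For each $k$ there is a local section $s_{i(k)}$ of $\Lambda$ near $x_k$ with $F(x_k)=s_{i(k)}(x_k)$ (this is exactly what $\Lambda(F(x_k))>0$ provides), and since $N^\ast(s_{i(k)})\le 1$ everywhere, the element $F(x_k)$ in the fiber $E_{x_k}$ has $N^\ast(F(x_k))=N^\ast(s_{i(k)}(x_k))\le 1$. Thus $(x_k)$ is a sequence with $\liminf_k N^\ast(F(x_k))\le 1$, and $z_k=\abs{x_k}\in\ov{U}$ since one can arrange $x_k\in\pi^{-1}(\ov U)=\ov{U^\ast}$.

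At this point I would invoke Lemma \ref{lemcontrol}: because the pair $(U,N)$ controls compactness, the pair $(U^\ast,N^\ast)$ has the property that there exist a sequence $(y_k)\subset\ov{U^\ast}$ and morphisms $\varphi_k\colon x_k\to y_k$ with $(y_k)$ admitting a convergent subsequence $y_{k_j}\to y$ in $X$. Since $\Lambda$ and $F$ are functors, $\Lambda(F(y_{k_j}))=\Lambda(F(x_{k_j}))>0$, so $y\in\supp(\Lambda\circ F)$ by lower semicontinuity of $\Lambda\circ F$ on the solution set (or by passing to the local branch structure near $y$: $F(y_{k_j})=s_m(y_{k_j})$ along a subsequence for a fixed local section $s_m$, hence $F(y)=s_m(y)$ and $\Lambda(F(y))\ge\sigma_m>0$). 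Consequently $z_{k_j}=\abs{y_{k_j}}\to\abs{y}$ in $\abs{X}=Z$ with $\abs{y}\in S(f,\lambda)$, which proves that $S(f,\lambda)$ is sequentially compact, hence compact since $Z$ is metrizable (second countable and paracompact).

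The main obstacle I anticipate is the bookkeeping in the step where one passes from the sequence $x_k$ to the morphism-shifted sequence $y_k$ while retaining the bound $N^\ast(F(y_k))\le 1$: one must check that $N^\ast$ is invariant under morphisms (which holds by the defining property of an auxiliary norm on a strong bundle over an ep-groupoid, so that $N^\ast(F(y_k))=N^\ast(\mu(\varphi_k,F(x_k)))=N^\ast(F(x_k))\le 1$, using functoriality $\mu(\varphi_k,F(x_k))=F(y_k)$), and similarly that $\Lambda(F(y_k))>0$ is preserved. Once the morphism-invariance of both $N^\ast$ and $\Lambda\circ F$ is recorded, the argument is essentially a repackaging of Theorem \ref{thm11.12} and Lemma \ref{lemcontrol}; no new analytic input beyond the local properness already encoded in those results is needed.
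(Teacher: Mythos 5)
Your proof is correct and follows essentially the same route as the paper's: fix a local model $(E,\Gamma,\gamma)$ with representatives $F$, $\Lambda$, $N^\ast$; observe $N^\ast(F(x_k))\le 1$ from $N^\ast(\Lambda)\le 1$; invoke Lemma \ref{lemcontrol} to get a morphism-shifted sequence $(y_k)$ with a convergent subsequence; and then use a local section structure near the limit $y$ together with the finiteness of the index set to extract a fixed local section $s_m$ with $F(y)=s_m(y)$, giving $\Lambda(F(y))>0$. Only quibble: the phrase ``by lower semicontinuity of $\Lambda\circ F$'' is not quite right (that functor is generally neither upper nor lower semicontinuous), but the parenthetical argument you give via the local branch structure is exactly the paper's and is the one that works.
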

\begin{proof}
It suffices to prove the result in local models.  Hence we  assume that $P:E\to X$ is a local model for $p$ and that  $F:X\to E$ is a proper Fredholm section representing $f$ and we assume that $\Lambda :E\to \Q^+$ is an $\ssc^+$-multisection representing $\lambda$.  Then   $N^*(\Lambda )=N(\lambda )\leq 1$. Set $U^*=\pi^{-1}(U)$.
The result will  follow by showing that the solution set
$$S(F, \Lambda )=\{x\in X\vert \, \Lambda (F(x))>0\}$$
is compact.
To see this  we  take a sequence $(x_n)\subset S(F, \Lambda )$. Then $\Lambda (F(x_n))>0$. Hence given
the point $x_n$,  there exists a neighborhood $U_{x_n}$ and a local section structure $(s_i)_{i\in I}$ together with associated weights $(\sigma_i)_{i\in I}$ so that $\Lambda (F(x_n))=\sum \sigma_i$ where the sum is taken over the indices $i$ for which $F(x_n)=s_i(x_n)$. From  $\Lambda (F(x_n))>0$   one concludes $F(x_n)=s_i(x_n)$ for some $i$,  and from  $N^*(\Lambda )\leq 1$ one  concludes  $N^*(F(x_n))\leq 1$ for all $n$.
By assumption, the pair $(U, N)$ controls compactness  so that by Lemma \ref{lemcontrol}
there is  a sequence of  points $(y_n)\in U^*$ and  a sequence of morphisms $\varphi_n:x_n\to y_n$ such that $(y_n)$ contains  a converging subsequence.
We assume without loss of generality that $y_n\to y$. In an open neighborhood $U_y$ of $y$ there is a local section structure $(t_j)_{j\in J}$ with associated weights $(\tau_j)_{j\in J}$ so that $\Lambda (F(z))=\sum_{\{ j\vert t_j(z)=F(z)\}}\tau_j$ for $z\in U_y$.
For large $n$,  $x_n\in U_y$ and since the index set $J$ is finite, $F(x_n)=t_{j_0}(x_n)$   for some fixed  index  $j_{0}\in J$  and some subsequence,  denoted again by $(x_n)$. Hence $F(y)=t_{j_0}(y)$ showing that $\Lambda (F(y))>0$.  Moreover, $y\in U^*$ since the support of $\Lambda $ is contained in $U^*$.  The proof of the lemma is complete.
\end{proof}

We are ready to prove the perturbation result in the case of no boundary, $\partial Z=\emptyset$. In order to have sc-smooth functions and sc-smooth sections with supports in preassigned open sets available  (Appendix \ref{partition}), we shall assume in the following that the {\bf sc-structures of the ep-groupoids $X$ used as models of the polyfold $Z$ are based on sc-separable Hilbert spaces.}

\begin{thm}\label{ghjl}
Let  $p:W\rightarrow Z$ be  the strong polyfold bundle having empty boundary
and let  $f$ be  a proper (oriented) Fredholm section of  $p$.
We assume that the sc-smoothness  structure of the polyfold $Z$ is based on separable Hilbert spaces. Fix  an auxiliary norm $N$ for $p$ and an open neighborhood $U$ of the solution set $S(f)=\{z\in Z\vert \, f(z)=0\}$  so that the pair $(U, N)$ controls compactness. Assume that $0<\varepsilon<\frac{1}{2}$. Then for
every $\ssc^+$-multisection $\lambda$ having  its  support  in $U$ and satisfying
$N(\lambda)<\frac{1}{2}$,  there exists an $\ssc^+$-multisection $\tau:W\to \Q^+$ supported in $U$ and satisfying $N(\tau)<\varepsilon$
so that  the pair $(f,\lambda\oplus \tau)$ is a transversal pair. In
particular,  the associated solution set
$$S(f,\lambda\oplus \tau)=\{z\in Z\vert \, (\lambda \oplus \tau)(f(z))>0\}$$
 is
an (oriented) compact  branched suborbifold of $Z$ without boundary.
\end{thm}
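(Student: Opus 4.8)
The plan is to reduce the statement about polyfolds to a corresponding statement about the local model, i.e. a strong bundle $P:E\to X$ over an ep-groupoid, and then to run the finite-dimensional perturbation argument sketched informally in the preamble. First I would fix a strong bundle structure $(E,\Gamma,\gamma)$ for $p$ with $P:E\to X$, a Fredholm section $F:X\to E$ representing $f$, an auxiliary norm $N^\ast$ representing $N$, and an $\ssc^+$-multisection $\Lambda$ representing $\lambda$ with $\supp\Lambda\subset U^\ast:=\pi^{-1}(U)$ and $N^\ast(\Lambda)<\tfrac12$. By Lemma \ref{lemcomactA} (applied to $\lambda$ itself, since $N(\lambda)\le 1$) the set $S(F,\Lambda)=\{x\in X\mid\Lambda(F(x))>0\}$ has compact orbit space; by the regularizing property all such $x$ are smooth. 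The goal is then to produce an $\ssc^+$-multisection $\tau$ (to be transported back via $\Gamma$) supported in $U^\ast$, invariant under morphisms, with $N^\ast(\tau)<\varepsilon$, such that $F'_{\Lambda\oplus T}(x)$ is surjective at every $x$ with $(\Lambda\oplus T)(F(x))>0$, where $T$ is the representative of $\tau$.

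The core construction proceeds by a covering/compactness argument. Around each $x\in S(F,\Lambda)$ choose a $G_x$-invariant open neighborhood $U_x\subset U^\ast$ on which $\Lambda$ has a local section structure $(s_i)_{i\in I_x}$. For each index $i$ with $F(x)-s_i(x)=0$, the operator $(F-s_i)'(x)$ is linear sc-Fredholm, so it has finite-dimensional cokernel; using Lemma \ref{lem7.9.4} and Remark \ref{rem13.2} I would choose finitely many $\ssc^+$-sections $h^1,\dots,h^{k_x}$ supported in $U_x$ whose values at $x$ span complements of all these cokernels simultaneously (finitely many $i$), with $N^\ast(h^j)$ as small as we like, and then form the parametrized family of $\ssc^+$-multisections $T^t_x$, $|t|_\infty<1$, as in Remark \ref{rem13.2}, each invariant under $G_x$ and hence (after the extension procedure of Proposition \ref{prop7.9.6}) under all morphisms. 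Passing to the orbit space, $\{|U_x|\}$ covers the compact set $|S(F,\Lambda)|$; extract a finite subcover $|U_{x_1}|,\dots,|U_{x_M}|$. Summing (via $\oplus$, with weights scaled by $1/M$) the corresponding parametrized families gives a single parametrized $\ssc^+$-multisection $T^t$, $t\in\R^K$ small, supported in $U^\ast$, invariant under morphisms, with $N^\ast(\Lambda\oplus T^t)$ still $<\tfrac12+\varepsilon<1$ for $|t|$ small, and with the property that at $t=0$ the linearization $F'_{\Lambda\oplus T^0}=F'_\Lambda$ — more precisely, after incorporating the $h^j$'s, the ``universal'' linearizations at solutions in each chart become surjective in the $t$-directions.

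Next I would invoke the M-polyfold transversality machinery of \cite{HWZ3} chart by chart. In each $U_{x_m}$ and for each relevant local section $a_i$ of $\Lambda\oplus T^t$, the solution set $\{(t,y)\mid (F-a_i)(y)=0\}$ of the parametrized Fredholm section is, since $\partial X=\emptyset$ and the $t$-derivative is onto, a smooth finite-dimensional manifold $M_i$ (Theorem 5.22 / the implicit function theorem in \cite{HWZ3}); the projection $M_i\to\R^K$, $(t,y)\mapsto t$, is a smooth Fredholm map of index $0$ between finite-dimensional objects, so by the Sard theorem its regular values are residual. There are only finitely many such $M_i$ in total (finitely many charts, finitely many indices each), hence a common small regular value $t^\ast$ with $|t^\ast|_\infty$ so small that $N^\ast(T^{t^\ast})<\varepsilon$. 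Set $\tau:=[\,\cdot\,,T^{t^\ast}]$. For this choice, at every $x$ with $(\Lambda\oplus T^{t^\ast})(F(x))>0$ and every relevant local section $a_i$ with $F(x)=a_i(x)$, surjectivity of $(F-a_i)'(x)$ follows from $t^\ast$ being a regular value together with the cokernel-filling property of the $h^j$'s; by Proposition \ref{linearlambda} this is independent of the chosen local section structure, so $(f,\lambda\oplus\tau)$ is a transversal pair. Then Lemma \ref{lemcomactA} gives compactness of $S(f,\lambda\oplus\tau)$ (as $N(\lambda\oplus\tau)<1$), and Theorem \ref{trans1} promotes it to a compact branched suborbifold without boundary, oriented if $f$ is oriented (the local orientations coming from \cite{HWZ8}, compatible with morphisms because $T^{t^\ast}$ is built from morphism-invariant data).

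The main obstacle is the bookkeeping needed to make the covering/convolution argument genuinely functorial and transversal at the \emph{same} time: each local chart contributes its own parametrized family $T^t_{x_m}$, and after forming the convolution $\bigoplus_m T^t_{x_m}$ one must check that a common regular value $t^\ast$ still produces surjectivity of $F'_{\Lambda\oplus T^{t^\ast}}(x)$ at \emph{every} solution point, not merely at the centers $x_m$ — this requires that the sections $h^j$ chosen in $U_{x_m}$ fill the cokernel of $(F-a_i)'(y)$ for \emph{all} nearby solutions $y$, which is where one uses upper semicontinuity of the cokernel dimension and shrinks the $U_{x_m}$ accordingly. Keeping the auxiliary-norm estimate $N^\ast(\Lambda\oplus T^{t^\ast})<1$ throughout (so that Lemma \ref{lemcomactA} applies at the end) and verifying that the morphism-invariant extension of Proposition \ref{prop7.9.6} does not destroy the smallness or the support condition are the remaining technical points; everything else is a direct transcription of the M-polyfold argument from \cite{HWZ3} into the groupoid setting.
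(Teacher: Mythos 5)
Your proposal is essentially the paper's own argument: reduce to a local model $P\colon E\to X$, apply Lemma \ref{lemcomactA} to $\lambda$ to get compactness of $S(F,\Lambda)$, choose at each solution $x$ a $G_x$-invariant chart, pick finitely many smooth vectors (via Lemma \ref{lem7.9.4} / Remark \ref{rem13.2}) whose $G_x$-orbits under $\mu$ span all cokernels of $(F-a_i)'(x)$, build $G_x$-equivariant parametrized $\ssc^+$-multisections $\Lambda^t_x$ as in Proposition \ref{prop7.9.6}, extract a finite subcover of $|S(F,\Lambda)|$, convolve via $\oplus$, and apply Sard to the finite collection of projections $(t,y)\mapsto t$. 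You also correctly identify the key technical point that the paper handles by shrinking $U_x$ to $V_x$: surjectivity must persist for all nearby solutions, not just the centers, which is exactly Lemma \ref{lemmontreal1}'s content in the boundary case and the remark after Lemma \ref{lemAA} here. Two minor imprecisions: the $\oplus$-convolution already normalizes weights to sum to $1$ (no rescaling by $1/M$ is needed — smallness of $N^\ast(T^{t^\ast})$ comes from making each $\Lambda^{t_i}_{x_i}$ small via the size of $|t_i|$ and the triangle inequality for $N^\ast$, not from weight-rescaling), and the projection $M_i\to\R^K$ has index $\operatorname{ind}(F)$ rather than $0$; neither affects the Sard argument or the conclusion.
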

\begin{proof}
Take an $\ssc^+$-multisection $\lambda:W\to \Q^+$ satisfying $N(\lambda )<\frac{1}{2}$.
In view of Lemma \ref{lemcomactA}, the solution set
$$S(f, \lambda  )=\{z\in Z\vert \, \lambda (f(z))>0\}$$
is  compact subset of the polyfold $Z$.

We choose a strong polyfold bundle $P:E\to X$ over  the ep-groupoid $X$  as a  model for $p:W\to Z$ and  let the proper Fredholm section $F:X\to E$ of the bundle $P$  represent the proper Fredholm section $f$ of $p$. We may assume without loss of generality
that
$$Z=\abs{X}\quad \text{and}\quad W=\abs{E}.$$

Assume that the $\ssc^+$-multisection $\Lambda:E\to \Q^+$ of  $P$ represents the
 $\ssc^+$-multisection $\lambda$ of $p$.  If $\pi:X\to \abs{X}$ denotes the quotient map onto the orbit space, the  set $U^{*}=\pi^{-1}(U)$ is an open neighborhood of the compact solution set
$$S(F, \Lambda )=\{x\in X\vert \, \Lambda (F(x))>0\},$$
and since $N(\lambda )=N^*(\Lambda )$, we have $N^*(\Lambda )<\frac{1}{2}.$

 We fix a solution $x\in S(F,\Lambda)$ and take an open neighborhood $U_x\subset X$ of $x$ on which the isotropy group $G_x$ acts by its natural representation $\varphi_g \in \text{Diff}_{\text{sc}}(U_x)$. The local system of sections of $U_x$ we shall denote by $(a_i)_{i\in I}$ and the associated weights by $(\sigma_i)_{i\in I}$. Then, if $y\in U_x$,
 $$
 \Lambda (F(y))=\sum_{\{i\in I\vert \, F(y)=a_i(y)\}}\sigma_i.
 $$
Moreover,  $\Lambda (F(y))=0$ if there is no index $i\in I$ satisfying $F(y)=a_i(y)$. By assumption on the solution, $\Lambda (F(x))>0$,  so that there is a subset $J\subset I$ such that
 $$F(x)-a_i(x)=0\quad \text{for all $i\in J$}.$$

 Recall that for $g\in G_x$ the map $\mu(g, \cdot ):E_x\to E_x$ is a linear sc-isomorphism. Therefore, we can choose smooth linearly independent vectors
 $e^1,\ldots, e^m\in E_x$  such that for every $g\in G_x$ the vectors
 $$\mu (g, e^1), \ldots ,\mu (g, e^m)$$
 in $E_x$ span the cokernels of the linearizations
 $$(F-a_i)'(x)$$
 for every $i\in I'$. If $g=\text{id}\in G_x$ is the identity element of the isotropy group $G_x$, then $\mu (\text{id}, e^j)=e^j$ for $1\leq j\leq m$.

By means of Lemma \ref{lem7.9.4}, we find  $\ssc^+$-sections   $s^1,\ldots ,s^m:U_x\to E\vert U_x$  having their supports  in $U_x$ and satisfying $s^j (x)=e^j$ for $1\leq j\leq m$.

For every $g\in G_x$, we next define
the $\ssc^+$-sections $s_g^j:U_x\to E\vert U_x$ by
$$
s_g^j(\varphi_g (y)):=\mu (\Gamma (g,y), s^j(y)),
$$
where $y\in U_x$ and $1\leq j\leq m$.  Introducing  the sum
$$s_{g}^t( y):=\sum_{j=1}^{m}t_j s_g^j(y),$$
where  $y\in U_x$ and $t=(t_1, \ldots ,t_n)\in \R^n$, we have defined the  $\sharp G_x$-many parametrized $\ssc^+$-sections $s^t_g:U_x\to E\vert U_x$ having their supports in $U_x$.  If $\text{id}\in G_x$ is the  identity element, then the morphism $\Gamma (\text{id}, y)$ is equal to $1_y:y\to y$ and using that $\mu (1_y, e_y)=e_y$ for all $e_y\in E_y$, the
$\ssc^+$-sections $s^t_g$ satisfy
$$s^t_g (\varphi_g(y))=\mu (\Gamma (g, y), s^t_{\text{id}}(y))$$
with
$$s^t_{\text{id}}(y)=\sum_{j=1}^mt_js^j(y).$$
The map
$$\Lambda^t_{U_x}:E\vert U_x\to \Q^+$$
is  defined by
$$\Lambda^t_{U_x}(e)=\dfrac{1}{\sharp G_x}\cdot \sharp \{g\in G_x\vert \, s_{g}^t(P(e))=e\}.$$
Proceeding as in Proposition \ref{prop7.9.6} one extends  the local multisection
$\Lambda^t_{U_x}$ from $E\vert U_x$ to the parameterized $\ssc^+$-multisection
$$\Lambda_x^t:E\to \Q^+.$$

Continuing with the proof of Theorem \ref{ghjl}, we introduce for $i\in J$ and $g\in G_x$ the perturbed sections $F^i_g:\R^m\oplus U_x\to E\vert U_x$  by
$$F^i_g(t, y)=F(y)-a_i(y)-\sum_{j=1}^mt_js_g^j(y),$$
where  $t=(t_1, \ldots,t_m)\in \R^m$.
In the special case $g=\text{id}\in G_x$ one has
$$F^i_{\text{id}}(t, y)=F(y)-a_i (y)-\sum_{j=1}^mt_js^j(y).$$
In view of Theorem 3.9 in \cite{HWZ3},  the perturbed sections $F_g^i$ are proper Fredholm sections of the bundle $E^1\to \R^m\oplus X^1$ over the set $\R^m\oplus (U_x)^1$.
\begin{lem}\label{lemAA}
For every $i\in J$ and $g\in G_x$ the linearization
$$DF_g^i(0, x):T_0\R^m\oplus T_xX\to E_x$$
of the map $(t, y)\mapsto F^i_g(t, y)$ at the special point $(0, x)$ at which $F(x)-a_i(x)=0$, is surjective.
\end{lem}
\begin{proof}
The linearization of the map $(t, y)\mapsto F^i_g (t, y)$ is equal to
\begin{eqnarray*}
&&
DF^i_g(t, y)\cdot (\delta t, \delta y)\\
&=&(F'(y)-a_i'(y))\cdot \delta y-\sum_{j=1}^m\delta t_j\cdot s^j_g(y)-\sum_{j=1}^mt_j\cdot D_ys^j_g(y)\cdot \delta y,
\end{eqnarray*}
where $\delta t=(\delta t_1,\ldots ,\delta t_m)\in \R^m$ and $\delta y\in T_xX$. At the point $y=x$ we have,  using $\varphi_g (x)=x$, that $s^j_g (x)=\mu (\Gamma (g, x), s^j(x))=\mu (g, s^j (x))=\mu (g, e^j)$. Therefore the linearization at $(t, y)=(0, x)$ is represented by the linear map
$$
DF^i_g(0,x)\cdot (\delta t, \delta y)=(F'(x)-a_i'(x))\cdot \delta y-\sum_{j=1}^m\delta t_j\cdot \mu (g, e^j),$$
which, in view of the definition of $\mu (g, e^j)$ for $1\leq j\leq m$, is obviously surjective as claimed in the lemma.
\end{proof}

Since by Lemma \ref{lemAA} the linearizations $DF^i_g(0, x)$ at the point $(0, x)$ are surjective it follows,  by the arguments in   section 4.2 in \cite{HWZ3}, that  the linearizations $DF^i_g(t, y)$ at the points  solving $F^i_g (t, y)=0$ are also surjective if  $\abs{t}$ is small and $y$ belongs to a possibly smaller invariant neighborhood $V_x\subset U_x$ of the distinguished point $x$.

By assumption the solution set  $S(f, \lambda )$  of the proper Fredholm section $f$ of $p$ is compact. Consequently, there exist finitely many solutions $x_1, \ldots ,x_m$ belonging to the solution set  $S=S(F, \Lambda)$ so that for the corresponding open neighborhoods $V_{x_1}, \ldots ,V_{x_m}$ in $X$,
the sets $\abs{V_{x_1}}, \ldots ,\abs{V_{x_m}}$ in the orbit space $\abs{X}$ cover
$f^{-1}(0)$.  Then we define the parametrized $\ssc^+$-multisection  $\Lambda_t$ as the sum
$$\Lambda_t=\Lambda^{t_1}_{x_1}\oplus \cdots \oplus \Lambda^{t_m}_{x_m}$$
where $t=(t_1, \ldots ,t_m)\in \R^{n_1}\times \cdots \R^{n_m}=\R^{N}$  with  $N=n_1+\cdots +n_m$. By construction, at every point $(t, x)=(0, x) \in \{0\}\times S$, for the new parametrized local branching structure at  $x\in X$, we have surjectivity of the linearizations of every local problem, so that the set
$M=\{(t, x)\in \R^N\times X\vert \, \text{$(\Lambda\oplus \Lambda_t)(F(x))>0\}$ and $\abs{t}$ small }\}$ is locally a collection of finitely many finite dimensional submanifolds of $X$. The projection maps $M\to \R^N$ given by $(t, x)\mapsto t$ can be viewed as finitely many maps defined on smooth submanifolds so that it makes sense to talk about regular values. We choose by means of the Sard's theorem a small regular value $t^*\in \R^N$. Then the pairs $(F, \Lambda_{t^*})$ and $(f, \lambda_{t^*})$ are transversal and the map $\Theta:X\to \Q^+$ defined by $\Theta (x)=(\Lambda \oplus \Lambda_{t^*})(F(x))$ is a branched ep-subgroupoid of the ep-groupoid $X$. This completes the proof of Theorem  \ref{ghjl}.
\end{proof}

We point out that,  in view of Proposition \ref{linearlambda},  the perturbation $\Lambda^t$ is independent of local section structures of the unperturbed multisection $\Lambda$.

Next we consider the case in which the polyfold $Z$ possesses a boundary $\partial Z$.
\begin{thm} \label{pertmontreal1}
Let $p:W\rightarrow Z$ be a strong polyfold bundle,
where $Z$ has possibly a boundary with corners.  We assume that the sc-smooth
structure of the polyfold $Z$ is based on separable Hilbert spaces.
Assume that $f$ is a
proper (oriented) Fredholm section of $p$, $N$ an auxiliary norm and $U$ an
open neighborhood of  the solution set $S(f)=\{z\in Z\vert \, f(z)=0\}$ so that  the pair $(N,U)$ controls compactness.
Let $\lambda:W\to \Q^+$ be an  $\ssc^+$-multisection satisfying
$N(\lambda(z))<\frac{1}{2}$ for all $z$ and supported in $U$. Then
for $\varepsilon\in (0,\frac{1}{2})$ there exists
$\ssc^+$-multisection $\tau$ satisfying $N(\tau(z))<\varepsilon$ and supported in
$U$ so that $(f,\lambda\oplus \tau)$ is in general position. In
particular, the solution set
$$S(f,\lambda\oplus\tau)=\{z\in Z\vert \, (\lambda \oplus \tau )(f(z))>0\}$$ is an (oriented)  compact branched suborbifold of the polyfold $Z$
with boundary with corners.
\end{thm}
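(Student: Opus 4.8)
The plan is to mimic the proof of Theorem \ref{ghjl}, the only genuinely new ingredient being that one must supply enough additional $\ssc^+$-sections so that the kernels of the perturbed linearizations come into general position to the boundary $\partial X$, exactly as in the single-valued M-polyfold situation treated in \cite{HWZ3}. First I would pass to a local model: choose a strong bundle $P:E\to X$ over an ep-groupoid modeled on separable sc-Hilbert spaces representing $p$, a proper Fredholm section $F:X\to E$ representing $f$, an $\ssc^+$-multisection $\Lambda:E\to\Q^+$ representing $\lambda$, and an auxiliary norm $N^\ast$ on $P$ representing $N$, with $N^\ast(\Lambda)<\frac12$. By Lemma \ref{lemcomactA} the solution set $S(F,\Lambda)=\{x\in X\vert\,\Lambda(F(x))>0\}$ has compact orbit space, and its points are smooth by the regularizing property of $F$ together with the fact that near each such point $\Lambda$ is built from $\ssc^+$-sections.

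Next, for a fixed $x\in S(F,\Lambda)$ I would choose a $G_x$-invariant open neighborhood $U_x$ with its natural $G_x$-action, a local section structure $(a_i)_{i\in I}$ with weights $(\sigma_i)_{i\in I}$, and let $J\subset I$ be the set of indices with $F(x)-a_i(x)=0$. Using that $\mu(g,\cdot):E_x\to E_x$ is a linear isomorphism for $g\in G_x$, I would select smooth linearly independent vectors $e^1,\dots,e^m\in E_x$ so that for every $g\in G_x$ and every $i\in J$ the vectors $\mu(g,e^1),\dots,\mu(g,e^m)$ not only span the cokernel of $(F-a_i)'(x)$ but also render the kernel of the augmented operator transversal to $T^\partial_xX$; that such a finite $G_x$-equivariant choice exists is the boundary analogue of the construction in Section~4 of \cite{HWZ3}, since surjectivity and general position to the corner structure are both finite-codimension conditions achievable by finitely many directions, and equivariance is arranged by averaging over the orbit. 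By Lemma \ref{lem7.9.4} I would realize the $e^j$ by $\ssc^+$-sections $s^j:U_x\to E\vert U_x$ supported in $U_x$ with $N^\ast(s^j(y))$ as small as desired, form the $G_x$-translates $s^j_g(\varphi_g(y))=\mu(\Gamma(g,y),s^j(y))$ and the parametrized sums $s^t_g(y)=\sum_j t_j s^j_g(y)$, and assemble the parametrized local $\ssc^+$-multisection $\Lambda^t_{x}:E\to\Q^+$ exactly as in Remark \ref{rem13.2} and Proposition \ref{prop7.9.6}.

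I would then consider the perturbed Fredholm sections $F^i_g(t,y)=F(y)-a_i(y)-\sum_{j=1}^m t_j s^j_g(y)$ on $\R^m\oplus U_x$, which are proper Fredholm sections of the bundle $E^1\to\R^m\oplus X^1$ over $\R^m\oplus(U_x)^1$ by Theorem~3.9 of \cite{HWZ3}. The analogue of Lemma \ref{lemAA} now asserts that at the special point $(0,x)$ the linearization $DF^i_g(0,x)$ is surjective \emph{and} its kernel is in general position to $\partial(\R^m\oplus X)$: surjectivity is the same computation as in the no-boundary case, and the general-position statement follows from the way the $e^j$ were chosen together with the product structure of the boundary. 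Invoking the openness of surjectivity and of general position to the corner structure, the relevant stability being established in Section~4 of \cite{HWZ3}, these two properties persist for $\abs{t}$ small and $y$ in a possibly smaller $G_x$-invariant neighborhood $V_x\subset U_x$. Using compactness of $\abs{S(F,\Lambda)}$ I would extract finitely many points $x_1,\dots,x_\ell$ whose $\abs{V_{x_k}}$ cover $f^{-1}(0)$, set $\Lambda_t=\Lambda^{t_1}_{x_1}\oplus\cdots\oplus\Lambda^{t_\ell}_{x_\ell}$ with $t=(t_1,\dots,t_\ell)\in\R^N$, and observe that for small $\abs{t}$ the set $M=\{(t,x)\vert\,(\Lambda\oplus\Lambda_t)(F(x))>0\}$ is, locally and on each of its local faces, a finite collection of finite-dimensional submanifolds with boundary with corners, on which the projection $(t,x)\mapsto t$ restricts to smooth maps. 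Applying Sard's theorem to these finitely many maps and all their restrictions to local faces, I would pick a common small regular value $t^\ast$, having arranged from the start (choosing the $e^j$ small) that $N^\ast(\Lambda_{t^\ast})<\varepsilon$. Then $\tau=[\,\cdot\,,\Lambda_{t^\ast}]$ is an $\ssc^+$-multisection supported in $U$ with $N(\tau)<\varepsilon$, the pair $(f,\lambda\oplus\tau)$ is in general position by Definition \ref{DER11} and Proposition \ref{linearlambda}, and Theorem \ref{trans2} identifies $S(f,\lambda\oplus\tau)$ as a compact branched suborbifold of $Z$ with boundary with corners, oriented if $f$ is, the orientation furnished by \cite{HWZ8}. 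The main obstacle I expect is the equivariant and \emph{stable} achievement of general position to $\partial X$ for all local branches $a_i$ and all group elements $g$ simultaneously, together with the bookkeeping needed to apply Sard's theorem face by face so that the perturbed solution set is transversal to every stratum of the corner structure; this amounts to importing and carefully adapting the corner-transversality machinery of \cite{HWZ3} to the functorial, multivalued setting.
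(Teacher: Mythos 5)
Your proposal follows the paper's route almost step for step; the localization, use of Lemma \ref{lemcomactA}, construction of $G_x$-translated parametrized $\ssc^+$-sections, assembly of the parametrized multisection $\Lambda_t$, compactness-and-covering argument, face-by-face Sard, and the final appeal to Theorem \ref{trans2} all coincide with the paper's proof. Two points are glossed over, though neither is fatal. First, the paper is explicit about the extra sections needed for corner transversality: after choosing $e^1,\dots,e^m$ to span the cokernels, it picks $h^{m+1},\dots,h^n$ with $\Span\{h^l\}\oplus T^\partial_xX=T_xX$ and then defines the additional values $e^{i,l,g}=-\mu\bigl(g,(F-a_i)'(x)h^l\bigr)$, so that $(\lambda^l,h^l)$ lies in the kernel of the augmented linearization and hence the kernel projects onto a complement of $T^\partial_xX$; this concrete recipe is what Lemma \ref{lempert00} runs on, whereas your phrase ``achievable by finitely many directions'' leaves the key construction unnamed. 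Second, ``equivariance is arranged by averaging over the orbit'' is not what happens: the multisection is made compatible with morphisms by taking \emph{all} $G_x$-translates $s^j_g$ as local branches with equal weights $1/\sharp G_x$, not by averaging the sections themselves (which would produce a single section and destroy the freedom to fill cokernels branch by branch). With those two points made precise, your argument is the paper's.
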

The proof is a variation of the corresponding proof of Theorem 5.22 in \cite{HWZ3}
for Fredholm sections on M-polyfold bundles.
Before  we start proving the theorem we  recall some notations.  If $x$ is a  point of the M-polyfold $X$, we denote by $d=d(x)$ the degeneracy index from
section \ref{basicresult}.
By ${\mathcal F}^1, \ldots, {\mathcal F}^d$ we abbreviate  the local faces of $X$ at the point $x$.  Every  local face ${\mathcal F}^j$ has the  tangent
space $T_x{\mathcal F}^j$ at the point $x\in X$.  The subset $T_{x_j}^{\partial }X\subset T_xX$ is the intersection $ T^\partial_x X=\bigcap_{1\leq
j\leq d} T_x{\mathcal  F}^j. $ In the case that $x$ is an
interior point, the degeneration index $d(x)$ is equal to $0$ and we   set $T^\partial_x X=T_x X$.
\begin{proof}
Proceeding as in the proof of Theorem \ref{ghjl},  we  let $\lambda:W\to \Q^+$ be an $\ssc^+$-multisection satisfying $N (\lambda )<\frac{1}{2}$.   Since $(N, U)$ controls compactness, the solution set
$S(f,\lambda )$ is a compact subset of $\abs{X}$ in view of Lemma \ref{lemcomactA}. Set $U^*=\pi^{-1}(U)$.
Let  $\Lambda:E\to \Q^+$ be the  $\ssc^+$-multisection  representing  the $\ssc^+$-mulitsection $\lambda:W\to \Q^+$ in the strong polyfold bundle $P:E\to X$. We assume that the proper Fredholm section $F:X\to E$ of $P$ represents the proper Fredholm section $f:Z\to W$ of $p$. Again we may assume that $Z=\abs{X}$ and $W=\abs{E}$.

Fix a solution $x$ belonging to the compact solution set $S(F, \Lambda)$  and take an open neighborhood $U_x\subset X$ of  $x$ which is invariant under the action of the isotropy group $G_x$.  On the neighborhood $U_x$ we have a local system of sections $(a_i)$ for $i\in I$ with associated weights $(\sigma_i)_{i\in I}$, so that
$$\Lambda (F(x))=\sum_{\{i\in I\vert \, F(x)=a_i(x)\}}\sigma_i.$$
Because $\Lambda (F(x))>0$,  there is a subset $J$ of the index set $I$ such that
$F(x)-a_i(x)=0$ for all $i\in J$.

Next we construct the parameterized $\ssc^+$-perturbation of the section $F-a_i$ for every $i\in J$. As in the proof of  Theorem \ref{ghjl},  we choose smooth linearly independent vectors $e^1, \ldots ,e^m\in E$ with the property that for every $g\in G_x$  the vectors $\mu (g, e^1), \ldots ,\mu (g, e^m)$ span all the cokernels of the linearizations $F'(x)-a'_i(x):T_xX\to E_x$  for every $i\in J$.   By  Lemma \ref{lem7.9.4},  there are
$\ssc^+$-sections $s^1, \ldots ,s^m:U_x\to E\vert U_x$ having their supports in $U_x$ and satisfying $s^j(x)=e^j$ for $j=1, \ldots ,m$.
To achieve transversality of the kernels of the linearized perturbed Fredholm sections we choose additional $\ssc^+$-sections $s^{m+1}, \ldots ,s^{N}:U_x\to E\vert U_x$ having their supports in $U_x$. This is done as follows.  First observe that the kernel of the linear map
$$(\lambda, h)\mapsto (F-a_i)'(x)\cdot h+\sum_{j=1}^N\lambda_j\cdot s^j(x)$$
consists of  $\{0\}\oplus \text{ker}(F-a_i)'(x)$ together with the vectors $(\lambda, h)$ which are the solutions of the equation $(F-a_i)'(x)=-\sum_{j=1}^N\lambda_j\cdot s^j(x)$.
Since the space $T_x^{\partial }X$ is of finite codimension in the tangent space $T_xX$, we find  finitely many smooth linearly independent vectors $h^{m+1},\ldots ,h^{n}$ so that
$$
\text{span}\{h^{m+1},\ldots ,h^n\}\oplus T^{\partial}_xX=T_xX.
$$
With this choice of the vectors $h^{m+1},\ldots ,h^n$, we define the smooth vectors $e^{i, l,g}\in E_x$ by
$$e^{i,l, g}:=-\mu (g, (F-a_i)'(x)h^l)$$
for all  $i\in J$, $m+1\leq l\leq n$, and $g\in G_x$.
We extend, using  Lemma \ref{lem7.9.4}, these vectors to $\ssc^+$-sections  having their supports in $U_x$ and label them by $s^{m+1}, \ldots ,s^N$.

Next we introduce  the perturbed sc-smooth section $F_{i}:\R^N\oplus U_x\to E$  by
$$F_{i}(t, y):=F(y)-a_i(y)+\sum_{j=1}^Nt_j\cdot s^j(y),$$
where   $y\in U_x$ and $t=(t_1,\ldots ,t_N)\in \R^N$.
This  is a proper Fredholm section having the following additional properties.
Its  linearization  $F'_{i}(0, x)$ at the point $(0, x)$  is surjective and  its kernel  is transversal to $\R^N\oplus T^{\partial}_xX$ in $\R^N\oplus T_xX$ for every $i\in J$.  Moreover,  for every subset $\tau$ of the set $\{1,\ldots ,d(x)\}$, the linearization $F_i'(0, x) $ restricted to the tangent space $T_{(0,x)}(\R^N\oplus \bigcap_{j\in \tau}{\mathcal F}^j)$ is surjective and the kernel of this restriction is transversal to the subspace $T^{\partial}_{(0,x)}(\R^N\oplus \bigcap_{j\in \tau}{\mathcal F}^j)$.

For $g\in G_x$ and $1\leq j\leq N$,  we define the $\ssc^+$-section $s_g^j:U_x\to E\vert U_x$ by
$$s^j_g (\varphi_g (y)):=\mu (\Gamma (g, y), s^j (y))$$
and the $\ssc^+$-section $s^t_g$ by
$$s^t_{ g}(y):=\sum_{j=1}^Nt_j\cdot s_g^j(y)$$
for $y\in U_x$ and $t=(t_1, \ldots ,t_N)\in \R^N.$
Hence we have  the $\sharp G_x$-many parameterized $\ssc^+$-sections $U_x\to E\vert U_x$ having supports in $U_x$,  which  define the map
$\Lambda_{U_x}^t:E\vert U_x\to \Q^+$ by
$$\Lambda^t_{U_x}(e)=\dfrac{1}{\sharp G_x}\cdot \sharp \{g\in G_x\vert s^t_{g}(P(e))=e\}.$$
As in Proposition  \ref{prop7.9.6} we extend the local $\ssc^+$-multisection $\Lambda^t_{U_x}$ from $E\vert U_x$ to the parametrized $\ssc^+$-multisection
$$\Lambda_x^t:E\to \Q^+.$$
If $i\in J$ and  $g\in G_x$, we introduce the perturbed section $F_{ g}^i:\R^N\times U_x\to E$ by
$$F^i_{g}(t, y)=F(y)-a_i(y)+s^t_{g}(y).$$
It  is a proper Fredholm section which as we show next has the same properties as the section $F_i$.

\begin{lem}\label{lempert00}
The linearization $DF^i_{g}(0, x):\R^N\oplus T_xX\to E_x$ is surjective and its kernel is transversal to $\R^N\oplus T_x^{\partial}X$ in $\R^N\oplus T_xX$. Moreover,  for every subset $\tau$ of the set $\{1,\ldots ,d(x)\}$, the restriction of  $DF^i_{g}(0, x) $ to the tangent space $T_{(0,x)}(\R^N\oplus \bigcap_{j\in \tau}{\mathcal F}^j)$ is surjective and the kernel of this restriction is transversal to the subspace $T^{\partial}_{(0,x)}(\R^N\oplus \bigcap_{j\in \tau}{\mathcal F}^j)$.
\end{lem}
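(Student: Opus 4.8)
The plan is to verify all three assertions directly at the distinguished point $(0,x)$, turning them into linear algebra in $E_x$ and $T_xX$, and to note that this linear algebra coincides with the one already used for the auxiliary section $F_i$ above — the only change being that the perturbation vectors $s^j(x)$ get replaced by $\mu(g,s^j(x))$. First I would record the linearization. Since $g\in G_x$ we have $\varphi_g(x)=x$ and $\Gamma(g,x)=g$, hence $s^j_g(x)=\mu(\Gamma(g,x),s^j(x))=\mu(g,s^j(x))$ for $1\le j\le N$, so that
$$
DF^i_{g}(0,x)\cdot(\delta t,\delta y)=(F-a_i)'(x)\cdot\delta y+\sum_{j=1}^N\delta t_j\,\mu\bigl(g,s^j(x)\bigr),\qquad(\delta t,\delta y)\in\R^N\oplus T_xX .
$$
Recall that $s^j(x)=e^j$ for $j\le m$, while the values $s^{m+1}(x),\ldots,s^N(x)$ enumerate the vectors $e^{i',l,g'}=-\mu\bigl(g',(F-a_{i'})'(x)h^l\bigr)$ with $i'\in J$, $m+1\le l\le n$, $g'\in G_x$, and that $\text{span}\{h^{m+1},\ldots,h^n\}\oplus T^{\partial}_xX=T_xX$, the $h^l$ being chosen (as in the construction of $F_i$, by passing to a partial-quadrant chart sending $x$ to the origin of $[0,\infty)^{d(x)}$) adapted to the local faces ${\mathcal F}^1,\ldots,{\mathcal F}^{d(x)}$ at $x$.

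For surjectivity, the defining property of $e^1,\ldots,e^m$ is exactly that $\mu(g,e^1),\ldots,\mu(g,e^m)$ span the cokernel of $(F-a_i)'(x)$ for every $i\in J$; these are the vectors $s^1_g(x),\ldots,s^m_g(x)$ and the $\delta t$-variable is free, so $DF^i_{g}(0,x)$ is onto $E_x$, and its restriction to $\R^N\oplus\bigcap_{j\in\tau}T_x{\mathcal F}^j$ is still onto because it retains the full $\R^N$-factor. For the transversality of the kernel, given $(\delta t_0,\delta y_0)$ I write $\delta y_0=\sum_l c_l h^l+w$ with $w\in T^{\partial}_xX$. The key identity is that, since $g^{-1}\in G_x$, the vector $e^{i,l,g^{-1}}$ occurs among the $s^j(x)$, say $s^{j(i,l,g^{-1})}(x)=e^{i,l,g^{-1}}$, and then the unit and composition properties of $\mu$ (Section \ref{ssect1.4}) give
$$
\mu\bigl(g,s^{j(i,l,g^{-1})}(x)\bigr)=\mu\bigl(g,-\mu(g^{-1},(F-a_i)'(x)h^l)\bigr)=-\mu\bigl(g\circ g^{-1},(F-a_i)'(x)h^l\bigr)=-(F-a_i)'(x)h^l .
$$
Setting $(\delta t_1)_{j(i,l,g^{-1})}=c_l$ for $m+1\le l\le n$, all other components of $\delta t_1$ equal to $0$, and $\delta y_1=\sum_l c_l h^l$, one checks $DF^i_{g}(0,x)\cdot(\delta t_1,\delta y_1)=0$, so $(\delta t_1,\delta y_1)\in\text{ker}\,DF^i_{g}(0,x)$, while $(\delta t_0,\delta y_0)-(\delta t_1,\delta y_1)=(\delta t_0-\delta t_1,w)\in\R^N\oplus T^{\partial}_xX$; this is precisely transversality of $\text{ker}\,DF^i_{g}(0,x)$ to $\R^N\oplus T^{\partial}_xX$.

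For the corner statement, fix $\tau\subseteq\{1,\ldots,d(x)\}$. Since $\R^N$ is boundaryless, the local faces of $\R^N\oplus\bigcap_{j\in\tau}{\mathcal F}^j$ through $(0,x)$ are $\R^N\oplus\bigl({\mathcal F}^{j'}\cap\bigcap_{j\in\tau}{\mathcal F}^j\bigr)$ for $j'\notin\tau$, hence $T^{\partial}_{(0,x)}(\R^N\oplus\bigcap_{j\in\tau}{\mathcal F}^j)=\R^N\oplus T^{\partial}_xX$. Because the $h^l$ are face-adapted, for $\delta y_0\in\bigcap_{j\in\tau}T_x{\mathcal F}^j$ the coefficients $c_l$ indexing faces in $\tau$ vanish, so $\delta y_1=\sum_l c_l h^l\in\bigcap_{j\in\tau}T_x{\mathcal F}^j$; the pair $(\delta t_1,\delta y_1)$ from the previous paragraph then lies in $\text{ker}\,DF^i_{g}(0,x)\cap\bigl(\R^N\oplus\bigcap_{j\in\tau}T_x{\mathcal F}^j\bigr)$ with $(\delta t_0,\delta y_0)-(\delta t_1,\delta y_1)\in\R^N\oplus T^{\partial}_xX$, which is the asserted transversality; surjectivity of the restriction was already noted. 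I expect the only genuine obstacle to be the bookkeeping around the identity $\mu(g,e^{i,l,g^{-1}})=-(F-a_i)'(x)h^l$ — making sure the perturbation family attached to $g$ contains all the directions needed to split off the image of the cokernel — which rests on $G_x$ being a group and on the associativity and unit axioms for $\mu$, together with the face-adapted choice of the $h^l$.
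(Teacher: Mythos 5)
Your proof takes essentially the same route as the paper's: compute $DF^i_g(0,x)$, observe that the $\mu(g,e^j)$, $j\le m$, supply the cokernel of $(F-a_i)'(x)$, and use the identity $\mu(g,e^{i,l,g^{-1}})=-(F-a_i)'(x)h^l$ (coming from the group law for $\mu$) to exhibit elements $(\lambda^l,h^l)$ of the kernel whose $X$-components $h^l$ span a complement of $T^\partial_xX$. The paper's proof stops after exhibiting these kernel elements and disposes of the corner statement with ``proved in a similar way''; you fill in exactly the details that this phrase hides, namely that one should take the $h^l$ adapted to a partial-quadrant chart so that for each $\tau$ the subcollection $\{h^l : l\notin\tau\}$ lies in $\bigcap_{j\in\tau}T_x{\mathcal F}^j$ and spans a complement of $T^\partial_xX$ there, and that $T^\partial_{(0,x)}(\R^N\oplus\bigcap_{j\in\tau}{\mathcal F}^j)=\R^N\oplus T^\partial_xX$. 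Both points are correct and are the natural way to complete the paper's sketch.

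One spot where you are slightly too terse is the surjectivity of the restriction of $DF^i_g(0,x)$ to $\R^N\oplus\bigcap_{j\in\tau}T_x{\mathcal F}^j$: ``retains the full $\R^N$-factor'' does not by itself explain why restricting $\delta y$ to a proper subspace of $T_xX$ costs nothing. The point is that the image of the restriction contains $(F-a_i)'(x)T^\partial_xX$, the vectors $\mu(g,e^j)$ for $j\le m$, and also $(F-a_i)'(x)h^l=-\mu(g,e^{i,l,g^{-1}})$ for $m+1\le l\le n$ (all reached through the free $\delta t$ variable); since $T^\partial_xX\oplus\operatorname{span}\{h^l\}=T_xX$, this already gives $(F-a_i)'(x)T_xX+\operatorname{span}\{\mu(g,e^j)\}=E_x$. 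Including this one sentence would close the small gap; otherwise the argument is complete and matches the paper's intent.
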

\begin{proof}
Fix $i\in J$ and $g\in G_x$. In view of the fact that $\text{span}\{h^{m+1},\ldots ,h^n\}\oplus T^{\partial }_xX=T_xX$, it suffices to show that for every vector $h^l$, there is a point $\lambda^l\in \R^N$ such that $(\lambda^l, h^l)\in \R^N\oplus T_xX$ belongs to the kernel of the linearization $(F^i_g)'(0, x)$.  Recall that vectors $e^{i, l,h}$ for $i\in J$, $m+1\leq l\leq n$ and $h\in G_x$ are denoted as $s^j(x)$ with $m+1\leq j\leq N$. Hence given $m+1\leq l\leq n$  there is an index $m+1\leq j\leq N$ such that $e^{i, l, g^{-1}}=s^j(x)$.
By definition,   $e^{i,l, g^{-1}}=\mu (g^{-1}, (F-a_i)'(x)h^l)$ so that
\begin{equation*}
\begin{split}
-s_g^j(x)&=-\mu (g, s^j (x))=\mu (g,\mu (g^{-1}, (F-a_i)'(x)h^l))\\
&=\mu (g^{-1}\circ g,  (F-a_i)'(x)h^l)=(F-a_i)'(x)h^l.
\end{split}
\end{equation*}
Hence  if $\lambda^l\in \R^N$ is defined by $\lambda^l_k=1$ if $k=j$ and
$\lambda^l_k=0$ for $k\neq j$, the pair $(\lambda^l, h^l)$ belongs to the kernel of the linearizations
$(F^i_g)'(0, x)$ as claimed.
 The remaining part of the lemma is proved in a  similar way.
 \end{proof}

\begin{lem}\label{lemmontreal1}
There exist a $G_x$-invariant open neighborhood $V_x\subset U_x$ and a parametrized $\ssc^+$-multisection $\Lambda^t_x:E\to \Q^+$ which is supported in $U_x$ and linear in $t$ such that for $\varepsilon>0$ sufficiently small the solution set
$$S_{x,\varepsilon}=\{ (t, y)\in \R^N\times X^1\vert \, \text{$(\Lambda \oplus \Lambda^t_x)(F(y))>0$, $\abs{t}<\varepsilon, $\text{and} $y\in V_x$}\}$$
has the following properties. For every $i\in I$ and $g\in G_x$ and $(t, y)\in S_{x,\varepsilon}$ solving the equation
$$F^{i}_{g}(t, y)=F(y)-a_i(y)+s^t_{g}(y)=0$$
the following holds true.
\begin{itemize}
\item[(1)]  The linearization $DF^i_{g}(t, y)$  is  surjective
\item[(2)] The kernel of the linearization $DF^i_{g}(t,y)$ is transversal to the subspace $T^{\partial }_{(t, y)}(\R^N\times X)$ of the tangent space $T_{(t, y)}(\R^N\times X)$
\item[(3)] For every subset $\sigma$ of $\{1,\ldots d(y)\}$,
the  linearization of $DF^i_{g}(t, y)$  restricted to the tangent space ${T_{(t
,y)}(\R^N \times  \bigcap_{j\in \sigma} \mathcal F}^j)$ is
surjective,  and the kernels of these  restrictions are   transversal to the
subspace $T^{\partial}_{(t, y)}(\R^N\times  \bigcap_{j\in \sigma}
{\mathcal F}^j)$ in the tangent space $T_{(t, y)}(\R^N\oplus
\bigcap_{j\in \sigma} {\mathcal F}^j)$.
\end{itemize}
\end{lem}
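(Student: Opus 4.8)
The statement is an open-condition (robustness) result: surjectivity and the various transversality conditions established at the special point $(0,x)$ in Lemma \ref{lempert00} persist at nearby solutions of the perturbed equations. The plan is to first fix the data from Lemma \ref{lempert00}, namely the finite set $J\subset I$ of indices with $F(x)-a_i(x)=0$, the group $G_x$, the smooth vectors $e^1,\dots,e^N$, and the $\ssc^+$-sections $s^j_g$, and to run the argument uniformly over the finitely many pairs $(i,g)\in J\times G_x$ and the finitely many subsets $\sigma\subseteq\{1,\dots,d(x)\}$. Since all the maps $F^i_g$ are proper sc-Fredholm sections of the bundle $E^1\to\R^N\oplus X^1$ (by Theorem 3.9 in \cite{HWZ3}, exactly as in the proof of Theorem \ref{ghjl}), the crucial point is the semicontinuity of the cokernel: if $DF^i_g(0,x)$ is surjective then $DF^i_g(t,y)$ remains surjective for $(t,y)$ in a neighborhood of $(0,x)$ in $\R^N\oplus X^1$. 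This is the content of the perturbation argument of section 4.2 in \cite{HWZ3}, which I would invoke essentially verbatim; the only new bookkeeping is that there are now $\sharp G_x$-many sections indexed by $g$, but $G_x$ is finite, so one simply takes the (finite) intersection of the resulting neighborhoods.

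\textbf{Key steps, in order.} First I would choose, for each $(i,g)\in J\times G_x$, a $G_x$-invariant open neighborhood $W_{i,g}\subset U_x$ of $x$ and an $\varepsilon_{i,g}>0$ so that $DF^i_g(t,y)$ is surjective whenever $F^i_g(t,y)=0$, $|t|<\varepsilon_{i,g}$, and $y\in W_{i,g}$; this uses the local sc-Fredholm theory from \cite{HWZ3}, which guarantees that the set of solutions near $(0,x)$ is (locally) a finite-dimensional manifold and that surjectivity is an open condition along it. Second, for the \emph{boundary} statements, I would note that each local face $\mathcal F^j$ of $X$ at $x$ is itself (locally) an M-polyfold, that $\R^N\oplus\bigcap_{j\in\sigma}\mathcal F^j$ is an M-polyfold with a well-defined tangent space, and that the restriction of $DF^i_g(0,x)$ to $T_{(0,x)}(\R^N\oplus\bigcap_{j\in\sigma}\mathcal F^j)$ is surjective with kernel transversal to $T^\partial_{(0,x)}(\R^N\oplus\bigcap_{j\in\sigma}\mathcal F^j)$ by Lemma \ref{lempert00}; then the same openness argument, applied to the restricted Fredholm problem on this sub-M-polyfold, yields a neighborhood on which all three conditions (1)--(3) persist. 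Since there are only finitely many pairs $(i,g)$ and finitely many subsets $\sigma$, I would set $V_x=\bigcap_{i,g}W_{i,g}$ (shrunk to remain $G_x$-invariant, which is automatic since each $W_{i,g}$ is) and $\varepsilon=\min_{i,g}\varepsilon_{i,g}$. Third, I would define $\Lambda^t_x:E\to\Q^+$ exactly as in the proof of Theorem \ref{ghjl}: set $\Lambda^t_{U_x}(e)=\tfrac{1}{\sharp G_x}\cdot\sharp\{g\in G_x\mid s^t_g(P(e))=e\}$ on $E\vert U_x$ and extend to all of $E$ by the recipe of Proposition \ref{prop7.9.6}; this is linear in $t$ and supported in $U_x$ by construction. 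Finally, I would verify that on the solution set $S_{x,\varepsilon}$ the local branches are precisely the sets $\{(t,y)\mid F(y)-a_i(y)+s^t_g(y)=0\}$ for those $(i,g)$ with $s^t_g(F(y)-a_i(y))$-type incidence, so that conditions (1)--(3) for the linearizations $DF^i_g(t,y)$ are exactly the conditions guaranteeing that $S_{x,\varepsilon}$ is a finite union of finite-dimensional submanifolds in good/general position to the boundary.

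\textbf{Main obstacle.} The genuinely delicate point is \emph{not} the finite group bookkeeping but the openness of surjectivity for sc-Fredholm operators: the linearizations $DF^i_g(t,y)$ do not depend norm-continuously on the base point $(t,y)$ as operators (this is exactly the subtlety flagged elsewhere in the paper in connection with determinant bundles), so "surjective at $(0,x)$ implies surjective nearby" cannot be read off from a naive continuity argument. One must instead use the local normal form / contraction-germ structure of sc-Fredholm sections from \cite{HWZ3}, section 4.2, which provides a finite-dimensional reduction near $(0,x)$ in which the reduced map is genuinely smooth and its differential does vary continuously; surjectivity of the full linearization is then equivalent to surjectivity of the reduced differential, and the latter is an open condition. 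I would therefore lean on that machinery rather than reproving it. The second, lesser, obstacle is checking that the transversality-to-the-corner-strata conditions (2) and (3) reduce, stratum by stratum, to ordinary transversality statements on the sub-M-polyfolds $\R^N\oplus\bigcap_{j\in\sigma}\mathcal F^j$ — here one must make sure the tangent spaces $T^\partial_{(t,y)}$ behave correctly under the restriction, but this is local linear algebra combined with the corner-recognition results (Proposition 3.13 of \cite{HWZ2}) already used in Lemma \ref{groupoidlem1}, and I expect it to go through with the same estimates as in the proof of Theorem 5.22 in \cite{HWZ3}.
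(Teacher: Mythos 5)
Your proposal follows essentially the same route as the paper: start from Lemma \ref{lempert00} at $(0,x)$, invoke the openness/persistence argument for sc-Fredholm sections from \cite{HWZ3} (the paper cites Lemma 5.23 there, you cite the surrounding section 4.2 machinery — same point), obtain for each $(i,g)$ a neighborhood $V_{i,g}$ and $\varepsilon_{i,g}$ on which conditions (1)--(3) persist, and conclude by intersecting over the finitely many $(i,g)$ and shrinking to a $G_x$-invariant $V_x$. You also correctly flag the genuine subtlety — that surjectivity must be propagated via the contraction-germ normal form rather than naive operator-norm continuity — which is exactly the reason the paper defers to \cite{HWZ3}.
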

\begin{proof}
We already know form Lemma \ref{lempert00} that the conclusions of the lemma  hold true at the special point $(t, y)=(0, x)$.
Working in local coordinates as in the proof of Lemma 5.23 in \cite{HWZ3},   we  find for every $i\in J$ and $g\in G_x$ a positive number  $\varepsilon_{i,g}$ and an open neighborhood
$V_{i,g}\subset U_x$ of $x$ such that the conclusions (1)-(3)  hold for $(t, y)\in \R^N\oplus V_{i,g}$ if  $\abs{t}<\varepsilon_{i,g}$.  Then the lemma follows by taking for the set $V_x$ a $G_x$-invariant open neighborhood of $x$ which is contained in the intersection of the  sets $V_{j,g}$ and choosing  for $\varepsilon$ a  positive number smaller than the  numbers $\varepsilon_{i,g}$.
\end{proof}

Finally we can finish  the proof of Theorem \ref{pertmontreal1}.

By assumption the solution set  $S(f, \lambda)$ of the proper Fredholm section $f$ of $p$ is a compact subset of the orbit space $\abs{X}$.  Consequently, there exist finitely many solutions $x_1, \ldots ,x_m$ belonging to $S(F, \Lambda)$ so that for the corresponding open neighborhoods $V_{x_1}, \ldots ,V_{x_m}$ in $X$,
the sets $\abs{V_{x_1}}, \ldots ,\abs{V_{x_m}}$ in the orbit space $\abs{X}$ cover
$S(f, \lambda ))$.  Then we define the parametrized $\ssc^+$-multisection  $\Lambda_t$ as the sum
$$\Lambda_t=\Lambda^{t_1}_{x_1}\oplus \cdots \oplus \Lambda^{t_m}_{x_m}$$
where $t=(t_1, \ldots ,t_m)\in \R^{N_1}\times \cdots \R^{N_m}=\R^{N}$  with  $N=N_1+\cdots +N_m$.

Using Lemma \ref{lemmontreal} one concludes that the solution set
$$S_{\varepsilon}=\{ (t, y)\in \R^N\times X^1\vert \, \text{$(\Lambda \oplus \Lambda_t)(F(y))>0$ and $\abs{t}<\varepsilon$}\}$$
consits of a finite collection of finite dimensional manifolds with boundary with corners. Now one studies the projection map $S_{\varepsilon}\to \R^l$ defined by $(t, x)\mapsto t$ and finds a small common regular value $t^*$ for various restrictions of the map to intersections of local faces. For this parameter value $t^*$, the pair $(F, \Lambda \oplus \Lambda_{t^*})$ is in general position and the associated solution space has an orbit space which is a compact branched suborbifold  of the polyfold $Z\subset \abs{X}$ in general position to the boundary. In particular, the associated solution set
$S(f, (\lambda \oplus \vartheta_{t^*})=\{z\in Z\vert \,  (\lambda \oplus \vartheta_{t^*})(f(z))>0\}$ is a compact branched orbifold with boundary with corners. This finishes the proof of
Theorem \ref{pertmontreal1}.
\end{proof}




The following result is proved along the lines of the previous result. In contrast to Theorem \ref{pertmontreal1}, we impose conditions on the Fredholm sections at those   solutions which are located at the boundary $\partial X$, while the perturbation has its support away from the boundary.

\begin{thm}\label{pertzxc}
Let $p:W\rightarrow Z$ be a strong polyfold bundle, where $Z$ has a
boundary with corners.  We assume that the sc-smoothness
structure of the polyfold $Z$ is based on separable Hilbert spaces. Assume that $f$ is a proper (oriented) Fredholm section
of $p$ and  $N$ an auxiliary norm and $U$ an open neighborhood of  the solution set
$S(f)=\{z\in Z\vert \, f(z)=0\}$
so that  the pair $(N,U)$ controls compactness. Let $\lambda$ be a
$\ssc^+$-multi-section supported in $U$ and satisfying $N(\lambda(z))<\frac{1}{2}$ for all $z$.
Assume that $(f,\lambda)$ is in good position
at the boundary. Then, given $\varepsilon<\frac{1}{2}$,  there
exists an $\ssc^+$-multisection $\tau$ satisfying  $N(\tau(z))<\varepsilon$
and supported in $U$ so that $\tau$ is trivial at the boundary and the pair
$(f,\lambda\oplus \tau)$ is in good position. In particular,  the solution set
$S(f,\lambda\oplus\tau)$ is an (oriented)  compact branched suborbifold of $Z$ with
boundary with corners.
\end{thm}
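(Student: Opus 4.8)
The plan is to mimic the proof of Theorem \ref{pertmontreal1} almost line for line, the one new feature being that the perturbation gets built only near the \emph{interior} solutions, so that its support avoids $\partial X$ and it is automatically trivial at the boundary. First I would pass to a local model: pick a strong bundle $P:E\to X$ over an ep-groupoid representing $p$, a proper Fredholm section $F:X\to E$ representing $f$, and an $\ssc^+$-multisection $\Lambda:E\to\Q^+$ representing $\lambda$, assuming $Z=\abs{X}$ and $W=\abs{E}$. Since $(N,U)$ controls compactness and $N^\ast(\Lambda)=N(\lambda)<\tfrac12$, Lemma \ref{lemcomactA} gives that $S(F,\Lambda)=\{x\in X\mid\Lambda(F(x))>0\}$ is compact; set $U^\ast=\pi^{-1}(U)$, noting that $S(F,\Lambda)\subset U^\ast$ because $\Lambda$ is supported in $U^\ast$. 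As the degeneracy index is lower semicontinuous, $\partial X$ is closed, hence $K_\partial:=S(F,\Lambda)\cap\partial X$ is compact, consisting of smooth points at which, by hypothesis and Proposition \ref{linearlambda}, the linearization $F'_\Lambda$ is surjective and in good position to the corner structure of $X$.

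Next I would use that surjectivity together with being in good position is an open condition (this is exactly the stability argument of Section~4.2 of \cite{HWZ3} already invoked in Theorems \ref{ghjl} and \ref{pertmontreal1}) to produce a saturated open neighborhood $\mathcal{O}$ of $K_\partial$ on which $(F,\Lambda)$ is in good position, and then shrink to $\overline{\mathcal{O}''}\subset\mathcal{O}$ with $\mathcal{O}''$ still an open neighborhood of $K_\partial$. The set $K:=S(F,\Lambda)\setminus\mathcal{O}''$ is then compact and meets $\partial X$ nowhere, so each $x\in K$ has $d(x)=0$; since $\{d=0\}$ is open, around each such $x$ I can choose a $G_x$-invariant neighborhood $V_x\subset U^\ast$ with $\overline{V_x}\cap\partial X=\emptyset$. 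By compactness finitely many of these, say $V_{x_1},\dots,V_{x_m}$, satisfy $\abs{\mathcal{O}''}\cup\bigcup_j\abs{V_{x_j}}\supset S(f,\lambda)$. On each $V_{x_j}$ I run the construction from the proofs of Theorems \ref{ghjl} and \ref{pertmontreal1}: take the local section structure $(a_i)_{i\in I}$ of $\Lambda$ near $x_j$, the indices with $F(x_j)-a_i(x_j)=0$, smooth vectors whose $G_{x_j}$-orbits under $\mu$ fill the cokernels of all the relevant $(F-a_i)'(x_j)$, extend them by Lemma \ref{lem7.9.4} to $\ssc^+$-sections supported in $V_{x_j}$ with $N^\ast$-size $<\varepsilon/m$, symmetrize over $G_{x_j}$, and obtain a parametrized $\ssc^+$-multisection $\Lambda^{t_j}_{x_j}$ supported in $V_{x_j}$, linear in $t_j$, with surjective linearizations at $(0,x_j)$ by Lemma \ref{lemAA}.

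With $\Lambda_t:=\Lambda^{t_1}_{x_1}\oplus\cdots\oplus\Lambda^{t_m}_{x_m}$, its support lies in $\bigcup_j\overline{V_{x_j}}$, which is disjoint from $\partial X$; hence $\Lambda_t$ is trivial at the boundary and $N^\ast(\Lambda_t)<\varepsilon$. I would then verify that $(F,\Lambda\oplus\Lambda_t)$ is in good position at all its solutions for $\abs{t}$ small: on $X\setminus\bigcup_j\overline{V_{x_j}}$ one has $\Lambda\oplus\Lambda_t=\Lambda$, so near $\partial X$ no solutions are created and the boundary solutions stay in good position by hypothesis; on $\abs{\mathcal{O}''}$ good position of $(F,\Lambda)$ persists for small $t$ by the same openness argument; and on the $V_{x_j}$ the new local branching structure has surjective linearizations at the solutions, good position to corners being automatic at interior points. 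Finally the set $M=\{(t,x)\mid(\Lambda\oplus\Lambda_t)(F(x))>0,\ \abs{t}\text{ small}\}$ is locally a finite union of finite-dimensional submanifolds with boundary with corners, and restricting the projection $(t,x)\mapsto t$ to the local faces and applying Sard's theorem yields an arbitrarily small common regular value $t^\ast$. For that value $\Theta=(\Lambda\oplus\Lambda_{t^\ast})\circ F$ is a branched ep-subgroupoid of $X$ with boundary with corners, so by Theorem \ref{trans2} the solution set $S(f,\lambda\oplus\tau)$, with $\tau$ the $\ssc^+$-multisection on $p$ represented by $\Lambda_{t^\ast}$, is a compact branched suborbifold of $Z$ with boundary with corners; $\tau$ has support in $U$, satisfies $N(\tau)<\varepsilon$ and is trivial at $\partial Z$; and in the oriented case the orientation is obtained from \cite{HWZ8} exactly as in Theorem \ref{trans2}. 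The hard part will be the bookkeeping in the second paragraph: cleanly separating the boundary part of the solution set, where one does nothing, from the compact interior part, and covering the latter by neighborhoods whose closures miss $\partial X$, so that the resulting perturbation is genuinely trivial near the boundary while still restoring surjectivity everywhere the cokernels had to be filled.
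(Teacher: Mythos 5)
Your proposal is correct and follows the same strategy as the paper: perturb as in Theorems \ref{ghjl} and \ref{pertmontreal1}, but build the perturbation only on interior neighborhoods whose closures avoid $\partial X$, relying on the hypothesis of good position at the boundary (plus its openness) to handle the boundary solutions untouched. The paper's own proof is a one-paragraph sketch that asserts exactly this; your write-up just supplies the covering bookkeeping (the split of $S(F,\Lambda)$ into a boundary collar $\mathcal{O}''$ and a compact interior remainder $K$, the choice of $G_x$-invariant $V_{x_j}$ with $\overline{V_{x_j}}\cap\partial X=\emptyset$) that the paper leaves implicit.
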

\begin{proof}
The proof is straight forward.  By assumption,  the pair $(f, \lambda)$ is already  transversal and in
good position to the boundary so that the solution space of
$(f,\lambda)$ near $\partial Z$ is already a branched orbifold. Then
we can perturb the $\ssc^+$-multisection $\lambda$ by an arbitrarily
small $\ssc^+$-multisection $\tau$ by the same argument as in the proof of
Theorem \ref{ghjl}, where,  in addition,  $\tau$ is trivial  near
$\partial Z$, so that  the pair $(f,\lambda\oplus \tau)$ is tansversal and,
by construction,  still in good position at solutions $x\in\partial
Z$.
\end{proof}

Theorem \ref{pertzxc} and variants thereof are
important in the `coherent perturbation theory' used in symplectic
field theory, where one deals simultaneously with infinitely many
Fredholm problems and where the boundaries are explained as products (or
fibered products) of Fredholm problems. In this
case one has an algorithm how the data are  being perturbed which
defines inductively perturbations on the boundary so that  the
problem at the boundary is already in good position.  Then one extends the
perturbation by keeping compactness and transversality.

\subsection{Invariants}
The results above allow to define invariants. The first result is
an abstract version of the argument  used in order  to define the Gromov-Witten
invariants in  \cite{HWZ4}.

\begin{thm}\label{o1}
Assume that $f$ is a proper, oriented Fredholm section of the strong
polyfold bundle $p:W\rightarrow Z$ and $\partial Z=\emptyset$.
Then
there exists a well-defined map
$$
\Phi_f:H^\ast_{dR}(Z)\rightarrow
{\mathbb R}$$
defined on the deRham cohomology group $H^\ast_{dR}(Z)$
 so that the following holds. If $(N,U)$ is a pair
controlling compactness, where $N$ is an auxiliary norm and $U$ a
corresponding open neighborhood of the solution set  $S(f)=\{z\in Z\vert \, f(z)=0\}$,  then for any
$\ssc^+$-multisection $\lambda$ with support in $U$ and satisfying
$N(\lambda(z))<1$ for all $z$ and such that the pair  $(f,\lambda)$ is  transversal,  the following representation of $\Phi_f$ holds,
\begin{equation}\label{opl}
\Phi_f([\omega]) := \int_{(S(f,\lambda),\lambda_f)}\omega.
\end{equation}
Here the pair $(S(f, \lambda), f_{\lambda})$ is a compact oriented branched suborbifold of $Z$ in which
$S(f, \lambda)=\{z\in Z\vert \, \lambda (f(z))>0\}$  is the  solution set  equipped with the weighting function $\lambda_f (z):=\lambda (f(z)).$
\end{thm}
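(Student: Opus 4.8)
The strategy is to reduce everything to the branched integration theory over ep-groupoids summarized in Section \ref{branchedint} (Theorems \ref{th1}, \ref{thmst0}, \ref{th4}) together with the perturbation theory of Theorem \ref{ghjl}, and to check two independences: independence of the chosen generic perturbation $\lambda$, and independence under proper homotopies of oriented Fredholm sections. First I would fix a pair $(N,U)$ controlling compactness and, given an $\ssc^+$-multisection $\lambda$ supported in $U$ with $N(\lambda)<1$ making $(f,\lambda)$ transversal, invoke Theorem \ref{trans1} to see that $(S(f,\lambda),\lambda_f)$ is a compact oriented branched suborbifold of $Z$ without boundary. Hence by Theorem \ref{thmst1} the quantity $\int_{(S(f,\lambda),\lambda_f)}\omega$ is well-defined for a closed sc-differential $n$-form $\omega$ representing $[\omega]\in H^n_{dR}(Z)$, and since $\partial S=\emptyset$, Stokes' theorem (the boundary-free case of Theorem \ref{thmst1}) shows $\int_{(S,\lambda_f)}d\eta=0$; thus the value depends only on the cohomology class $[\omega]$, not on the representative. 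This gives a candidate map $[\omega]\mapsto \int_{(S(f,\lambda),\lambda_f)}\omega$ for each admissible $\lambda$; it remains to show it is independent of $\lambda$.

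For independence of the perturbation, I would use the standard homotopy argument adapted to the polyfold setting, exactly the mechanism set up just before Theorem \ref{pertmontreal1}: given two admissible transversal multisections $\lambda_0,\lambda_1$ with $N(\lambda_i)<1$, first shrink to $N(\lambda_i)<\frac12$ (using that if $N(\lambda)<1$ one can scale, or more carefully reduce to perturbations obtained from Theorem \ref{ghjl}), form the convex homotopy $\lambda_\alpha$ and then use Theorem \ref{ghjl} on the pull-back bundle $\pi^\ast p$ over $[0,1]\times Z$ to find a further small $\ssc^+$-multisection $\tau$ so that the pair $(\widehat f,\widehat\lambda\oplus\tau)$ on $[0,1]\times Z$ is in general position, with compact solution set $S$; here $\widehat f(t,z)=f(z)$, $\widehat\lambda(t,z)=\lambda_\alpha(z)$. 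By Theorem \ref{trans2} (applied with $\partial Z=\emptyset$, so the boundary of $[0,1]\times Z$ is just $\{0,1\}\times Z$) the set $(S,w)$ is a compact oriented branched suborbifold with boundary, and $\partial S = S_1 - S_0$, where $S_i$ is the (oriented) generic solution set for the $i$-th endpoint perturbation. Then Stokes' theorem (Theorem \ref{thmst1}) gives, for a closed $n$-form $\omega$ pulled back from $Z$,
$$
\int_{(S_1,w_1)}\omega - \int_{(S_0,w_0)}\omega = \int_{(\partial S,w)}\omega = \int_{(S,w)}d(\pi^\ast\omega) = 0,
$$
since $\pi^\ast\omega$ is closed. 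Thus the value is independent of the chosen transversal $\lambda$, and $\Phi_f$ is well-defined.

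For the homotopy invariance statement $\Phi_{f_0}=\Phi_{f_1}$ under a proper homotopy $t\mapsto f_t$ of oriented Fredholm sections, I would run the same argument one level up: form $\widehat f(t,z)=f_t(z)$, a proper oriented Fredholm section of $\pi^\ast p$ over $[0,1]\times Z$ (properness of the solution set following from properness of the homotopy, as in the definition of proper homotopy used for Theorem \ref{poil}), choose an auxiliary norm and neighborhood controlling compactness, perturb by Theorem \ref{ghjl} to a transversal $\widehat\lambda$ whose restrictions to the two ends are transversal perturbations for $f_0$ and $f_1$ respectively, get a compact oriented branched suborbifold $(S,w)\subset[0,1]\times Z$ with $\partial S = S_1-S_0$, and apply Stokes once more to conclude $\int_{(S_1,w_1)}\omega=\int_{(S_0,w_0)}\omega$ for closed $\omega$; evaluating through the representation formula \eqref{opl} for each endpoint gives $\Phi_{f_0}([\omega])=\Phi_{f_1}([\omega])$.

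\textbf{Main obstacle.} The routine part is Stokes' theorem and the cohomological bookkeeping; the genuinely delicate points are (i) the \emph{orientation conventions}: one must check that the boundary orientation on $\partial S$ induced from the oriented branched suborbifold $S\subset[0,1]\times Z$ really decomposes as $S_1 - S_0$ with the sign dictated by the orientation of the Fredholm section and the extra $\R$-factor, which rests on the determinant-bundle constructions of \cite{HWZ8} recalled in the appendix and on the fact (used in the proof of Theorem \ref{trans2}) that local solution sets carry orientations compatible with morphisms; and (ii) verifying that the perturbation $\tau$ produced by Theorem \ref{ghjl} on $[0,1]\times Z$ can be taken to \emph{restrict} to the prescribed admissible perturbations at the two ends while keeping $N$-smallness and support in the right neighborhood — i.e. a relative version of the perturbation theorem. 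This is exactly the kind of relative perturbation handled in \cite{HWZ3} and is the step I would spell out most carefully; everything else is a direct transcription of the M-polyfold arguments into the ep-groupoid/polyfold language via Theorem \ref{th4}.
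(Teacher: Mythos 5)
Your proposal matches the paper's proof in both strategy and substance: invoke the transversality theorem to obtain a compact oriented branched suborbifold, use the branched integration theory of \cite{HWZ7} for well-definedness of the integral, and prove independence of $\lambda$ by a cobordism over $[0,1]\times Z$ built from a parametrized $\ssc^+$-multisection together with Stokes' theorem. Your additional observation that Stokes with $\partial S=\emptyset$ is needed to see the integral depends only on $[\omega]$ (and not on the chosen cocycle representative) is a point the paper leaves implicit, and your final paragraph on homotopy invariance of $\Phi_f$ under proper homotopies is not actually part of Theorem \ref{o1} as stated — it belongs to the surrounding discussion — but it is consistent with the paper's treatment.
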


\begin{proof}
In view of Theorem \ref{trans2}
the map $z\mapsto  \lambda_f(z)$ defines  a compact oriented branched
suborbifold of $Z$.  Hence, by the results in \cite{HWZ7}, the integrals $\int_{(S(f, \lambda), \lambda_f)}\omega$ are well defined real numbers. In order to show that they do not depend on the choice of $\lambda$ we choose a second transversal pair $(f, \lambda')$ whose
$\ssc^+$-multisection $\lambda'$ is supported in $U$ and view $f$ as a Fredholm section of the strong polyfold bundle $W\to [0,1]\times Z$. Now we take a parametrized $\ssc^+$-multisection
$\lambda_t$ having its support in $[0,1]\times U$ such that the pairs $(f, \lambda_t)$ are transversal and connect the $\ssc^+$-multisection $\lambda_0=\lambda$ with the  $\ssc^+$-multisection $\lambda_1=\lambda'$. The disjoint union $S(f, \lambda')\coprod -S(f, \lambda)$ is the boundary  of the solution set $S(f, \lambda_t)=\{(t, z)\in [0,1]\times Z\vert \, \lambda_t (z)>0\}$ which is a compact oriented suborbifold of $[0,1]\times Z$. Stokes' theorem from \cite{HWZ7} leads to
$$\int_{(S(f, \lambda), \lambda_f)}\omega=\int_{(S(f, \lambda'), \lambda'_f)}\omega .$$
Hence the right hand side of \eqref{opl} is indeed independent of the choice of the transversal pair $(f, \lambda)$ in $U$ and hence $\Phi_f ([\omega ])$ is well defined by the formula  \eqref{opl}.  The proof of Theorem \ref{o1} is complete.
\end{proof}
Note that we have a distinguished $0$-form on the polyfold $Z$, namely the constant
$1$-function. Assume that $f$ is a proper and oriented  Fredholm section of the strong polyfold bundle $W\to Z$ and has Fredholm
index $0$. Then $\Phi_f([1])$ is a rational number and is  a version of a  degree for oriented proper Fredholm
sections of strong polyfold bundles.

The application of Theorem \ref{o1} to the  Gromov-Witten invariants sketched in the introduction is as follows. We consider the polyfold $Z$ whose elements are the equivalence classes $[(S, j, M, D, u)]$ introduced in Definition \ref{defspace} and the strong polyfold bundle $:W\to Z$ in Theorem \ref{natural}.  Let $\overline{\partial}_J$ be the sc-smooth component-proper Fredholm section of the bundle according to Theorem \ref{compproper}. Given the homology class $A\in H_2(Q, \Z)$ of the closed symplectic manifold $Q$ and two integers $g, k\geq 0$, we look at the polyfold $Z_{g,k}\subset Z$ of equivalence classes $[(S,j, M, D, u)]$ in which the nodal Riemann surface $S$ has arithmetic genus $g$ and is equipped with $k$ marked points, and the map $u$ represents the homology class $A$ of the manifold $Q$. The evaluation maps $\text{ev}_i:Z_{g,k}\to Q$ and $\sigma:Z_{g,k}\to \overline{\mathcal M}_{g,m}$ are defined in the introduction and allow to pull back the differential forms  on $Q$ and on $\overline{\mathcal M}_{g,m}$ to obtain sc-differential forms on the polyfold $Z_{g,k}$.  Wedging combinations of these forms together, we can
integrate over the oriented solution set  $(\mathfrak{M}, w)=(S(\overline{\partial}_{J},\lambda),\lambda_{\overline{\partial}_{J}})$ assuming that the pair $(\overline{\partial}_{J}, \lambda)$ is transversal. Here
$$\mathfrak{M}=S(\overline{\partial}_{J},\lambda)=\{z\in Z_{g,k}\vert \, \lambda (\overline{\partial}_{J}z)>0\}$$
and the weight function $w=\lambda_{\overline{\partial}_J}:Z_{g,k}\to \Q^+$ is defined by
$\lambda_{\overline{\partial}_J}(z)=\lambda (\overline{\partial}_J z).$
The pair $(\mathfrak{M}, w)$ is an oriented compact branched suborbifold of the polyfold $Z_{g,k}$. Hence, in view of the abstract Theorem \ref{o1}, the Gromov-Witten invariants can be constructed by means of the map
$$
\Phi^Q_{A,g,m}:H^\ast(Q)^{\otimes k}\otimes H_\ast(\overline{\mathcal
M}_{g,m}) \rightarrow \R
$$
defined by the formula
$$
\Psi^Q_{A,g,k}([\alpha_1],\ldots, [\alpha_k];[\tau])=
\int_{(\mathfrak{M},w)} \text{ev}_1^\ast(\alpha_1)\wedge\ldots \wedge
\text{ev}_k^\ast(\alpha_k)\wedge \sigma^\ast(PD(\tau))
$$
where $\alpha_1, \ldots ,\alpha_k\in H^*(Q)$ and where $\tau \in H_{\ast}(\overline{M}_{g,k})$ and where $PD$ denotes the Poincar\'e dual.  That the formula is independent of the choice of the $\ssc^+$-multisection $\lambda$ follows from Theorem \ref{o1}.

Finally, we shall complete  the proof of Theorem \ref{poil}.
\begin{proof}
Using Theorem \ref{pertmontreal1} we take a generic perturbation $\lambda$ so that the solution set $S=S(f,\lambda)$ is in good position to the
boundary. Assume that $S$ is of dimension $n$. Then for
$[\omega,\tau]\in H^n_{dR}(Z,\partial Z)$ we have the well-defined
integral $\int_{(S,w)}\omega-\int_{(\partial S,w)}\tau$.

If  $f_t$ is an oriented smooth proper homotopy of Fredholm
sections connecting  $f_0$ with $f_1$ we can view
$$
F(t,x)=f_t(x)
$$
as a proper Fredholm section of $W$ pulled-back by the projection map
$[0,1]\times Z\rightarrow Z$. Then we can fix an open neighborhood $U$
of the solution set $S(f)=\{z\in Z\vert \, f(z)=0\}$ and an auxiliary norm so that  the pair $(N,U)$
controls  compactness. Assume that $(f_i,\lambda_i)$ are two admissible
perturbations so that the corresponding solution sets are in good
position to the boundary.  The  proper homotopy $f_t$  can be perturbed  generically
 by the same argument as before  to find a homotopy
$\lambda_t$,  which for $i=0,1$ coincides with the perturbations we
already have, so that away from the boundaries $\{0\}\times Z$ and
$\{1\}\times Z$ the perturbed pair $(f_t,\lambda_t)$ is in good
position to the boundary. Now the previous discussion about the
behavior of our invariant under our homotopy  finishes the proof of Theorem \ref{poil}.
\end{proof}

\section{Appendix}

In the following we explain some of the more technical results as
well as some of the necessary background material.

\subsection{Natural Representation of Stabilizers}\label{verynewsection8.4}
We  shall study  the
local structure of the morphism set of an ep-polyfold groupoid in more detail.

We choose an object $x_0\in X$. By the ep-assumption its stabilizer group
${\bf X}(x_0)$ is a finite group and we denote it by $G_{x_0}$. The
following theorem describes the  structure of the morphism set
near the isotropy group.
\begin{thm}\label{verynewtheorem8.25}
Given an ep-groupoid $X$, an object  $x_0\in X$, and an open neighborhood $V\subset X$ of $x_0$. Then there exist  an open
neighborhood $U\subset V$ of $x_0$,  a group homomorphism
$$
\varphi:{G}_{x_0}\rightarrow \hbox{Diff}_{\ssc}(U), \quad g\mapsto
\varphi_g=t_g\circ s_g^{-1},
$$
and an  sc-smooth map
$$
\Gamma:G_{x_0}\times U\rightarrow {\bf X}
$$
having the following properties.
\begin{itemize}
\item[$\bullet$] $\Gamma(g,x_0)=g$.
\item[$\bullet$] $s(\Gamma(g,y))=y$ and $t(\Gamma(g,y))=\varphi_g(y)$ for all $y\in U$ and $g\in G_{x_0}$.
\item[$\bullet$] If $h:y\rightarrow z$  is a morphism  connecting  two objects $y,z\in U$,  then there exists a
unique $g\in G_{x_0}$ such that  $\Gamma(g,y)=h$.
\end{itemize}
\end{thm}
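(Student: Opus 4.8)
The plan is to exploit the étale property of the source and target maps, together with properness, to produce the neighborhood $U$ and the map $\Gamma$. First I would set $G=G_{x_0}={\bf X}(x_0)=\{g_1,\dots,g_k\}$, a finite set by properness. For each $g\in G$, since $s:{\bf X}\to X$ is a surjective local sc-diffeomorphism and $s(g)=x_0$, there is an open neighborhood ${\bf U}_g\subset {\bf X}$ of $g$ on which $s$ restricts to an sc-diffeomorphism $s_g:{\bf U}_g\to s({\bf U}_g)$ onto an open neighborhood of $x_0$; shrinking ${\bf U}_g$ I may also assume $t|{\bf U}_g=:t_g$ is an sc-diffeomorphism onto an open neighborhood of $t(g)=x_0$ (using that $t$ is étale as well). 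Then $\varphi_g:=t_g\circ s_g^{-1}$ is an sc-diffeomorphism between two open neighborhoods of $x_0$, fixing $x_0$. Define the candidate map by $\Gamma(g,y)=s_g^{-1}(y)$ for $y$ in the common domain. The first two bulleted properties, $\Gamma(g,x_0)=g$ and $s(\Gamma(g,y))=y$, $t(\Gamma(g,y))=\varphi_g(y)$, are then immediate from the construction, and sc-smoothness of $\Gamma$ follows because $s_g^{-1}$ is an sc-diffeomorphism.

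The substantive work is shrinking to a single neighborhood $U\subset V$ on which everything is simultaneously defined \emph{and} on which the third property — every morphism between points of $U$ arises uniquely as some $\Gamma(g,y)$ — holds. For the common domain, I would intersect the finitely many domains of the $s_g^{-1}$ and of the $\varphi_g$ and $\varphi_g^{-1}$, so that each $\varphi_g$ maps a fixed neighborhood into $V$; one then checks, again shrinking finitely many times, that $g\mapsto\varphi_g$ is a group homomorphism into $\mathrm{Diff}_{\ssc}(U)$, using $\Gamma(g,\cdot)$ and the multiplication map $m$: the composition $\varphi_g\circ\varphi_h$ and $\varphi_{gh}$ agree at $x_0$ and are both extensions of the morphism $g\circ h$ via $t\circ s^{-1}$, and a local sc-diffeomorphism extending a given morphism is unique, so they agree near $x_0$. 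Uniqueness in the third property is similar: if $\Gamma(g,y)=\Gamma(g',y)=h$ then $g$ and $g'$ are the source-preimages of $h$ in ${\bf U}_g\cap{\bf U}_{g'}$ near the respective group elements, and one arranges the ${\bf U}_g$ to be disjoint (possible since $G$ is finite and discrete inside ${\bf X}$), forcing $g=g'$.

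The main obstacle is \emph{existence} in the third property — showing that, after shrinking $U$, there are \emph{no} morphisms between points of $U$ other than the ones captured by $\Gamma$. This is exactly where properness enters. The plan is: pick, by the proper axiom, a neighborhood $V(x_0)$ with $t:s^{-1}(\overline{V(x_0)})\to X$ proper, and suppose for contradiction that there is no admissible $U$; then one can find sequences $y_n\to x_0$, $z_n\to x_0$ and morphisms $h_n:y_n\to z_n$ with $h_n\notin\bigcup_g\Gamma(g,\cdot)$. By properness of $t$ on $s^{-1}$ of a compact neighborhood, a subsequence of $(h_n)$ converges to some morphism $h$ with $s(h)=t(h)=x_0$, i.e.\ $h\in G$. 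But then $h_n$ eventually lies in the neighborhood ${\bf U}_h$ of $h$ on which $s$ is injective, so $h_n=s_h^{-1}(y_n)=\Gamma(h,y_n)$ for large $n$, a contradiction. Finally I would intersect with $V$ and record the resulting $U$, $\varphi$, $\Gamma$; the verification that $\Gamma$ is sc-smooth as a map on $G\times U$ (with $G$ discrete) and that the three bulleted items hold is then routine. I expect the compactness/properness argument in the last paragraph to be the only genuinely delicate point; the rest is bookkeeping with local sc-diffeomorphisms.
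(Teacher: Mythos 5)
Your overall architecture matches the paper's: pick disjoint open neighborhoods $N_g\subset{\bf X}$ of the finitely many $g\in G_{x_0}$ on which $s$ and $t$ are both sc-diffeomorphisms, set $\varphi_g=t_g\circ s_g^{-1}$ and $\Gamma(g,y)=s_g^{-1}(y)$, and use properness to rule out ``rogue'' morphisms between nearby points. The contradiction argument you sketch for the third bullet (extract a convergent subsequence $h_n\to h$ via properness, observe $h\in G_{x_0}$, and conclude $h_n\in N_h$ for large $n$) is exactly the paper's Lemma \ref{lem7.9.1}, and your argument for uniqueness (disjointness of the $N_g$) is also the one the paper uses.

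However, there is a genuine gap in how you obtain the final neighborhood $U$. You need $\varphi_g$ to lie in $\mathrm{Diff}_{\ssc}(U)$, i.e.\ $\varphi_g(U)=U$ for every $g$, and you propose to achieve this by ``intersecting the finitely many domains'' and ``shrinking finitely many times.'' Shrinking alone does not produce an invariant set: in the model situation $X=\mathbb{R}$, $x_0=0$, $G=\mathbb{Z}/2$, $\varphi(x)=-x$, the interval $(-a,b)$ with $a\neq b$ is not invariant, and intersecting/shrinking a finite family of such intervals in an arbitrary way still need not yield a symmetric one. What actually works (and is the content of the paper's Lemma \ref{lem7.9.2}) is a \emph{saturation} construction: first get the small $U_1$ from the properness argument, then choose $U_2\subset U_1$ so small that $\bigcup_{g}\varphi_g(U_2)$ and all its further images under the $\varphi_g$ stay inside $U_1$, and \emph{define} $U:=\bigcup_{g\in G_{x_0}}\varphi_g(U_2)$. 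Invariance of this $U$ is then proved by combining it with Lemma \ref{lem7.9.1}: for $x=\varphi_h(u)$ with $u\in U_2$ and $v:=\varphi_g(x)\in U_1$, there is a morphism $u\to v$, hence by Lemma \ref{lem7.9.1} this morphism equals $\varphi_{g'}$ for some $g'\in G_{x_0}$, so $v=\varphi_{g'}(u)\in U$. Your write-up never produces such an invariant $U$, so the claim that $\varphi$ lands in $\mathrm{Diff}_{\ssc}(U)$ is unsupported; this step needs to be added (note also that the saturation argument \emph{requires} the properness lemma to already be in place, so the logical order has to be: properness lemma first, then invariance, not the other way around as your sketch suggests).
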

In particular, every morphism between points in $U$ belongs to the
image of the map $\Gamma$.  We call  the group homomorphism $\varphi:G_x\rightarrow
\mbox{Diff}_{\ssc}(U)$ a {\bf natural representation} of the
stabilizer  group   $G_{x_0}$.
\begin{proof}

 For
every $g\in G_{x_0}$ we choose two  contractible open neighborhoods
$N^t_g$ and $N^s_g\subset {\bf X}$ on which the target and source
maps $t$ and $s$ are sc-diffeomorphisms onto some open neighborhood
$U_0\subset X$ of $x_0$. Since  the isotropy group $G_{x_0}$ is
finite we can assume that the open sets $N^t_g\cup N^s_g$ for $g\in
G_{x_0}$ are disjoint and define the disjoint open neighborhoods
$N_g\subset {\bf X}$ of $g$ by
$$N_g:=N_g^t\cap N_g^s,\quad g\in G_{x_0}.$$
We  abbreviate the restrictions of the source and target maps by
$$s_g:=s\vert N_g\quad \text{and}\quad t_g:=t\vert N_g.$$

\begin{lem}\label{lem7.9.1}
With the choices made above there exists an open neighborhood
$U_1\subset U_0$ of $x_0$ so that every morphism $h\in {\bf X}$ with
$s(h)$ and $t(h)\in U_1$ belongs to $N_g$ for some  $g\in G_{x_0}$.
\end{lem}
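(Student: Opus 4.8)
\textbf{Proof plan for Lemma \ref{lem7.9.1}.}

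The statement is a compactness/contradiction argument exploiting properness of the ep-groupoid together with the fact that the finitely many neighborhoods $N_g$, $g\in G_{x_0}$, already cover the isotropy group $G_{x_0}$ inside $\mathbf{X}$. The plan is to argue by contradiction. Suppose no such $U_1$ exists. Then we may pick a sequence of shrinking neighborhoods $U_0\supset W_1\supset W_2\supset\cdots$ with $\bigcap_k W_k=\{x_0\}$ (a neighborhood basis at $x_0$, which exists since $X$ is metrizable as an M-polyfold), and for each $k$ a morphism $h_k\in\mathbf{X}$ with $s(h_k),t(h_k)\in W_k$ but $h_k\notin N_g$ for every $g\in G_{x_0}$. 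Since $s(h_k)\to x_0$ and $t(h_k)\to x_0$, the plan is to extract a convergent subsequence $h_k\to h$ in $\mathbf{X}$ and show $h$ is a morphism $x_0\to x_0$, i.e. $h\in G_{x_0}$; then $h$ lies in the open set $N_h$, so $h_k\in N_h$ for large $k$, contradicting the choice of the $h_k$.

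The extraction of the convergent subsequence is where properness of the ep-groupoid enters, and I expect this to be the main technical point. By the properness axiom there is an open neighborhood $V(x_0)$ of $x_0$ so that $t:s^{-1}(\overline{V(x_0)})\to X$ is proper. Shrinking the $W_k$ we may assume $\overline{W_k}\subset V(x_0)$, so each $h_k$ lies in $s^{-1}(\overline{V(x_0)})$. Since $t(h_k)\to x_0$, the sequence $(t(h_k))$ lies in a compact subset of $X$ (the closure of $\{t(h_k):k\}\cup\{x_0\}$); properness of $t$ on $s^{-1}(\overline{V(x_0)})$ then yields a subsequence, still denoted $h_k$, converging to some $h\in s^{-1}(\overline{V(x_0)})\subset\mathbf{X}$. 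By continuity of $s$ and $t$ we get $s(h)=\lim s(h_k)=x_0$ and $t(h)=\lim t(h_k)=x_0$, so $h\in\mathbf{X}(x_0)=G_{x_0}$.

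The remaining step is routine: $h\in G_{x_0}$ means $h\in N_h^t\cap N_h^s=N_h$, and $N_h$ is open in $\mathbf{X}$, so $h_k\in N_h$ for all large $k$. This contradicts the assumption that $h_k\notin N_g$ for every $g\in G_{x_0}$. Hence some $U_1\subset U_0$ with the claimed property exists, and one may of course shrink $U_1$ further to keep it inside any prescribed neighborhood of $x_0$. One small point to be careful about in writing this up: one should make sure the subsequence argument is applied to a genuinely compact target set, which is why I phrased it via the closure of the (convergent, hence relatively compact) sequence $(t(h_k))$ together with its limit $x_0$; this is where one uses that $X$ is a reasonable (in particular, first countable, metrizable) topological space so that sequential compactness is available.
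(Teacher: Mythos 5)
Your argument is correct and matches the paper's proof in all essentials: both argue by contradiction, produce a sequence of morphisms $h_k$ with $s(h_k),t(h_k)\to x_0$ but $h_k\notin\bigcup_g N_g$, invoke the properness axiom to extract a convergent subsequence $h_k\to h\in G_{x_0}$, and then derive a contradiction since $h\in N_h$ and $N_h$ is open. The extra detail you supply (shrinking $W_k$ so $\overline{W_k}\subset V(x_0)$, noting that the convergent sequence of targets together with its limit $x_0$ is the compact set to which properness is applied, and using metrizability to pass from compactness to sequential compactness) is just what the paper's terse phrase ``by the properness assumption'' is implicitly invoking, so there is no genuine divergence in method.
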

\begin{proof}
Arguing indirectly we find a sequence $h_k\in {\bf X}$ with $h_k\not
\in N_g$ for all $g\in G_{x_0}$ and satisfying $s(h_k), t(h_k)\to
x_0$ as $k\to \infty$. By the properness assumption of ep-polyfolds
there is a convergent subsequence $h_{k_l}\to h\in {\bf X}$.
Necessarily $h\in G_{x_0}$ and hence $h\in N_g$ for some $g\in
G_{x_0}$. This contradiction implies the lemma.
\end{proof}

\begin{lem}\label{lem7.9.2}
If  $U_1$ is  the open neighborhood of $x_0$ guaranteed by Lemma
\ref{lem7.9.1}, then there exists an open neighborhood $U_2\subset
U_1$ of $x_0$ so that the open neighborhood $U$ of $x_0$,  which is
defined as the union
$$U:=\bigcup_{g\in G_{x_0}}t_g\circ s_g^{-1}(U_2),$$
is contained in $U_1$ and invariant
 under all the maps $t_g\circ s_g^{-1}$ for $g\in G_{x_0}$.
\end{lem}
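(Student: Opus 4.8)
The goal is to shrink the neighborhood $U_1$ from Lemma \ref{lem7.9.1} to a $G_{x_0}$-invariant neighborhood $U$. The obvious candidate for an invariant set is the union of the $G_{x_0}$-translates $t_g\circ s_g^{-1}(U_2)$ of some smaller set $U_2$; the content of the lemma is that this union does not grow out of control, i.e.\ it can be arranged to lie inside $U_1$, and that it is genuinely invariant under all the diffeomorphism germs $t_g\circ s_g^{-1}$.

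First I would record the elementary algebraic facts about the maps $\psi_g:=t_g\circ s_g^{-1}$ that make the construction work. Each $\psi_g$ is an sc-diffeomorphism from $s_g(N_g)$ onto $t_g(N_g)$, both open neighborhoods of $x_0$, and $\psi_g(x_0)=x_0$ since $g\in G_{x_0}$. The crucial point is a local cocycle/group property: for $g,h\in G_{x_0}$ one has $\psi_{gh}=\psi_g\circ\psi_h$ wherever both sides are defined, and $\psi_{g^{-1}}=\psi_g^{-1}$, and $\psi_e=\id$. This follows because $s$ and $t$ are local sc-diffeomorphisms and $m$ is sc-smooth: near $x_0$ a morphism with source and target close to $x_0$ lies in a unique $N_k$ by Lemma \ref{lem7.9.1}, and composing the local sections $s_g^{-1}$ and $s_h^{-1}$ of the source map with the multiplication produces a local section of the source map landing in $N_{gh}$, whose composition with $t$ is exactly $\psi_{gh}$. (One must only check that after possibly shrinking to a common neighborhood all these compositions are defined; since $G_{x_0}$ is finite this is a finite number of shrinkings.) Thus there is an open neighborhood $\Omega\subset U_1$ of $x_0$ on which every $\psi_g$ is defined, maps $\Omega$ into $U_1$, fixes $x_0$, and such that the assignment $g\mapsto\psi_g|_\Omega$ is ``locally'' a group action in the above sense.

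Next comes the shrinking argument. Because $G_{x_0}$ is finite and each $\psi_g$ is continuous with $\psi_g(x_0)=x_0$, the set
$$
U_2:=\bigcap_{g\in G_{x_0}}\psi_g^{-1}\bigl(\Omega\bigr)
$$
is an open neighborhood of $x_0$ contained in $\Omega\subset U_1$. I then set $U:=\bigcup_{g\in G_{x_0}}\psi_g(U_2)$. Since each $\psi_g(U_2)\subset\Omega\subset U_1$, we get $U\subset U_1$ as required. Invariance is the final check: fix $h\in G_{x_0}$; then
$$
\psi_h(U)=\bigcup_{g\in G_{x_0}}\psi_h\bigl(\psi_g(U_2)\bigr)
=\bigcup_{g\in G_{x_0}}\psi_{hg}(U_2)=\bigcup_{g'\in G_{x_0}}\psi_{g'}(U_2)=U,
$$
using the local group property $\psi_h\circ\psi_g=\psi_{hg}$ (valid here because $\psi_g(U_2)\subset\Omega$, the domain on which these identities hold) and the fact that $g\mapsto hg$ permutes $G_{x_0}$. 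This shows $U$ is invariant under every $\psi_h=t_h\circ s_h^{-1}$, $h\in G_{x_0}$, completing the proof.

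\textbf{Main obstacle.} The only genuinely delicate point is verifying the local group identity $\psi_{gh}=\psi_g\circ\psi_h$ near $x_0$ with all domains of definition properly accounted for; this is where the properness (Lemma \ref{lem7.9.1}, which guarantees that a morphism with source and target in $U_1$ lies in exactly one $N_k$) and the sc-smoothness of the multiplication map are really used. Once that identity is in hand on a common neighborhood $\Omega$, the rest is a routine finite intersection/union manipulation. A minor additional care: one should choose $\Omega$ small enough that not only the binary compositions but also the identifications needed to see that $s_g^{-1}$ followed by multiplication by $g$ (or $h$) again lands in the chosen contractible chart $N_{gh}$ are valid — again only finitely many shrinkings since $G_{x_0}$ is finite.
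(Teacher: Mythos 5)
Your proof is correct, but it takes a genuinely different route from the paper. You first establish the local group law $\psi_{gh}=\psi_g\circ\psi_h$ on a common neighborhood $\Omega$ of $x_0$, and then invariance of $U$ becomes a purely algebraic manipulation (reindexing a finite union by the permutation $g\mapsto hg$). The paper avoids proving the group law altogether: it chooses $U_2$ so small that $U$ and all images $\psi_g(U)$ stay inside $U_1$, then takes $x=\psi_h(u)\in U$ and observes that the composition $\psi_g\circ\psi_h(u)=:v$ comes with a morphism $u\to v$; since $u,v\in U_1$, Lemma \ref{lem7.9.1} says this morphism lies in some $N_{g'}$, so $v=\psi_{g'}(u)\in U$ because $u\in U_2$. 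The paper's argument therefore never needs to know which $g'$ appears (in particular it does not need $g'=gh$), only that some $g'$ works — a strictly weaker fact than your group law. What your approach buys is the structural statement that $g\mapsto\psi_g$ is a genuine local group action, which is conceptually attractive; what it costs is the extra continuity argument needed to pin the composed morphism $m(s_g^{-1}(\psi_h(y)),s_h^{-1}(y))$ down to the specific neighborhood $N_{gh}$ rather than merely to some $N_k$, a step you correctly flag as the delicate one but treat a bit quickly. Both arguments ultimately rest on the same two ingredients (finiteness of $G_{x_0}$ and the dichotomy of Lemma \ref{lem7.9.1}); yours front-loads the work into one clean lemma while the paper inlines just the part it needs.
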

\begin{proof}
We choose an open neighborhood $U_2\subset U_1$ of $x_0$  so small
that the union $U$ and also $t_g\circ s_g^{-1}(U)$ are contained in
$U_1$ for all $g\in G_{x_0}$. Consider the map $t_g\circ
s^{-1}_g:U\to X$ and choose $x\in U$. Then we can represent it as
$x=t_h\circ s_h^{-1}(u)$ for some $h\in G_{x_0}$ and some $u\in
U_2$. Now, $v:=t_g\circ s_g^{-1}\circ t_h\circ s_h^{-1}(u)$ belongs
to $U_1$ and the formula implies the existence of a morphism $u\to
v$ in ${\bf X}$. By Lemma \ref{lem7.9.1} the morphism has
necessarily the form $v=t_{g'}\circ s^{-1}_{g'}(u)$ for some $g'\in
G_{x_0}$. Since $u\in U_2$ it follows that $v=t_g\circ
s_g^{-1}(x)\in U$ implying the desired invariance of $U$.
\end{proof}

In view of Lemma \ref{lem7.9.2} we can associate  with every $g\in
G_{x_0}$ the sc-diffeomorphism
$$\varphi (g ):=t_g\circ s_g^{-1}:U\to U$$
of the open neighborhood $U$ of $x_0$, and obtain the mapping
$$\varphi: G_{x_0}\to \text{Diff}_{\ssc}(U), \quad g\mapsto \varphi_g.
$$
 Since the neighborhoods $N_g\subset {\bf X}$ of $g$ are
disjoint and since the structure maps are continuous we conclude
that $\varphi$ is a homomorphism of groups, in the following called
the {\bf natural representation of the stabilizer group $G_{x_0}$}
by sc-diffeomorphisms of the open neighborhood $U\subset X$ of
$x_0$. Then we define
$$
\Gamma: G_{x_0}\times U\rightarrow {\bf X}:\Gamma(g,y)= s_g^{-1}(y).
$$
Summing up the consequences of Lemma \ref{lem7.9.1} and Lemma
\ref{lem7.9.2} we have proved Theorem \ref{verynewtheorem8.25}.
\end{proof}

The following structure theorem is fundamental for  the  constructions of perturbations.

\begin{thm}\label{lem7.9.3}
Every object $x_0\in X$ of an ep-groupoid possesses an open
neighborhood $U\subset X$  having the following properties.
\begin{itemize}
\item[$\bullet$\: ] On $U$, the stabilizer  group $G_{x_0}$ has the natural representation
$$\varphi:G_{x_0}\to \text{Diff}_{\ssc}(U).$$
\item[$\bullet$\: ] Assume $y_0\in X$ is an object for which
there exists no morphism $y_0\to x$, where $x$ in $\ov{U}$. Then
there exists an open neighborhood $V$ of $y_0$ so that for every
$z\in V$ there is no morphism to an element in $\ov{U}$.
\item[$\bullet$\: ] Assume $y_0\in X$ is an object for which
there exists no morphism $y_0\to x$ for every $x\in U$, but there
exists a morphism to some element in $\ov{U}$. Then given an open
neighborhood $W$ of $\partial U$ (the set theoretic boundary of
$U$), there exists an open neighborhood $V$ of $y_0$ so that if
there is a morphism $y\to x$ for some $y\in V$ and $x\in U$, then
$x\in W$.
\end{itemize}
Moreover, the open set $U$ can be taken as small as we wish.
\end{thm}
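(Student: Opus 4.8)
The plan is to build $U$ directly from the natural representation theorem and then dispose of the two separation properties by a sequential argument. First I would fix, using the properness clause in the definition of an ep-groupoid, an open neighborhood $V(x_0)$ of $x_0$ for which $t:s^{-1}(\ov{V(x_0)})\to X$ is proper. Given any prescribed neighborhood $V_0$ of $x_0$, I would then apply Theorem \ref{verynewtheorem8.25} inside $V_0\cap V(x_0)$ to obtain an open neighborhood $U\subset V_0\cap V(x_0)$ of $x_0$ together with the natural representation $\varphi:G_{x_0}\to\mathrm{Diff}_{\ssc}(U)$; by the construction in Lemma \ref{lem7.9.2} this $U$ is invariant under all $\varphi_g=t_g\circ s_g^{-1}$. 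This already gives the first bullet and, since $V_0$ was arbitrary, the final clause ``$U$ can be taken as small as we wish''. Note $\ov U\subset\ov{V(x_0)}$, which is what will make properness usable below.

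For the two remaining bullets I would argue by contradiction, the key device being to replace a sequence of morphisms $g_k$ by its inverses $g_k^{-1}$ so that the ``$\ov U$-endpoint'' becomes the \emph{source} and therefore stays inside the fixed properness neighborhood $\ov{V(x_0)}$, while the ``$y_0$-endpoint'' becomes the target and ranges over a convergent sequence. Concretely, for the second bullet suppose no neighborhood of $y_0$ has the stated property; choosing a countable neighborhood basis of $y_0$ we get $z_k\to y_0$ and morphisms $g_k:z_k\to x_k$ with $x_k\in\ov U$. Then $s(g_k^{-1})=t(g_k)=x_k\in\ov U\subset\ov{V(x_0)}$, so $g_k^{-1}\in s^{-1}(\ov{V(x_0)})$, while $t(g_k^{-1})=z_k$ lies in the compact set $K=\{z_k:k\in\N\}\cup\{y_0\}$ (a convergent sequence together with its limit). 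Properness of $t$ on $s^{-1}(\ov{V(x_0)})$ gives a subsequence $g_{k_j}^{-1}\to h$ in ${\bf X}$, and since $s,t$ are continuous and $\ov U$ is closed, $h$ is a morphism from a point $x\in\ov U$ to $y_0$; hence $h^{-1}:y_0\to x$ with $x\in\ov U$, contradicting the hypothesis on $y_0$.

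For the third bullet I would first record the point-set fact that, because $W$ is open and $\partial U=\ov U\setminus U\subset W$, the set $U\setminus W$ is closed in $X$: indeed $\ov{U\setminus W}\subset\ov U\cap(X\setminus W)=(U\cup\partial U)\setminus W=U\setminus W$. Now assume the conclusion fails; using a neighborhood basis of $y_0$ we obtain $y_k\to y_0$ and morphisms $g_k:y_k\to x_k$ with $x_k\in U\setminus W$. As before $s(g_k^{-1})=x_k\in U\subset\ov{V(x_0)}$ and $t(g_k^{-1})=y_k\in\{y_k:k\in\N\}\cup\{y_0\}$, so properness yields a subsequence $g_{k_j}^{-1}\to h$; by continuity $s(h)=\lim x_{k_j}\in U\setminus W$ since $U\setminus W$ is closed, and $t(h)=y_0$. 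Then $h^{-1}:y_0\to x$ with $x\in U$, contradicting the assumption that $y_0$ admits no morphism to a point of $U$. (The auxiliary hypothesis ``there exists a morphism $y_0\to x$ for some $x\in\ov U$'' is not needed for the conclusion; if it fails, the second bullet already gives a $V$ with no morphisms into $\ov U$, a fortiori none into $U$.)

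The only genuinely delicate point is the one the inversion trick is designed to handle: M-polyfolds are not locally compact, so $\ov U$ is merely closed, not compact, and one cannot extract convergent subsequences of the $g_k$ from the mere fact that their targets lie in $\ov U$. Flipping the morphisms so that the compact set in play is the convergent-sequence-plus-limit $K$ while the other endpoint is confined to the preassigned properness neighborhood is exactly what makes the ep-groupoid properness axiom applicable. Everything else — continuity of $s$, $t$ and the inversion $i$, closedness of $\ov U$ and of $U\setminus W$, and the ability to start with an arbitrarily small $G_{x_0}$-invariant $U$ — is routine and can be quoted from the definitions and from Theorem \ref{verynewtheorem8.25}.
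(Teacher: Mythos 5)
Your proof is correct and takes essentially the same route as the paper's: the natural representation theorem supplies $U$ and the first bullet, and both separation properties are proved by contradiction using the properness axiom after inverting the offending morphisms so that the $\ov U$-endpoint becomes the source (the paper simply takes $U$ small enough that $t:s^{-1}(\ov U)\to X$ is itself proper rather than quoting properness on a larger fixed $\ov{V(x_0)}$, but since $s^{-1}(\ov U)$ is closed in $s^{-1}(\ov{V(x_0)})$ the two are interchangeable). Your explicit observation that $U\setminus W$ is closed — and hence the limit point $x$ lands in $U$, not merely in $\ov U$ — is a small step the paper uses implicitly but does not spell out.
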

\begin{proof}
For the first statement  we refer to Theorem
\ref{verynewtheorem8.25}.  We can choose the neighborhood  $U$ of
$x_0$  as small as we wish. Hence we may assume that
$$t:s^{-1}(\ov{U})\to X$$
is proper. To prove the second statement we assume for $y_0\in X$
that there is no morphism to any element in $\ov{U}$. If no
neighborhood $V$ with the desired properties exist, we find
sequences $y_k\to y_0$ and $x_k\in \ov{U}$ so that there exist
morphisms $y_k\to x_k$. Inverting these morphisms we obtain a
sequence of morphisms $g_k:x_k\to y_k$, and the sequence of points
in $X$,
$$y_k=t(g_k)\in t(s^{-1}(\ov{U})).$$
By the properness assumption we may assume without loss of
generality that $g_k\to g\in {\bf X}$ implying $x_k\to x'$ in $X$
for some $x'$. Therefore,
$$g:x'\to y_0\quad \text{and}\quad x'\in \ov{U}.$$
This contradiction proves the second assertion.

Assume, finally, that there exists no morphism $y_0\to x$ for $x\in
U$, but a morphism $y_0\to \ov{x}\in \ov{U}$. Pick an arbitrary open
neighborhood $W$ of $\partial U$. If $V$ with the desired properties
does not exist we find sequences of morphisms $y_k\to y_0$ and
elements $x_k\in U\setminus W$ admitting morphisms $g_k:y_k\to x_k$.
Using the properness assumption again we may assume that $g_k\to g$
in ${\bf X}$ where $g:y_0\to x$ and $x\in U$,  giving a
contradiction. The proof of Theorem \ref{lem7.9.3} is complete.
\end{proof}

\subsection{Sc-Smooth Partitions of Unity}\label{partition}
In  this section  we  prove the existence of an sc-smooth partition of unity on an ep-groupoid.  We consider an ep-groupoid  whose {\bf sc-structure is  based on separable sc-Hilbert spaces}.
We view $[0,1]$ as a category with only the identity morphisms.   An sc-smooth functor $f:X\to [0,1]$ on $X$  is an sc-smooth map on the object M-polyfold  which  is invariant under morphisms, that is, $f(x)=f(y)$ if there exists a morphism $h:x\to y$.

\begin{defn}  Let $X$ be an ep-groupoid and let ${\mathcal U}=(U_{\alpha})_{\alpha \in A}$ be an open cover  of $X$ consisting of saturated sets.  An sc-partition of unity  $(g_\alpha)_{\alpha \in A}$ subordinate to ${\mathcal U}$ consists of  the  locally finite  collection of sc-smooth functors $g_\alpha :X\to [0,1]$ satisfying  $\sum_{\alpha \in A}g_\alpha =1$ and $\supp g_{\alpha}\subset U_{\alpha}$ for every $\alpha \in A$.
\end{defn}
The existence of an sc-smooth partition of unity depends on a sufficient supply of sc-smooth functions.  We shall make use of  the following result  for separable Hilbert spaces  proved in \cite{Fathi}.

\begin{lem}\label{l1}
Let $U$ and $W$ be  open subsets of a separable Hilbert space $H$  such that $\ov{W}\subset  U$. Then there exists a smooth function $f:H\to [0,1]$ having its support contained in $U$ and satisfying  $f=1$ on $\ov{W}$.
\end{lem}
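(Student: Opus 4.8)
The plan is to recover this as a special case of the standard fact that a separable Hilbert space admits $C^\infty$ bump functions subordinate to any open cover (equivalently, that it is ``smoothly normal''), keeping only the single‑bump version needed here for the pair $\ov{W}\subset U$. The argument rests on two well‑known ingredients. First, $H$ carries $C^\infty$ bump functions because the map $x\mapsto\norm{x}^2$ is $C^\infty$ on $H$ — it is a continuous quadratic form, hence a degree‑two polynomial, with $D(\norm{\cdot}^2)(x)=2\langle x,\cdot\rangle$ — so for $x_0\in H$ and $0<r<\rho$ the function $x\mapsto\psi(\norm{x-x_0}^2)$, where $\psi\colon\R\to[0,1]$ is a $C^\infty$ cut‑off equal to $1$ on $(-\infty,r^2]$ and to $0$ on $[\rho^2,\infty)$, is $C^\infty$, equals $1$ on $\ov{B(x_0,r)}$, and is supported in $\ov{B(x_0,\rho)}$. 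Second, $H$ is a separable metric space, hence Lindel\"of and paracompact, so every open cover of $H$ has a countable, locally finite open refinement.

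With these in hand, the steps I would carry out are: (i) cover $\ov{W}$ by open balls whose closures lie in $U$, and, using second countability together with paracompactness (applied to this family enlarged by $H\setminus\ov{W}$), pass to a countable subordinated family $\{\Omega_i\}_{i\in\N}$ that is locally finite — only finitely many $\Omega_i$ meet any bounded set — still covers $\ov{W}$, and has each $\ov{\Omega_i}$ contained in a ball with closure in $U$; (ii) on each $\Omega_i$ place a $C^\infty$ bump $\beta_i\colon H\to[0,1]$ of the type above, equal to $1$ on enough of $\Omega_i$ that the sets $\{\beta_i=1\}$ still cover $\ov{W}$, and supported in $\ov{\Omega_i}\subset U$; (iii) form $g:=\sum_i\beta_i$, which by local finiteness is a locally finite sum of $C^\infty$ functions, hence lies in $C^\infty(H)$, satisfies $g\ge 1$ on $\ov{W}$, and has $\{g\neq 0\}\subset\bigcup_i\ov{\Omega_i}$, a set which is closed (locally finite union of closed sets) and contained in $U$, so $\supp g\subset U$; (iv) set $f:=\chi\circ g$ with $\chi\colon\R\to[0,1]$ a $C^\infty$ function that is $1$ on $[1,\infty)$ and $0$ on $(-\infty,\tfrac12]$. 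Then $f\in C^\infty(H)$, $f\equiv 1$ on $\ov{W}$, and $\{f\neq 0\}\subset\{g>0\}\subset\{g\neq 0\}$, so $\supp f\subset\supp g\subset U$, which is the assertion.

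The step I expect to be the main obstacle is the passage in (i)–(ii) from a countable cover of $\ov{W}$ by balls to a subordinated family on which one can still place genuine $C^\infty$ bumps whose (in general infinite) sum is again $C^\infty$. In infinite dimensions one cannot simply refine to balls of a fixed size and remain locally finite, so one must either produce a genuinely locally finite subordinated refinement, or — keeping the original balls — choose the elementary bumps with their $C^k$‑sizes decaying fast enough relative to the radii of their supports that $\sum_i\beta_i$ converges in every $C^k$‑norm on every bounded subset of $H$. Supplying this device is precisely the content of \cite{Fathi} (and of the classical existence theorem for $C^\infty$ partitions of unity on separable Hilbert spaces), so in the write‑up I would simply invoke it at that point; the remaining points above are routine.
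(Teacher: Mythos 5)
The paper itself does not prove this lemma; it is stated with a bare citation to Fathi's note on partitions of unity for countable covers, and no argument appears. So there is no internal proof to compare against, and your task was to reconstruct one. Your structural sketch is a reasonable reconstruction of what such a proof must contain: the elementary source of smooth bumps (the map $x\mapsto\norm{x}^2$ being $C^\infty$ on a Hilbert space, so that $x\mapsto\psi(\norm{x-x_0}^2)$ gives a smooth bump on a ball), a shrinking/refinement step, a locally finite summation, and the final composition with a real cutoff to turn ``$g\geq 1$ on $\ov{W}$'' into ``$f\equiv 1$ on $\ov{W}$''.

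That said, step~(ii) as written contains the gap that you yourself flag in your closing paragraph, and it is worth making it precise. The bump $\psi(\norm{x-x_0}^2)$ ``of the type above'' is supported in a ball, not in an arbitrary open set. If $\Omega_i$ is an element of a locally finite refinement that is merely \emph{contained} in a ball $B_i$ with $\ov{B_i}\subset U$, then the only bump you can actually build is supported in $\ov{B_i}$, not in $\ov{\Omega_i}$, and local finiteness of $\{\Omega_i\}$ does not imply local finiteness of $\{\ov{B_i}\}$: the ambient balls can be much larger than the $\Omega_i$ and can accumulate. Nor can one shortcut by ``choose a smooth $\beta_i$ equal to $1$ on $\ov{V_i}$ with $\supp\beta_i\subset\Omega_i$'' for some shrinking $\ov{V_i}\subset\Omega_i$, because producing such a $\beta_i$ for a general pair of open sets is exactly the statement being proved --- circular. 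One genuinely needs either a locally finite refinement \emph{by balls}, or a non-locally-finite cover by balls together with a summability device (a telescoping identity, or radii and $C^k$-sizes decaying fast enough) that makes $\sum_i\beta_i$ smooth anyway. You correctly place this device in Fathi's note, which is the same attribution the paper itself makes. In short: your sketch captures how the argument is set up and is honest about where the elementary reasoning ends, but steps~(i)--(iv), if read as a self-contained proof, quietly assume the very point that requires Fathi.
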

The proof  of  Lemma \ref{l1}  extends  easily to the case in which  $U$ and $W$ are  open subsets of  a partial quadrant in  a separable Hilbert space. In the next lemma we extend  Lemma \ref{l1}  to the sc-context.
\begin{lem}\label{l2}
Let $W$ and $U$ be open subsets of a splicing core $K$ such that $\ov{W}\subset U$. Then there exists an sc-smooth function $f: K\to [0,1]$ such that $f$ has its support in $U$ and is equal to $1$ on $W$.
\end{lem}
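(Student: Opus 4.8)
The plan is to reduce the statement to the Hilbert–space bump function Lemma \ref{l1} and then transport the resulting function to the splicing core by composing with the splicing retraction. Write $K=K^{\mathcal S}$ for a splicing $\mathcal S=(\pi,E',V)$, so that $K=\{(v,e)\in V\oplus E'\vert\,\pi_v(e)=e\}$ sits inside $V\oplus E'$, an open subset of the partial quadrant $C\oplus E'$ in a separable Hilbert space (separable by the standing assumption of this section), and let $r:V\oplus E'\to V\oplus E'$, $r(v,e)=(v,\pi_v(e))$, be the associated retraction with image $K$, which is sc-smooth by the splicing axioms. Recall from \cite{HWZ2} that a function $g:K\to\mathbb R$ is sc-smooth exactly when $g\circ r:V\oplus E'\to\mathbb R$ is sc-smooth; since any $C^\infty$ function on $C\oplus E'$ is sc-smooth for the sc-structure and composes with the sc-smooth map $r$ to an sc-smooth map, it follows that the restriction to $K$ of a $C^\infty$ function on $C\oplus E'$ is automatically sc-smooth on $K$.

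First I would extend $W$ and $U$ to the ambient space. Since $(v,e)\mapsto\pi_v(e)-e$ is continuous, $K$ is closed in $V\oplus E'$, hence the closure of $W$ in $K$ coincides with its closure $\ov{W}$ in $C\oplus E'$, and by hypothesis $\ov{W}\subseteq U$. Choose open sets $\wt W_0,\wt U\subseteq V\oplus E'$ with $\wt W_0\cap K=W$ and $\wt U\cap K=U$. Since $C\oplus E'$ is metrizable, hence normal, and $\ov{W}$ is closed with $\ov W\subseteq\wt U$, there is an open set $A$ with $\ov{W}\subseteq A\subseteq\ov A\subseteq\wt U$; put $\wt W:=A\cap\wt W_0$, so that $\wt W\cap K=W$ and $\ov{\wt W}\subseteq\ov A\subseteq\wt U$.

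Now I would apply the partial quadrant version of Lemma \ref{l1}, available by the remark following that lemma, to the pair $\wt W\subseteq\wt U$ inside $C\oplus E'$: it produces a $C^\infty$ function $\wt f:C\oplus E'\to[0,1]$ with support contained in $\wt U$ and with $\wt f\equiv 1$ on $\ov{\wt W}$. Set $f:=\wt f|_K$. By the first paragraph, $f:K\to[0,1]$ is sc-smooth; it is identically $1$ on $W\subseteq\ov{\wt W}$; and $\{f\neq 0\}\subseteq\supp\wt f\cap K\subseteq\wt U\cap K=U$, the set $\supp\wt f\cap K$ being closed in $K$, so that the support of $f$ in $K$ lies in $U$. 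This yields the asserted function.

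The only step carrying genuine content is the reduction in the first paragraph, namely the identification of sc-smoothness on a splicing core with sc-smoothness of the composition with the retraction, which is built into the definitions of \cite{HWZ2}; the topological extension of $W$ and $U$ and the invocation of Lemma \ref{l1} are routine. I therefore do not anticipate a real obstacle beyond citing the correct form of these definitions.
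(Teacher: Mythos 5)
Your proof follows essentially the same strategy as the paper: reduce to the Hilbert-space bump function Lemma \ref{l1} (via its partial-quadrant variant) and then transport to $K$, observing that the restriction to $K$ of a $C^\infty$ function on the ambient space is automatically sc-smooth because sc-smoothness on $K$ is detected by precomposition with the retraction $r(v,e)=(v,\pi_v(e))$. That is exactly the role played by $f_0\circ\pi$ and $f:=f_0|_K$ in the paper's argument.

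Where you diverge is the topological extension step, and here the paper's version is slicker. You extend $W$ and $U$ to open subsets $\widetilde W_0,\widetilde U$ of the ambient space and then shrink $\widetilde W_0$ by a normality argument. The paper instead pulls back along the retraction, setting $W'=\Phi^{-1}(W)$ and $U'=\Phi^{-1}(U)$ (with $\Phi=r$); continuity of $\Phi$ together with $\Phi|_K=\mathrm{id}_K$ then gives $\ov{W'}\subset\Phi^{-1}(\ov W)\subset U'$ and $W'\cap K=W$, $U'\cap K=U$ at once, with no choice of extension and no separate normality step. Also note a small imprecision in your paragraph: from $K$ being closed in $V\oplus E'$ you conclude that the closure of $W$ in $K$ equals its closure in $C\oplus E'$. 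This only identifies the closure in $K$ with the closure in $V\oplus E'$; since $V$ need not be the whole partial cone, $K$ need not be closed in $C\oplus E'$, so the subsequent normality argument and the invocation of Lemma \ref{l1} should be phrased relative to $V\oplus E'$ (and the open sets $\widetilde W,\widetilde U$ regarded as opens in the partial quadrant when Lemma \ref{l1} is applied). This is a fixable point of bookkeeping rather than a conceptual gap, and the paper's pullback construction sidesteps it.
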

\begin{proof}
Assume that  $K=K^{{\mathcal S}}=\{(v, e)\in V\oplus E\vert \, \pi_v (e)=e\}$ is the splicing core associated with the splicing ${\mathcal S}=(\pi, E, V)$.  Here $V$ is an open subset of a partial cone $C$ in a separable sc-Hilbert space $Z$,  $E$ is a separable sc-Hilbert space, and $\pi:V\oplus E\to E$ is an sc-smooth map such that $\pi (v, \cdot ):=\pi_v:E\to E$ is a bounded  linear projection for every $v\in V$.  Consider  $\Phi:W\oplus E\to W\oplus E$ defined by $\Phi (v, e)=(v, \pi (v, e))$. The map
$\Phi$ is sc-smooth and, in particular, continuous from level $0$ to level $0$ of $V\oplus E$.
Moreover, $\Phi (V\oplus E)=K$. Put  $W'=\Phi^{-1}(W)$ and $U'=\Phi^{-1}(U)$.
Then $W'$ and $U'$ are open and since  $\Phi^{-1}(\ov{W})$ is closed,  we get
$\ov{W'}=\ov{\Phi^{-1}(W)}\subset \Phi^{-1}(\ov{W})\subset U'.$
By Lemma \ref{l1}, there exists a smooth function $f_0:V\oplus E\to [0,1]$  such that $\supp f_0\subset U'$ and $f=1$ on $W'$.  Since $f_0$ is sc-smooth in view of Proposition 2.15  in \cite{HWZ2} and since  the map $\pi$ is  sc-smooth,  the composition $f_0\circ \pi$ is also sc-smooth by the chain rule, Theorem 2.16 in \cite{HWZ2}.  Hence,  putting
 $f:=f_0\vert K$,  we obtain an sc-smooth function  defined on $K$  having its support in $U$ and equal to $1$ on $W$.
\end{proof}

If $g:X\to [0,1]$ is an sc-smooth functor on the ep-groupoid $X$, we denote by $\abs{g}$  the continuous function defined on the orbit space $\abs{X}$ by $\abs{g}( \abs{ x}):=g(x)$.  Now we come to the statement of the main theorem of this section.

\begin{thm} [{\bf  sc-smooth partition of unity}] \label{scpounity}
Let $X$ be an  ep-groupoid  whose sc-structure is based on separable sc-Hilbert spaces, and let ${\mathcal O}=(O_{\alpha})_{\alpha \in A}$ be an  open cover of the orbit space  $\abs{X}$. Then there exists an sc-smooth partition of unity $(g_{\alpha})_{\alpha\in A}$ on  $X$ so that the associated continuous  partition of unity $(\abs{g_{\alpha}})_{\alpha\in A}$ of $\abs{X}$ is subordinate to ${\mathcal O}=(O_{\alpha})_{\alpha \in A}$.
\end{thm}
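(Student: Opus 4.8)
The plan is to transplant, to the setting of a scalar functor, the construction of morphism‑compatible $\ssc^+$‑multisections carried out in Section~\ref{sect7.9} (Lemma~\ref{lem7.9.4} and Proposition~\ref{prop7.9.6}), the local $\ssc^+$‑sections being replaced by local sc‑smooth bump functions supplied by Lemma~\ref{l2}. It is precisely this last ingredient that uses the hypothesis that the sc‑structure is modeled on separable Hilbert spaces (Lemma~\ref{l2} rests on Lemma~\ref{l1}, i.e.\ on Fathi's theorem), so the theorem is genuinely false without it.

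First I would do the point‑set bookkeeping on the orbit space. The object M‑polyfold $X$ is second countable and paracompact, and by properness the quotient map $\pi\colon X\to\abs{X}$ is open with Hausdorff image, so $\abs{X}$ is second countable, Hausdorff and regular, hence metrizable and paracompact. Starting from $\mathcal{O}=(O_\alpha)$ I pass to a locally finite open refinement and then, using the Lindel\"of property, to a countable one, so as to obtain countable "chart data": for each $i\in\N$ there are an index $\alpha(i)\in A$; a representative $x_i\in X$; a $G_{x_i}$‑invariant open neighbourhood $U_i$ of $x_i$ carrying the natural representation $\varphi\colon G_{x_i}\to\text{Diff}_{\ssc}(U_i)$ of Theorem~\ref{localstructure} and identified via a chart with an open subset of a splicing core, chosen small enough that $\overline{\pi(U_i)}\subset O_{\alpha(i)}$ and that Lemma~\ref{l2} applies inside it; a $G_{x_i}$‑invariant open $W_i$ with $x_i\in W_i$ and $\overline{W_i}\subset U_i$; and open sets $R_i'\subset R_i$ in $\abs{X}$ with $\overline{R_i'}\subset R_i\subset\pi(W_i)$; arranged so that $(R_i)_i$ is locally finite and $(R_i')_i$ still covers $\abs{X}$.

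Next, for each $i$, Lemma~\ref{l2} provides an sc‑smooth $\tilde\beta_i$ on the chart equal to $1$ on $\pi^{-1}(R_i')\cap W_i$ and with support a closed subset of $\pi^{-1}(R_i)\cap U_i$ (using $\overline{\pi^{-1}(R_i')\cap W_i}\subset\pi^{-1}(R_i)\cap U_i$, which follows from $\overline{R_i'}\subset R_i$ and $\overline{W_i}\subset U_i$); averaging, $\beta_i:=\frac{1}{\#G_{x_i}}\sum_{g\in G_{x_i}}\tilde\beta_i\circ\varphi_g\colon U_i\to[0,1]$ is a $G_{x_i}$‑invariant sc‑smooth function still equal to $1$ on the $G_{x_i}$‑invariant set $\pi^{-1}(R_i')\cap W_i$ and with support $S_i\subset\pi^{-1}(R_i)\cap U_i$. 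I then extend $\beta_i$ to a functor $g_i\colon X\to[0,1]$ exactly as in Proposition~\ref{prop7.9.6}: on $\pi^{-1}(\pi(U_i))$ set $g_i(y)=\beta_i(y')$ for any $y'\in U_i$ joined to $y$ by a morphism — well defined because morphisms between points of $U_i$ are the $\varphi_g$ (Theorem~\ref{localstructure}) and $\beta_i$ is $G_{x_i}$‑invariant, and sc‑smooth since locally $g_i=\beta_i\circ(t\circ s^{-1})$; near a point $y_0$ admitting no morphism into $\overline{U_i}$ put $g_i\equiv0$ on the neighbourhood given by the second bullet of Theorem~\ref{lem7.9.3}; and near a point $y_0$ admitting a morphism only into $\overline{U_i}\setminus U_i$ use the third bullet of Theorem~\ref{lem7.9.3} with a neighbourhood $N$ of $\partial U_i$ chosen disjoint from the closed set $S_i$ to conclude again $g_i\equiv0$ near $y_0$. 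As in Lemma~\ref{lem7.9.5}, $g_i$ is an sc‑smooth functor; moreover $\abs{g_i}>0$ forces the projection to lie in $R_i$, so $\supp\abs{g_i}\subset\overline{R_i}\subset O_{\alpha(i)}$, the family $(\supp\abs{g_i})_i$ is locally finite, and $g_i\equiv1$ on the saturated set $\pi^{-1}(R_i')$.

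Finally I would normalize and regroup: since $(R_i')$ covers $\abs{X}$, the locally finite sum $\sigma:=\sum_i g_i$ is a well‑defined sc‑smooth functor with $\sigma\geq1$, so $\hat g_i:=g_i/\sigma$ is again an sc‑smooth functor with $\sum_i\hat g_i=1$; then $g_\alpha:=\sum_{\{i\,:\,\alpha(i)=\alpha\}}\hat g_i$ is a locally finite sum of sc‑smooth functors, hence an sc‑smooth functor $g_\alpha\colon X\to[0,1]$, with $\sum_{\alpha\in A}g_\alpha=\sum_i\hat g_i=1$ and $\supp\abs{g_\alpha}\subset\bigcup_{\{i:\alpha(i)=\alpha\}}\supp\abs{g_i}\subset O_\alpha$ by local finiteness. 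The main obstacle is the global extension step, i.e.\ promoting the $G_{x_i}$‑averaged local bump $\beta_i$ to a function on all of $X$ that is simultaneously sc‑smooth and invariant under morphisms; this requires the full force of the structure Theorem~\ref{lem7.9.3} (controlling which far‑away objects admit morphisms into $\overline{U_i}$), and once it is in place the remainder is the familiar paracompactness machinery. The only further points needing care are the choice of the charts small enough that the closures relevant to Lemma~\ref{l2} are taken inside the splicing core where they should be, and the arrangement of the refinement making $(\supp\abs{g_i})_i$ locally finite; neither is serious.
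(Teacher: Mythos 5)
Your proof is correct and takes essentially the same route as the paper's. The paper organizes the argument into a preliminary Lemma~\ref{invarinatcover} (producing $G$-invariant locally finite covers $(W_j)$, $(U_j)$ with $(\pi^{-1}(\pi(U_j)))$ locally finite) and a separate Lemma~\ref{lemmascp1} (producing the single invariant bump with support in a saturated set), followed by the normalize-and-regroup step; you interleave the same ingredients — Lemma~\ref{l2} for the raw bump, averaging over $G_{x_i}$, extension via the three bullets of Theorem~\ref{lem7.9.3}, paracompactness — with slightly different bookkeeping, in particular arranging local finiteness directly at the level of the open sets $R_i\subset\abs{X}$ rather than verifying it afterwards for the saturations $\pi^{-1}(\pi(U_j))$ as the paper does. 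That choice is a clean way to guarantee $(\supp\abs{g_i})_i$ is locally finite, since a locally finite family in $\abs{X}$ pulls back to a locally finite family of saturated sets under the open surjection $\pi$. One place to be a little more careful than your phrasing suggests is the application of Lemma~\ref{l2}: the closure hypothesis there is taken in the splicing core, not in $X$ or in $\abs{X}$, so the chart and the intermediate open sets must be chosen so that the relevant closure stays inside the chart image — you flag this with "chosen small enough that Lemma~\ref{l2} applies inside it", and the paper handles it by inserting an extra invariant open $V$ with $\overline{W}\subset V\subset\overline{V}\subset U$ before invoking Lemma~\ref{l2}; either works.
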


The proof of Theorem \ref{scpounity}  will follow from the next  two lemmata which make us of  Theorem \ref{lem7.9.3} in  Appendix \ref{verynewsection8.4}.

\begin{lem}\label{invarinatcover}
Let ${\mathcal O}=(O_{\alpha})_{\alpha\in A}$ be an open cover of $\abs{X}$. Then  there exist locally finite open covers $(W_{j})_{j\in J}$ and $(U_{j})_{\beta \in J}$  subordinate to ${\mathcal O}$ and such that $\ov{W_j}\subset U_{j}$. The sets
$W_{j}$ and $U_{j}$ are invariant  under the natural representations of the isotropy groups $G_{x_j}$ on
$U_{j}$ for some $x_j\in U_j$, and the open cover $(\pi^{-1}(\pi (U_{j}))_{j\in J}$ is locally finite.
\end{lem}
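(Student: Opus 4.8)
The plan is to construct the required covers in two stages, first producing a locally finite refinement of $\mathcal{O}$ with the usual shrinking property, and then replacing each refining set by its saturation-controlled ``symmetrized'' version using the natural representations of the isotropy groups supplied by Theorem \ref{lem7.9.3} (equivalently Theorem \ref{verynewtheorem8.25}). Concretely, I would start from the open cover $\mathcal{O}=(O_\alpha)_{\alpha\in A}$ of the orbit space $\abs{X}$ and pull it back to an open cover $(\pi^{-1}(O_\alpha))_{\alpha\in A}$ of the object M-polyfold $X$ consisting of saturated sets; here $\pi:X\to\abs{X}$ is the quotient map, which is open. Since $X$ is an M-polyfold it is paracompact, second countable, and (being metrizable) admits locally finite refinements with the shrinking property.

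Next I would use Theorem \ref{lem7.9.3}: for each point $x\in X$ choose a neighborhood $U(x)$ carrying the natural representation $\varphi:G_x\to\mathrm{Diff}_{\ssc}(U(x))$, small enough that $\abs{U(x)}$ is contained in some $O_{\alpha(x)}$ and that $t:s^{-1}(\ov{U(x)})\to X$ is proper. Replacing $U(x)$ by $\bigcap_{g\in G_x}\varphi_g(U(x))$ (still open, still a neighborhood of $x$ since $\varphi_g(x)=x$) we may assume $U(x)$ is $G_x$-invariant under its natural representation. The collection $(\pi^{-1}(\pi(U(x))))_{x\in X}$ is then an open cover of $X$ by saturated sets refining the pullback of $\mathcal{O}$. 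Using paracompactness of $\abs{X}$ I would extract a locally finite subcover $(\pi(U_j))_{j\in J}$ of $\abs{X}$ with the shrinking property, i.e.\ there are open sets $\pi(W_j)$ with $\ov{\pi(W_j)}\subset\pi(U_j)$ and $\bigcup_j\pi(W_j)=\abs{X}$; pulling back, $(\pi^{-1}(\pi(U_j)))_{j\in J}$ is a locally finite open cover of $X$ by saturated sets. For the local (non-saturated) sets, set $W_j:=W_{x_j}$ and $U_j:=U_{x_j}$ where $x_j$ is the center of the corresponding chart; these are $G_{x_j}$-invariant, satisfy $\ov{W_j}\subset U_j$ after a further small shrinking, and their images $\pi(W_j),\pi(U_j)$ are subordinate to $\mathcal{O}$.

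The one genuine subtlety — and the step I expect to be the main obstacle — is matching up the \emph{shrinking with closure control} downstairs (the relation $\ov{\pi(W_j)}\subset\pi(U_j)$ provided by paracompactness of $\abs{X}$) with the corresponding relation $\ov{W_j}\subset U_j$ upstairs for the $G_{x_j}$-invariant representatives, while simultaneously keeping the saturations $\pi^{-1}(\pi(U_j))$ locally finite. The point is that $\pi$ need not be a closed map, so $\ov{\pi^{-1}(W_j)}$ can be larger than $\pi^{-1}(\ov{\pi(W_j)})$; however, within the single chart neighborhood the natural representation acts by sc-diffeomorphisms of $U_{x_j}$ with $G_{x_j}$ finite, so $\pi$ restricted there is a finite quotient map and hence both open and closed onto its image. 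This lets me transfer the closure relation: choosing $W_j\subset U_j$ with $\ov{W_j}\subset U_j$ inside the chart and $G_{x_j}$-invariant (again by intersecting over $G_{x_j}$), one gets $\pi(\ov{W_j})=\ov{\pi(W_j)}\subset\pi(U_j)$. Local finiteness of $(\pi^{-1}(\pi(U_j)))_j$ follows because $(\pi(U_j))_j$ is locally finite in $\abs{X}$ and, for any $x\in X$, a neighborhood of $x$ meeting only finitely many $\pi^{-1}(\pi(U_j))$ is obtained by pulling back a neighborhood of $\pi(x)$ meeting only finitely many $\pi(U_j)$. This completes the construction; the index set $J$ may be taken countable since $\abs{X}$ is second countable, though that is not needed for the statement.
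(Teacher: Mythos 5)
Your approach differs genuinely from the paper's, and the difference is worth spelling out before addressing the gap. The paper takes a locally finite refinement of $(V_x)_{x\in X}$ \emph{upstairs} in $X$, where each $V_x$ is chosen small enough that (a) it carries the natural representation and (b) by construction there are no morphisms between points of $V_x$ and points in the saturated sets $\pi^{-1}(Q_\alpha)$ not meeting $V_x$; this last property is what powers the paper's nontrivial verification that the saturation cover $(\pi^{-1}(\pi(U'_j)))_j$ is locally finite. You instead push the refinement \emph{downstairs} into $\abs{X}$ and pull back, which makes local finiteness of the saturated family completely automatic (a neighborhood of $\pi(x)$ meeting finitely many $\pi(U_j)$ pulls back to a neighborhood of $x$ meeting finitely many $\pi^{-1}(\pi(U_j))$). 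That is a real simplification of the hardest step in the paper's argument, and it is correct as far as it goes.

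However, there is a genuine gap in what your construction actually delivers. The lemma asserts that $(W_j)_{j\in J}$ and $(U_j)_{j\in J}$ themselves are \emph{open covers of $X$}; this is used literally in the proof of Theorem \ref{scpounity}, where the positivity of $\sum_j f'_j$ is deduced from ``every point of $X$ is in some $W_j$.'' Your $U_j$ is a $G_{x_j}$-invariant subset of a single chart $U(x_j)$, chosen so that $\pi(U_j)$ lies in the downstairs refinement. A point $y\in X$ with $\pi(y)\in\pi(U_j)$ need only have \emph{some point in its orbit} inside $U(x_j)$; the point $y$ itself may lie outside every chosen chart. Thus the family $(U_j)$, and a fortiori $(W_j)$, has saturations covering $X$ but need not cover $X$. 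The paper sidesteps this exactly by building the refinement $(U'_j)$ upstairs first (so that it is a cover of $X$ by construction) and only afterwards symmetrizing with $G_j$ and checking local finiteness of the saturations. To repair your version one would have to superimpose an upstairs refinement, at which point one is essentially forced into the paper's argument.

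Two smaller points. First, you say ``extract a locally finite subcover'' of $\abs{X}$; paracompactness gives locally finite \emph{refinements}, not subcovers, so the chosen sets downstairs will in general be proper subsets of the $\pi(U(x))$, and the identification $U_j = U_{x_j}$ is then not legitimate -- one must take $U_j = \pi^{-1}(R_j)\cap U(x_j)$, which is still $G_{x_j}$-invariant but compounds the covering problem above. Second, your fix for the closure relation $\ov{W_j}\subset U_j$ -- shrinking $W_j$ directly upstairs and then pushing down -- is in tension with needing $(\pi(W_j))_j$ to still cover $\abs{X}$; the shrinking and the covering have to be coordinated, which again is what the paper achieves by shrinking upstairs against a cover of $X$ rather than of $\abs{X}$.
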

\begin{proof}
In view of the paracompactness of $\abs{X}$,  there is  a locally finite refinement $(Q_{\alpha})_{\alpha\in A}$  of the cover $(O_{\alpha})_{\alpha \in A}$.  Then $(\pi^{-1}(Q_{\alpha}))_{\alpha \in A}$ is a locally finite refinement of $(\pi^{-1}(O_{\alpha}))_{\alpha \in A}$. For every point $x\in X$,  we choose an open neighborhood $V_x$ intersecting only a finite number of sets $\pi^{-1}(Q_{\alpha})$.  We replace  $V_x$ by its  intersection  with  those  $\pi^{-1}(Q_{\alpha})$  which contains  the point $x$. Observe that  there is no morphism between  the point $x$ and the sets $\pi^{-1}(Q_{\alpha})$ which don't intersect $V_x$.  Hence, shrinking $V_x$ further,  we may assume that $V_x$ has the  properties  listed in Theorem \ref{lem7.9.3} and that there are no morphisms  between points in $V_x$ and points in the sets $\pi^{-1}(Q_{\alpha})$  not intersecting  $V_x$.  The collection $(V_x)_{x\in X}$ is an open cover of $X$ and since $X$ is paracompact, there exists a locally finite refinement  $(U'_{j})_{j\in J}$ of $(V_x)_{x\in X}$. For every  $j\in J$, choose  a  point $x(j)$ such that $U'_{j}\subset V_{x(j)}$.  We abbreviate by $G_{j}$ the isotropy group $G_{x(j)}$  acting on $V_{x(j)}$ by its natural representation.
We claim that $(\pi^{-1}(\pi (U_j')))_{j\in J}$ is a locally finite cover of $X$. Indeed, take $y\in X$. Then $y\in U_k'\subset V_{x(k)}$ for some $k\in J$. Since $(U_j')_{j\in J}$ is locally finite, there exists an open neighborhood $W_y$ of $y$ contained in $U'_k$ and intersecting only a finite number of the sets $U_j'$, say $U'_{j_1}, \ldots ,U'_{j_N}$. Hence $k=j_i$ for some $1\leq i\leq N$. Replacing $W_y$  by a smaller set,  we may assume that $W_y$ is $G_{k}$-invariant.  Assume that $z\in W_y\cap \pi^{-1}(\pi (U'_j))$ for some $j\neq j_1,\ldots , j_N$.  Then there is a morphism between some point $v\in V_j$ and $z=\varphi_g (v)$ for some $g\in G_j$. In view of the definition of $V_{x(k)}$,  we have $v\in V_{x(k)}$.
Hence there is $h\in G_{k}$ such that $v=\varphi_h (z)$, and since $z\in W_y$ and $W_y$ is $G_j$-invariant $v\in W_y$. Consequently, $W_y\cap U'_j\neq \emptyset$ and it follows that $W_y$ intersects only the sets $\pi^{-1}(\pi (U'_{j_1})), \ldots ,\pi^{-1}(\pi (U'_{j_N}))$.
For every $j\in J$, set  $U_j=\bigcup_{g\in G_j}\varphi_g (U'_j)$. Then $\pi^{-1}(\pi (U_j))=\pi^{-1}(\pi (U_j'))$ and since the isotropy groups $G_j$ are finite, it follows that $(U_j)_{j\in J}$ is a locally finite cover of $X$ such that $U_j\subset V_{x(j)}$.  Using paracompactness of $X$ again, we find a locally finite cover $(W'_{j})_{j\in J}$ such that $\ov{W_j'}\subset U_{j}$.
Define  $W_{j}=\bigcup_{g\in G_{j}}\varphi_g (W_j')$. Then  $W_{j}$ is  a $G_{j}$-invariant  open subset of $V_{j}$ such that  $\ov{W_j}\subset U_j$, and   the open cover  $(W_j )_{j\in J}$ is  locally finite.  This completes the  proof of the lemma..
\end{proof}

\begin{lem}\label{lemmascp1}
Let  $U=U(x_0)\subset X$ be an open neighborhood of $x_0$ with  the properties as listed in  Theorem \ref{lem7.9.3} and let  $\varphi:U\to K^{\mathcal S}$ be  a coordinate chart onto an open subset of the splicing core $K^{\mathcal S}$. Assume that  $W$ is a $G_{x_0}$-invariant open subset of $U$ such that $\ov{W}\subset U$. Then there exists an sc-functor $f:X\to [0,1]$ satisfying $f=1$ on $\ov{W}$ and  $\supp f\subset \pi^{-1}(\pi (U)).$
\end{lem}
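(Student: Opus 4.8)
The idea is to pull the desired functor back from an invariant sc-smooth bump function living on the chart, and then symmetrize over the isotropy group and extend by zero using the separation properties of $U$ provided by Theorem \ref{lem7.9.3}. First I would use the coordinate chart $\varphi:U\to K^{\mathcal S}$ to transport the situation into the splicing core: the set $\varphi(W)$ has closure contained in $\varphi(U)$, so Lemma \ref{l2} produces an sc-smooth function $f_0:K^{\mathcal S}\to[0,1]$ with $\supp f_0\subset\varphi(U)$ and $f_0\equiv1$ on $\overline{\varphi(W)}$. Composing with $\varphi$ gives an sc-smooth function $\tilde f:U\to[0,1]$ with $\supp\tilde f\subset U$ (as a closed subset of $U$; we may also arrange $\overline{\supp\tilde f}\subset U$ by shrinking $W$ slightly first if needed, or by taking a slightly larger $W'$ with $\overline W\subset W'$, $\overline{W'}\subset U$ and applying Lemma \ref{l2} to the pair $(W',U)$ so that $\supp\tilde f$ is actually compactly contained in $U$).

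Next I would make $\tilde f$ invariant under the natural representation $\varphi:G_{x_0}\to\mathrm{Diff}_{\ssc}(U)$ of the isotropy group on $U$ (available by Theorem \ref{lem7.9.3}). Since $G_{x_0}$ is finite and each $\varphi_g$ is an sc-diffeomorphism of $U$, the averaged function
$$
f_U:=\frac{1}{\sharp G_{x_0}}\sum_{g\in G_{x_0}}\tilde f\circ\varphi_g
$$
is sc-smooth on $U$, takes values in $[0,1]$, satisfies $f_U\circ\varphi_g=f_U$ for all $g\in G_{x_0}$, and still equals $1$ on $\overline W$ because $W$ is $G_{x_0}$-invariant and $\varphi_g(\overline W)=\overline W$. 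Moreover $\supp f_U\subset\bigcup_{g}\varphi_g(\supp\tilde f)$, which is again compactly contained in $U$ (using invariance of $U$ under the $\varphi_g$ from Theorem \ref{lem7.9.3}).

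Now I would extend $f_U$ to a functor $f:X\to[0,1]$. Define $f(y)=f_U(y')$ whenever there is a morphism $h:y\to y'$ with $y'\in U$; set $f(y)=0$ if there is no morphism from $y$ to any point of $\overline U$. Well-definedness when $y$ admits morphisms to two points $y',y''\in U$ follows from Theorem \ref{localstructure}: the composed morphism $y''\to y'$ (through $y$) lies in the image of $\Gamma$, hence is $\varphi_g$-extension for some $g\in G_{x_0}$, so $y'=\varphi_g(y'')$ and $f_U(y')=f_U(y'')$ by the $G_{x_0}$-invariance established above. It remains to handle the borderline objects $y_0$ that admit a morphism to some point of $\overline U$ but to no point of $U$: such $y_0$ only maps into $\partial U$, and since $\supp f_U$ is compactly contained in $U$, i.e.\ disjoint from $\partial U$, I would pick in Theorem \ref{lem7.9.3} the open set $W\supset\partial U$ to be disjoint from $\supp f_U$, so that the conclusion of that theorem gives a neighborhood $V$ of $y_0$ whose points only map into $W$; hence $f\equiv0$ on $V$, and $f$ is sc-smooth (indeed locally constant equal to $0$) near $y_0$. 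On points $y$ mapping genuinely into $U$, sc-smoothness of $f$ near $y$ follows because $f$ agrees near $y$ with $f_U\circ(t\circ s^{-1})$, a composition of sc-smooth maps, using that every morphism extends to a local sc-diffeomorphism $t\circ s^{-1}$; and near points mapping to no element of $\overline U$, the second bullet of Theorem \ref{lem7.9.3} gives a neighborhood on which $f\equiv0$. Functoriality $f(y)=f(y')$ for a morphism $y\to y'$ is immediate from the definition, and $f=1$ on $\overline W$, $\supp f\subset\pi^{-1}(\pi(U))$ by construction.

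\textbf{Main obstacle.} The routine part is sc-smoothness in the chart (Lemmata \ref{l1}, \ref{l2}) and the averaging. The delicate point is the extension step: one must verify well-definedness and sc-smoothness \emph{at and near the set-theoretic boundary of $U$}, and this is exactly what the two non-obvious bullets of Theorem \ref{lem7.9.3} are designed for. The key bookkeeping is to choose the data in the right order — first the chart and the bump function $\tilde f$ with support compactly inside $U$, then observe $\overline{\supp f_U}$ is a compact subset of the open $U$ disjoint from $\partial U$, then invoke Theorem \ref{lem7.9.3} with $W$ a neighborhood of $\partial U$ disjoint from $\overline{\supp f_U}$ — so that no object sitting "just outside" $U$ ever sees a point where $f_U\neq0$. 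Once the choices are organized this way, each local sc-smoothness check reduces to composing sc-smooth maps or to $f$ being locally $0$.
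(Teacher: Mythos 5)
Your proposal is correct and follows essentially the same route as the paper: pull back a bump function via Lemma \ref{l2}, average over $G_{x_0}$, then extend by zero using the separation properties of Theorem \ref{lem7.9.3}, checking sc-smoothness near $\partial U$ by the three cases. The only cosmetic difference is that the paper interposes an explicit $G_{x_0}$-invariant $V$ with $\ov{W}\subset V\subset\ov{V}\subset U$ before applying Lemma \ref{l2}, whereas you fold the same bookkeeping into arranging $\ov{\supp\tilde f}\subset U$; both achieve the needed property that the averaged function vanishes near $\partial U$.
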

\begin{proof}
We choose an open $G_{x_0}$-invariant set $V$ such that  $\ov{W}\subset V\subset \ov{V}\subset U$.  With the help of  Lemma \ref{l2} and the chart $\varphi:U\to K^{\mathcal S }$, we find an sc-smooth function $f_0: X\to [0,1]$  satisfying $\supp f_0\subset V$ and $f_0=1$ on $\ov{W}$.  Define
 the function $f_1$ on $U$  by
 $$f_1 (x)=\dfrac{1}{\sharp G_{x_0}}\sum_{g\in G_{x_0}}f_0 (\varphi_g (x)), \quad x\in U.$$
Then   $f_1$ is sc-smooth as  a  finite sum of sc-smooth functions, $0\leq  f_1 \leq 1$, and  $f_1 =1$  on $x\in \ov{W}$ since $\ov{W}$ is $G_{x_0}$-invariant.   Since $f_0 =0$ on $U\setminus \ov{V}$ and $U\setminus \ov{V}$ is $G_{x_0}$-invariant,  it follows that also $f_1=0$ on $U\setminus \ov{V}$.  In particular, $f_1=0$ on $\partial V$.

We extend $f_1$ to the function  $f:X\to [0,1]$ as follows.  If $x\in U$,  then $f(x):=f_1 (x).$ If  there exists a morphism between $x$ and some point $y\in U$, then  we set $f(x):=f_1  (y).$ Finally, if there is no morphism between $x$ and a point in $U$, then  we set  $f(x):=0.$

Clearly, $f(x)=f(y)$ if there  is a morphism $h: x\to y$. Note also that $f (x)=0$ for $x\in \partial U$. Indeed, if there are no morphisms  between $x$ and points of $U$, then $f(x)=0$  by the definition of $f$. If there exists a morphism between $x$ and $y\in U$, then by Theorem  \ref{lem7.9.3}, the point $x$ belongs to $U\setminus \ov{V}$,  so that again $f (x)=f_1(y)=0$.

We already know that $f$ is sc-smooth on $U$. To show that it is sc-smooth on $X$, we
take   $x\in X\setminus U$ and consider the following cases.  If  there is no morphism between $x$ and a point in  $\ov{U}$, in particular, there is no morphism between $x$ and a point in $U$, then $f(x)=0$. By   part (b) of Theorem  \ref{lem7.9.3},  $f=0$ on some  open neighborhood $U_x$ of $x$  and so $f$ is sc-smooth on $U_x$.

Next assume that  there exists a morphism $h: x\to y$  between the point $x$ and a point $y\in U$. According to the definition of $f$,  $f (x)=f_1 (y)$. We find two  open neighborhoods $U_x$ and $U_y$  of $x$ and $y$ such that  $U_y\subset U$  and   $t\circ s^{-1}:U_x\to U_y$ is an sc-diffeomorphism.  Then, $f =f_1 \circ t\circ s^{-1}$ on $U_x$ and since the right hand side is an sc-smooth function,  the function $f$ is sc-smooth on $U_x$.

In the last case, assume that  there is no morphism between $x$ and points of $U$ but there is a morphism between $x$ and some  point $y\in \partial U$. Then again we find open neighborhoods $U_x$ and $U_y$ of points $x$ and $y$ such that $t\circ s^{-1}:U_x\to U_y$ is an sc-diffemorphism. By  Theorem   \ref{lem7.9.3}, we may  take  these neighborhoods so small that the following holds. If there exists a morphism between a point $y'\in U_y$ and a point $z\in U$, then necessarily $z\in U\setminus W$.  At $y'$ we have $f(y')=0$ since $f_1=0$ on $U\setminus W$. If there are no morphism between $y'\in U_y$ and points in $U$, then $f (y')=0$. Hence $f$ is equal to $0$ on $U_y$ and  since $f\vert U_x=(f\vert U_y)\circ (t\circ s^{-1})$,  we conclude that $f$ is equal to $0$ on $U_x$.  So we  proved that the function $f$ is sc-smooth on $X$.

It remains to prove  that $\supp f\subset \pi^{-1}(\pi (U))$. At every point $x\not \in \pi^{-1}(\pi (U))$, $f(x)=0$.   Hence  it  is  enough to show that for every $x\in \partial \pi^{-1}(\pi (U))$ there exists a neighborhood $U_x$ of $x$ such that $f=0$ on $U_x$.  To see this, we prove that there is an  open neighborhood $U_x$ of $x$ such that if there is a morphism between $x'\in U_x$ and a point in $y\in U$,  then $y\in U\setminus W$. Otherwise, we find a sequence  $(x_n)$ converging to $x$, a sequence $(y_n)\subset W$, and a sequence  $(h_n)$ of morphisms $h_n:y_n\to x_n$.  Since the map $t:s^{-1}(\ov{U(x_0}))\to X$ is proper, there is  subsequence of the morphisms $(h_n)$ converging to the  morphism $h$. This implies that  the subsequence of $(y_n)$ converges to the point $y\in W$ and that $h:y\to x$,  contradicting the fact that there are no morphisms between points in $\partial \pi^{-1}(\pi (U))$ and points in  $U$.   Hence $f =0$ on $U_x$ and this proves that $\supp f \subset \pi^{-1}(\pi (U)).$
\end{proof}

\begin{proof}[Proof of Theorem \ref{scpounity}]
Let ${(O_{\alpha})}_{\alpha \in A}$ be an open cover of $\abs{X}$.  In view of  Lemma \ref{invarinatcover}, there are open covers $(W_j)_{j\in J}$ and  $(U_j)_{j\in J}$ subordinate to $(\pi^{-1}(O_{\alpha}))$ such that $\ov{W_j}\subset U_j$. Moreover, the sets
$W_j$ and $U_j$ are invariant with respect to the natural representation of $U_j$ and the cover $(\pi^{-1}(\pi (U_j)))_{j\in J}$ is locally finite. By Lemma \ref{lemmascp1}, for every
$j\in J$, there is an sc-smooth functor $f_j':X\to [0,1]$ which is equal to $1$ on $\ov{W_j}$ and $\supp f_j'\subset \pi^{-1}(\pi (U_j))$. Set $f'=\sum_{j\in J}f_j'$. In view of the local finiteness of   $(\pi^{-1}(\pi (U_j)))_{j\in J}$, the sum  has only a finitely many nonzero terms in a neighborhood of each point and thus defines an sc-smooth function. Because $f_j=1$ on $W_j$ and every point of $X$ is in some $W_j$, the sum is also positive. Now define
$f_j=\tfrac{f'_j}{f'}$. Then each $f_j$ is an sc-smooth functor such that $\supp f_j\subset \pi^{-1}(\pi (U_j))$.   Finally, we may reindex our functions $f_j$ so that they are indexed by the indices in the set $A$.  Since  the cover $(U_j)_{j\in J}$ is a refinement of  $(\pi^{-1}(O_{\alpha}))$,  we choose for each $j$ an index $\alpha (j)$ such that  $U_j\subset \pi^{-1}(O_{\alpha (j)})$. Then for each $\alpha\in A$,  we define $g_{\alpha}=\sum_{j, \,  \alpha (j)=\alpha}f_{j}$. If there is no  index $j$ satisfying  $\alpha (j)=\alpha$,  we set $g_{\alpha}=0$.  Every $g_{\alpha}$ is smooth and invariant under  the morphisms, satisfies $0\leq g_{\alpha}\leq 1$ and $\supp g_{\alpha}\subset \pi^{-1}(O_{\alpha})$.  In addition, $\sum_{\alpha\in A}g_{\alpha}=\sum_{j\in J}f_j=1$.  Consequently, $(g_\alpha )_{\alpha \in A}$ is a desired sc-smooth partition of unity.
\end{proof}

\subsection{Submanifolds of M-Polyfolds}\label{sub}

In \cite{HWZ2}, Definition 3.19,  we have introduced  the concept  of a strong finite dimensional  submanifold of an
M-polyfold. It  carries  the structure
of a manifold in a natural way.
In \cite{HWZ3} we have introduced the more general notion of a finite dimensional submanifold which we recall here for the convenience of the reader. Again, submanifolds according to the new
definition will have natural manifold structures and,  moreover,  strong
finite dimensional submanifolds  are also submanifolds according to the new definition.
The manifold structures induced in both cases are the same.
\begin{defn}
Let $X$ be an M-polyfold and $M\subset X$ a subset  equipped with the
induced topology. The subset  $M$ is called a  {\bf finite dimensional submanifold} of $X$
provided the following holds.
\begin{itemize}
\item[$\bullet$]  The subset $M$ lies in $X_\infty$.
\item[$\bullet$] At  every point $m\in M$  there exists an  M-polyfold chart
$$(U, \varphi, (\pi,E,V))
$$
where  $m\in U\subset X$ and where $\varphi:U\rightarrow
O$ is a homeomorphism satisfying $\varphi (m)=0$, onto the  open neighborhood $O$ of $0$ in
the splicing core $K$ associated with the sc-smooth splicing $(\pi, E, V)$. Here $V$ is an open neighborhood of $0$ in a
partial quadrant $C$ of the sc-Banach space $ W$. Moreover,  there
exists a finite-dimensional smooth linear subspace $N\subset W\oplus
E$ in good position to $C$ and a  corresponding sc-complement
$N^\perp$, an open neighborhood $Q$ of $0\in C\cap N$ and an
$\ssc$-smooth map $A:Q\rightarrow N^\perp$ satisfying  $A(0)=0$, $DA(0)=0$
so that the map
$$
\Gamma:Q\rightarrow W\oplus E:q\rightarrow q+A(q)
$$
has its image in $O$ and the image of the composition
$\Phi:=\varphi^{-1}\circ\Gamma:Q\rightarrow U$ is equal to  $M\cap U$.
\item[$\bullet$] The map $\Phi:Q\rightarrow M\cap U$ is a homeomorphism.
\end{itemize}
We call the map $\Phi:Q\rightarrow U$ a {\bf good parametrization}
of a neighborhood of $m\in M$ in $M$.
\end{defn}
In  other words,   a  subset $M\subset X$   of an  M-polyfold $X$  consisting of smooth points
is a submanifold if for every $m\in M$ there is  a good
parametrization of an open neighborhood of $m$ in $M$. The following
proposition shows that the transition maps  $\Phi\circ \Psi^{-1}$ defined  by two good
parameterizations $\Phi$ and $\Psi$  are smooth, so the inverses of the good parametrizations define an atlas of smoothly compatible charts.  Consequently, a finite dimensional submanifold is in a natural way a manifold with boundary with corners.

\begin{prop} Any two parametrizations of a finite dimensional submanifold $M$ of the M-polyfold $X$ are smoothly compatible.
\end{prop}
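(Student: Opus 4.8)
The plan is to reduce the claim to the smoothness of a single transition map and then to compute that transition map explicitly using the defining data of a good parametrization. Let $\Phi:Q\to M\cap U$ and $\Psi:Q'\to M\cap U'$ be two good parametrizations of neighborhoods of points in $M$, with associated M-polyfold charts $(U,\varphi,(\pi,E,V))$ and $(U',\varphi',(\pi',E',V'))$, finite-dimensional subspaces $N\subset W\oplus E$, $N'\subset W'\oplus E'$, maps $A:Q\to N^\perp$, $A':Q'\to N'^\perp$, and $\Gamma(q)=q+A(q)$, $\Gamma'(q')=q'+A'(q')$. On the overlap $V_0:=\Phi^{-1}(M\cap U\cap U')$ the transition map is
$$
\Psi^{-1}\circ\Phi = (\Gamma')^{-1}\circ\big(\varphi'\circ\varphi^{-1}\big)\circ\Gamma,
$$
so it suffices to show this composite is smooth as a map between (open subsets of) finite-dimensional manifolds.

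First I would observe that $\varphi'\circ\varphi^{-1}$ is an sc-diffeomorphism between open subsets of the splicing cores $K$ and $K'$, hence in particular sc-smooth; and $\Gamma:Q\to O\subset K$ is an sc-smooth map from the finite-dimensional manifold $Q$ (an open subset of $C\cap N$, which is a smooth finite-dimensional manifold with corners) into the M-polyfold, because $q\mapsto q+A(q)$ is the sum of the inclusion and the sc-smooth map $A$. Therefore $\psi:=(\varphi'\circ\varphi^{-1})\circ\Gamma:Q_0\to O'\subset K'$ is sc-smooth, where $Q_0=Q\cap V_0$. The content of the proposition is thus: the composite $(\Gamma')^{-1}\circ\psi$ is smooth. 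The key point is that $\psi$ takes values in the image of $\Gamma'$, i.e.\ in $M'\cap U'$ read in the chart $\varphi'$, and on that image $(\Gamma')^{-1}$ is realized by a smooth (finite-dimensional) formula. Concretely, write the target splitting $W'\oplus E'=N'\oplus N'^\perp$ with linear projection $\mathrm{pr}_{N'}$. For a point $\Gamma'(q')=q'+A'(q')$ one has $\mathrm{pr}_{N'}(\Gamma'(q'))=q'$ since $A'(q')\in N'^\perp$. Hence on the overlap,
$$
(\Gamma')^{-1}\circ\psi = \mathrm{pr}_{N'}\circ\,\iota'\circ\psi,
$$
where $\iota':K'\hookrightarrow W'\oplus E'$ is the canonical inclusion of the splicing core into the ambient sc-Banach space (which is sc-smooth, indeed sc-smoothly a retract). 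Thus $(\Gamma')^{-1}\circ\psi$ is the composition of an sc-smooth map $Q_0\to W'\oplus E'$ with the continuous linear (hence sc-smooth) projection $\mathrm{pr}_{N'}$, and its image lies in the finite-dimensional subspace $N'$, so it is sc-smooth as a map into $N'$; but an sc-smooth map from a finite-dimensional manifold into a finite-dimensional space is just a classical $C^\infty$ map. This shows $\Psi^{-1}\circ\Phi$ is smooth, and by symmetry so is $\Phi^{-1}\circ\Psi$, proving compatibility.

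The step I expect to be the main obstacle is the verification that $\mathrm{pr}_{N'}$ genuinely inverts $\Gamma'$ on the relevant image and that the identity $(\Gamma')^{-1}\circ\psi=\mathrm{pr}_{N'}\circ\iota'\circ\psi$ holds on the whole overlap domain and not just pointwise in a way that loses smoothness — one must be careful that $\psi(Q_0)$ lands exactly in $\varphi'(M'\cap U')=\Gamma'(Q')$, which follows because both $\Phi$ and $\Psi$ parametrize the \emph{same} subset $M$ and $\varphi,\varphi'$ are homeomorphisms, so $\psi=\varphi'\circ\varphi^{-1}\circ\varphi\circ\Phi\circ(\text{id})$ composed appropriately sends $Q_0$ into $M\cap U'$ read in the $\varphi'$-chart. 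A second, more technical point is the good-position hypothesis on $N'$ relative to the partial quadrant $C'$: this is what guarantees that $C'\cap N'$ is a smooth finite-dimensional manifold with corners and that the splitting $W'\oplus E'=N'\oplus N'^\perp$ restricts compatibly, so that $\mathrm{pr}_{N'}$ maps $Q_0$ into $Q'$ rather than merely into $N'$; I would invoke the good-position results from \cite{HWZ3} (and Definition 3.19 and the surrounding discussion in \cite{HWZ2}) for this. Everything else is the routine bookkeeping of composing sc-smooth maps, using the chain rule (Theorem 2.16 in \cite{HWZ2}) and the fact that inclusions and projections associated with splicing cores and sc-complements are sc-smooth.
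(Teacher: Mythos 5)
Your argument is essentially the same as the paper's: both write the transition map as the chart transition $\varphi'\circ\varphi^{-1}$ (an sc-smooth map of splicing cores) composed with $\Gamma$, then recover the transition coordinate by applying the sc-projection onto the finite-dimensional subspace $N'$ along its sc-complement $N'^{\perp}$, and finally observe that an sc-smooth map between finite-dimensional manifolds is classically smooth. The extra care you take about $\mathrm{pr}_{N'}$ landing in $Q'$ (via good position) is a correct and useful remark that the paper leaves implicit, but it does not change the route of the proof.
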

\begin{proof}
Assume that $m_0:=\varphi^{-1}(q_0+A(q_0))=\psi^{-1}(p_0+B(p_0))$
for two good parameterizations. Since both good parameterizations
are local homeomorphisms onto an open neighborhood of $m_0$ in $M$,
we obtain a local homeomorpism $O(p_0)\rightarrow O(q_0)$, $p\mapsto
q(p)$, where the domain and codomain are relatively open
neighborhoods in partial quadrants. We have
$$
q(p)+A(q(p))=\varphi\circ\psi^{-1}(p+B(p)).
$$
Recall that $q(p)\in N$ and $A(q(p)\in N^\perp$, where $N\oplus
N^\perp=W\oplus E$ is an sc-splitting. If  $P:W\oplus E\rightarrow
N$ is an  sc-projection along $N^{\perp}$, then
$$
q(p)= P(\varphi\circ\psi^{-1}(p+B(p)).$$
The map $p\rightarrow q(p)$ is sc-smooth as a  composition of sc-smooth
maps. However,  since the domain and codomain
lie in finite dimensional smooth subspaces, the map is smooth in the usual sense.
\end{proof}

\subsection{Orientations and Determinants}\label{oranddet}

We begin by recalling determinants of linear Fredholm
operators. More details and proofs  can be found in \cite{DK} and   \cite{FH} and,  particularly,  relevant in our context, in \cite{HWZ3} and  \cite{HWZ8}.

The {\bf determinant} of a linear Fredholm operator $T:E\to E$ between two Banach spaces is  the one-dimensional real vector space
$$
\det(T)=(\wedge^{max}\ker(T))\otimes
(\wedge^{\max}\text{cokern}(T))^\ast
$$
where  the star $\ast$ refers to the dual space. The orientation of the Fredholm operator $T$ is, by definition, the orientation of the vector space $\det (T)$.
If  $\Gamma:X\rightarrow {\mathcal F}(E,F)$  is a continuous family of Fredholm operators $T:E\to F$, then the bundle
$$
\det(\Gamma)=\bigcup_{x\in X}\{x\}\times\det(\Gamma (x))\to X
$$
carries the structure of a topological  line-bundle. The
same result holds true if  the domains and targets of the operators vary
in vector bundles.

We shall consider now a Fredholm section $f$ of the  strong polyfold bundle
$p:W\rightarrow Z$ and let  $P:E\rightarrow X$ be a model for $p$ over the ep-groupoid $X$ and let  the proper Fredholm section
$F:X\to E$ be the corresponding representative of the section  $f$. As shown in \cite{HWZ2}
and \cite{HWZ3} there is a well-defined notion of a linearisation of
$F$ at a smooth point $x\in X$. It is a class of sc-Fredholm operator differing by $\ssc^+$-operators and defined as follows.  If $x$ is the smooth point, we take a germ of $\ssc^+$-sections $s$ defined near $x$ and satisfying
$$F(x)=s(x).$$

Then $(F-s)(x)=0$ so that  the linearisation $(F-s)'(x):T_xX\rightarrow E_x$ is a well-defined.  It is a classical Fredholm operator and hence possesses
the  determinant $\det((F-s)'(x))$.  It depends clearly on the choice of the $\ssc^+$-section $s$. If  $s_1$ is another such $\ssc^+$-section, then the  linearisations $(F-s)'(x)$ and $(F-s_1)'(x)$ differ only  by an $\ssc^+$-operator and therefore  have, in particular, the same Fredholm index.
The space of all these  linearisations at the point $x$ is a contractible convex space, and we denote the collection of the associated determinants by $\text{DET}(F, x)$.  Consequently,  if the determinant of one of these Fredholm operators is oriented, then,
by continuation,  every  other linearisation becomes canonically oriented as well.

In order to  describe the  behavior of the determinants under the morphismsms, we look at the  morphism $\varphi:x\rightarrow y$ between two smooth points in $X$.
We choose open neighborhoods ${\bf U}(\varphi )\subset {\bf X}$ of the morphismsm and $U(x)$ and $U(y)$ of the  points $x, y\in X$ (as small as necessary) such that the source and the target maps $s:{\bf U}(\varphi )\to U(x)$ and $t:{\bf U}(\varphi )\to U(y)$ are sc-smooth diffeomorphisms.  Then  associated with the morphism $\varphi$ is the sc-diffeomorphism $\sigma:U(x)\to U(y)$ defined by  $\sigma:=t\circ s^{-1}$  and satisfying $\sigma (x)=y$.  With an $\ssc^+$-germ $s_0:U(x)\to E$ satisfying $s_0(x)=F(x)$ we associate the $\ssc^+$-germ $s_1:U(y)\to E$ defined by
$$s_1(\sigma (z)):=\mu (s^{-1}(z), s_0(z))$$
for $z\in U(x)$. Since $F:X\to E$ is a functor, it satisfies the identity
$$F(\sigma (z))=\mu (s^{-1}(z), F(z)).$$
It follows that $s_1(y)=F(y)$, and for for the linearizations we deduce
$$(F-s_1)(y)\circ T\sigma (x)\cdot h=\mu (\varphi, (F-s_0)'(x)\cdot h.$$
Abbreviating the linear isomorphism
$$\Phi:=\mu (\varphi , \cdot ):E_x\to E_y$$
and recalling  that, by definition, $T\varphi=T\sigma (x)$ one obtains
$$
\Phi^{-1}\circ (F-s_1)'(y)\circ T\varphi = (F-s_0)'(x).
$$
This  formula shows that the morphism $\varphi:x\to y$ determines  the natural isomorphism
$$
\varphi_\ast:\det((F-s_0)'(x))\rightarrow \det((F-s_1)'(y))
$$
between the determinants of the linearizations at the source and the target of $\varphi.$

In order to define the continuation of the orientations along an sc-smooth curve, we consider the sc-smooth path  $\Theta:[0,1]\rightarrow X$ connecting the smooth point $\Theta (0)$ with the smooth point $\Theta (1)$. It is easy to construct an sc-smooth family of germs
of $\ssc^+$-sections $s_t$ satisfying
$$s_t(\Theta(t))=F(\Theta(t)),$$ where
$s_t$ is defined on an open neighborhood of the point  $\Theta(t)$. Using the Fredholm property of the section $F$ one proves  in \cite{HWZ8}, that the bundle
$$
\bigcup_{t\in [0,1]} \{t\}\times \det((F-s_t)'(\Theta(t)))\rightarrow [0,1]
$$
has in a natural way the structure of a real topological  line-bundle over $[0,1]$. (We should point out  that this  is not entirely trivial because  the linearizations do not depend continuously as bounded operators on the points $\Theta (t)$ at which the section is  linearized.) As consequence we can define the continuation of the  orientations  of the linerisations  along an sc-smooth path in $X$.

\begin{defn}
An {\bf orientation for the sc-Fredholm section  $F$ of the strong bundle $P:E\to X$ }
 consists of an orientation for the linearisation of $F$ at every smooth point $x\in X$,  which is invariant under the morphisms
and stable under continuation along sc-smooth paths in $X$.
\end{defn}

Two  oriented  sc-Fredholm sections $F$ of $P:E\to X$ and $F'$ of $P':E'\to X'$  are called  {\bf equivalent},
if there is a common bundle refinement pulling back the sections to the same oriented section.  Since bundle equivalences induce isomorphisms between the determinants, we can therefore define an orientation for a Fredholm section $f$ of a strong polyfold bundle $p:W\to Z$.

If $F$ is an oriented Fredholm section and  $(F,\Lambda)$ is a transversal pair and $\Lambda(F(x))>0$,
then the linearisation  at the smooth solution  $x$ is
an intrinsic finite collection of sc-Fredholm operators
which all differ by  $\ssc^+$-operators.  Therefore, we obtain an
an orientation for the manifolds of the local solution structure. In view of the  compatibility with morphisms and the  stability under continuation, the solution set inherits the  structure of an oriented  branched suborbifold. The details and the proofs will be carried out in \cite{HWZ8}.


\begin{thebibliography}{99}
\bibitem{AR} A. Adem, J. Leida\ and\ Y. Ruan, Orbifolds and Stringy
Topology, to appear Cambridge University Press.
\bibitem{BSV} V. Borisovich, V. Zvyagin\ and\ V. Sapronov, Nonlinear Fredholm maps and Leray-Schauder degree, Russian
Math. Survey's 32:4 (1977), p 1-54.
\bibitem{BEHWZ} F.~Bourgeois, Y.~Eliashberg, H.~Hofer,
K.~Wysocki and E.~Zehnder,  Compactness Results in Symplectic Field
Theory, {\em Geometry and Topology}, Vol. 7, 2003, pp.799-888.
\bibitem{CRS} K. Cieliebak, I. Mundet i Riera\ and\ D. A. Salamon,
 Equivariant moduli problems, branched manifolds,
 and the Euler class,
Topology {\bf 42} (2003), no.~3, 641--700.
\bibitem{CM} K. Cieliebak and K. Mohnke,
Symplectic hypersurfaces and transversality in Gromov-Witten theory,
preprint arXiv:math/0702887.
\bibitem{DK} S. Donaldson\ and\ P. Kronheimer, The geometry of
four-manifolds, Oxford Mathematical Monographs. Oxford Science
Publications. The Clarendon Press, Oxford University Press, New
York, 1990.
\bibitem{Do} S. Donaldson, Symplectic submanifolds and almost-complex geometry,
J. Diff. Geom. 44, no. 4, 666–705 (1996).
\bibitem{D} J. Dungundji,   \textit{Topology},  Allyn and Bacon, 1966.
\bibitem{EGH} Y. Eliashberg, A. Givental\ and\ H. Hofer, Introduction to Symplectic Field Theory,
 Geom. Funct. Anal.   2000,  Special Volume, Part II, 560--673.
\bibitem{El} H. Eliasson, Geometry of manifolds of maps, J. Differential Geometry {\bf 1}(1967), 169--194.
\bibitem{Fathi} A, Fathi, Partitions of Unity for Countable Covers,
 The American Mathematical Monthly, Vol. 104, No. 8.
(Oct., 1997), pp. 720-723.
\bibitem{FH} A. Floer\ and\ H. Hofer, Coherent orientations for periodic orbit problems in symplectic geometry, Math. Z. {\bf 212} (1993), no.~1, 13--38.
\bibitem{FO} K. Fukaya\ and\ K. Ono,
    Arnold conjecture and Gromov-Witten invariants,
    Topology,Vol. 38 No 5, 1999.pp. 933-1048.
\bibitem{FOOO} K. Fukaya, Y.-G. Oh, H. Ohta and K. Ono,
Lagrangian intersection Floer theory-anomaly and obstruction,
preprint.
\bibitem{GZ} P. Gabriel and M Zisman, Calculus of Fractions and
Homotopy Theory, Ergebnisse Vol. 35, Springer (1967).
\bibitem{G} M.~Gromov, Pseudoholomorphic Curves in
Symplectic Geometry, {\it Inv. Math.} Vol. 82 (1985), 307-347.
\bibitem{Haefliger1} A. Haefliger, Structures feuillet\'ees et cohomologie \`a valeur dans un faisceau de groupo\"ides,
Comment. Math. Helv, 32 (1958), 248-329.
\bibitem{Haefliger1.5} A. Haefliger, Homotopy and integrability, in Manifolds (Amsterdam, 1970),' 133--163, Springer Lecture Notes in Math., 197, 1971.
\bibitem{Haefliger2} A. Haefliger, Groupo\"ides d'holonomie et  classifiants, {\em Ast\'erisque}, 116, (1984),  70-107.
\bibitem{Haefliger3} A. Haefliger, Groupoids and foliations, Contemp. Math. 282 (2001) 83-110.
\bibitem{Hofer} H. Hofer, A General Fredholm Theory and
Applications, Current Developments in Mathematics, edited by D.
Jerison, B. Mazur, T. Mrowka, W. Schmid, R. Stanley, and S. T. Yau,
International Press, 2006.
\bibitem{H2} H. Hofer, Lectures on Polyfolds and Applications {I}:
Basic Concepts and Illustrations, to appear Courant Lecture Note.
\bibitem{HWZ-polyfolds1} H. Hofer, K. Wysocki\ and\ E. Zehnder,
Fredholm Theory in Polyfolds {I}: Functional Analytic Methods, book
in preparation.
\bibitem{HWZ-polyfolds2} H. Hofer, K. Wysocki\ and\ E. Zehnder,
Fredholm Theory in Polyfolds {II}: The Polyfolds of Symplectic Field
Theory, book in preparation.
\bibitem{HWZ2} H. Hofer, K. Wysocki\ and\ E. Zehnder,
A General Fredholm Theory {I}: A Splicing-Based Differential
Geometry, JEMS, vol. 9, No.7, 2007, pp. 841-876, see also  arXiv:math/0612604v2.
\bibitem{HWZ3} H. Hofer, K. Wysocki\ and\ E. Zehnder,
A General Fredholm Theory {II}: Implicit Function Theorems,
arXiv:0705.1310v1, to appear in GAFA (2008).
\bibitem{HWZ4} H. Hofer, K. Wysocki\ and\ E. Zehnder,
Applications of Polyfold Theory {I}: Gromov-Witten Theory, paper in
preparation.
\bibitem{HWZ5} H. Hofer, K. Wysocki\ and\ E. Zehnder,
Applications of Polyfold Theory {II}: The Polyfolds of Symplectic
Field Theory, paper in preparation.
\bibitem{HWZ6} H. Hofer, K. Wysocki\ and\ E. Zehnder,
A General Fredholm Theory {IV}: Operations , paper in preparation.
\bibitem{HWZ7} H. Hofer, K. Wysocki\ and\ E.
Zehnder, Integration Theory for Zero Sets of Polyfold Fredholm Sections,
preprint  arXiv:0711.0781.
\bibitem{HWZ8} H. Hofer, K. Wysocki\ and\ E. Zehnder,
Connections and Determinant Bundles for Polyfold Fredholm Operators,
paper in preparation.
\bibitem{HWZ9} H. Hofer, K. Wysocki\ and\ E. Zehnder,
The Fredholm Property of the Nonlinear Cauchy-Riemann operator in
the Polyfold Set-up for Symplectic Field Theory, paper in
preparation.
\bibitem{J} K. Janich, On the classification of $O(n)$-manifolds,
Math. Annalen 176 (1968), 53--76.
\bibitem{Joyce} D. Joyce, Kuranishi bordism and Kuranishi homology,
preprint arxiv 0707.3572v1.
\bibitem{LA} S. Lang, {\it
Introduction to differentiable manifolds}, Second edition, Springer,
New York, 2002.
\bibitem{LiT} J. Li and G. Tian, Virtual moduli cycles and Gromov-Witten
invariants of general symplectic manifolds, in: Topics in symplectic
4-manifolds (Irvine, CA, 1996), 47–83, Int. Press (1998).
\bibitem{LT} G. Liu and G. Tian, Floer homology and Arnold conjecture
J. Diff. Geom. 49, no. 1, 1–74 (1998).
\bibitem{LuT} G. Lu and G. Tian, Constructing virtual Euler cycles and classes, Int. Math. Res. Surv. IMRS  2007,  2008 in electronic version, Art. ID rym001, 220 pp.
\bibitem{Man} Y. Manin, {\it Frobenius Manifolds, Quantum Cohomology, and
Moduli Spaces}, AMS Colloquium Publications, Volume 47.
\bibitem{Mc} D. McDuff, Groupoids, Branched Manifolds and
Multisection, J. Symplectic Geom. 4, 259-315 (2006).
\bibitem{MS} D. McDuff and D. Salamon, {\it Introduction
 to symplectic topology}, 2nd edition, Oxford University Press, 1998.
\bibitem{MS2} D. McDuff and D. Salamon, {\it J-holomorphic curves and
symplectic topology} , Colloquium Publications, vol. 52, Amer. Math.
Soc., Providence, RI, 2004, xii+669 pp.
\bibitem{Mj} I. Moerdijk, Orbifolds as Groupoids: An Introduction, Contemp. Math. 310, 205-222 (2002).
\bibitem{MM} I. Moerdijk and J. Mr\v cun, {\it Introduction to
Foliation and Lie Groupoids}, Cambridge studies in advanced
mathematics, Vol. 91, 2003.
\bibitem{R1} Y. Ruan, Topological sigma model and Donaldson type
invariants in Gromov theory, Duke Math. J. 83, 1996, 451-500.
\bibitem{R2} Y. Ruan, Symplectic Topology on Algebraic 3-folds, J.
Diff. Geom. 39, 1994, 215-227.
\bibitem{R}W. Rudin, {\it Functional analysis}, Second edition, McGraw-Hill, New York, 1991.
\bibitem{Schaefer} H. Schaefer,  Topological vector spaces.
Graduate Texts in Mathematics, Vol. 3. Springer-Verlag, New
York-Berlin,  1971. xi+294 pp.
\bibitem{SCHW} J. T. Schwartz,
{\it Nonlinear functional analysis}, Gordon and Breach, New York,
1969.
\bibitem{Tian} G. Tian, Quantum cohomology and its associativity,
Current Developments in Mathematics, 1995, p. 361-397, International
Press
\bibitem{Tr} H. Triebel, {\it Interpolation theory, function spaces, differential operators}, North-Holland, Amsterdam, 1978.



\end{thebibliography}
\end{document}